\newcommand{\F}{F}
\newcommand{\cI}{\mathcal{I}}
\newcommand{\cJ}{\mathcal{J}}
\newcommand{\esp}{\hspace{0.06cm}}
\newcommand{\N}{\mathbb N}
\newcommand{\T}{\mathbb T}
\newcommand{\clo}{\mathrm{S}^1}
\newcommand{\dist}{V_{\infty}}
\newcommand{\var}{\mathrm{var}}
\newcommand{\rot}{\mathbf{rot}}
\newcommand{\SO}{\mathrm{SO}}
\newcommand{\R}{\mathbb{R}}
\newcommand{\Z}{\mathbb Z}
\newcommand{\Diff}{\mathrm{Diff}}
\newcommand{\eps}{\varepsilon}
\newcommand{\X}{X}
\newcommand{\Y}{Y}
\newcommand{\psiX}{\psi_0}
\newcommand{\psiY}{\psi_1}
\newcommand{\de}{\delta}
\def\Fix{{\mathrm {Fix}}}
\def\res#1{\mathbin{|}{}_{#1}}
\def\rep#1{\{#1\}}
\theoremstyle{theorem}
\newtheorem{thm}{Theorem}[section]
\newtheorem{main}{Main Theorem}
\newtheorem*{analytic}{Theorem C}
\newtheorem{prop}[thm]{Proposition}
\newtheorem{defprop}[thm]{Definition-Proposition}
\newtheorem*{propsn}{Szekeres' theorem}
\newtheorem*{kop}{Kopell's lemma}
\newtheorem{cor}[thm]{Corollary}
\newtheorem{qs}{Question}
\newtheorem{lem}[thm]{Lemma}
\newtheorem{interp}[thm]{The Interpolation Lemma}
\newtheorem{reg}[thm]{The Regularization Lemma}
\theoremstyle{definition}
\theoremstyle{remark}
\newtheorem{rem}[thm]{Remark}
\newcommand{\id}{\mathrm{id}}
\newcommand{\f}{\varphi}
\begin{document}

\date{}
\vspace{-1cm}

\date{}
\author{H\'el\`ene Eynard-Bontemps \,\, \& \,\, Andr\'es Navas\\
with an appendix in collaboration with Th\'eo Virot}

\title{\vspace{-1.5cm}The space of $C^{1+ac}$ actions of $\Z^d$ on a one-dimensional manifold is path-connected}

\maketitle

\noindent{\bf Abstract.} We show path-connectedness for the space of $\Z^d$ actions by $C^1$ diffeomorphisms 
with absolutely continuous derivative on both the closed interval and the circle. We also give a new and short proof of 
the connectedness of the space of $\Z^d$ actions by $C^2$ diffeomorphisms on the interval, as well as an analogous 
result in the real-analytic setting. 

\vspace{0.1cm}

\noindent{\bf Keywords:} centralizer, flow, connectedness, $\Z^d$ action.

\vspace{0.1cm}

\noindent{\bf MCS 2020:} 37C05, 37C10, 37C15, 37E05, 37E10, 57S25.\medskip

\section*{Introduction}

Actions of free abelian groups on one-dimensional manifolds have been deeply studied from the 60's on, 
with seminal works such as \cite{Sz,Ko,Ta,Se,Yo}, and have known a regain of interest in the past decade 
\cite{Na14,BE}.\footnote{At this point, we should also refer to the closely-related work \cite{EN2}. 
However, this article will remain unpublished since most of the results therein are improved 
(and, actually, simplified) by those of this article. It is worth pointing out this article is 
totally self-contained; in particular, no result from \cite{EN2} is a prerequisite.}
A historical motivation is the study of foliations of $3$-manifolds by surfaces, where actions of $\Z^2=\pi_1(\T^2)$ on the interval 
appear as holonomy representations of (germs of) foliations near toric leaves, which play a special role in the $3$-dimensional context. 
Actually, there is a deep relation between $\Z^2$ actions and the problem of deformation of foliations by surfaces (seen as integrable plane fields) on any 
3-dimensional manifold. Namely, an obvious necessary condition to deform one such foliation to another one is the existence of a homotopy between the 
corresponding plane fields, and it is natural to ask whether this condition is also sufficient (the plane fields in the homotopy being non necessarily tangent 
to foliations). In \cite{Ey16}, this is shown to be true in class $C^{\infty}$ provided the space of $C^{\infty}$ actions of $\Z^d$ on $[0,1]$ is path connected, 
which shows the relevance of this (still unsolved) question.

Actions on the circle are also involved in related problems in the context of codimension-one foliations, 
with an additional source of interest coming from small denominator phenomena related to linearization of 
circle diffeomorphisms. In this direction, a fundamental (and still unsolved) question raised by Rosenberg in 
the 70's asks whether the space of $\Z^2$ actions on the circle by $C^{\infty}$ diffeomorphisms is locally 
path-connected.

Note that a $\mathbb{Z}^d$ action is completely determined by the data of $d$ commuting diffeomorphisms
 of the one-dimensional manifold, so the space of such actions inherits a natural topology from that of the 
group of diffeomorphisms under scrutiny. Although we might fail to specify it on some occasions, all the actions
 considered in this article are by orientation-preserving diffeomorphisms. Also, in a slight abuse of terminology, 
 actions by $C^k$ diffeomorphisms will be called {\em $C^k$ actions}.\medskip

The question of connectedness can be asked in any differentiability class, and it turns out that the phenomena 
at play highly depend on the regularity. 
In a nutshell, this subject is the theater of a thrilling fight between the virtues of flexibility and rigidity. 

In low regularity, commutativity is not so restrictive, and there is no simple universal model of what such an action 
should look like. 
This may seem inconvenient but, on the upside, low regular actions are more ``flexible''.  
An enlightening illustration of this view is the method employed in \cite{Na14} to prove the path-connectedness 
for $\Z^d$ actions on the interval or the circle in $C^1$ regularity. Namely, therein, it is proved that 
the only obstruction to deform such an action to one by isometries through a path of $C^1$ conjugates 
is the existence of hyperbolic periodic points, and that these can be somewhat ``flattened'' 
along a ($C^1$-continuous) path of topologically conjugated $C^1$ actions. 

In higher regularity, things are not as flexible. 
For example, \cite{Na23} highlights a new obstruction,  the so-called \emph{asymptotic variation} 
(cf. Section \ref{ss:rat}), that arises when one tries to deform an action to one by isometries through conjugates
 in regularity higher than $C^1$. A priori, this obstruction cannot be as ``easily'' cancelled through deformation 
 as the hyperbolicity of periodic points. Nevertheless, the rigidity of higher regularity limits the possible forms 
 of the actions we start with, and this provides a natural strategy to deform them (cf. Section~\ref{ss:idea}).

To be more concrete, we present a summary of the known ``definitive'' results on the subject in chronological order. 
Below, $M$ denotes either the closed interval or the circle:

\begin{itemize}

\item The space of $C^0$ actions of $\Z^d$ on $M$ is path-connected. 
For $M=[0,1]$, this can be shown using the so-called Alexander trick (cf. \cite{BE}), 
which actually implies that the space of $C^0$ actions of \emph{any finitely generated group} on $[0,1]$ 
is \emph{contractible}! 
For $M=\T^1$, it follows from ideas of Herman, made explicit in Section \ref{section-herman}, that the space 
of $C^0$ actions of $\Z^d$ on $\T^1$ retracts onto $(\SO(2,\R))^d$; in particular, it is path-connected \cite{EN2}. 

\item The space of $C^1$ actions of $\Z^d$ on $M$ is path-connected \cite{Na14}. 
However, it is unknown whether this space is contractible for the case of the interval, 
or if it is homotopically equivalent to $(\SO(2,\R))^d$ in the circle case. 

\item The space of $C^\infty$ actions of $\Z^d$ on $[0,1]$ is connected \cite{BE}, but, as mentioned before, the question of the 
path-connectedness remains open (the connectedness is obtained by showing that the path-connected component 
of the trivial action is dense in the whole space, but it is unknown whether it \emph{is} the whole space). 
For the case of the circle, even the connectedness is unknown, though \cite{FK,Be,BE} give partial results.

\item The space of $C^2$ actions of $\Z^d$ on $[0,1]$ is connected \cite{EN2}, and, as in the $C^{\infty}$ case, 
the path-connectedness for $[0,1]$ and the connectedness for the circle remain open.

\end{itemize}

In \cite{EN2}, we also showed an important result for this series, namely, that any two $C^2$ actions 
of $\Z^d$ on $M$ can be connected by a path of $C^{1+ac}$ actions. 
Recall that a function $u$ on $M$ is \emph{absolutely continuous} ($ac$, for short) if it is the ``primitive'' 
of an $L^1$ function, which we will denote by $Du$. 
A $C^{1+ac}$ diffeomorphism of $M$ is a $C^1$ diffeomorphism whose derivative 
(or equivalently, the logarithm of it) is absolutely continuous. 
In Section \ref{notations}, we recall what is the natural topology on the space $\Diff^{1+ac}_+(M)$, 
and we discuss some of its basic features. \medskip

In this work, we give a complete result of path-connectedness for $C^{1+ac}$ actions.

\begin{main}
\label{t:C1ac}
The space of orientation-preserving $C^{1+ac}$ actions of $\Z^d$ on either the closed interval or the circle is path-connected.
\end{main}

It is worth to stress that our strategy allows to connect any given action to the trivial one by an \emph{explicit} path 
of actions, which is \emph{canonical} in the sense that it is determined by the initial action 
and involves no auxiliary choice.

Though this new result may seem like a small step between $C^1$ and $C^\infty$ regularities, it requires new 
techniques compared to the $C^1$ case, which in turn appear unsuitable for higher regularity. 
More importantly, it gives the  strongest possible result, namely path-connectedness, 
which again remains unknown in higher regularity (apart from real-analytic, cf. Theorem C). 
We will see in due time what is missing in order to get a general path-connectedness result for $C^2$ actions on 
the interval (cf. Remark \ref{r:pcC2}).  Notwithstanding, the new strategy allows us to give a simpler 
and more elegant proof of the result below from \cite{EN2}.

\begin{main}
\label{t:C2}
The space of orientation-preserving $C^2$ actions of $\Z^d$ on the interval is connected.
\end{main}

Our last result deals with the real-analytic setting. Although this lacks the motivation coming from foliation theory and 
the methods of proof are based on well-established work in this framework, putting them together requires the use 
of several ingredients of the proofs of Theorems A and B. This leads to the following crystal clear result. 

\begin{analytic}\label{t:anal}
The space of orientation-preserving real-analytic actions of $\Z^d$ on the interval is path-connected.
\end{analytic}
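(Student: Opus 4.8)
The plan is to produce a \emph{canonical} path from a given real-analytic action $\rho=(f_1,\dots,f_d)$ to the trivial one by cutting $[0,1]$ along the (necessarily finite) set $F$ of global fixed points of $\rho$ and deforming the restriction of $\rho$ to each component of $[0,1]\setminus F$. The essential difference with the $C^{1+ac}$ and $C^2$ settings is that one cannot flatten the action near the points of $F$: the deformations on the various components must therefore be chosen so as to glue \emph{analytically} across $F$ at each time, and reconciling this rigid global constraint with the flexibility needed inside each component is the whole point. This is where several ingredients from the proofs of Theorems A and B — in particular the Interpolation and Regularization Lemmas — get reused.

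First I would exploit analyticity to put the action in good shape. If $\rho$ is nontrivial then each $\Fix(f_i)$ is either all of $[0,1]$ or finite, so $F=\bigcap_i\Fix(f_i)$ is finite; write $[0,1]\setminus F$ as a finite union of $\rho$-invariant open intervals $I_j=(a_j,b_j)$. On a fixed $I_j$, a generator $f_i$ fixing some interior point $p$ would then fix the entire orbit of $p$ under any $f_k$ not fixing $p$ (such an $f_k$ exists since $p\notin F$), hence an infinite set, and would therefore be the identity on all of $[0,1]$; thus on $I_j$ each generator is either trivial or fixed-point-free. Likewise $I_j$ contains no proper $\rho$-invariant subinterval, since an endpoint of such a subinterval lying in $I_j$ would be a global fixed point. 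So on each $I_j$ we are left with a finitely generated abelian group of analytic diffeomorphisms acting without global fixed point and without proper invariant subinterval.

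For such a group the classical rigidity of analytic centralizers — Szekeres' and Kopell's lemmas together with the Koenigs and Fatou analysis of analytic germs at the endpoints — yields a dichotomy: either (i) the restricted action lies in the flow of a \emph{unique} real-analytic vector field $X_j$ on $[a_j,b_j]$ vanishing exactly at the endpoints, so $f_i\res{I_j}=\exp(t_i^j X_j)$ for reals $t_i^j$; or (ii) the group generated by $\rho\res{I_j}$ is infinite cyclic, and one of its generators $h_j$ — an analytic diffeomorphism of $[a_j,b_j]$ with no interior fixed point — can be taken to be the restriction to $I_j$ of a global element $\prod_i f_i^{m_i^j}$ of the group, with $f_i\res{I_j}=h_j^{\,n_i^j}$. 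In case (i) the natural deformation is $s\mapsto(\exp(s\,t_i^j X_j))_i$, with $s$ decreasing from $1$ to $0$. In case (ii) one connects $h_j$ to the identity by a path $(h_{j,s})_{s\in[0,1]}$ inside the path-connected group $\mathrm{Diff}^\omega_+([a_j,b_j])$, produced so as to prescribe the boundary behaviour at $a_j$ and $b_j$ — this is exactly what the interpolation/regularization techniques underlying Theorems A and B deliver — and sets $f_i\mapsto h_{j,s}^{\,n_i^j}$.

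The main obstacle — really the content of the theorem — is to splice these component-wise deformations into a path of honest real-analytic diffeomorphisms of the \emph{whole} interval $[0,1]$, and of honest $\Z^d$ actions. The point that makes this possible at a $p\in F$ is that the local data there depends only on the germs of the $f_i$ at $p$, not on the side from which $p$ is approached: if these germs jointly embed in the flow of an analytic germ of vector field — which is forced as soon as case (i) holds on \emph{either} side of $p$ — then $X_j$ and $X_{j+1}$ share that germ, so $\exp(s\,t_i^j X_j)$ and $\exp(s\,t_i^j X_{j+1})$ agree near $p$ for every $s$ and glue analytically; otherwise the centralizer of the germs at $p$ is discrete cyclic, which pins down the $h_j$'s at $p$ and lets one choose the paths $(h_{j,s})$ on the two sides of $p$ with matching jets along the whole deformation. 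Performing this matching uniformly over all of $F$, so that for every $s$ the glued map is analytic at each point of $F$ and the family is continuous, is precisely what the borrowed interpolation machinery is built for; it then yields the required canonical path from $\rho$ to the identity.
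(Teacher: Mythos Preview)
Your component-by-component strategy has a genuine gap at the gluing step, and the tools you invoke there do not do what you claim. The Interpolation Lemma~\ref{l:chem} and the Regularization Lemma~\ref{l:lissage} produce $C^{1+ac}$ (or $C^2$) paths of conjugate actions; they say nothing about analyticity and certainly do not let you prescribe the full germ of an analytic diffeomorphism at a boundary point. In the real-analytic category, ``matching jets along the whole deformation'' at a point $p\in F$ means the germs coincide, which forces the two deformations to be restrictions of a \emph{single} analytic map near $p$. So your case~(ii) gluing is only possible if the generators $h_j$ on adjacent components are restrictions of one global analytic diffeomorphism $h$ --- i.e.\ if the action is globally cyclic --- and then you might as well deform $h$ on all of $[0,1]$ at once. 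Your mixed case (flow on one side, cyclic on the other) is similarly unsupported: you would need the path $(h_{j+1,s}^{\,n_i^{j+1}})$ to share a germ at $p$ with $(\exp(s\,t_i^j X_j))$ for every $s$, and nothing you cite produces that.

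The paper avoids all of this by proving a \emph{global} dichotomy (Proposition~\ref{p:dichotomy}): a nontrivial $d$-tuple of commuting analytic diffeomorphisms of $[0,1]$ either generates a cyclic group or embeds in the flow of a single real-analytic vector field on the whole interval. The nontrivial half is the flowable case: one first gets a $C^\infty$ vector field from Proposition~\ref{p:noncyclicC2} (analytic maps have no infinitely-flat fixed points), and then upgrades it to real-analytic using the \'Ecalle--Voronin theory of commuting holomorphic germs at each fixed point. Once the dichotomy is global, the path is immediate and needs no gluing: in the cyclic case deform the generator by $f_s=(1-s)f+s\,\id$ and take powers; in the flowable case rescale time in the analytic flow. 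Your own case~(i) argument --- that the Szekeres vector field on $[a_j,b_j]$ is analytic via germ analysis at the endpoints --- is essentially this \'Ecalle--Voronin step localized; pushed one step further, it shows the $X_j$'s have coinciding germs at each $p\in F$ and hence assemble into one analytic vector field on $[0,1]$, which \emph{is} the global dichotomy and makes the gluing gymnastics unnecessary.
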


The strategy of proof  of the Main Theorem \ref{t:C1ac} shares some features with the one used in \cite{EN2}. 
However, there is an important difference. 
Namely, in \cite{EN2}, the starting idea was the same as the one implemented in \cite{Na14} for $C^1$ actions, 
which consisted in looking for conjugates of the original action that become closer and closer to an action 
by isometries in the corresponding topology. To this end, in the case of the interval (and of 
non minimal actions on the circle), we needed to deal with hyperbolic fixed points, and this is where we used 
the $C^2$ assumption. Indeed, we wished to apply Sternberg's linearization theorem to them, 
but this classical result of the $C^2$ setting no longer holds in regularity $C^{1+ac}$ 
(in this regard, see \cite{EN4} as well as Appendix~\ref{a:examples} of the present article).

Here, in contrast, we use conjugacy to connect not necessarily the trivial action, but possibly an action 
that embeds into a $C^{1+ac}$ flow (which then can be deformed to the trivial action, but not through 
conjugated ones). 
Concretely, the new ingredients of this strategy are the Interpolation Lemma \ref{l:chem} and the Regularization 
Lemma \ref{l:lissage}, as well as a variation of a classical result of Szekeres \cite{Sz} which is established 
in Appendix~\ref{a:szekeres} together with an extension of a fundamental inequality from the companion 
Appendix \ref{a:varflow}. 

We devote Section \ref{s:conjugate} to the Interpolation Lemma. The Main Theorem \ref{t:C1ac} is then 
proved in Sections \ref{s:C1ac-int} and~\ref{s:C1ac-circle} for the interval and the circle case, respectively. 
Section \ref{s:C2} is devoted to the proof of the Main Theorem \ref{t:C2}, and discusses to what extent 
the involved techniques apply to the case of the circle and to higher regularity. Finally, 
Theorem C is proved in Section \ref{s:analytic}.

\medskip

It is worth to stress that connectedness for $C^2$ actions on the circle remains as an open question. With our new 
strategy, we will show that it holds for actions which either have a rotation number set of rank less than or equal to $1$ 
or which embed into a $C^1$ flow (cf. Section~\ref{ss:C2-circle}). To get the connectedness of the whole space 
of $C^{2}$ actions of $\Z^d$ on the circle, one would need a positive answer to the following question:

\begin{qs} 
\label{q:cercleC2}
Let $(f_1,\dots, f_d)$, with $d\ge2$, be a $d$-tuple of commuting $C^2$ circle diffeomorphisms 
with rationally independent rotation numbers which does not embed into a $C^1$ flow.  
Does $(f_1,\dots,f_d)$ belong to the connected component of $(\id,\dots,\id)$ in 
the space of $d$-tuples of commuting $C^2$ circle diffeomorphisms?
\end{qs}

Examples of (real-analytic) diffeomorphisms with irrational rotation number which do not embed into a $C^1$ flow 
were first given by Arnold in \cite{Ar}. 
According to \cite{FK}, for any prescribed $d$-tuple of non simultaneously diophantine irrational numbers, 
one can obtain a $d$-tuple of commuting smooth diffeomorphisms having these numbers as rotation numbers, 
which does not embed into a $C^1$ flow.

Since $\Diff^2_+(\T^1)$ is path-connected, the conjugacy class of an action is path-connected and its closure 
is connected. Because of this, the answer to the above question will be positive if that to one of the two questions
 below is. Each of them deals with $d$-tuples of diffeomorphisms satisfying the same hypothesis as those in 
 Question \ref{q:cercleC2}. 

\begin{qs}
Can such a $d$-tuple be $C^2$-conjugated to $d$-tuples of commuting diffeomorphisms that are closer and closer 
to $d$-tuples of rotations in the $C^2$ topology?
\end{qs} 

The answer to this question is known to be positive in the $C^{1+ac}$ topology \cite{Na23}, 
and the analogous statement in the $C^{\infty}$ setting has been announced by Avila and Krikorian. 
In the $C^2$ context, this is open even for a single diffeomorphism. 

\begin{qs}
Can such a $d$-tuple be $C^2$-approached by simultaneous $C^2$ conjugates of rotations?
\end{qs}

Again, we do not even know the answer for a single diffeomorphism in this differentiability class.  
In the $C^{\infty}$ setting, Yoccoz proved in \cite[Chapitre III]{Yo} that the answer is positive for 
single diffeomorphisms. A positive answer in $C^{\infty}$ regularity for the general question 
 (for $d$-tuples of commuting diffeomorphisms) would imply the connectedness for smooth $\Z^d$ actions 
 on the circle, according to \cite{BE}. 
\medskip

We close this Introduction by pointing out that connectedness for the space of actions is a phenomenon of only certain classes of 
groups, mainly small ones (mostly Abelian or nilpotent) or free groups. This is very well known for the case of the circle, where several connected 
components arise even for continuous actions of, for instance, fundamental groups of surfaces \cite{Ma15} and Thompson's group $G$ \cite{ghys}. 
For the case of the interval, Alexander's trick shows that the space of continuous actions of every group is path-connected (see Example \ref{r:alex}). 
However, there are groups for which the space of $C^k$ actions is not connected for any $k \geq 1$. A remarkable example is Thompson's group $F$ 
(see \cite{GS87} for $C^{\infty}$ actions on the interval of this group), for which nonconnectedness  follows from the next two facts:
\begin{itemize}
\item Every action of $F$ either is faithful or factors through an action of the abelianization $F / F' \sim \mathbb{Z}^2$ (this is classical; se for instance 
\cite{CFP96}).
\item Every faithful action of $F$ by $C^1$ diffeomorphisms of the interval with no global fixed point in the interior is topologically 
semiconjugate to the standard action (this is a recent and remarkable result from \cite{BMRT21}). 
\end{itemize}
\noindent We claim that these imply that the subspace of faithful actions is separated from that of unfaithful ones. To see this, 
choose two elements $f,g$ in $F'$ (that is, with trivial derivative at the endpoints; see \cite{CFP96}) for which, in the canonical action, 
there are points $u,v$ satisfying the ``resilience  property'' $u < f(u) < f(v) < g(u) < g(v) < v$. Then for every unfaithful action of $F$, 
both $f$ and $g$ act trivially. However, for every faithful $C^1$ action of $F$, there must be a point at which one of $f,g$ has 
derivative smaller than $1/2$. Indeed, such an action has to be semiconjugate to the standard one on some fixed interval having no 
global fixed point inside. Since resilience is stable under semiconjugacy, this forces the announced property.

\section{Notation and terminology}
\label{notations}

Throughout this text, the compact one-dimensional manifold $M$ will be always normalized so that its total length is 1. 
In particular, the circle will be parametrized by $[0,1]$ via the identification $0 \sim 1$.

A function $u$ on $M$ is said to be $C^{bv}$ if it has finite total variation, that is, if the following value is finite:
$$\var (u) := \sup_{0=a_0 < a_1 < \ldots < a_n=1} \sum_{i=0}^{n-1} \big| u (a_{i+1}) - u (a_i) \big|.$$ 
The space of such functions is a Banach space for the norm 
$$\| u \|_{bv} := \| u \|_{\infty}+ \var (u).$$
The function $u$ is said to be $C^{ac}$ if it is the primitive of an $L^1$ function, which is denoted $Du$ 
as if it were a ``genuine'' derivative ($Du$ coincides a.e. with the classical derivative). 
For such a function we have  $\var (u) = \| Du \|_{L^1}$. 
On many occasions, we will use the following fundamental fact: if $u$ is $C^{bv}$ and $\varphi$ is a 
homeomorphism of $M$, then $\var(u)=\var(u\circ \varphi)$. Note also that if $u$ is $C^{ac}$ and 
$\varphi$ is a $C^1$ diffeomorphism, then $u \circ \varphi$ is $C^{ac}$, and 
$\| Du \|_{L^1} = \| D \, (u \circ \varphi) \|_{L^1}.$

A $C^1$ diffeomorphism is said to be $C^{1+bv}$ (resp. $C^{1+ac}$) if $Df$ is $C^{bv}$ (resp. $C^{ac}$). 
One readily checks that this is equivalent to that $\log Df$ has the corresponding property (it is of class 
$C^{bv}$ or $C^{ac}$, respectively). 

The following formulas define distances on $\Diff^{1}_+(M)$ and $\Diff_+^{1+bv}(M)$, respectively (the proof of this 
is a straightforward exercise):
$$d_{1}(f,g) := \| f-g \|_{\infty} + \|\log Df-\log Dg\|_\infty,$$ 
$$d_{1+bv}(f,g):=  \| f-g \|_{\infty}  + \var(\log Df-\log Dg).$$
Moreover, these satisfy 
$$d_1\le  d_{1+bv}.$$ 
Indeed, this easily follows from the fact that, for any two $f,g$ in $\Diff_+^{1+bv}(M)$, the function $\log Df - \log Dg$
has to vanish somewhere, otherwise one could not have $\int_0^1Df=\int_0^1Dg$.

We endow $\Diff_+^{1+ac}(M)\subset \Diff_+^{1+bv}(M)$ with the induced distance $d_{1+ac}$, which satisfies:
$$d_{1+ac}(f,g) =  \| f-g \|_{\infty}  +  \| D \, (\log Df-\log Dg) \|_{L^1}.$$
Moreover, we endow $\Diff_+^{2}(M)$ with the distance $d_2$ defined by:
$$d_{2}(f,g) :=  \| f-g \|_{\infty}  + \|D(\log Df-\log Dg)\|_{\infty}.$$

\begin{rem} It is worth mentioning that, endowed with the topologies induced from the corresponding metrics above, the groups 
$\Diff_+^1 (M), \Diff^{1+ac}_+ (M)$ and $\Diff_+^{2}(M)$ are topological groups, though $\Diff_+^{1+bv} (M)$ is not. 
See Appendix 4 for more on this.
\end{rem}

Given a finitely generated group $G$ with a fixed set of generators $g_1,\dots,g_d$,  for every 
pair of actions $\rho,\,\tilde \rho:G\to\Diff^r_+(M)$, where $r\in\{1,1+bv,1+ac,2\}$, we simply let 
$$d_{r}(\rho,\tilde\rho) = \max_i d_r(\rho(g_i),\tilde\rho(g_i)).$$
 When $G=\Z^d$, we will always consider the canonical set of generators. 
  
We fix the terminology for dealing with an action $\rho \!: (\R,+) \to \Diff^r_+(M)$. In this setting, 
one usually denotes $f := \rho (1)$ and $f^t  := \rho (t)$, so that the following relation holds: 
$f^{s+t}=f^s\circ f^t$. We say that $(f^t)$ is a $C^r$ flow if, besides, the map $t \mapsto f^t$ from 
$(\R,+)$ to $\Diff^r_+(M)$ is continuous. This definition makes sense not only for $r \in \mathbb{N}$, but also for 
$r=1+bv$ and $r=1+ac$. 
For $r \in \{1,1+bv,1+ac,2\}$ and two flows $(f^t)$ and $(\tilde{f}^t)$, we let 
$$d_{r}( (f^t), ( \tilde{f}^t )) = \max_{t\in[-1,1]}d_{r} (f^t,\tilde{f}^t).$$

By an important theorem of Bochner and Montgomery \cite{BM}, for all $r \in \mathbb{N}$ and every $C^r$ flow $(f^t)$, 
the map $(t,x) \mapsto f^t (x)$ is of class $C^r$. In particular,  we can define its generating vector field $X$ by 
$$X(x) := \frac{d}{dt}_{|t=0}f^t(x).$$ 
Note that this definition makes sense even for $C^{1+bv}$ or $C^{1+ac}$ flows, as these are automatically 
$C^1$ flows. 

In abuse of notation, we will still use $d_r$ to compare the restrictions of two actions or flows to subsets of $M$ that 
are simultaneously fixed by them. Moreover, for the case of the interval, we will replace the metric $d_r$ by its version 
$d_r^*$ that does not incorporate the term $\| f-g \|_{\infty}$ in the definition. More precisely, 
$$d_1^* (f,g) := \|\log Df-\log Dg\|_\infty,$$ 
$$d^*_{1+bv}(f,g) := \var(\log Df - \log Dg),$$
$$d^*_{1+ac}(f,g) := \|D(\log Df-\log Dg)\|_{L^1},$$
$$d_{2}^* (f,g) := \|D(\log Df-\log Dg)\|_{\infty}.$$
Note that $d_r^* \leq d_r \leq 2 \, d_r^*$, so that $d_r$ and $d_r^*$ are equivalent metrics.
Also, note that $d_r^*$ still makes sense for the circle, yet in this case it is only a pseudometric. Nevertheless, 
it will be employed in situations where certain desired convergence also holds for the $\| \cdot \|_{\infty}$ term above 
because of some specific reason (for example, in the proof of the Interpolation Lemma \ref{l:chem}). 

Through this work, we will denote by ``$\id$'' either the trivial element of a group, the trivial action of a general group 
or the identity transformation of a certain space.
 

\section{The starting idea: conjugating actions via averages} 
\label{section-herman}

As we already mentioned, the starting idea of proof of several connectedness results listed in the introduction consists 
(whenever possible) in conjugating the original action so that it becomes closer and closer 
to an action by isometries. The conjugacies are obtained by some classical averaging procedure. For actions by $C^1$ 
diffeomorphisms, this idea was implemented in \cite{Na14} via the logarithmic derivative $\log D (\cdot)$. It was then 
extended to $C^{1+\mathrm{ac}}$ actions on the circle in \cite{Na23} using the affine derivative 
$D  \log D (\cdot)$, and plays a crucial role in Sections \ref{ss:rat} and \ref{ss:infinite} of the present article. 
The general idea of averaging actions is also at the core of the Regularization Lemma~\ref{l:lissage}.

For completeness of this work, below we give an elementary result in the continuous framework whose proof illustrates
 how the averaging procedure works. (Compare \cite[Proposition (2.2), Chapitre VII]{herman} and 
 \cite[Th\'eor\`eme A]{Na14}.)  
\footnote{Let us stress that this was made explicit in \cite{EN2}, but since this article will remain unpublished, 
we have included the complete argument here.}

\medskip

\begin{prop} 
\label{connexe-cont}
The space of $\mathbb{Z}^d$ actions by homeomorphisms of either the interval or the circle is path-connected.
\end{prop}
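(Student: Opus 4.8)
\emph{The interval.} Here I would simply use Alexander's trick (cf.~Example~\ref{r:alex}), which in fact works for any finitely generated group. Given commuting homeomorphisms $f_1,\dots,f_d$ of $[0,1]$, define for each $t\in(0,1]$ a homeomorphism $(f_i)_t$ by $(f_i)_t(x)=t\,f_i(x/t)$ for $x\in[0,t]$ and $(f_i)_t(x)=x$ for $x\in[t,1]$, and set $(f_i)_0=\id$. The map $t\mapsto(f_i)_t$ is continuous (the only delicate point is $t=0$, handled by $\|(f_i)_t-\id\|_\infty\le t$), and, crucially, on $[0,t]$ the tuple $\big((f_1)_t,\dots,(f_d)_t\big)$ is a rescaled copy of $(f_1,\dots,f_d)$ while on $[t,1]$ it is trivial, so the $(f_i)_t$ still commute. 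Thus $t\mapsto\big((f_1)_t,\dots,(f_d)_t\big)$ is a path of $\Z^d$ actions joining the given one to the trivial action, which settles the interval case.

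\emph{The circle.} For $\T^1$ the plan is first to semiconjugate the action to one by rotations and then to straighten this semiconjugacy. Since $\Z^d$ is amenable, any action $\rho\colon\Z^d\to\Homeo_+(\T^1)$ preserves a Borel probability measure $\mu$; lifting $\mu$ to a $1$-periodic measure on $\R$ and integrating it produces a non-decreasing map $\Pi\colon\R\to\R$ with $\Pi(x+1)=\Pi(x)+1$ for which the $\mu$-invariance forces the function $x\mapsto\Pi(F_v(x))-\Pi(x)$ to be constant, equal to the translation number $\tau_v$, for every $v\in\Z^d$ and every lift $F_v$ of $\rho(v)$ (when $\mu$ has atoms one first replaces $\Pi$ by the map collapsing the arcs cut out by a finite invariant set). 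Hence the monotone degree-one circle map $\pi$ induced by $\Pi$ semiconjugates $\rho$ to the rotation action $\bar\rho(v)=R_{\tau_v\bmod 1}$ (indeed an action, rotation number being additive on commuting maps).

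\emph{The circle: deformation and the main obstacle.} Next I would deform $\Pi$ linearly, setting $\Pi_t:=(1-t)\,\id+t\,\Pi$ for $t\in[0,1]$. For $t<1$ this is a homeomorphism of $\R$ commuting with integer translations, so it descends to $\pi_t\in\Homeo_+(\T^1)$ with $\pi_0=\id$, and $t\mapsto\pi_t$ and $t\mapsto\pi_t^{-1}$ are continuous on $[0,1)$. As conjugation preserves commutativity, $\rho_t:=\pi_t\,\rho\,\pi_t^{-1}$ is a path of $\Z^d$ actions on $[0,1)$ with $\rho_0=\rho$, and, writing $\bar F_v(x)=x+\tau_v$, the elementary identity $\Pi_tF_v-\bar F_v\Pi_t=(1-t)\big(F_v-\id-\tau_v\big)$ combined with the $1$-periodicity (hence boundedness) of $F_v-\id-\tau_v$ yields $\|\pi_t\rho(v)\pi_t^{-1}-R_{\tau_v\bmod 1}\|_\infty\to0$ as $t\to1$; so the path extends continuously to $t=1$ with $\rho_1=\bar\rho$. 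Finally $\bar\rho$ is joined to the trivial action through $s\mapsto\big(R_{(1-s)\tau_{e_1}},\dots,R_{(1-s)\tau_{e_d}}\big)$, and concatenating the three paths connects $\rho$ to $\id$. The hard part will be the endpoint $t=1$: the conjugating maps $\pi_t$ themselves degenerate there (their inverses blow up on the plateaus of $\Pi$), so continuity of $t\mapsto\rho_t$ at $t=1$ cannot be extracted from $\pi_t$ alone but must come from the explicit bound $(1-t)\|F_v-\id-\tau_v\|_\infty$; the bookkeeping for atoms of $\mu$ is a secondary technicality.
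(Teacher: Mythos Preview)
Your interval argument via Alexander's trick is correct and is precisely the alternative the paper points to in Remark~\ref{r:alex}.

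For the circle you take a genuinely different route. The paper conjugates by the explicit averages $\Phi_n(x)=\frac{1}{n^d}\sum_{0\le n_i<n} F_1^{n_1}\cdots F_d^{n_d}(x)$, which are automatically homeomorphisms, and shows $\Phi_n f_i\Phi_n^{-1}\to R_{\rot(f_i)}$ via the uniform convergence $(F_i^n(y)-y)/n\to\tau(F_i)$. You instead integrate an invariant probability $\mu$ to a semiconjugacy $\Pi$ and interpolate $\Pi_t=(1-t)\,\id+t\,\Pi$; your identity $\Pi_tF_v-\bar F_v\Pi_t=(1-t)(F_v-\id-\tau_v)$ is clean and delivers the endpoint continuity at $t=1$ directly. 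When $\mu$ is atomless this is complete and arguably tidier than the paper's argument.

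But the atomic case is a real gap, not a ``secondary technicality''. If $\mu$ has an atom then $\Pi$ has a jump, hence so does $\Pi_t$ for every $t>0$; thus $\pi_t$ is not even continuous and the path $\rho_t=\pi_t\rho\,\pi_t^{-1}$ is undefined. Your parenthetical fix---``the map collapsing the arcs cut out by a finite invariant set''---cannot work: a continuous degree-one map $\T^1\to\T^1$ that is constant on each complementary arc of a finite set is globally constant, hence of degree zero. (You may be conflating atoms of $\mu$, which produce \emph{jumps} of $\Pi$, with gaps in the support of $\mu$, which produce \emph{plateaus}; your argument already handles plateaus correctly.) The paper's $\Phi_n$ dodges the issue entirely by being a homeomorphism for every $n$. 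One honest repair: if some invariant measure is atomless (automatic when any rotation number is irrational) your argument runs unchanged; otherwise every invariant measure has an atom, hence there is a finite invariant set $S$, and after conjugating $S$ to $\{j/n:0\le j<n\}$ one applies Alexander's trick arc-by-arc to each $R_{-k_i/n}f_i$---commutativity persists because the Alexander rescaling is multiplicative, $(\psi\chi)_t=\psi_t\chi_t$.
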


\begin{proof} 
By identifying the endpoints, the case of the interval can be deduced from that of the circle, so let us only consider 
this one (see also Remark \ref{r:alex} below). 
Let $f_1, \ldots, f_d$ be the images of the canonical generators of $\Z^d$, and let $\F_i$ be a lift of $f_i$ 
to the real line. 
Consider the map $\Phi_n$ defined as 
\begin{equation}\label{eq-conj-top}
\Phi_n (x) := \frac{1}{n^d} \sum_{0 \leq n_i < n} \F_{1}^{n_1} \F_2^{n_2} \cdots \F_d^{n_{d}} (x).
\end{equation}
Note that $\Phi_n$ is a homeomorphism, since it is continuous and strictly increasing. 
Since the maps $\F_j$ commute, for each $\F_i$ we have 
$$\Phi_n (\F_i (x)) 
= \frac{1}{n^d} \sum_{0 \leq n_j < n} \F_1^{n_1} \cdots \F_{i-1}^{n_{i-1}}  \F_i^{1+n_i}  \F_{i+1}^{n_{i+1}} \cdots 
\F_d^{n_d} (x),$$
and, again by commutativity, this equals 
$$\Phi_n (x) + \frac{1}{n^d} \Big[ \sum_{\substack{0 \leq n_j < n \\ j \neq i}} \F_i^n (\F_1^{n_1} \cdots \F_{i-1}^{n_{i-1}}  \F_{i+1}^{n_{i+1}} \cdots \F_d^{n_d} (x))  
    - \F_1^{n_1} \cdots \F_{i-1}^{n_{i-1}}  \F_{i+1}^{n_{i+1}} \cdots \F_d^{n_d} (x) \Big].$$   
Recall that\, $ ( \F^n_i (y) - y ) / n$ \, uniformly converges to the translation number $\rho (\F_i)$. 
Since there are $n^{d-1}$ terms of type \, $\F_i^n (y) - y$ \, in the right-side expression above, 
we deduce the (uniform) convergence 
$$\Phi_n (\F_i (x)) - \Phi_n (x) \longrightarrow \rho (\F_i) \qquad \mbox{ as } \quad n \to \infty.$$
Changing $x$ by $\Phi_n^{-1} (x)$, this yields 
$$\Phi_n (\F_i (\Phi_n^{-1}(x))) \longrightarrow x +  \rho (\F_i) \qquad \mbox{ as } \quad n \to \infty.$$
One readily checks that $\Phi_n$ commutes with the integer translations, hence induces a circle homeomorphism, 
that we denote by $\varphi_n$. 
The convergence above translates into that $\varphi_n f_i \varphi_n^{-1}$ uniformly converges to the rotation by 
$\rho (\F_i)$ mod. $\! \mathbb{Z}$, which is nothing but the rotation number of $f_i$. 
We have thus produced a sequence of conjugated actions that uniformly converges to an action by rotations. 
One can then construct a continuous path of such conjugates just by convex interpolation. 
More precisely, one considers circle homeomorphisms of the form $(1-s) \varphi_n + s \, \varphi_{n+1}$, 
with $s \in [0,1]$. 
Finally, having produced continuous paths of  conjugated actions ending at actions by rotations, 
one can connect any two $\mathbb{Z}^d$ actions just by moving the angles of these latter actions.
\end{proof}

\begin{rem} 
The proof above actually shows more: the natural inclusion of $\mathrm{SO}(2,\mathbb{R})^d$ in the space of 
continuous $\mathbb{Z}^d$ actions on the circle is a homotopy equivalence. (Compare \cite[Proposition 4.2]{ghys}.) 
We do not know whether this result extends to higher regularity.
\end{rem}

\begin{rem} 
One can produce a different proof of Proposition \ref{connexe-cont} by conjugating as in \cite{Na14} 
via quasi-invariant probability measures that evolve towards the Lebesgue measure. 
Both arguments apply more generally to nilpotent group actions. (The structural results from \cite{parkhe} 
can also be adapted to produce still another proof in the nilpotent case.) However, the argument via quasi-invariant 
probability measures applies more generally to actions of groups of subexponential growth, for which 
an averaging method as in the proof above is unavailable.
\end{rem}

\begin{rem}
\label{r:alex}
As previously mentioned, the case of the interval can be also ruled out using the classical Alexander trick. 
Note that this works for any group action by homeomorphisms of the interval, but it does not work for actions 
on the circle, even in the Abelian case. Also note that this argument cannot be applied in higher regularity.
\end{rem}

\begin{rem} As a complement to the previous remark, it is worth pointing out that, for any $k\ge1$, there is no deformation of  the whole 
space of commuting $C^k$-diffeomorphisms of the interval that arises from a global deformation of the group of diffeomorphisms via 
homomorphisms. Indeed, this follows from the nonconnectedness of the space of $C^1$ actions of Thomspson's group $F$ 
discussed at the end of the Introduction, but may be directly established as follows. By restricting the source, any homomorphism 
from $\mathrm{Diff}^k_+ ([0,1])$ into itself gives rise to one from $\mathrm{Diff}^r_c ((0,1))$, with $r=\max(k,3) \neq2$, into 
$\mathrm{Homeo}_+((0,1))$. Such homomorphism is ``standard'', meaning that there are countably many embeddings $\phi_i: (0,1) \to (0,1)$ 
with disjoint images such that $\rho$ is conjugate (via the product of the $\phi_i$) to the diagonal action on (the copies of) 
$(0,1) \times (0,1) \times \cdots$ (see \cite{chen-mann} and references therein). Now, a path of such homomorphisms  
cannot connect the standard action with the trivial one. Indeed, if for two diffeomorphisms $f,g$ there are $u,v$ in $(0,1)$ satisfying 
the resilient property $u < f(u) < f(v) < g(u) < g(v) < v$, then at least one of them has derivative smaller than $1/2$ somewhere in 
$(u,v)$. Since resilience is stable under topological conjugacy, this implies that $f,g$ admit no sequences of simultaneous conjugates 
both converging to the identity in $C^k$ topology. (Compare \cite[Question 24]{Na18}.)  
\end{rem}

\begin{rem} Inspired by the previous remark, one may still ask whether there are deformations of the space of diffeomorphisms that 
preserve commutativity. We suspect that, actually, most (all~?) of them come from group homomorphisms. We state this as a question.
\end{rem}

\begin{qs} Let $\rho$ be a continuous map from $\mathrm{Diff}^r_+(V)$ into itself, where $V$ is any manifold. Assume that 
$\rho$ sends commuting elements to commuting elements. Under what  conditions is $\rho$ a group homomorphism~?
\end{qs}


\section{Connecting conjugated actions}
\label{s:conjugate}

On many occasions in this work, we will use conjugacy to modify a given action to a ``better one''. 
If the initial action is by $C^r$ diffeomorphisms and the conjugacy $\varphi$ is also of class $C^r$, one can easily 
connect this action to its conjugate by $\varphi$ through a continuous path of $C^r$-conjugated actions, simply by 
connecting $\id$ to $\f$ by a continuous path of $C^r$ diffeomorphisms. 
However, we will sometimes need to ``concatenate'' conjugacies either ``in time'' or ``in space'':
\begin{itemize}
\item In Section \ref{ss:infinite}, for example, we will consider a sequence of conjugated actions 
that converges to an action by isometries, and we will intend to connect all of these conjugated actions 
between them to obtain a (continuous) path ending at the action by isometries 
(this is an example of ``time concatenation'').
\item In Section \ref{ss:idea}, we will subdivide the interval into possibly infinitely many pieces, 
on each of which we will modify the action, sometimes by conjugacy, 
and we will wish that everything matches up nicely (this is an example of ``space concatenation'').
\end{itemize}
 
 For these reasons, it will be important to keep a good control on the $C^r$ size of the path we construct 
 between an initial action and its conjugate. To do this, a technique that has been already implemented in 
 \cite{Na14} and \cite{Na23,EN2} in the $C^1$ and $C^{1+ac}$ settings, respectively, and that works tamely 
 provided $r \leq 2$ (cf. Remark \ref{r:higher}), is to ``affinely interpolate between $0=\log D (\id)$ and 
$\log D(\varphi)$''. 

The new key observation here is that, quite surprisingly, this still works when dealing with two $C^r$ actions 
that are only $C^1$ conjugated, which is the situation we will face in Section \ref{ss:reg}. 
This is precisely the content of the following seemingly inoffensive lemma, which deals with the case $r=1+ac$. 
For the $C^2$ case, see Lemma~\ref{l:chemC2} further on.

\begin{interp}
\label{l:chem}
Let $M$ denote either the interval $[0,1]$ or the circle. 
If two $C^{1+ac}$ actions $\rho_0$ and $\rho_1$ of a finitely generated group $G$ on $M$ are $C^1$ conjugated, 
then they are connected by a path $(\rho_t)_{t\in[0,1]}$ of $C^1$-conjugated $C^{1+ac}$ actions of $G$ 
such that, all along the path,  
$$d^*_{1+ac}(\rho_t,\id)\le\max(d^*_{1+ac}(\rho_0,\id),d^*_{1+ac}(\rho_1,\id)).$$
\end{interp}

\begin{proof} 
Let $f_i$ and $g_i$, $i\in[\![1,d]\!] := \{1,2,\ldots,d\}$, be the images of the canonical generators of $\Z^d$ under 
$\rho_0$ and $\rho_1$, respectively, and let $\varphi$ be a $C^1$ diffeomorphism of $M$ such that 
$\varphi f_i \varphi^{-1}=g_i$ holds for each $i$. For every $t\in[0,1]$, define the map $\varphi_t$ by letting 
$\varphi_t(0) :=0$ and $\log D\varphi_t(x) := t\log D\varphi(x) - c_t$, where $c_t := \log\int_0^1(D\varphi(y))^tdy$ 
is chosen so that $\int_0^1D\varphi_t=1$. 
Then $\varphi_t$ is a $C^{1}$ diffeomorphism of $M$, with $\varphi_0=\id$ and $\varphi_1=\varphi$.  
(In the circle case, note that $\log D \f_t (0) = t \, \log D\f (0) - c_t = t \, \log D\f (1) - c_t = \log D\f_t (1)$.)
Moreover, $(\varphi_t)$ is a continuous path for the $C^1$ topology. 

Define $\rho_t$ as the conjugate of $\rho_0$ by $\varphi_t$. If we let 
$f_{i,t} := \varphi_t f_i \varphi_t^{-1}$ for every $i\in[\![1,d]\!]$, then
\begin{align}
\label{e:chem}
(\log Df_{i,t})\circ \varphi_t 
&= (\log D\varphi_t)\circ f_i + \log Df_i -\log D\varphi_t\notag\\
& = t \, (\log(D\varphi)\circ f_i + \log Df_i -\log D\varphi) + (1-t)\log Df_i\notag\\
& =  t \, (\log D (\varphi f_i \varphi^{-1}) )\circ \varphi + (1-t)\log Df_i \notag\\
& =  t \, ( \log Dg_i) \circ \varphi +  (1-t)\log Df_i.
\end{align}
As a consequence, $f_{i,t}$ is $C^{1+ac}$, with 
\begin{eqnarray*}
\var ( \log Df_{i,t}  ) 
&=& \var ( (\log Df_{i,t}) \, \circ \, \varphi_t ) \\
&\leq&   t \, \var (( \log Dg_i) \circ \varphi ) + (1-t) \, \var ( \log Df_i) \\ 
&=&  t \, \var ( \log Dg_i) +  (1-t) \, \var ( \log Df_i). 
 \end{eqnarray*}
This immediately gives the estimate for $d^*_{1+ac} (\rho_i, id)$ announced in the statement. 
In order to check that $t\mapsto f_{i,t}$ is continuous in the $C^{1+ac}$ topology, 
let $(t_n)$ be a sequence in $[0,1]$ converging to a certain $t$. Then 
$$
\var(\log   Df_{i,t_n} \! -\log Df_{i,t})
=\var( \log Df_{i,t_n}\circ \varphi_{t_n}-\log Df_{i,t} \circ \varphi_{t_n} )$$
is smaller than or equal to
\begin{equation} 
\label{expr}
\var(\log Df_{i,t_n}\circ \varphi_{t_n}-\log Df_{i,t} \circ \varphi_{t} )
+ \var(\log Df_{i,t}\circ \varphi_{t}-\log Df_{i,t} \circ \varphi_{t_n} ).
\end{equation}
The first term of this expression equals
$$\var\Bigl((1 \! - \! t_n)\log Df_i + t_n (\log Dg_i) \! \circ \! \varphi - (1 \! - \! t) \log Df_i - t (\log Dg_i) \! \circ \! \varphi\Bigr),$$
that is, 
$$\! |t-t_n|\cdot\var\Bigl(\log Df_i - (\log Dg_i) \! \circ \! \varphi\Bigr),$$
which goes to $0$ when $n$ goes to infinity. 
For the second term in (\ref{expr}), we have:
$$\var\Bigl( \log Df_{i,t}\circ \varphi_{t}-\log Df_{i,t} \circ \varphi_{t_n}  \Bigr) 
= \var\Bigl( \log Df_{i,t}\circ (\varphi_{t}\circ \f_{t_n}^{-1})-\log Df_{i,t}  \Bigr).$$
Lemma \ref{l:astuce} below shows that this expression converges to $0$ as $n$ goes to infinity. 
Putting all of this together, we get that 
$$d^*_{1+ac} (f_{i,t_n},f_{i,t}) \xrightarrow[n\to+\infty]{}0.$$
This  proves the announced continuity in the interval case, since $d^*_{1+ac}$ induces the $C^{1+ac}$ topology. 
In the circle case, one needs to observe that, moreover, the map $t\mapsto f_{i,t}$ is continuous in the $C^1$ topology 
(hence in the $C^0$ one), since $t\mapsto \varphi_t$ is.
\end{proof}

\begin{rem}
\label{r:higher}
Using formula \eqref{e:chem}, one can readily check that if the initial actions are by $C^r$ diffeomorphisms, 
with $r \in \{1+bv, 1+ac, 2 \}$, then the actions along the deformation path are still of class $C^r$, 
even if the $\varphi_t$'s are only $C^1$. However, if we assume $r>2$, this formula does not ensure that the 
actions along the path are $C^r$. This is one of the places where our techniques fail to generalize to higher regularity 
(cf. Section \ref{ss:Cr}). 
\end{rem}

\begin{lem}
\label{l:astuce}
If $u$ is an absolutely continuous function defined on $M$ and $(\varphi_n)$ is a sequence of $C^1$ diffeomorphisms 
of $M$ converging to $\id$ in the $C^1$ topology, then $\var(u\circ\varphi_n - u)$ converges to $0$.
\end{lem}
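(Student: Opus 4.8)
The plan is to reduce the general absolutely continuous $u$ to the much easier case where $u$ is $C^1$ (in fact, it suffices to take $u$ with a continuous derivative), and then handle the $C^1$ case by a direct computation. For the reduction, I would use that $Du \in L^1(M)$ and the density of, say, continuous functions (or $C^\infty$ functions) in $L^1(M)$: given $\eps > 0$, pick $g \in C^0(M)$ with $\|Du - g\|_{L^1} < \eps$, and let $v$ be the primitive of $g$ (with the same base point as $u$, so that $v$ is $C^1$ on the interval, and in the circle case one first subtracts from $g$ its mean so that $v$ descends to $\T^1$ — this costs only another $\eps$ in the $L^1$ estimate). Then for any $C^1$ diffeomorphism $\varphi$ we have the triangle inequality
\[
\var(u \circ \varphi - u) \le \var\big((u-v)\circ\varphi\big) + \var(v\circ\varphi - v) + \var(v - u).
\]
Now $\var((u-v)\circ\varphi) = \var(u-v) = \|Du - Dv\|_{L^1} = \|Du - g\|_{L^1} < \eps$ since total variation is invariant under precomposition by a homeomorphism, and likewise $\var(v-u) < \eps$. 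So it remains to show $\var(v\circ\varphi_n - v) \to 0$ for the fixed $C^1$ function $v$, which will finish the proof upon letting $\eps \to 0$.

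For the $C^1$ case, since $v$ is $C^1$ (with continuous derivative $Dv = g - c$) and $\varphi_n$ is a $C^1$ diffeomorphism, the function $v\circ\varphi_n - v$ is $C^1$, so its variation equals the $L^1$ norm of its derivative:
\[
\var(v\circ\varphi_n - v) = \int_M \big| (Dv)(\varphi_n(x))\, D\varphi_n(x) - (Dv)(x) \big|\, dx.
\]
I would bound the integrand by $|(Dv)(\varphi_n(x))|\,|D\varphi_n(x) - 1| + |(Dv)(\varphi_n(x)) - (Dv)(x)|$. The first term is at most $\|Dv\|_\infty \cdot \|D\varphi_n - 1\|_\infty$, which tends to $0$ because $\varphi_n \to \id$ in the $C^1$ topology. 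The second term tends to $0$ because $Dv$ is continuous on the compact set $M$, hence uniformly continuous, and $\|\varphi_n - \id\|_\infty \to 0$ forces $\sup_x |(Dv)(\varphi_n(x)) - (Dv)(x)| \to 0$; integrating over $M$ (total length $1$) preserves this.

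I do not expect a serious obstacle here — the lemma is genuinely elementary once one exploits the conjugation-invariance of $\var(\cdot)$ to pass from $AC$ to $C^1$. The only point requiring a little care is the circle case, where one must ensure the approximating primitive $v$ is well-defined on $\T^1$; subtracting the mean of the $L^1$-approximant before integrating takes care of this, and the invariance property $\var(u\circ\varphi)=\var(u)$ used in the reduction holds for homeomorphisms of either $M$, so nothing else changes.
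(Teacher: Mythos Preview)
Your proof is correct and follows essentially the same approach as the paper: approximate $Du$ in $L^1$ by a continuous function, use the invariance of total variation under precomposition by homeomorphisms (equivalently, a change of variable in $L^1$) to control the approximation error uniformly in $n$, and then handle the continuous piece via the uniform continuity of the approximant together with $\|D\varphi_n - 1\|_\infty \to 0$. The only cosmetic difference is that the paper works directly at the level of derivatives throughout (splitting $\|Du\circ\varphi_n\cdot D\varphi_n - Du\|_{L^1}$ into four terms), whereas you first pass to a $C^1$ primitive $v$ and split $\var(u\circ\varphi_n - u)$ at the function level; this forces you to subtract a mean in the circle case, a step the paper sidesteps by never taking a primitive.
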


\begin{proof} 
Fix $\eps>0$, and let $v$ be a continuous function such that $\|Du - v\|_{L^1}<\frac\eps4$. Then
\begin{align*}
\var(u\circ\varphi_n - u) 
&=\| D (u \circ\varphi_n - u) \|_{L^1} \\
&= \|Du\circ\f_n\cdot D\f_n-Du\|_{L^1}\\
&\le \|Du\circ\f_n\cdot D\f_n-v\circ \f_n \cdot D\f_n\|_{L^1} + \|v\circ \f_n \cdot D\f_n - v\circ \f_n\|_{L^1} \\
&\hspace{8cm}+\|v\circ \f_n - v\|_{L^1}+\|v-Du\|_{L^1}.
\end{align*}
The first and last terms in the sum above are smaller than $\frac\eps4$ (for the first term, this follows from a change of 
variable). 
For the second term we have: 
$$\|v\circ \f_n \cdot D\f_n - v\circ \f_n\|_{L^1}\le \|v\|_\infty\cdot\|D\f_n-1\|_\infty,$$ 
and the last expression is smaller than $\varepsilon / 4$ for $n$ sufficiently large. 
Finally, for the third term, $v$ being uniformly continuous, there exists $\eta>0$ such that $|v(x)-v(y)|\le \frac\eps4$ 
as soon as $|x-y|<\eta$, 
Therefore, if we take a large-enough $n$ so that $\|\f_n-\id\|_\infty<\eta$, then 
$$\|v\circ \f_n - v\|_{L^1}\le \|v\circ \f_n - v\|_{\infty}<\tfrac\eps4$$
which concludes the proof.
\end{proof}

\begin{rem}
Among all the steps involved in the proof of the Main Theorem \ref{t:C1ac} in the interval case,  
Lemma~\ref{l:astuce} above (which is used in the proof of the Interpolation Lemma \ref{l:chem}) 
is the only place where we know that $C^{1+ac}$ cannot be replaced by $C^{1+bv}$; see Appendix 4. Actually, 
if it were not for the use of this lemma, we would be also able to prove path-connectedness for $C^{1+bv}$ 
actions of $\mathbb{Z}^d$ on the interval (but still not on the circle; cf.~Section~\ref{ss:infinite}).
\end{rem}


\section{Path-connectedness of $C^{1+ac}$ actions on the interval}
\label{s:C1ac-int}

\subsection{General idea and discussion}
\label{ss:idea}

In any regularity $C^r$ with $r \geq 1$ (including $r=1+bv$ and $r=1+ac$), there are two particular types of actions 
$\rho:\Z^d\to\Diff^{r}_+([0,1])$ that are easy to deform to the trivial one: 
those with infinite cyclic image and those whose image embeds into a $C^{r}$ flow. For the former, one is 
reduced to deforming a single diffeomorphism, and this can be done by convex interpolation, for example. 
For the latter, one can continuously deform the flow to the trivial one just by reparametrization of time, 
and this induces a deformation of the original action to the trivial action. Our general strategy to prove the 
Main Theorem \ref{t:C1ac} for the case of the interval consists in trying to reduce to these favorable situations. 

Recall that a {\em global fixed point} of an action is a point that is fixed by all group elements. Given an action 
$\rho:\Z^d\to\Diff^{1+ac}_+ (M)$, let us denote by $P$ the set of global fixed points at which all elements of 
$\rho(\Z^d)$ are parabolic. We will refer to them as {\em parabolic global fixed points}. 

Our proof involves three different aspects:
\begin{itemize}
\item We start by showing that the restriction of a $C^{1+ac}$ action of $\mathbb{Z}^d$ 
to the closure of every connected component of $[0,1]\setminus P$ is either of the first type above 
or \emph{almost} of the second type, in the sense that its image embeds into a $C^1$ flow, but some 
elements of this flow may fail to be $C^{1+ac}$ diffeomorphisms (such a situation can occur, according to 
Proposition~\ref{p:Sergeraert}). 
\item In each of these cases, we develop a method to connect the action to the trivial one through $C^{1+ac}$ actions.  
Roughly, in the former, we  deform the generator of the image group, and in the latter, 
we first connect to an action which embeds into a $C^{1+ac}$ flow, and then we deform the flow.
\item{This needs to be done keeping a good control on the size of the deformation on each component 
(of which there can be infinitely many), so that everything matches up nicely when performing all of them 
simultaneously.} 
\end{itemize}

In order to implement this strategy of proof of the Main Theorem \ref{t:C1ac} for the case of the interval, the three following statements will be crucial:

\begin{prop}
\label{p:noncyclic}
If a $C^{1+bv}$ action of $\Z^d$ on $[0,1]$ has no interior parabolic global fixed point and has a noncyclic image, 
then it embeds into a $C^1$ flow.
\end{prop}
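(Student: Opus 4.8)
The plan is to exploit the classical theory of centralizers in $\Diff^{1+bv}_+([0,1])$, specifically Szekeres' theorem and Kopell's lemma, which together control commuting diffeomorphisms near a fixed point. First I would analyze the fixed-point structure: since the action has no interior parabolic global fixed point, on each connected component $]a,b[$ of the complement of the \emph{common} fixed point set, at least one generator $f_i$ acts without fixed points, and one of the endpoints (say $a$, after relabeling) is non-parabolic for some generator. The key point is that a $C^{1+bv}$ diffeomorphism of $[0,1)$ without interior fixed points embeds canonically into a $C^1$ flow — this is precisely the content of (the $C^{1+bv}$ version of) Szekeres' theorem, which gives a unique $C^1$ vector field $X$ generating a flow $(f^t)$ with $f^1 = f_i$, and $X$ vanishes to order exactly one (or is non-vanishing) at the endpoints according to the hyperbolic/parabolic nature of $f_i$ there.

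The heart of the argument is then to show that \emph{every} generator $f_j$ belongs to this Szekeres flow, i.e. $f_j = f^{t_j}$ for some $t_j \in \R$. This is where the noncyclicity hypothesis enters, combined with Kopell's lemma. The Szekeres vector field $X$ of $f_i$ is characterized intrinsically, so any $C^{1+bv}$ diffeomorphism commuting with $f_i$ must preserve $X$ up to a constant; more precisely, the centralizer of $f_i$ in $\Diff^{1+bv}_+(]a,b[)$ that also fixes the endpoints is contained in the flow $(f^t)$ — this is the standard rigidity consequence of Szekeres' theorem. Hence each $f_j = f^{t_j}$, the whole group $\rho(\Z^d)$ sits inside the $C^1$ flow $(f^t)$, and the map $(n_1,\dots,n_d) \mapsto \sum n_j t_j$ realizes the action; since the image is assumed noncyclic, the subgroup $\langle t_1,\dots,t_d\rangle$ of $\R$ is noncyclic (indeed not infinite cyclic), which is fine — we do not even need that, we just record the embedding into the flow. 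One subtlety: Szekeres' theorem as usually stated applies on a half-open interval near a single fixed point; to get a flow on all of $[a,b]$ one glues the Szekeres field built from $a$ with the one built from $b$ and checks they agree on the overlap because both are determined by $f_i$ (here one uses that $f_i$ has no interior fixed point, so the component really is a single ``bridge'' and the two germs of the flow are forced to coincide). Finally, if the component is all of $[0,1]$ and only the global picture matters, one assembles the flows on the (possibly several, but here by non-parabolicity at interior points there are no interior common fixed points to worry about) components into a single $C^1$ flow on $[0,1]$; the $C^1$ regularity at the junction points is where one invokes that the Szekeres field vanishes to first order at hyperbolic endpoints, matching the $C^1$ structure.

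The main obstacle I expect is precisely the regularity bookkeeping: Szekeres' construction produces a $C^1$ vector field, but proving that the flow it generates is genuinely $C^1$ as a path $t \mapsto f^t$ in $\Diff^1_+$, and that the gluing over different components (across common fixed points, if any survive — here there are none in the interior by hypothesis, which is what makes the statement clean) yields a globally $C^1$ flow, requires care. A second delicate point is the Kopell-type uniqueness: one must ensure that a commuting diffeomorphism cannot ``slip'' to a different centralizer element on different fundamental domains of $f_i$; this is exactly ruled out by Kopell's lemma in class $C^{1+bv}$ (the $C^{1+bv}$ hypothesis, rather than merely $C^1$, is essential here — Kopell's lemma fails in $C^1$). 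So the logical skeleton is: (1) no interior parabolic global fixed point $\Rightarrow$ some generator is fixed-point-free on each component with at least one non-parabolic endpoint; (2) Szekeres $\Rightarrow$ that generator embeds in a canonical $C^1$ flow; (3) Kopell + the intrinsic characterization of the Szekeres field $\Rightarrow$ all generators lie in that flow; (4) noncyclicity is then automatic to state, and assembling components gives the global $C^1$ flow.
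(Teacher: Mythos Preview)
There is a genuine gap. Your step ``glue the Szekeres field built from $a$ with the one built from $b$ and check they agree on the overlap because both are determined by $f_i$'' is false in general: a $C^{1+bv}$ diffeomorphism of $[a,b]$ without interior fixed points has \emph{two} Szekeres flows, one from each endpoint, and they typically do \emph{not} coincide. Their discrepancy is precisely the Mather invariant (see Appendix~\ref{a:varflow}), and generically it is nontrivial. So knowing that every $f_j$ lies in the left flow $(f^t_{[a,b)})$ and also in the right flow $(f^t_{(a,b]})$ does not yet give a single $C^1$ flow on all of $[a,b]$. Relatedly, you treat noncyclicity as decorative (``noncyclicity is then automatic to state''), but in the paper's argument it is the crux: noncyclicity forces some generator $g$ to have an \emph{irrational} translation number $\alpha$ relative to $f$, so that $f^t_{[a,b)}=f^t_{(a,b]}$ holds for every $t\in\Z+\alpha\Z$, hence by density and continuity for every $t\in\R$. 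Only then do the two half-open flows glue to a $C^1$ flow on the closed component.

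A second, smaller issue: you assert that ``there are no interior common fixed points to worry about''. The hypothesis only excludes interior \emph{parabolic} global fixed points; hyperbolic ones may well exist. Lemma~\ref{l:meme-PF} shows that all nontrivial elements share the same fixed-point set, but this set can have interior (hyperbolic) points, so $[0,1]\setminus\Fix(\rho)$ can have several components. One must then check that the translation number of $g$ relative to $f$ is the \emph{same} on every component --- the paper does this by reading it off from the derivative at the hyperbolic endpoint separating two adjacent components. Finally, one must verify $C^1$ regularity of the assembled flow at endpoints $0,1$ when these are accumulated by interior fixed points; this uses the quantitative estimate in Proposition~\ref{p:Szek}, not just the qualitative existence of the Szekeres flow.
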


\begin{prop}
\label{p:flot}
Let $\rho_0$ be a $C^{1+ac}$ action of $\Z^d$ on $[0,1]$ that embeds into a $C^1$ flow. 
Then $\rho_0$ can be continuously deformed to the trivial action~$\rho_1=\id$ through a path 
$(\rho_t)_{t \in [0,1]}$ satisfying $d^*_{1+ac}(\rho_t,\id)\le d^*_{1+ac}(\rho_0,\id)$ for every $t\in[0,1]$. Moreover, 
if the diffeomorphisms of the initial action are parabolic at an endpoint, so are all the diffeomorphisms along the 
deformation.
\end{prop}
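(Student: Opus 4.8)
The plan is to construct the deformation by rescaling the time parameter of the flow, and then to verify the $C^{1+ac}$ estimate via the Interpolation-Lemma-style trick of affinely interpolating the affine derivative. Concretely, suppose $\rho_0$ embeds into a $C^1$ flow $(f^u)_{u\in\R}$, so that the generators $f_i$ of $\rho_0(\Z^d)$ are $f^{u_i}$ for suitable real times $u_i$. For $t\in[0,1]$, define $\rho_t$ to be the action with generators $f^{(1-t)u_i}$; then $\rho_0$ is the original action, $\rho_1$ is trivial, and each $\rho_t$ visibly is an action of $\Z^d$ (the diffeomorphisms commute since they all lie in the flow). Continuity of $t\mapsto\rho_t$ in the $C^{1+ac}$ topology reduces to continuity of $u\mapsto f^u$ in that topology near the relevant times; here one must be slightly careful because the flow is only assumed $C^1$, so I would work instead with a natural $C^{1+ac}$ reparametrization, or argue directly on the generating vector field.

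The cleaner route, and the one I would actually carry out, is to describe $\rho_t$ through its affine derivative. Fix $i$ and write $\varphi := f_i = f^{u_i}$. Since $\varphi$ is $C^{1+ac}$, the function $\log D\varphi$ is $C^{ac}$. The flow structure gives, by the cocycle identity for $\log D$ along the flow, that $\log D f^{su_i}$ depends on $s$ in a controlled way; in the spirit of formula \eqref{e:chem}, I would simply \emph{define} $\rho_t$ directly by setting, for each generator,
\begin{equation*}
\log D f_{i,t}(x) := (1-t)\,\log D f^{\,u_i}\!\big(h_t(x)\big)\circ(\text{suitable reparam.}),
\end{equation*}
but in fact the transparent thing is: the flow itself, being $C^1$, has a generating vector field $X$, and the $C^{1+ac}$ hypothesis on the time-one-type maps forces $X$ to be $C^1$ with $DX$ of bounded variation on the relevant scale (this is exactly the kind of statement proved in the Szekeres-type appendix the authors reference). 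Then $f^u$ is $C^{1+ac}$ for \emph{all} $u$ in a neighborhood of $0$, and $u\mapsto f^u$ is continuous in $C^{1+ac}$ there, and one takes $\rho_t$ with generators $f^{(1-t)u_i}$. The monotonicity estimate $\var(\log D f^{su_i})\le \var(\log D f^{u_i})$ for $s\in[0,1]$ — equivalently $d^*_{1+ac}(\rho_t,\id)\le d^*_{1+ac}(\rho_0,\id)$ — follows because $\log D f^{su}$ is an "average"/integral of $DX$ composed with flow maps along a sub-orbit, so its $L^1$-type variation is dominated by that of the full-time map; I would make this precise using the identity $\log Df^{u}(x)=\int_0^u DX(f^v(x))\,dv$ and the change-of-variables invariance of $\|\cdot\|_{L^1}$ recalled in Section~\ref{notations}.

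For the "moreover" clause: if each $f_i$ is parabolic at an endpoint $p$, i.e. $Df_i(p)=1$, then the generating vector field vanishes to first order at $p$ in the appropriate sense, so $Df^u(p)=1$ for every $u$; hence every $f_{i,t}=f^{(1-t)u_i}$ is again parabolic at $p$. This is immediate once the flow description is in place.

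The main obstacle, and the step I expect to need the most care, is the regularity of the flow: we are only \emph{given} a $C^1$ flow, yet we need the time-rescaled maps $f^{(1-t)u_i}$ to be genuinely $C^{1+ac}$ and to vary continuously in the $C^{1+ac}$ topology. This is not automatic from "$C^1$ flow" alone; it uses crucially that the \emph{particular} elements $f_i$ of the action are $C^{1+ac}$, combined with a Szekeres/Kopell-type argument (the variation of the flow's vector field is controlled by that of $\log Df_i$) — precisely the content of the appendices the authors advertise (\ref{a:szekeres}, \ref{a:varflow}). So the real work is importing that flow-variation inequality and deducing from it both the $C^{1+ac}$ membership of all $f^u$ and the bound $\var(\log Df^{su})\le\var(\log Df^u)$ for $0\le s\le1$; once that is granted, the deformation $\rho_t$ and all its properties fall out essentially for free.
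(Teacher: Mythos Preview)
There is a genuine gap in your approach. You propose to set $\rho_t$ by rescaling the time of the \emph{original} $C^1$ flow, taking generators $f^{(1-t)u_i}$, and you expect that the $C^{1+ac}$ regularity of the $f_i = f^{u_i}$ will force all intermediate flow maps to be $C^{1+ac}$ as well (via some control on the generating vector field). This is false: Appendix~\ref{a:examples}, and in particular Proposition~\ref{p:Sergeraert}, builds a $C^1$ contracting flow on $[0,1)$ whose time-$1$ map is $C^\infty$ but whose time-$\tfrac12$ map is not even $C^{1+bv}$. So nothing in the Szekeres-type appendix guarantees that ``$f^u$ is $C^{1+ac}$ for all $u$'' once you know it for the generators; on the contrary, the appendix exhibits the obstruction. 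Your identity $\log Df^u(x)=\int_0^u DX(f^v(x))\,dv$ already presupposes $X$ is $C^1$, which a $C^1$ flow does not give you, and the claimed monotonicity $\var(\log Df^{su})\le\var(\log Df^{u})$ cannot be salvaged in this generality.

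The paper's proof repairs exactly this point via the Regularization Lemma~\ref{l:lissage}: one \emph{conjugates} the given $C^1$ flow by a $C^1$ diffeomorphism $\varphi$ to the flow of a genuine $C^{1+ac}$ vector field $\tilde X$, with the sharp control $\var(D\tilde X)=\var(\log Df)$. For this new flow $(g^s)$, Lemma~\ref{l:unsurprising} gives $\var(\log Dg^s)\le |s|\,\var(D\tilde X)$, hence $d^*_{1+ac}(g^{\tau_i},\id)\le d^*_{1+ac}(\rho_0,\id)$, and now your time-rescaling idea works on the \emph{regularized} flow. The remaining issue is that $\varphi$ is only $C^1$, so one still has to connect $\rho_0$ to its conjugate; this is where the Interpolation Lemma~\ref{l:chem} enters, and its control $d^*_{1+ac}(\rho_t,\id)\le\max(d^*_{1+ac}(\rho_0,\id),d^*_{1+ac}(\rho_1,\id))$ is what preserves the bound across that first segment of the path. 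Your proposal misses both of these ingredients.
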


\begin{prop}
\label{p:cyclic}
Every $C^{1+ac}$ action $\rho_0$ of $\Z^d$ on $[0,1]$ with a cyclic image can be continuously deformed to the
 trivial action $\rho_1$ through a path $(\rho_t)_{t \in [0,1]}$ satisfying 
 $d^*_{1+ac}(\rho_t,\id)\le 2 \, d^*_{1+ac}(\rho_0,\id)$ for every $t\in[0,1]$. 
 Moreover, if the diffeomorphisms of the initial action are parabolic at an endpoint, so are all the diffeomorphisms 
 along the deformation.
\end{prop}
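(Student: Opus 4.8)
First, if the image of $\rho_0$ is trivial there is nothing to prove, so assume it is infinite cyclic (orientation-preserving homeomorphisms of $[0,1]$ have no torsion). I would fix a generator $f$ of the image and write $\rho_0(e_i)=f^{k_i}$, where $e_1,\dots,e_d$ are the canonical generators of $\Z^d$ and $\gcd(k_1,\dots,k_d)=1$. Since $\var(\log Dg^{-1})=\var(\log Dg)$, only the $|k_i|$ matter, and I set $M:=d^*_{1+ac}(\rho_0,\id)=\max_i\var(\log Df^{k_i})$. The plan is to produce a path $(f_t)_{t\in[0,1]}$ in $\Diff^{1+ac}_+([0,1])$ from $f_0=f$ to $f_1=\id$ such that, for each $i$, the map $t\mapsto f_t^{\,k_i}$ is continuous for $d^*_{1+ac}$ with $\var(\log Df_t^{\,k_i})\le 2M$ throughout, and such that $f_t$ remains parabolic at an endpoint of $[0,1]$ whenever $f$ is; one then sets $\rho_t:=(f_t^{k_1},\dots,f_t^{k_d})$.

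The natural candidate path comes from the internal dynamics of $f$. I would decompose $[0,1]\setminus\Fix(f)$ into its connected components $I_n=(a_n,b_n)$; on each $\overline{I_n}$ the diffeomorphism $f$ has no interior fixed point, so the variant of Szekeres' theorem established in Appendix~\ref{a:szekeres} provides a unique $C^1$ flow $(\psi_n^{\,s})_{s\in\R}$ on $\overline{I_n}$ with $\psi_n^{\,1}=f|_{\overline{I_n}}$. Setting $g_t|_{\Fix(f)}:=\id$ and $g_t|_{\overline{I_n}}:=\psi_n^{\,1-t}$ defines, for each $t$, a $C^1$ diffeomorphism of $[0,1]$: a hyperbolic fixed point of $f$ is isolated in $\Fix(f)$, hence bounds two components on which the time-$(1-t)$ maps share the derivative at the common point, while at a parabolic point or an accumulation point of $\Fix(f)$ the derivative is $1$ and $\var(\log Df|_{I_n})\to 0$ as $I_n$ shrinks to that point, so the pieces glue $C^1$. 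The key point is that $g_t^{\,k}|_{\overline{I_n}}=\psi_n^{\,(1-t)k}$, so the fundamental inequality of Appendix~\ref{a:varflow} (in the form: $s\mapsto\var(\log D\psi_n^{\,s})$ is non-decreasing along a $C^1$ flow) together with $|(1-t)k|\le|k|$ gives $\var(\log Dg_t^{\,k}|_{I_n})\le\var(\log D\psi_n^{\,k})=\var(\log Df^{\,k}|_{I_n})$; summing over $n$ (total variation is additive over the components, and $\log Dg_t^{\,k}\equiv 0$ on $\Fix(f)$) yields $\var(\log Dg_t^{\,k_i})\le M$ for all $i$ and $t$ — with constant $1$ — and $t\mapsto g_t^{\,k}$ is continuous for $d_1$.

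The trouble is that $(g_t)$ need not stay inside $\Diff^{1+ac}_+([0,1])$: on a component with a hyperbolic endpoint the map $\psi_n^{\,s}$ may fail to be a $C^{1+ac}$ diffeomorphism when $s\notin\Z$, and even when the $g_t$ are $C^{1+ac}$ the path $t\mapsto g_t$ may fail to be continuous for $d^*_{1+ac}$ — this is the Sergeraert phenomenon, cf. Proposition~\ref{p:Sergeraert} and Appendix~\ref{a:examples}. I would therefore feed $(g_t)$ to the Regularization Lemma~\ref{l:lissage}, which replaces it by a bona fide path of $C^{1+ac}$ actions, continuous for $d^*_{1+ac}$, still running from $\rho_0$ to $\id$, at the cost of weakening the estimate to $\var(\log Df_t^{\,k_i})\le 2M$ — this is where the constant $2$ (rather than the $1$ of Proposition~\ref{p:flot}) appears, the loss stemming from the flows $\psi_n$ being defined only component by component and not globally. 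The parabolicity clause survives: if $f$ is parabolic at $0$, then either $0$ is interior to $\Fix(f)$, where $g_t=\id$ near $0$, or $0=a_{n_0}$ is a parabolic endpoint of $I_{n_0}$, where $D\psi_{n_0}^{\,s}(0)=1$ for all $s$, so $Dg_t(0)=1$ throughout; the regularization can be arranged to respect this (and likewise at $1$).

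The step I expect to be the main obstacle is exactly this passage from the clean but a priori non-$C^{1+ac}$ path $(g_t)$ to an honest one: one must smooth out the Sergeraert-type defects near the (infinitely many, possibly Cantor-clustered) hyperbolic fixed points of $f$ without destroying the $C^1$ matching between components, without letting the total variation of the iterates exceed the allowed factor $2$, and uniformly enough in $n$ to keep the whole family continuous for $d^*_{1+ac}$. This is precisely what the Regularization Lemma and the sharpened Szekeres and flow-variation statements of the appendices are designed to deliver; the remaining verifications (continuity, the parabolicity bookkeeping, and the assembly into a genuine $\Z^d$-action path) are routine.
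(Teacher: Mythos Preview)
Your approach has a fundamental gap at its very first step. You write that on each closed component $\overline{I_n}=[a_n,b_n]$, Szekeres' theorem (Proposition~\ref{p:Szek}) provides a unique $C^1$ flow $(\psi_n^{\,s})$ with time-$1$ map $f|_{\overline{I_n}}$. It does not: Proposition~\ref{p:Szek} only yields a flow on the \emph{half-open} interval $[a_n,b_n)$, and symmetrically one on $(a_n,b_n]$. These two flows agree precisely when the Mather invariant of $f|_{\overline{I_n}}$ is trivial, i.e.\ when $f|_{\overline{I_n}}$ is $C^1$-flowable on the closed interval --- and by Corollary~\ref{c:varflow} this happens exactly when $V_\infty(f|_{\overline{I_n}})=0$. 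For a generic cyclic action this fails, so your $g_t$ is not even well defined as a $C^1$ diffeomorphism of $[0,1]$. This is not a technicality to be smoothed over: it is the entire content of the Proposition that one must handle the \emph{non}-flowable case.

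Two further points: (i) the ``fundamental inequality of Appendix~\ref{a:varflow}'' you invoke --- that $s\mapsto\var(\log D\psi^{\,s})$ is non-decreasing --- is not stated there (Appendix~\ref{a:varflow} relates $\var(\log DM_f)$ to $V_\infty(f)$) and is not true in general; (ii) the Regularization Lemma~\ref{l:lissage} takes a single $C^1$ flow and conjugates it to a $C^{1+ac}$ one, it does not repair a one-parameter family $(g_t)$ that fails to lie in $\Diff^{1+ac}_+$.

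The paper's proof is entirely different and avoids Szekeres on components. The difficulty you correctly identify --- that the generator $f_0$ need not be small even when the $f_i=f_0^{m_i}$ are --- is handled via the \emph{asymptotic variation}: one conjugates by the averaged maps $\varphi_n$ of \eqref{eq:def-g_n} so that the new generator $g_0$ satisfies $\var(\log Dg_0)\le 2\,V_\infty(f_0)=2\,V_\infty(f_i)/|m_i|$, which is the control one needs. If $V_\infty(f_0)=0$ one falls back on Proposition~\ref{p:flot}; otherwise one then deforms $g_0$ linearly along $\log D(\cdot)$ and reads off $\var(\log D(g_0^t)^{m_i})\le 2\,\var(\log Df_i)$. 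The Interpolation Lemma~\ref{l:chem} provides the path between the original and conjugated actions with the required bound.
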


Assuming the validity of these statements, we proceed to the proof of the Main Theorem \ref{t:C1ac} for actions 
on the interval. 

\begin{proof}[Proof of the Main Theorem \ref{t:C1ac} for the interval] 
Let $\rho_0$ be a $C^{1+ac}$ action of $\Z^d$ on the interval, $P$ the set of its global parabolic fixed points 
and $I$ a connected component of $U := [0,1]\setminus P$. By Proposition \ref{p:noncyclic}, the restriction of 
$\rho_0$ to the closure $\bar I$ of $I$ either embeds into a $C^1$ flow or has a cyclic image. 
In the former case, we can apply Proposition~\ref{p:flot} to this restriction; in the latter case, we can apply Proposition 
\ref{p:cyclic}. In any case, we obtain a deformation of the original action restricted to $\bar I$ to the trivial action 
on $\bar I$. 
We claim that the path $(\rho_t)$ obtained by gluing together all these pieces of deformations of actions 
is a continuous path of $C^{1+ac}$ actions connecting $\rho_0$ to the trivial action at $t=1$. To show this, 
we need to check that $\rho_t$ defines a $C^{1+ac}$ action of $\mathbb{Z}^d$ for each $t \in [0,1]$, and that 
these actions depend continuously on $t$ for the $C^{1+ac}$ topology.

For the first point, fix $t\in(0,1)$, and let $f$ be the image by~$\rho_t$ of a standard generator of $\Z^d$. It is clear that 
$f$ is $C^1$ in restriction to the closure of every connected component of $[0,1]\setminus P$.  

To prove that $f$ is $C^1$ on the whole interval, it remains to show that $f$ is differentiable at every point of $P$, 
with derivative equal to $1$, and that the thus well-defined map $Df$ has limit $1$ at every point of $P$. 
To fix ideas, we will only study limits and differentiability from the right, the other side being analogous.

So let $p\in P$. If $p$ is isolated from the right in $P$, that is, if $p$ is the infimum of a connected component 
$I$ of $U$, then Propositions \ref{p:flot} and \ref{p:cyclic} imply that $f\res{\bar I}$ is $C^1$ and parabolic at $p$, 
as required.

Let us now assume that $p$ is an accumulation point of $P$ from the right, and let $x>p$. 
If $x$ is a fixed point of $f$, then $\frac{f(x)-f(p)}{x-p}=1$. 
Otherwise, let $[l_x,r_x]$ be the closure of the connected component $I_x$ of $U$ containing $x$. 
To fix ideas, assume $f(x)>x$.  Then
\begin{align*}
1\le \frac{ f(x) -  f(p) }{x-p}
= 1+\frac{f(x)-x}{x-p}
&\le 1+ \frac{f(x)-x}{x-l_x} 
= \frac{f(x)-f(l_x)}{x-l_x}
=Df(y_x)
\end{align*}
for some $y_x\in [l_x,r_x]$, by the Mean Value Theorem. Hence,
$$\left|\log \left( \frac{f(x)-f(p)}{x-p} \right) \right|\le \sup_{[l_x,r_x]}|\log Df| 
\le \underbrace{|\log Df(l_x)|}_0 + \,\var(\log Df\res{ [l_x,r_x]}).$$
The controls in Propositions \ref{p:flot} and \ref{p:cyclic} show that the last term is bounded above by 
$$2 \, d^*_{1+ac}(\rho\res{[l_x,r_x]},\id\res{[l_x,r_x]}),$$ 
which goes to $0$ when $x$ goes to $p$. One argues similarly when $f(x)<x$. Hence the right-derivative of $f$ at $p$ is well defined and equal to~$1$. 

The proof of the continuity of $Df$ at every point $p\in P$ proceeds similarly. Again, we will only deal with 
right-continuity. The case where $p$ is isolated from the right in $P$ has already been settled. 
Now, if $p$ is the limit of a decreasing sequence $(p_n)$ in $P$, then 
$$\sup_{[p,p_n]}|\log Df|=\sup_{[p,p_n]\setminus P}|\log Df|\le  2 \, d^*_{1+ac}(\rho\res{[p,p_n]},\id\res{[p,p_n]}),$$
which goes to $0$ when $n$ goes to infinity. Therefore, $Df(x)$ goes to $1=Df(p)$ when $x$ goes to $p$, as required.

Hence $\rho_t$ is a $C^1$ action, and it is immediate from the controls in Propositions \ref{p:flot} and \ref{p:cyclic} 
that it is also a $C^{1+ac}$ action, 
with $d^*_{1+ac}(\rho_t,\id) \le 2 \, d^*_{1+ac}(\rho,\id)$. 

Let us finally show that $t\mapsto \rho_t$ is continuous for the $C^{1+ac}$ topology. 
Fix $t_0\in[0,1]$ and $\eps>0$. Let $\cI$ denote the set of connected components of $U$. 
One readily checks that there exists a finite subset $\cJ$ of $\cI$ such that, if $E=[0,1]\setminus \cup_{I\in \cJ}I$, then 
$$d^*_{1+ac}(\rho\res{E},\id\res{E})<\frac\eps8.$$
Let $N$ be the cardinality of $\cJ$. Since  $t\mapsto \rho_t\res{\bar I}$ is continuous for the $C^{1+ac}$ topology 
on every $I\in\cI$, there exists $\eta>0$ such that, for every $I\in\cJ$ and every $t$ satisfying $|t-t_0|<\eta$,
\begin{equation}
\label{e:eps2N}
d^*_{1+ac}({\rho_t}\res{\bar I},{\rho_{t_0}}\res{\bar I})< \frac\eps{2N}.
\end{equation}
Hence,
\begin{align*}
d^*_{1+ac}({\rho_t},{\rho_{t_0}})
&\le d^*_{1+ac}(\rho_t\res{E},\id\res{E})+d^*_{1+ac}(\rho_{t_0}\res{E},\id\res{E})
+\sum_{I\in\cJ}d^*_{1+ac}({\rho_t}\res{\bar I},{\rho_{t_0}}\res{\bar I})\\
&\le 4 \, d^*_{1+ac}(\rho\res{E},\id\res{E})+\frac\eps{2N}\cdot|\cJ| \, 
< \, \eps,
\end{align*}
where the second inequality follows again from the controls ensured by Propositions \ref{p:flot} and \ref{p:cyclic}, 
together with \eqref{e:eps2N}, and this concludes the proof.
\end{proof}

At this point, it is worth comparing in a more accurate way the proof above with that of the weaker result established 
in \cite{EN2}. In \cite{EN1}, we proved that there are two obstructions to connect a given $C^{1+ac}$ action of 
$\mathbb{Z}^d$ on the interval to the trivial action via $C^{1+ac}$ conjugacies: 
the existence of hyperbolic fixed points and the non-$C^1$-flowability. 
With this in mind, in \cite{EN2}, we subdivided the interval into {\em components}, that is, fixed intervals having 
no global fixed point inside, and dealt with each component separately, 
distinguishing between the $C^1$-flowable ones and the others.

For the non $C^1$-flowable components, our method has not changed. 
This situation emerges here in Proposition \ref{p:cyclic} which, actually, essentially corresponds to 
\cite[Proposition 4.2]{EN2}. In order to keep this article as self-contained as possible, 
we provide a short and direct proof in Section~\ref{ss:rat}.

On $C^1$-flowable components, the strategy of \cite{EN2} consisted in somewhat getting rid of 
the hyperbolic fixed points using Sternberg's linearization theorem  \cite{St}, and then deforming the action 
to the trivial one by conjugacy. However, as we already mentioned in the Introduction, 
this theorem requires $C^2$ regularity, and strongly fails in class $C^{1+ac}$. 
In our new approach, hyperbolic fixed points do not play a major role thanks to the Regularization 
Lemma~\ref{l:lissage} in Section \ref{ss:reg}. 
This is the core of the proof of Proposition~\ref{p:flot} above, as it allows us to reduce to a well-controlled 
$C^{1+ac}$-flowable action. 

There is still another subtle difference which is hidden in the statement of Proposition~\ref{p:noncyclic}. 
Namely, instead of dealing with the aforementioned ``components'', here it is much more appropriate to deal with 
fixed intervals having no parabolic global fixed point inside, thus allowing the existence of hyperbolic fixed points 
in the interior. This is related to the fact that our treatment of these points has radically changed.

Let us end this panoramic section by discussing the validity of its core statements, namely Propositions 
\ref{p:noncyclic}, \ref{p:flot} and \ref{p:cyclic}, in higher regularities. 
First, Proposition \ref{p:noncyclic} extends to class $C^r$, $r\ge 2$: see Proposition \ref{p:noncyclicC2} in 
Section \ref{ss:irrat}. 
Also, Proposition \ref{p:flot} extends to the $C^2$ setting: see Proposition \ref{p:flotC2} in Section \ref{ss:C2-int}. 
However, our method of proof of Proposition \ref{p:cyclic} implemented in Section \ref{ss:rat} is very specific to the 
$C^{1+ac}$ (or $C^{1+bv}$) setting; see Remark~\ref{rem-specific} in this regard. 
In particular, it does not yield the desired control in $C^2$ regularity, and this is why we only manage to prove 
connectedness and not \emph{path}-connectedness in the $C^2$ context (cf. Section \ref{s:C2}). 


\subsection{Noncyclic components are flowable}
\label{ss:irrat}

This section is devoted to the proof of Proposition \ref{p:noncyclic}, for which we closely follow an argument from 
\cite{BE} that will be also employed later in the $C^r$ case, $r \geq 2$ (cf. Proposition \ref{p:noncyclicC2}). 
In the present $C^{1+bv}$ setting, the proof strongly uses a variation of works by Szekeres, Sergeraert and Yoccoz 
presented in Appendix~\ref{ss:Szek}, as well as the famous Kopell Lemma, also recalled therein. 

We start with a couple of general lemmas concerning sets of fixed points of group elements. 
Henceforth, given an action $\rho$ of a group on some space, we denote $\Fix (\rho)$ the set of its global fixed points. 

\begin{lem}
\label{l:hypfp}
If for a $C^{1+bv}$ action of $\Z^d$ on $[0,1]$ one of the elements in the image group has a hyperbolic 
interior fixed point, then every other element that is not the identity near this point must hyperbolically fix it.
\end{lem}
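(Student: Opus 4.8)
The plan is to use Kopell's lemma together with the fact that all the generators commute. Suppose some generator, say $f_1$, has a hyperbolic interior fixed point $p$; without loss of generality $Df_1(p) < 1$ (otherwise replace $f_1$ by $f_1^{-1}$). Pick another generator $g$ in the image group and assume $g$ is not the identity on a right neighborhood of $p$ (the left side is symmetric, so it suffices to treat one-sided neighborhoods). First I would record that $p$ is automatically fixed by $g$: indeed, since $f_1$ contracts toward $p$ from the right, the component of $\{x : f_1(x) = x\}^c$ just to the right of $p$ is an interval $J = \,]p, q[\,$ (with $q$ possibly $=1$ or the next fixed point of $f_1$), and $g$, commuting with $f_1$, must permute the connected components of the complement of $\Fix(f_1)$; as $g$ is orientation preserving and close to the identity near $p$ — more precisely, $g$ cannot move $p$ strictly, since then $g^n(p) \to$ some point which would have to be a non-hyperbolic accumulation of images, contradicting that $p$ is isolated in $\Fix(f_1)$ from the right and $g$-invariance of this structure — we get $g(p) = p$.

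The heart of the argument is then Kopell's lemma, recalled in Appendix~\ref{ss:Szek}. Work on the half-open interval $[p, q[$ (or on a smaller $[p, p']$ if $f_1$ has further fixed points, shrinking so that $f_1$ has no fixed point in $\,]p, p']$). On this interval $f_1$ is a $C^{1+bv}$ contraction toward $p$ with no interior fixed point, and $g$ is a $C^{1+bv}$ diffeomorphism commuting with $f_1$ and fixing $p$. Kopell's lemma asserts that a nontrivial such $g$ cannot have a fixed point in the interior of $[p, p']$; combined with the commutation and the hyperbolicity of $f_1$ at $p$, the standard consequence is that $Dg(p)$ is forced. Concretely, differentiating the relation $f_1 \circ g = g \circ f_1$ at $p$ gives $Df_1(p)\, Dg(p) = Dg(p)\, Df_1(p)$, which is vacuous, so instead I would use the sharper statement: iterating $g \circ f_1^n = f_1^n \circ g$ and passing to the limit $n \to \infty$, the contraction $f_1^n$ conjugates $g$ on $[p,p']$ to $g$ on shrinking neighborhoods of $p$, and the $C^{1+bv}$ control (via the $\var(\log D\cdot)$ bounds that are stable under such conjugacy) forces $Dg$ to be asymptotically constant near $p$; if that constant were $1$, Kopell would give $g = \id$ near $p$, contrary to hypothesis. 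Hence $Dg(p) \ne 1$, i.e. $g$ fixes $p$ hyperbolically.

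I expect the main obstacle to be the bookkeeping needed to guarantee that we really are in the hypotheses of Kopell's lemma — in particular, arranging a one-sided neighborhood of $p$ on which $f_1$ (or a suitable power, or $f_1^{-1}$) acts as a fixed-point-free $C^{1+bv}$ contraction, and justifying that $g$ restricts to that neighborhood (which uses the permutation-of-components argument above and $g(p) = p$). Once the setup is clean, the conclusion is exactly the "hyperbolicity propagates through centralizers" phenomenon that Kopell's lemma encodes, so the remaining steps are routine. A symmetric argument handles the case where $g$ is nontrivial only on the left of $p$, and the two combine to give the statement as phrased.
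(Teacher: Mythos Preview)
Your argument that $g$ must fix $p$ is on the right track but stated loosely; the paper's version is cleaner: the $g$-orbit of $p$ consists of fixed points of $f_1$ all having the same multiplier $Df_1(p)\neq 1$, yet this orbit accumulates on endpoints which are forced to be parabolic fixed points of $f_1$, a contradiction.

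The real gap is in the second half. You invoke Kopell's lemma to conclude that ``if $Dg(p)=1$ then $g=\id$ near $p$'', but that is not what Kopell says. Kopell's lemma asserts that a $C^1$ diffeomorphism commuting with a $C^{1+bv}$ contraction and having an \emph{interior} fixed point must be the identity; it says nothing about the derivative at the boundary point. A parabolic $g$ with no interior fixed point is perfectly consistent with Kopell as stated. Your ``asymptotically constant $Dg$'' step is also empty: $Dg$ is continuous, so it automatically has a limit at $p$; the issue is precisely whether that limit equals $1$.

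The paper closes this gap by a comparison trick: assuming $g$ parabolic at $p$ and $f$ repelling, one finds $b$ near $p$ and $n$ large with $g^n(b)>f(b)$ but $g^n(x)<f(x)$ for $x$ very close to $p$ (since $g^n$ is parabolic and $f$ is hyperbolic). By the intermediate value theorem $g^{-n}f$ then has an interior fixed point in $(p,b)$, and \emph{now} Kopell applies to $g^{-n}f$, forcing $g^{-n}f=\id$ locally, hence $Dg(p)^n=Df(p)\neq 1$, contradicting parabolicity. Alternatively (as the paper notes in the remark following the proof), one can use the $C^{1+bv}$ Szekeres variation to embed $f$ in a $C^1$ flow and observe $g=f^\alpha$ with $\log Dg(p)=\alpha\log Df(p)$; but you would need to invoke that explicitly, not Kopell alone.
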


\begin{proof} 
Assume some diffeomorphism $f$ of the image group has a hyperbolic fixed point $a\in(0,1)$. We first claim 
that any other element $g$ must fix $a$. 
Otherwise, by commutativity, the orbit of $a$ under $g$ would be made of fixed points of $f$, 
all of them with the same multiplier. However, these points would accumulate on certain points to the left 
and to the right of $a$, and these are necessarily parabolic fixed points of $f$. This is absurd, hence $g$ fixes $a$.  

Assume now that some element $g$ of the image group fixes $a$ but acts nontrivially on every neighborhood of $a$. 
For the sake of concreteness, assume that nontriviality happens to the right of $a$, the other case being analogous. 
Change $f$ by its inverse if necessary so that it becomes repelling at $a$, and fix $\varepsilon > 0$ such that 
$f$ has no fixed point in $(a, a+ \varepsilon)$. 
According to Kopell's lemma, $g$ has no fixed point in $(a, a + \varepsilon)$. Changing it by its inverse if necessary, 
we may assume that it is topologically repelling on this interval. 
Fix a point $b$ to the right of $a$ but close enough to it so that $f(b) < a+\varepsilon$. 
We may choose $n \in \mathbb{N}$ large enough so that $g^n (b) > f (b)$. If $g$ were parabolic at $a$, 
then we would have $g^n (x) < f (x)$ for all $x > a$ very close to $a$. 
This would imply that $g^{-n} f$ has a fixed point in $(a,b)$. 
Since the map $g^{-n} f$ commutes with $f$, another application of Kopell's lemma would show that it coincides with 
the identity on a right neighborhood of $a$. 
Nevertheless, this would force the derivative of $g^n$ to be equal to that of $f$ at $a$, which is absurd. 
Therefore, $g$ must be hyperbolic at $a$, which completes the proof.
\end{proof}

\begin{rem} 
In class $C^2$, the proof of the preceding lemma can be greatly simplified by using a combination of the classical 
results of Szekeres and Kopell. Indeed, if $f$ has an isolated fixed point $a$ and $g$ commutes with $f$ 
and fixes $a$, these results imply that $f$ and $g$ locally belong to the same flow of a $C^1$ vector field 
vanishing at $a$ (cf. Remark \ref{r:folk}). 
If this zero is hyperbolic/parabolic, all the nontrivial elements of the flow are hyperbolic/parabolic at $a$. 
This argument still works in class $C^{1+bv}$, but requires the variation of Szekeres' theorem 
given in Appendix~\ref{a:szekeres} (cf. Proposition~\ref{p:Szek}).  
This is why we preferred to give the down-to-earth (yet elementary) argument above. 
\end{rem}

\begin{lem}
\label{l:meme-PF}
If a $C^{1+bv}$ action of $\Z^d$ on $[0,1]$ has no interior parabolic global fixed point, then all the 
nontrivial elements of its image have exactly the same fixed points.
\end{lem}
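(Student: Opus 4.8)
The plan is to prove the contrapositive-flavored statement directly: assuming the action has no interior parabolic global fixed point, I would show that the fixed-point set of any nontrivial element $g$ of the image coincides with $\Fix(\rho)$, the set of global fixed points. One inclusion, $\Fix(\rho) \subseteq \Fix(g)$, is trivial, so the work is in showing $\Fix(g) \subseteq \Fix(\rho)$, i.e. that every point fixed by one nontrivial element is fixed by all. First I would dispose of the hyperbolic case using Lemma~\ref{l:hypfp}: if $g$ has a hyperbolic interior fixed point $a$, then every nontrivial element near $a$ must hyperbolically fix $a$; in particular $a$ is a global fixed point which is \emph{not} parabolic, so there is nothing to contradict, and $a \in \Fix(\rho)$. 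So the remaining issue is an interior fixed point $a$ of $g$ at which $g$ is parabolic (or, more precisely, a fixed point of $g$ that is not a global fixed point).

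The heart of the argument is then the following: suppose $a \in (0,1)$ is fixed by some nontrivial $g$ in the image but is \emph{not} a global fixed point, so some other element $h$ of the image moves $a$. I want to derive a contradiction with the no-parabolic-global-fixed-point hypothesis, or rather show this situation simply cannot arise. After the hyperbolic case is handled, I may assume $g$ is parabolic at $a$. Pass to the connected component $J$ of $\Fix(g)^{\circ}$-complement, i.e. work on a maximal open interval $(a,c)$ (or $(c,a)$) on one side of $a$ on which $g$ has no fixed point; by Kopell's lemma applied to the pair $(g,h)$ — using that $h$ commutes with $g$ — any nontrivial element commuting with $g$ and fixing $a$ must also be free of fixed points on $(a,c)$, or fix $(a,c)$ pointwise. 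The key structural input is the $C^{1+bv}$ version of Szekeres' theorem from Appendix~\ref{a:szekeres} (Proposition~\ref{p:Szek}): the parabolic element $g$, having an isolated fixed point at $a$, embeds near $a$ into the flow of a $C^1$ vector field $X$ vanishing at $a$, and by the Kopell-type commutation argument every element of the image group that fixes $a$ and commutes with $g$ also belongs locally to this same flow $(g_X^t)$. Since $g$ is parabolic, the zero of $X$ at $a$ is degenerate, so \emph{every} nontrivial element of the local flow is parabolic at $a$.

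Now I would run the orbit argument exactly as in the proof of Lemma~\ref{l:hypfp}. Take the element $h$ that moves $a$; then the $h$-orbit of $a$ consists of fixed points of $g$ (since $h g h^{-1} = g$), accumulating on points $p_- \le a \le p_+$ which are fixed by both $g$ and $h$ and are endpoints of a component of $\mathrm{Fix}(g)$; by Szekeres/Kopell these accumulation points are global fixed points of the whole image group locally, and $g$ (being in the flow through a degenerate zero) is parabolic there — but wait, this only shows $g$ is parabolic at $p_{\pm}$, and I need \emph{all} generators parabolic there. This is where I would again invoke the flow: all elements fixing $p_\pm$ and commuting with $g$ lie in a common local flow with a zero at $p_\pm$; because the sequence of interior fixed points of $g$ accumulates at $p_\pm$ from one side, that zero cannot be hyperbolic (a hyperbolic zero gives a locally free action near it), hence it is degenerate, hence every nontrivial element of the image is parabolic at $p_\pm$. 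Thus $p_\pm$ is a parabolic global fixed point in the interior of $[0,1]$, contradicting the hypothesis. Therefore no such $a$ exists, which forces $\Fix(g) = \Fix(\rho)$ for every nontrivial $g$, and in particular all nontrivial elements share the same fixed-point set.

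The main obstacle I anticipate is the bookkeeping around the accumulation points $p_\pm$: ensuring that the $C^{1+bv}$-Szekeres machinery genuinely applies at those points (they must be isolated fixed points \emph{of some element} to get a germ of a flow, which follows since $g$ has no fixed points on the relevant one-sided neighborhood between consecutive orbit points), and that the "degenerate zero $\Rightarrow$ all elements parabolic" implication is used only where legitimate. A clean way to organize this is to first establish the intermediate claim that, under the hypothesis, every element of the image has an \emph{isolated} interior fixed point set that is in fact equal to $\Fix(\rho)$, reducing everything to the dichotomy hyperbolic/degenerate at the finitely many types of endpoints — but since there may be infinitely many components, one argues component-by-component using Kopell's lemma locally, which is exactly the style of Lemma~\ref{l:hypfp}.
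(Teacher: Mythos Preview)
Your argument has a real gap at the crucial step. When you take $h$ moving $a$ and look at the accumulation points $p_\pm$ of the $h$-orbit of $a$, two things go wrong. First, you assert that $p_\pm$ are \emph{global} fixed points ``by Szekeres/Kopell'', but this is not established: $p_\pm$ is a fixed point of $h$ (as a boundary point of a component of $[0,1]\setminus\Fix(h)$) and of $g$ (as a limit of fixed points of $g$), but nothing yet forces an arbitrary third element $k$ of the image to fix it. Second, and more seriously, your implication ``the sequence of fixed points of $g$ accumulates at $p_\pm$, hence the zero of the flow vector field there cannot be hyperbolic'' is false. If you embed $h$ (say) into a local flow near $p_+$ and express $g$ as a time-$t$ map, the accumulation of fixed points of $g$ only forces $t=0$, i.e.\ $g=\id$ near $p_+$; the zero of the vector field can perfectly well be hyperbolic. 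So you do not produce a parabolic global fixed point, and the contradiction evaporates.

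The paper's route is shorter and avoids both issues. Given $a\in\Fix(g)$ with $f(a)\neq a$, let $(b,c)$ be the component of $[0,1]\setminus\Fix(f)$ containing $a$. The $f$-orbit of $a$ gives fixed points of $g$ in $(b,c)$ accumulating on $b$ and $c$, so $g$ fixes $b$ and $c$. Now Kopell's lemma, applied with $f$ as the $C^{1+bv}$ contraction of $[b,c)$ and $g$ as a $C^1$ commuting map with an interior fixed point, yields $g=\id$ on $[b,c]$ in one stroke --- no Szekeres flow needed here. One then takes the \emph{maximal} interval $[b',c']\supset[b,c]$ on which $g=\id$ and argues that its endpoints are global fixed points (again via Kopell); by hypothesis some element is hyperbolic at $c'$, and Lemma~\ref{l:hypfp} then forces $g$ to be hyperbolic at $c'$ as well, contradicting $g=\id$ on $[b',c']$. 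The missing idea in your plan is precisely this direct application of Kopell to conclude $g$ is the identity on a whole subinterval, followed by the maximal-interval/boundary argument; your attempt to manufacture a parabolic global fixed point instead cannot succeed for the reason above.
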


\begin{proof} 
Assume some diffeomorphism $g$ of the image group has a fixed point $a\in(0,1)$ that is not fixed 
by some other element $f$. Let us show that $g$ is necessarily the identity. Let $(b,c)$ be the connected 
component of $[0,1]\setminus\Fix(f)$ containing $a$. By commutativity, the orbit of $a$ under $f$ is made 
of fixed points of $g$ which accumulate on $b$ and $c$,  so $g$ fixes $b$ and $c$ as well. By Kopell's 
lemma, this implies that $g$ is the identity on $[b,c]$. 

Now let $[b',c']$ be the maximal interval containing $[b,c]$ on which $g$ is the identity. Assume by contradiction 
that $[b',c']$ is not all of $[0,1]$. For example, assume $c' < 1$, the case where $b' > 0$ being analogous. 
We claim that $c'$ is a global fixed point of the action. Otherwise, there would exist 
$f$ in the image group such that $f(c') > c'$, and Kopell's lemma again would yield that 
$g$ is the identity on an open neighborhood of the point $c'$, which contradicts its definition.
 
By hypothesis, $c'$ is not a parabolic global fixed point, hence there exists $\tilde{f}$ in the image group 
that is hyperbolically repelling at $c'$. By definition of $c'$, the diffeomorphism $g$ must act nontrivially 
on arbitrarily small (right) neighborhoods of $c'$. 
By the previous lemma, the element $g$ must hyperbolically fix $c'$. But since $g$ pointwise fixes $[b',c']$, 
it is parabolic at $c'$. This contradiction completes the proof.
\end{proof}
\medskip
  
\begin{proof}[Proof of Proposition \ref{p:noncyclic}] 
Let $\rho$ be a $C^{1+bv}$ action of $\Z^d$ on $[0,1]$ with no interior parabolic global fixed point 
and with noncyclic image. According to the previous lemma, all the nontrivial elements of the image 
group have exactly the same fixed points. Let $f$ be such an element. 
By Proposition \ref{p:Szek}, for every connected component $(a,b)$ of $[0,1]\setminus\Fix(\rho)=[0,1]\setminus \Fix(f)$, 
the map $f$ is the time-$1$ map of a $C^1$ flow $(f^t_{[a,b)})$ on $[a,b)$ and $(f^t_{(a,b]})$ on $(a,b]$. 
Moreover, according to Kopell's lemma (see also Remark \ref{r:folk}) for any $g$ in the image group there exist 
numbers $\alpha$ and $\beta$ such that the restriction of $g$ to $[a,b)$ (resp. $(a,b]$) is equal to $f^\alpha_{[a,b)}$ 
(resp. $f^\beta_{(a,b]}$). 
In fact, these numbers are equal. A simple way of seeing this is the following: given $c\in(a,b)$, the maps 
$\phi:t\mapsto f^t_{[a,b)}(c)$ and $\psi:t\mapsto f^t_{(a,b]}(c)$ define $C^1$ diffeomorphisms from $\R$ to $(a,b)$ 
which conjugate $f$ to the translation by $1$ on $\R$ and $g$ to the translations by $\alpha$ and $\beta$, 
respectively. Hence, these two translations are conjugated by $\phi^{-1}\psi$, which commutes with the unit translation. 
This implies that $\alpha=\beta$ because of the following elementary fact whose proof is left to the reader: 
if two translations of the real line are conjugated by an orientation-preserving homeomorphism 
that commutes with a nontrivial translation, then they coincide. We call $\alpha$ the {\em translation number} of $g$ 
with respect to $f$. 

Since there is no interior parabolic global fixed point and $\log Df^\alpha_{[a,b)}(a)=\alpha\log Df(a)$, 
the value of $\alpha$ is determined by the derivative of $g$ at $a$. As a consequence, 
if $(c,a)$ and $(a,b)$ are two neighbor connected components of $[0,1]\setminus\Fix(\rho)$, 
the translation number of $g$ with respect to $f$ is the same on both components. 
Therefore, this number is constant on $[0,1]\setminus\Fix(\rho)$. 

If all the relative translation numbers of the generators of $\rho$ were rational, the action would be cyclic, 
which is contrary to our hypothesis. So there is some generator $g$ whose relative translation number $\alpha$ is 
irrational. In this case, for every connected component $(a,b)$ of $[0,1]\setminus\Fix(\rho)$, the restrictions 
$f^t_{[a,b)}$ and $f^t_{(a,b]}$ coincide for every $t\in\Z+\alpha\Z$. By continuity, this actually holds for every $t\in\R$. 

Finally, for each $t\in\R$, let us define $f^t$ by letting  $f^t := \id$ on $\Fix(f)$ and $f^t := f^t_{[a,b)}=f^t_{(a,b]}$ 
in restriction to every connected component $(a,b)$ of $[0,1]\setminus\Fix(\rho)$. 
We thus obtain a flow whose restriction to each component of $[0,1]\setminus\Fix(\rho)$ is $C^1$. Moreover, 
everything glues up nicely at each isolated global fixed point of the action, so the flow is $C^1$ on $(0,1)$, 
and the image group embeds into this flow. 
The last thing to check is that the flow is $C^1$ when one includes the endpoints $0$ and $1$. 
The only problematic case is when some of them is accumulated by interior fixed points. 
But then the control given in Proposition \ref{p:Szek} guarantees that the derivatives with 
respect to $x$ of $(t,x)\mapsto f^t(x)$ converge to $1$ as $x$ goes to the boundary, which concludes the proof.
\end{proof}

In the following ``$C^r$ version'' of Proposition \ref{p:noncyclic}, we say that a global fixed point of an action is 
{\em $r$-parabolic} if every diffeomorphism of the image group is $C^r$-tangent to the identity at that point.

\begin{prop}
\label{p:noncyclicC2}
Let $r\ge2$ be an integer. If a $C^{r}$ action $\rho$ of $\Z^d$ on $[0,1]$ with a noncyclic image has no interior 
$r$-parabolic global fixed point, 
then it embeds into a $C^{1}$ flow $(f^t)$ that is $C^{r-1}$ away from the possible $r$-parabolic fixed points at the 
boundary.
\end{prop}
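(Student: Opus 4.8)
The plan is to repeat the argument used for Proposition \ref{p:noncyclic}, substituting the $C^2$ version of Szekeres' theorem for Proposition \ref{p:Szek} and being careful about where regularity is lost. First I would check that the fixed-point analysis carries over verbatim: the analogues of Lemma \ref{l:hypfp} and Lemma \ref{l:meme-PF} hold for $C^r$ actions with no interior $r$-parabolic global fixed point (in fact the $C^2$ hypothesis makes these easier, via the Szekeres--Kopell combination mentioned in the remark after Lemma \ref{l:hypfp}), so all nontrivial elements of the image share exactly the same fixed point set $\Fix(\rho)$. Fix such an element $f$, and on each connected component $(a,b)$ of $[0,1]\setminus\Fix(\rho)$ apply the classical $C^2$ Szekeres theorem: $f\res{[a,b)}$ is the time-$1$ map of a $C^1$ flow $(f^t_{[a,b)})$ which is moreover $C^{r-1}$ on the half-open interval when $f$ is $C^r$ (this is the standard Szekeres regularity statement; the generator vector field is $C^{r-1}$), and similarly on $(a,b]$.

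Next I would run the ``matching of translation numbers'' argument exactly as in the $C^{1+bv}$ proof: for $g$ in the image, Kopell's lemma gives $g\res{[a,b)}=f^{\alpha}_{[a,b)}$ and $g\res{(a,b]}=f^{\beta}_{(a,b]}$, and the elementary fact about conjugated translations commuting with a nontrivial translation forces $\alpha=\beta$. The relative translation number $\alpha$ of $g$ with respect to $f$ is then constant across $[0,1]\setminus\Fix(\rho)$, because at a non-$r$-parabolic (hence, in particular, non-$1$-parabolic, i.e.\ hyperbolic) boundary point of a component it is pinned down by the derivative $Dg$, and derivatives agree on the two sides of an interior fixed point. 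Noncyclicity of the image forces some generator to have irrational relative translation number, and continuity in $t$ then yields $f^t_{[a,b)}=f^t_{(a,b]}$ for all $t\in\R$ on every component. One thus defines $f^t$ globally by setting it equal to the identity on $\Fix(f)$ and to this common flow on each component; the image group embeds into $(f^t)$, and away from the $r$-parabolic boundary points the flow is $C^{r-1}$ because each local Szekeres flow is $C^{r-1}$ and these glue $C^{r-1}$-smoothly at hyperbolic (or lower-order-parabolic) interior fixed points, where the linearization/Sternberg-type regularity is available in class $C^r$.

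The main obstacle is exactly the gluing at fixed points and at the boundary, i.e.\ upgrading the ``$C^1$ on each component, glues up nicely'' conclusion of the $C^{1+bv}$ proof to ``$C^1$ globally and $C^{r-1}$ off the $r$-parabolic points''. Two issues must be handled. First, at an \emph{interior} fixed point $p$ which is hyperbolic for the action: there the generator vector field $X$ of the flow, coming from Szekeres on the two adjacent components, has a simple zero, and one needs the two one-sided flows to fit together into a single $C^{r-1}$ object across $p$; this is where one invokes the $C^r$ smooth linearization (Sternberg) available precisely because we are in regularity $\ge 2$, together with the fact that the relative translation numbers match, so the flows on the two sides are genuinely the same flow. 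Second, at a \emph{boundary} point $0$ or $1$ that is accumulated by interior fixed points: here one cannot hope for $C^{r-1}$ but must still get $C^1$, and this follows from the uniform control on derivatives in the relevant version of Szekeres' theorem (the analogue, for the $C^r$ setting, of the control stated in Proposition \ref{p:Szek}), which forces $D_x f^t(x)\to 1$ as $x$ approaches such a boundary point uniformly in $t$ on compacts. If the boundary point is $r$-parabolic, it is simply excluded from the region where $C^{r-1}$ is claimed, so only the $C^1$ statement — which the uniform control again provides — is needed there.
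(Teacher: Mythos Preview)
Your overall plan mirrors the paper's, but there is a genuine gap hidden in the sentence ``at a non-$r$-parabolic (hence, in particular, non-$1$-parabolic, i.e.\ hyperbolic) boundary point of a component it is pinned down by the derivative $Dg$''. The implication is backwards: non-$1$-parabolic (hyperbolic) implies non-$r$-parabolic, not conversely. An interior global fixed point may perfectly well be $1$-parabolic (derivative equal to $1$ for every element) while failing to be $r$-parabolic --- for instance, some element may look like $x\mapsto x + c(x-p)^k + o((x-p)^k)$ with $2\le k\le r$ and $c\neq 0$ in local coordinates. The hypothesis ``no interior $r$-parabolic global fixed point'' allows such points, and your two key steps break down there: Sternberg linearization requires hyperbolicity and does not apply, and the first derivative $Dg(p)=1$ carries no information about the relative translation number $\alpha$.

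The paper handles exactly this by invoking \cite[Appendice~IV]{Yo} (Yoccoz's extension of Takens): at a fixed point where $f$ is $C^r$ and not $C^r$-tangent to the identity, the Szekeres vector fields on the two adjacent half-intervals are $C^{r-1}$ and glue to a single $C^{r-1}$ vector field across the point --- this replaces your appeal to Sternberg and covers both the hyperbolic and the parabolic-but-non-flat cases at once. Correspondingly, the matching of relative translation numbers across such a point is read off not from the first derivative but from the leading nontrivial Taylor coefficient of order $\le r$ (if $f-\id$ vanishes to order $k$ with coefficient $c\neq 0$, then $f^\alpha-\id$ vanishes to order $k$ with coefficient $\alpha c$, so $\alpha$ is determined by the $k$-jet of $g$). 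With these two substitutions your outline goes through; the rest of your plan (common fixed-point set, $\alpha=\beta$ on each component, irrationality forcing the two one-sided flows to coincide, $C^1$ gluing at accumulated boundary fixed points via the uniform Szekeres estimate) matches the paper's argument.
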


The proof of this statement follows the same lines as that of Proposition \ref{p:noncyclic}, with an additional ingredient.
 Namely, in this context, if $b$ is an interior fixed point, then it is isolated, since non-$2$-parabolic. 
 Thus, letting $(a,b)$ and $(b,c)$ the corresponding neighboring connected components of $[0,1]\setminus \Fix(\rho)$, 
 then, according to \cite[Appendice IV]{Yo}, the flows $(f^t_{(a,b]})$ and $(f^t_{[b,c)})$ 
arise from $C^{r-1}$ vector fields on $(a,b]$ and $[b,c)$ that glue as a $C^{r-1}$ vector field on $(a,c)$. 
Moreover, the non-$r$-parabolicity also implies that relative translation numbers do not change between two 
neighbor components. Details are left to the reader; see also \cite{EyNa23}.


\subsection{Deformation of flowable actions}
\label{ss:reg}

In this section, we prove Proposition \ref{p:flot}. To do this, we use the next subtle and somewhat miraculous lemma, 
which is actually the second new key ingredient of our strategy. The argument of proof is a mixture of \cite{Ha}, 
\cite{Na14} and \cite[Prop.~8.4]{EN1}. 

\begin{reg}
\label{l:lissage} 
If $(f^t)$ is a $C^1$ flow of diffeomorphisms of $[0,1]$ whose time-$1$ map $f = f^1$ is of class $C^{r}$, 
with $r \in \{1,1+bv,1+ac\}$, then  
it is conjugated by a $C^1$ diffeomorphism to the flow of a $C^r$ vector field $\tilde{X}$ satisfying  
$\var(D\tilde X) = \var(\log Df)$.
\end{reg}

\begin{proof}[Proof]  
Let $X$ be the generating vector field of the flow, and let $\varphi: [0,1] \to \R$ be the map defined by 
$\varphi (0):= 0$ and 
\begin{equation}
\label{e:defh}
\log D\varphi (x) := \int_0^{1} \log Df^s(x) ds - c, \quad\text{with} \quad c:= \log\left(\int_0^1\exp\left(\int_0^{1} \log Df^s(x) ds\right)dx\right).
\end{equation}
Then $\varphi$ is of class $C^{1}$, has positive derivative everywhere, and satisfies $\int_0^1D\varphi(x)dx=1$. 
As a consequence, it is a  $C^1$ diffeomorphism of $[0,1]$.

Let us next check that $\varphi$ conjugates $(f^t)$ to the flow generated by a $C^{r}$ vector field, 
i.e. that the vector field $\tilde X := \varphi_*X$ is of class $C^{r}$, and that this vector field satisfies the announced 
equality $\var(D\tilde X)=\var(\log Df)$.
To this end, first consider an arbitrary point $x \in [0,1]\setminus X^{-1}(\{0\})$. We have 
$(\tilde X \circ \varphi )(x) = X(x) \cdot D\varphi (x) \neq 0$. 
Hence, 
\begin{align*}
\log(\tilde X\circ \varphi)(x) 
&= \log X(x)+\log D\varphi (x) 
=\log X (x)+\int_0^1 \log Df^s(x) \, ds \, -c\\
&=\int_0^1 \log(X\cdot Df^s(x)) \, ds \, -c
=\int_0^1 \log(X\circ f^s)(x) \, ds \, -c \\
&=\int_0^1 \frac{\log(X\circ f^s)(x)}{(X\circ f^s)(x)}\cdot \tfrac d {ds} f^s(x) \, ds \, -c 
=\int_x^{f(x)}\frac{\log X(y)}{X(y)} \, dy \, -c.
\end{align*}
Thus, $\log(\tilde X\circ \varphi)$ is differentiable at $x$, with derivative
$$\frac{\log(X\circ f) (x)}{(X\circ f) (x)}\cdot Df (x) - \frac{\log X (x)}{X(x)} =\frac{\log Df (x)}{X (x)},$$
where the equality follows from applying twice the relation $X \circ f = Df \cdot X$. 
As a consequence, $\tilde{X} \circ \varphi$ is also differentiable at $x$, and
$$D ( \tilde{X} \circ \varphi ) (x) 
= D \log (\tilde{X} \circ \varphi) (x) \cdot ( \tilde{X} \circ \varphi ) (x) 
= \frac{\log Df (x)}{X (x)} \cdot X(x) \cdot D\varphi (x) = \log Df (x) \cdot D\varphi(x).$$
This shows that $\tilde X \circ \varphi$ is differentiable on $[0,1]\setminus X^{-1}(\{0\})$, 
and its derivative therein coincides with the \emph{continuous} map \, $\log Df\cdot D\varphi$. 

We now want to show that this actually holds on all of $[0,1]$, i.e. also at every zero $a$ of $X$. 
We distinguish two cases according to whether $a$ is an isolated zero or not.

In the first case, there exists an open interval $U$ of $[0,1]$ containing $a$ such that $a$ is the only 
zero of $X$ in $U$. 
By the above discussion, $\tilde X\circ \varphi$ is $C^1$ on $U\setminus\{a\}$ and its derivative on this domain 
extends to the continuous function $\log Df\cdot D\varphi$ on $U$. 
By a standard extension theorem for $C^1$ maps, this implies that $\tilde X\circ \varphi$ is $C^1$ on 
all of $U$ and its derivative there coincides with $\log Df\cdot D\varphi$.  

In the second case, $a$ is necessarily a global parabolic fixed point of the flow, which implies that 
$(\log Df\cdot D\varphi)(a)=0$. We are thus reduced to showing that $\tilde X\circ \varphi$ is differentiable 
with derivative equal to $0$ at $a$. Now, since $\tilde X\circ \varphi = X\cdot D\varphi$, for all $x\neq a$ we have 
$$\frac{(\tilde{X} \circ \varphi )(x) - (\tilde{X} \circ \varphi) (a)}{x-a} 
= \frac{(X\cdot D\varphi)(x)-(X\cdot D\varphi)(a)}{x-a} 
= \frac{X(x)}{x-a}\cdot D\varphi (x).$$
Since $D\varphi (x)$ has a finite limit at $a$, we are left with checking that $\frac{X(x)}{x-a}$ tends to $0$ as 
$x$ goes to $a$. If $x$ is a zero of $X$ (or equivalently a fixed point of $f$), the latter quantity equals $0$. 
In the opposite (more complicated) case, 
\begin{align}
\label{e:derivX}
\frac{X(x)}{x-a} = \frac{X(x)}{f(x)-x}\cdot \frac{f(x)-x}{x-a}.
\end{align}
For the first term of this product, we have
$$\frac{f(x)-x}{X(x)} = \frac{\int_0^1\frac{d}{ds}f^s(x) ds}{X(x)} 
= \int_0^1 \frac{(X\circ f^s)(x)}{X(x)}ds = \int_0^1 Df^s(x) ds.
$$
This expression converges to $1$ when $x$ goes to $a$ because $a$ is a parabolic fixed point for every $f^s$ 
and $(s,x)\mapsto Df^s(x)$ is uniformly continuous on $[0,1]^2$. 
Concerning the second term of the product in \eqref{e:derivX}, using the parabolicity at $a$, we obtain 
$$f(x)-x = \underbrace{f(a)-a}_0 + \underbrace{D (f-\id)(a)}_0 \cdot \, (x-a)+o(x-a) = o (x-a).$$
Therefore, the expression in  \eqref{e:derivX} tends to $0$ as $x$ goes to $a$, as announced. 
This completes the proof of the $C^1$ regularity of $\tilde X\circ \varphi$.

Since $\varphi$ is a $C^1$ diffeomorphism, we deduce that $\tilde X$ itself is $C^1$ 
and that its derivative satisfies $D\tilde X \circ \varphi \cdot D\varphi = D (\tilde{X} \circ \f) =\log Df\cdot D\varphi$. 
Hence $D \tilde{X} = \log Df \circ \varphi^{-1}$, so $\tilde X$ is of class $C^r$ and $\var(D\tilde X)=\var(\log Df)$, 
as announced.
\end{proof}

\vspace{0.1cm}

\begin{rem} 
All along our proof of the Main Theorem \ref{t:C1ac} for the case of the interval, Proposition~\ref{p:flot} is 
only used for the restriction of a flow to the connected components of the complement of the set of 
parabolic global fixed points. On the interior of such a component, every zero of the generating vector field is 
hyperbolic, hence isolated. 
Thus, we could have restricted to this context the statements of Proposition~\ref{p:flot} and its main proof ingredient, 
the Regularization Lemma~\ref{l:lissage} above, and avoided dealing with non-isolated zeroes of the vector field, 
which are the most difficult to treat. However, we have included here the general statements with proof in order to 
show that no ``devil's staircase phenomenon" arises in this setting. 
\end{rem}
\medskip

\begin{rem} 
\label{r:stillholds} 
Part of Lemma \ref{l:lissage} still holds on the interval $[0,1)$. More precisely, if $(f^t)$ is a $C^1$ flow 
of diffeomorphisms of $[0,1)$ whose time-$1$ map $f = f^1$ is of class $C^{r}$, with $r \in \{1,1+bv,1+ac\}$, 
then it is still true that $(f^t)$ is conjugated by a $C^1$ diffeomorphism to the flow of a $C^r$ vector field. 
The proof is the same as the one above when the involved ``normalization constant'' $c$ in~\eqref{e:defh} is 
finite. If it is infinite, we just define $\tilde{\varphi}$ by $\tilde{\varphi} (0) = 0$ and 
$$\log D \tilde{\varphi}(x) := \int_0^{1} \log Df^s(x) ds.$$
Then $\tilde{\varphi}$ is a $C^r$ diffeomorphism from $[0,1)$ to $\mathbb{R}_+ := [0,\infty)$. The desired conjugacy is 
$\varphi := \varphi_* \circ \tilde{\varphi}$, where $\varphi_*$ is any $C^1$ diffeomorphism from $\mathbb{R}_+$ to 
$[0,1)$. 
Details are left to the reader.\\ 
\end{rem}

Before passing to the proof of Proposition \ref{p:flot}, we need to make a final observation. When $r$ is a positive 
integer, it is standard that the flow of a $C^r$ vector field on $[0,1]$ is a $C^r$ flow (that is, a continuous group 
morphism from $(\R,+)$ to $\Diff^r_+([0,1])$). Below, we make sure that this still holds when $r=1+ac$.

\begin{lem}
\label{l:unsurprising}
The flow $(f^t)$ of a $C^{1+ac}$ vector field $X$ on $[0,1]$ is a $C^{1+ac}$ flow whose maps satisfy 
$\var(\log Df^t)\le |t|\cdot \var(DX)$.
\end{lem}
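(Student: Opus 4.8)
The statement has two parts: first, that the flow $(f^t)$ of a $C^{1+ac}$ vector field $X$ is again a $C^{1+ac}$ flow, i.e.\ that each $f^t \in \Diff_+^{1+ac}([0,1])$ and that $t \mapsto f^t$ is continuous for $d_{1+ac}$; second, the quantitative bound $\var(\log Df^t) \le |t| \cdot \var(DX)$. The idea is to exploit the standard ODE for the derivative of a flow along the vector field. Since $X$ is in particular $C^1$, the flow is a $C^1$ flow by the classical theory (Bochner--Montgomery, or just Picard--Lindel\"of plus smooth dependence on initial conditions for $C^1$ fields), so $f^t$ is a $C^1$ diffeomorphism and $(t,x) \mapsto f^t(x)$ is $C^1$. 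The key computation is that, differentiating $\frac{d}{dt} f^t(x) = X(f^t(x))$ in $x$, one gets $\frac{d}{dt}\log Df^t(x) = DX(f^t(x))$, whence
\begin{equation}
\label{e:logderivflow}
\log Df^t(x) = \int_0^t DX(f^s(x))\, ds.
\end{equation}

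\textbf{Absolute continuity of $\log Df^t$.} From \eqref{e:logderivflow} I would argue that $x \mapsto \log Df^t(x)$ is absolutely continuous with a representative of its derivative given by $\frac{d}{dx}\log Df^t(x) = \int_0^t DX(f^s(x)) \cdot Df^s(x)\, ds$ (differentiating under the integral sign, using the chain rule and that $DX$ is only defined a.e.\ but $Df^s$ is continuous and positive). The cleanest way to make this rigorous is: $DX$ is the primitive data of an $L^1$ function $D(DX)$, so one can write $DX(f^s(x)) = DX(f^s(x_0)) + \int_{f^s(x_0)}^{f^s(x)} D(DX)(y)\, dy$; substituting into \eqref{e:logderivflow} and applying Fubini (legitimate since $\|D(DX)\|_{L^1}<\infty$ and the maps $f^s$ are uniformly bi-Lipschitz on $[-|t|,|t|]\times[0,1]$), one exhibits $\log Df^t$ as a constant plus a primitive of an $L^1$ function, i.e.\ as a $C^{ac}$ function. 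Hence $f^t \in \Diff_+^{1+ac}([0,1])$.

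\textbf{The variation estimate.} For the bound, take any partition $0 = a_0 < a_1 < \dots < a_n = 1$. Using \eqref{e:logderivflow},
\begin{align*}
\sum_{j} \bigl|\log Df^t(a_{j+1}) - \log Df^t(a_j)\bigr|
&= \sum_j \Bigl| \int_0^t \bigl( DX(f^s(a_{j+1})) - DX(f^s(a_j)) \bigr)\, ds \Bigr| \\
&\le \int_0^{|t|} \sum_j \bigl| DX(f^s(a_{j+1})) - DX(f^s(a_j)) \bigr|\, ds \\
&\le \int_0^{|t|} \var(DX)\, ds = |t| \cdot \var(DX),
\end{align*}
where the last inequality uses that, for each fixed $s$, the points $f^s(a_0) < \dots < f^s(a_n)$ form a partition of $[0,1]$ (as $f^s$ is an orientation-preserving diffeomorphism fixing the endpoints), so the inner sum is bounded by the total variation of $DX$. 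Taking the supremum over partitions gives $\var(\log Df^t) \le |t| \cdot \var(DX)$ (and in fact the analogous argument with $\|\cdot\|_{L^1}$ in place of $\var$ shows $\|D\log Df^t\|_{L^1} \le |t|\cdot\|D(DX)\|_{L^1}$, which re-proves the $C^{ac}$ property).

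\textbf{Continuity in $t$.} Finally, for continuity of $t \mapsto f^t$ in the $d_{1+ac}$ metric: the $\|f^t - f^{t_0}\|_\infty$ term goes to $0$ since the flow is $C^0$-continuous in $t$. For the $\var(\log Df^t - \log Df^{t_0})$ term, from \eqref{e:logderivflow} write $\log Df^t(x) - \log Df^{t_0}(x) = \int_{t_0}^t DX(f^s(x))\, ds$; bounding its variation exactly as above gives $\var(\log Df^t - \log Df^{t_0}) \le |t - t_0|\cdot \var(DX) \to 0$. This completes the plan. I expect the only genuinely delicate point to be the rigorous justification of differentiating under the integral sign in \eqref{e:logderivflow} to establish absolute continuity of $\log Df^t$ when $DX$ itself is merely $C^{ac}$ (so $D(DX) \in L^1$ only); the Fubini argument sketched above, together with the uniform bi-Lipschitz control on the $f^s$, is what makes it work.
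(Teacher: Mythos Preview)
Your proof is correct and takes a genuinely different route from the paper's. The paper works spatially: on each connected component $I$ of $[0,1]\setminus X^{-1}(\{0\})$ it uses the identity $D\log Df^t = \frac{DX\circ f^t - DX}{X}$ (derived from $Df^t = \frac{X\circ f^t}{X}$), then decomposes $\int_I \bigl|\frac{DX\circ f^t - DX}{X}\bigr|$ over fundamental domains of $f^t$, using that $\int$ of $|1/X|$ over a fundamental domain of $f^t$ equals $|t|$. Summing over components gives the bound.

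Your approach is temporal: you integrate the variational equation to get $\log Df^t(x)=\int_0^t DX(f^s(x))\,ds$ and then bound the variation by swapping the partition sum with the time integral. This is arguably cleaner: it treats all points of $[0,1]$ uniformly (no special handling of $X^{-1}(\{0\})$, no fundamental-domain decomposition), and the estimate $\var(\log Df^t-\log Df^{t_0})\le |t-t_0|\cdot\var(DX)$ for continuity in $t$ falls out of the same formula with no extra work. The paper's spatial identity, on the other hand, is the one reused later (as equation~\eqref{e:nec}) in the $C^2$ setting of Proposition~\ref{p:flotC2}, which is presumably why it is introduced here. The only minor point in your write-up is the justification of Fubini/differentiation under the integral for the absolute continuity claim; the argument you sketch (writing $DX(f^s(x))-DX(f^s(x_0))$ as an integral of $D(DX)\in L^1$ and using the uniform bi-Lipschitz control on $f^s$ for $s\in[-|t|,|t|]$) is correct, but note that the variation estimate itself, together with the continuity of $\log Df^t$, already forces $\log Df^t$ to be $C^{bv}$; the finer $C^{ac}$ conclusion indeed needs your Fubini step.
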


\begin{proof}
Let $X$ be a $C^{1+ac}$ vector field on $[0,1]$. Since $X$ is in particular $C^1$, 
its flow $(f^t)$ is a $C^1$ flow, as already mentioned. Given $t\in\R$, using once 
the equality $D f^t = \frac{X\circ f^t}{X}$ on $[0,1]\setminus X^{-1}(\{0\})$ 
(which shows that $f^t$ is $C^2$ therein), we get
$$D\log Df^t 
= D\log(X\circ f^t)-D\log X 
= (D\log X)\circ f^t \cdot D f^t - D \log X  
= \tfrac{DX\circ f^t\cdot D f^t}{X\circ f^t}-\tfrac{DX}X.$$ 
Using the same inequality again, we obtain
\begin{equation}
\label{e:nec}
D\log D f^t = \frac{DX\circ f^t - DX}{X}.
\end{equation}
Thus, for every connected component $I$ of $[0,1]\setminus X^{-1}(\{0\})$ and all $a$ in $I$,
\begin{small}
\begin{align*}
\| D\log Df^t\|_{L^1(I)}
&= \! \int_I  \left|\tfrac{DX\circ f^t - DX}{X}\right| \\
&= \! \sum_{n=-\infty}^{+\infty}  \int_{(f^t)^{n+1}(a)}^{(f^t)^n(a)}\left|\tfrac{DX\circ f^t - DX}{X}\right|\\
&\le \!  \sum_{n=-\infty}^{+\infty} \max_{[(f^t)^{n+1}(a),(f^t)^n(a)]}\left|DX \! \circ \! f^t - DX\right| \underbrace{\int_{(f^t)^{n+1}(a)}^{(f^t)^n(a)}\left|\tfrac{1}{X}\right|}_{|t|} 
\le |t| \cdot \var(DX;I).
\end{align*}
\end{small}(reversing the orientation of the fundamental intervals of $f^t$ if necessary). This implies that $f^t$ is $C^{1+ac}$ 
and that $\var(\log Df^t)\le |t|\cdot \var(DX)$ as announced. In particular, $f^t$ goes to the identity in the 
$C^{1+ac}$ topology when $t$ goes to $0$, and this implies that the group morphism $t\mapsto f^t$ from $(\R,+)$ to 
$\Diff^{1+ac}_+([0,1])$ is continuous, which completes the proof.
\end{proof}

We are finally in position of proving Proposition \ref{p:flot}. To this end, we will use the Interpolation 
Lemma~\ref{l:chem} together with the Regularization Lemma \ref{l:lissage} and Lemma \ref{l:unsurprising} above 
to reduce to the case of a well-controlled $C^{1+ac}$ flow, which is easy to deform to the trivial one.\\

 \begin{proof}[{Proof of Proposition \ref{p:flot}}] 
By assumption, the generators of the image group of the (nontrivial) action $\rho$ we want to deform are flow maps 
$f^{\tau_1}$, \dots, $f^{\tau_d}$ of a $C^1$ flow $(f^t)$. 
Up to permutation of the generators and time rescaling of the flow, one can assume that $\tau_1=1$ and 
$\tau_i\in[-1,1]$ for every~$i \in [\![1,d]\!]$. 
According to Lemma~\ref{l:lissage}, this flow is $C^1$ conjugated to the flow of a $C^{1+ac}$ vector field $X$ 
satisfying $\var(DX)=\var(\log Df)$ and whose time-$s$ maps $g^s$ are tangent to the identity at the endpoints 
if the original maps are. 
According to Lemma \ref{l:unsurprising}, these maps are $C^{1+ac}$ and, for every $s$ in $[-1,1]$ 
(in particular, for $s=\tau_i$, $1\le i\le d)$,
$$d_{1+ac}^* ( g^{s}, id ) \leq | s | \cdot \var(DX) 
=|s | \cdot \var(\log f) 
= \, | s | \cdot d_{1+ac}^* ( f, id ) 
\leq d_{1+ac}^* ( \rho, id ) .$$ 
Therefore, according to the Interpolation Lemma \ref{l:chem}, it suffices to deal with this new more regular action. 
To this end, we simply consider the continuous path of $C^{1+ac}$ actions defined by 
$$t\in[0,1]\mapsto(g^{t\tau_1},\dots,g^{t\tau_d})$$ 
(continuity follows again from Lemma \ref{l:unsurprising}). By the above estimates, 
$d_{1+ac}^* ( g^{t\tau_i}, id )\le d_{1+ac}^* (\rho,\id)$, 
so this path connects the regularized action to the trivial one satisfying the desired control on the $C^{1+ac}$ distance 
from the generators to the identity. 
\end{proof}

We close this section with a lemma that gives extra information in the context of the Regularization Lemma 
\ref{l:lissage} that will be useful when dealing with $\mathbb{Z}^d$ actions on the circle. 

\begin{lem} 
\label{lem-complement}
Under the hypothesis of the Regularization Lemma \ref{l:lissage}, if $g$ is a $C^r$ diffeomorphism commuting with 
every $f^t$, 
then the conjugate of $g$ by $\varphi$ is still of class $C^r$. 
\end{lem}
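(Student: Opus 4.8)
The plan is to compute the logarithmic derivative of the conjugate $\tilde g := \varphi g \varphi^{-1}$ directly and to recognize it as an averaged logarithmic derivative of $g$, of the same flavour as the one defining $\varphi$ itself. Since $g$ and $\varphi$ are at least $C^1$ with positive derivative, differentiating the identity $\tilde g \circ \varphi = \varphi \circ g$ yields
\[ \log D\tilde g \circ \varphi = \log D\varphi \circ g + \log Dg - \log D\varphi. \]
First I would exploit the commutation relations $g\circ f^s = f^s\circ g$: differentiating them gives $\log Df^s\circ g - \log Df^s = \log Dg\circ f^s - \log Dg$ for every $s$, and, recalling the defining formula \eqref{e:defh} for $\log D\varphi$, integrating this in $s$ produces
\[ \log D\varphi\circ g - \log D\varphi = \int_0^1 (\log Dg)\circ f^s\, ds - \log Dg. \]
Substituting into the first display, the terms $\pm\log Dg$ cancel and we are left with the clean identity $\log D\tilde g\circ\varphi = \int_0^1 (\log Dg)\circ f^s\, ds =: H$.

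It then remains to check that $H$ has the regularity expected of the logarithmic derivative of a $C^r$ diffeomorphism. When $r=1$ there is nothing to do, as $\tilde g$ is automatically $C^1$. When $r=1+bv$, each $f^s$ is a homeomorphism of $[0,1]$, so $\var((\log Dg)\circ f^s)=\var(\log Dg)$, and subadditivity of the total variation under averaging gives $\var(H)\le\var(\log Dg)<\infty$. When $r=1+ac$, each $f^s$ is a $C^1$ diffeomorphism (the flow being $C^1$), hence $(\log Dg)\circ f^s$ is absolutely continuous with $\|D((\log Dg)\circ f^s)\|_{L^1}=\|D\log Dg\|_{L^1}$ by the change of variables formula, uniformly in $s$; a Fubini argument, writing $(\log Dg)(f^s(x))=(\log Dg)(f^s(0))+\int_0^x (D\log Dg)(f^s(u))\,\partial_u f^s(u)\,du$ and swapping the integrations in $s$ and $u$, then shows that $H$ is absolutely continuous with $\|DH\|_{L^1}\le\|D\log Dg\|_{L^1}$.

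Finally, since $\varphi$ is a $C^1$ diffeomorphism of $[0,1]$, the equality $\log D\tilde g = H\circ\varphi^{-1}$ transfers the regularity of $H$ to $\log D\tilde g$: composition with a homeomorphism preserves finite total variation, and composition of an absolutely continuous function with a $C^1$ diffeomorphism is again absolutely continuous. Hence $\tilde g$ is of class $C^r$ (and, as a byproduct, $\var(\log D\tilde g)\le\var(\log Dg)$). The only place requiring genuine care is the $C^{1+ac}$ instance of the middle step, namely that an average over $s$ of a uniformly absolutely continuous family is absolutely continuous; this is where the uniform $L^1$ bound coming from the change of variables is essential, and I expect it to be the main (mild) obstacle in the argument.
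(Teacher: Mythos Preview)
Your proof is correct and follows essentially the same route as the paper: both derive the key identity $\log D\tilde g\circ\varphi=\int_0^1(\log Dg)\circ f^s\,ds$ from the chain rule and the commutation relations, then read off the regularity of $\log D\tilde g$ from that of $\log Dg$. The paper is terser in the $C^{1+ac}$ case (it just says ``derivation under the integral''), whereas you spell out the Fubini justification; otherwise the arguments match.
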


\begin{proof} 
If $r=1$, then $\tilde g := \varphi g \varphi^{-1}$ is automatically $C^{1}$. Let us now assume that $r=1+bv$. 
By the chain rule, 
$$\log (D\tilde g)\circ \varphi = \log (Dh)\circ g  + \log Dg - \log D\varphi.$$
By the definition of $\varphi$, this gives
$$\log (D\tilde g)\circ \varphi  = \int_0^1(\log (Df^s)\circ g +\log Dg- \log Df^s ) \, ds.$$
Again, the chain rule applied to the equality $f^s \circ g = g \circ f^s$ gives 
$$\log (D\tilde g)\circ \varphi  = \int_0^1\log (Dg)\circ f^s ds$$
As a consequence, $\var ( \log D\tilde g ) \le \var (\log Dg)$, hence $\tilde{g}$ belongs to $ \Diff^{1+bv}_+([0,1])$.  
Finally, in the case $r=1+ac$,  
derivation under the integral in the above equality shows that $\tilde g$ belongs to $ \Diff^{1+ac}_+ ([0,1])$. 
\end{proof}

\begin{rem} 
\label{rem-unpleasant} 
Both the Regularization Lemma \ref{l:lissage} and the complementary Lemma \ref{lem-complement} above hold 
(with the very same proofs) for actions on the circle. 
We didn't state them in this setting because it is not a natural one, as actions on the circle arising from $C^1$ flows are 
``rare''. (This is contrary to the case of the interval, for which Szekeres' theorem always provides $C^1$ flows on some 
half-closed intervals for $C^2$ actions; see also Proposition \ref{p:noncyclic}.) 
Actually, although we will use variations of them to study connectedness of $C^2$ actions of $\mathbb{Z}^d$ 
on the circle, they will only apply under very restrictive hypothesis (cf. Proposition \ref{t:circleC2}).
\end{rem}


\subsection{Deformation of ``non-flowable'' actions}
\label{ss:rat}

In this section, we give a proof of Proposition \ref{p:cyclic}. Let us start by explaining what is the main difficulty, 
since it is somehow hidden in the argument below. 
Let $f_1,\ldots,f_d$ be the images of the canonical generators for a $C^r$ action of $\mathbb{Z}^d$ on the interval 
with a cyclic image, and let $f_0$ be a generator of the cyclic group $\langle f_1,\dots,f_d\rangle$. 
Since each $f_i$ is an integer power of $f_0$, deforming the action would reduce to just deforming $f_0$.  
However, there is a deep problem, namely, it could happen that $f_1,\dots,f_d$ are $C^r$ close to $\id$ 
but $f_0$ is not. In this case, if we deform $f_0$ and extend the deformation to its powers, the absence of control on 
$f_0$ might lead to a path that goes very far from the trivial action.

In the $C^{1+ac}$ case, we will manage to avoid this problem by first conjugating the action so as to obtain 
the desired control on $f_0$.  
To do this, we will need a crucial tool, namely the \emph{asymptotic variation}, a notion that was introduced in 
\cite{Na23}. (Originally, this was called ``asymptotic distortion'', but the terminology was changed afterwards.) 
For a $C^{1+bv}$ diffeomorphism $f$ of a compact 1-manifold $M$, this is 
$$V_{\infty}(f):=\lim_{n\to+\infty}\frac{\var (\log Df^n)}n.$$
This number is well-defined and less than or equal to $\var (\log Df)$. 
Moreover, it is homogeneous: for every $m \in \mathbb{Z}$,
\begin{equation}\label{eq-hom}
V_\infty(f^m) = |m| \, \dist (f).
\end{equation}
An important result of \cite[Corollary 2]{EN1} establishes that, for an interval diffeomorphism~$f$, 
the value of $V_{\infty} (f)$ vanishes if and only if $f$ has no hyperbolic fixed point and is $C^1$-flowable. 
This result is extended to the $C^{1+bv}$ setting in Appendix \ref{a:varflow} (cf. Corollary \ref{c:varflow2}).\\

\begin{proof}[Proof of Proposition \ref{p:cyclic}.] 
Let $\rho_0$ be the action with a cyclic image we are trying to deform, let $f_1,\dots,f_d$ be the images of the 
canonical generators of $\Z^d$, and let $f_0$ be a generator of the cyclic group $\langle f_1,\dots,f_d\rangle$. 
For every $n \geq 1$, define a diffeomorphism $\varphi_n$ of $[0,1]$ by letting $\varphi_n (0) =0$ and 
\begin{equation}\label{eq:def-g_n}
D\varphi_n (x) := 
\frac{\left[ \prod_{0 \leq n_j < n} D(f_0^{n_0} \cdots f_d^{n_d}) (x) \right]^{\frac{1}{n^{d+1}}}}
{\int_0^1  \left[ \prod_{0 \leq n_j < n} D(f_0^{n_0} \cdots f_d^{n_d}) (y) \right]^{\frac{1}{n^{d+1}}} \, dy}
\end{equation}
(note that the right-side expression above is a positive continuous function of integral equal to $1$, so it is indeed 
the derivative of a $C^1$ diffeomorphism of $[0,1]$ which is actually of class $C^{1+ac}$). 
At first glance, this definition may look strange, but it is very natural in a 
much more general setting: see Appendix \ref{a:Banach}. Compare also (\ref{eq-conj-top}).

Now we compute.
Using commutativity and the chain rule, we obtain 
$$D (\varphi_n \circ f_i \circ \varphi_n^{-1}) (\f_n (x)) 
= \frac{D\varphi_n (f_i (x))}{D\varphi_n (x)} \cdot Df_i (x).$$
This gives
\begin{eqnarray*}
D (\varphi_n \circ f_i \circ \varphi_n^{-1}) (\varphi_n (x)) 
&=& \frac{\left[ \prod_{0 \leq n_j < n} D(f_0^{n_0} \cdots f_d^{n_d}) (f_i(x)) \right]^{\frac{1}{n^{d+1}}}}
{\left[ \prod_{0 \leq n_j < n} D(f_0^{n_0} \cdots f_d^{n_d}) (x) \right]^{\frac{1}{n^{d+1}}}} \cdot Df_i (x) \\
&=& \frac{\left[ \prod_{0 \leq n_j < n} D(f_0^{n_0} \cdots f_d^{n_d}) (f_i(x)) \cdot Df_i (x)\right]^{\frac{1}{n^{d+1}}}}
{\left[ \prod_{0 \leq n_j < n} D(f_0^{n_0} \cdots f_d^{n_d}) (x) \right]^{\frac{1}{n^{d+1}}}} \\
&=& \frac{\left[ \prod_{0 \leq n_j < n} D(f_0^{n_0} \cdots f_i^{1+n_i} \cdots f_d^{n_d}) (x) \right]^{\frac{1}{n^{d+1}}}}
{\left[ \prod_{0 \leq n_j < n} D(f_0^{n_0} \cdots f_i^{n_i} \cdots f_d^{n_d}) (x) \right]^{\frac{1}{n^{d+1}}}} \\
&=& \frac{\left[ \prod_{0 \leq n_j < n; \, j \neq i} D(f_i^{n} f_0^{n_0} \cdots f_{i-1}^{n_{i-1}} f_{i+1}^{n_{i+1}} \cdots f_d^{n_d}) (x) \right]^{\frac{1}{n^{d+1}}}}
{\left[ \prod_{0 \leq n_j < n; \, j \neq i} D(f_0^{n_0} \cdots f_{i-1}^{n_{i-1}} f_{i+1}^{n_{i+1}} \cdots f_d^{n_d}) (x) \right]^{\frac{1}{n^{d+1}}}} \\
\end{eqnarray*}
Thus,
$$D (\varphi_n \circ f_i \circ \varphi_n^{-1}) (\varphi_n (x)) 
=\left[ \prod_{0 \leq n_j < n; j \neq i} D(f_i^{n}) (f_0^{n_0} \cdots f_{i-1}^{n_{i-1}} f_{i+1}^{n_{i+1}} \cdots f_d^{n_d} (x) ) \right]^{\frac{1}{n^{d+1}}}
,$$
and therefore
$$\log \big( D (\varphi_n \circ f_i \circ \varphi_n^{-1}) (\varphi_n (x)) \big) = 
\frac{1}{n^{d+1}} \sum_{\substack{0 \leq n_j < n; \\ j \neq i}} 
\log (Df_i^n) (f_0^{n_0} \cdots f_{i-1}^{n_{i-1}} f_{i+1}^{n_{i+1}} \cdots f_d^{n_d} (x) ).$$
Since the total variation is invariant under coordinate changes, a triangle inequality yields 
\begin{equation}
\label{e:var-conj}
\var \big( \log D (\varphi_n \circ f_i \circ \varphi_n^{-1})  \big) 
\leq \frac{1}{n^{d+1}} \sum_{\substack{0 \leq n_j < n \\ j \neq i}} \var (\log Df_i^n) 
= \frac{\var (\log Df_i^n)}{n}.
\end{equation}
At this point, note that, by Corollary \ref{c:varflow2}, if $\dist(f_0)=0$ then $f_0$ embeds into a $C^1$ flow. 
Since $f_1,\ldots,f_d$ are all powers of $f_0$, Proposition \ref{p:flot} provides in this case the desired path 
connecting the action to the trivial one (see also Remark \ref{r:vanish} below). 
From now on, we thus assume that $\dist(f_0)>0$, which implies that $\dist(f_i)>0$ for every nontrivial $f_i$ 
by the homogeneity of $\dist$ (see~\eqref{eq-hom}).  
In this case, \eqref{e:var-conj} implies that, for $N$ big enough and all $i \in [\![0,d]\!]$, 
$$\var \big( \log D (\varphi_N \circ f_i \circ \varphi_N^{-1})  \big) 
\leq  2 \, V_\infty(f_i).$$
Using the Interpolation Lemma \ref{l:chem}, one can connect the initial action to the one conjugated by $\varphi_N$ 
without increasing the $C^{1+ac}$-distance to the identity along the deformation. 
Let $g_0,\dots, g_d$ be the conjugates of $f_0,\dots, f_d$ by $\varphi_N$, and let $\tilde\rho$ be the new action 
of $\Z^d$ sending the canonical generators to $g_1,\dots,g_d$. 
For each $i \in [\![1,d]\!]$, there exists an integer $m_i$ such that $f_i = f_0^{m_i} $. 
Using the homogeneity of $V_\infty$ (see~\eqref{eq-hom}), for those $i$ for which $m_i \neq 0$ we obtain
\begin{equation}
\label{eq:est-F}
\var (\log Dg_0) \leq 2 \, \dist (f_0) =
\frac{2\, \dist (f_0^{m_i})}{|m_i|} = \frac{2\, \dist(f_i)}{|m_i|}.
\end{equation} 
We let $g_0^t$ be the homotopy from $g_0$ to the identity that is linear along $\log D (\cdot)$.  
More precisely, we let $g_0^t$ be such that $g_0^t (0) := 0$ and 
\begin{equation}
\label{e:otra-def}
Dg_0^t (x) := \frac{ e^{(1-t) \log Dg_0(x)}} {\int_0^1 e^{(1-t) \log Dg_0 (y)} dy}.
\end{equation}
Now consider the path $(\tilde\rho_t)$ from $\tilde\rho$ to the trivial action made of the actions $\tilde\rho_t$ 
that associate to the $i^{th}$ generator of $\mathbb{Z}^d$ the map $(g_0^t)^{m_i}$. 
Since, for a certain constant $c$,  
$$\log Dg_0^t = (1-t) \log Dg_0 + c,$$
we have 
 $\var (\log Dg_0^t) = (1-t) \, \var (\log Dg_0).$ 
Therefore, by (\ref{eq:est-F}),
\begin{eqnarray*}
\var (\log D(g_0^t)^{m_i}) 
&\leq& | m_i | \, \var (\log Dg_0^t) 
\,\,\, = \,\,\, (1-t) \, | m_i | \, \var (\log Dg_0) \\
&\leq& 2 \, (1-t) \, \dist (f_i) 
\,\,\, \leq \,\,\, 2 \, (1-t) \, \var (\log Df_i),
\end{eqnarray*}
which shows the desired estimates for the path.  
\end{proof}

\begin{rem}
\label{r:vanish}
Given any $C^{1+ac}$ action of $\Z^d$ on $[0,1]$ (without the hypothesis of a cyclic image), 
if the asymptotic variation of the images of the generators of $\Z^d$ all vanish, 
then the Main Theorem of \cite{Na23} establishes that the action can be deformed to the trivial 
one through a path of $C^{1+ac}$-conjugated actions. We could have used this result in the case where 
$\dist(f_0)=0$ above. However, we preferred to reduce it to the flowable case using Appendix \ref{a:varflow}. 
\end{rem}

\begin{rem}
\label{rem-specific}
Let us stress that, in the $C^2$ setting, we do not know how to perform a conjugacy trick as the one above 
involving the asymptotic distortion that keeps a good control in $C^2$ topology. 
\end{rem}


\section{Path-connectedness of $C^{1+ac}$ actions on the circle}
\label{s:C1ac-circle}

In this section, we prove our Main Theorem \ref{t:C1ac} for the case of the circle. 
Note that it suffices to prove that any $C^{1+ac}$ action of $\Z^d$ on the circle can be connected to another one 
that is conjugated to an action by rotations, for the latter can be connected to the trivial action just 
by moving the angles. 
So let $\rho$ be such an action, and denote 
$G := \rho(\Z^d) \subset \Diff^{1+ac}_+(\T^1)$. 
The rotation number function induces a group morphism $\rot \!: G \to \T^1$ whose kernel $G_0$ coincides with 
the set of elements having fixed points. 
Moreover, each of these elements fixes all the points in the support of every invariant probability measure; 
in particular, they share a global fixed point $a_0$ (that we identify to $0 \in \mathbb{T}^1$). 
See \cite[Chapter 2]{N1} for all of this.

We proceed differently depending on whether $\rot(G)$ is finite or not. 
Roughly, in the first case, we adjust and extend the results about the interval from the preceding section, 
while in the second case, we slightly simplify the proof of the Main theorem of \cite{Na23}, 
which claims that the action can be connected to an action by rotations through a path of conjugates.


\subsection{Finite rotation number set}
\label{ss:finite}

Here, we assume that $\rot(G)$ is a finite subgroup of $\T^1$, hence of the form $\mathbb{Z} / n \mathbb{Z}$. 
The case where $n=1$ has been essentially settled. 
Indeed, this corresponds to the situation where there is a global fixed point for the action, 
and the reader can readily check that our proof of the Main Theorem \ref{t:C1ac} for the interval works verbatim 
for $C^{1+ac}$ actions of $\Z^d$ on the circle with global fixed points. 
For example, the conjugating map $\varphi$ introduced in \eqref{e:defh} is a genuine circle diffeomorphism 
if the original maps are. (Recall that we are identifying $a_0 \sim 0$.)

If $n > 1$ and $G_0$ is trivial, then $G$ is finite and conjugated to a finite (hence cyclic) group of rotations.  
We assume henceforth that $n>1$ and $G_0$ is nontrivial. In this case, we know how to deform 
the restriction of the action to the subgroup $\rho^{-1}(G_0)=\rho^{-1}(\ker(\rot))$, 
but we need to ensure that this deformation can be extended to the whole action of $\Z^d$, 
and this requires a few adjustments. 
Note that $G$ is generated by $\{ g,f_1,\dots,f_k \}$ for any $g$ satisfying $\rot(g)=\frac1n$ 
and any $\{ f_1,\dots,f_k \}$ generating $G_0$. 

Suppose first that one can find such a $g$ with $g^n=\id$. Then one can conjugate the action so that $g$ becomes the 
rational rotation $R_{\frac1n}$. 
One can then check that the deformation of $(f_1,\dots, f_k)$ to $(\id,\dots,\id)$ provided by the proof of the Main 
Theorem \ref{t:C1ac} for actions on the circle with a global fixed point is $R_{\frac1n}$-equivariant. 
Thus, $(g,f_1,\dots,f_k)$ can be deformed through commuting diffeomorphisms to $(R_{\frac1n},\id,\dots,\id)$, 
which settles this case.

Suppose now, more generally, that there exists a $C^{1+ac}$ circle diffeomorphism $f$ fixing $a_0$ 
that is an $n^{th}$ root of $g^n$.   
We claim that $f$ commutes with all the elements of $G$. 
Assuming this for a while, we let $\tilde g := gf^{-1}$  to obtain that  $\{ \tilde g, f, f_1,\dots,f_k \}$ generates 
an Abelian group of $C^{1+ac}$ circle diffeomorphisms containing $G$ and $\tilde g^n=\id$, so that we are reduced 
to the previous settled case. 
Now, to see that $f$ commutes with $G$, we first check that it commutes with $g$. 
To do this, note that $\Fix (f^n) = \Fix (f)$,  and on each connected component $I$ of 
$\clo\setminus\Fix(g^n)=\clo\setminus\Fix(f)$, 
the $n^{th}$ root of $g^n$ is unique among $C^1$ diffeomorphisms (see Remark \ref{r:folk}). Since $gfg^{-1}$ satisfies 
$(gfg^{-1})^n = gf^ng^{-1} =gg^ng^{-1} = g^n$, this forces the equality $gfg^{-1} = f$ on each such $I$. Moreover, 
the maps $f$ and $g$ trivially commute on $\Fix (g^n) = \Fix (f)$. 
It remains to prove that $f$ commutes with every element $h$ of $G_0$. 
To do this, first note that, if we open the circle at $a_0$, then $g^n$ and $h$ become commuting diffeomorphisms 
of the interval. 
It then follows from Kopell's lemma that every connected component of $\clo\setminus\Fix(g^n)=\clo\setminus\Fix(f)$ 
is fixed by $h$. Now, a folklore result (see again Remark \ref{r:folk}) shows that on the closure of any such component
 $I$, the $C^1$ centralizer of the restriction of $g^n$ is Abelian, so the restrictions of $f$ and $h$ commute there. 
 Finally, since $\Fix(g^n) =  \Fix(f)$ is contained in $\Fix (h)$, the maps $f$ and $h$ trivially commute on $\Fix (g^n)$.

A major problem with the strategy above is that, in general, a diffeomorphism of the interval 
(as is $g^n$ if we open the circle at $a_0$) does not have an $n^{th}$ root, even if it does not fix any other point. 
Actually, Kopell proved in her thesis \cite{Ko} that generic $C^2$ diffeomorphisms have a trivial $C^1$ centralizer 
(i.e. a centralizer reduced to its powers). 
And even in the optimistic situation, elements in the $C^1$ centralizer of a $C^{1+ac}$ diffeomorphism may fail to be 
of class $C^{1+ac}$, as it is the case for the time-1 map of the flows built in Appendix~\ref{a:examples}. 
All that follows is a way to circumvent this misfortune.

\subsubsection{The case where $G_0$ has a global parabolic fixed point}

Let us first assume that $a_0$ is a parabolic global fixed point for $G_0$. 
Adding an element to the generating set $\{f_1,\dots,f_k\}$ of $G_0$ if necessary, 
we can assume that $g^n$, which belongs to $G_0$, is equal to $f_1$. 
Up to $C^{\infty}$ conjugacy, we can also assume that the orbit of $a_0 \sim 0$ under $g$ is 
$\{0,\frac1n,\dots,\frac{n-1}n\}$. 

We claim that, up to a $C^{1+ac}$ conjugacy now, one can assume that
\begin{equation}
\label{e:R1n}
g = R_{\frac1n} \text{ on } [0,\tfrac{n-1}n]\quad \text{and}\quad g 
= g^n\circ R_{\frac1n}
= f_1\circ R_{\frac1n}\text{ on }[\tfrac{n-1}n,1].
\end{equation} 
Indeed, any smooth diffeomorphism $\psi$ of $[0,\frac1n]$ parabolic at $0$ and satisfying $D\psi(\frac1n) = Dg(0)$  
extends in a unique way to a conjugacy $\f$ with the desired behavior. 
The requirement that $\f^{-1}g\f=R_{\frac1n}$ on $[0,\frac{n-1}n]$, 
or equivalently that $\f = g \f R_{-\frac1n}$ on $[\frac1n,1]$, 
determines $\f$ completely (step by step) on $[\frac1n,1]$ from its restriction $\psi$ to $[0,\frac1n]$ 
(namely, for $k\in[\![0,n-1]\!]$, $\f= g^k\psi R_{-\frac k n}$ on $[\frac k n, \frac{k+1}n]$). 
In particular, if for each $f \in G$ one denotes by $\tilde f$ the conjugate $\f^{-1}f\f$, on $[\frac{n-1} n, 1]$ we have 
$$ \tilde g = \f^{-1} g \f 
= \psi^{-1} g ( g^{n-1}\psi R_{-\frac{n-1} n}) 
= \f^{-1} g^{n}\f R_{\frac1n} 
= (\f^{-1}f_1\f )R_{\frac1n} 
=\tilde f_1 R_{\frac1n}$$
as required.

We still need to check that $\f$ is a $C^{1+ac}$ diffeomorphism. 
To do this, first note that the boundary condition on $\psi$ ensures that the restrictions of $\f$ to $[0,\frac1n]$ 
and $[\frac1n,\frac2n]$ glue in a $C^1$ way at $\frac1n$. 
Indeed, if $D_-$ and $D_+$ denote the left and right derivatives, then 
$$D_+\f(\tfrac1n)=D_+(g \f R_{-\tfrac1n})(\tfrac1n)=D_+g(0) \, D\f(0)=D_{-}\f(\tfrac1n).$$
If $2\le k\le n-1$, near $\frac k n$ one has  $\f = g^{k-1}\f R_{-\frac{k-1}n}$. Since we just checked that $\f$ is $C^1$ 
near $\frac1n = R_{-\frac{k-1}n}(\frac kn)$, this equality shows that $\f$ is also $C^1$ near $\frac k n$. Finally, to check 
differentiability at $a_0 = 0$, we use the initial hypothesis of parabolicity of $g^n$ at~0: 
\begin{align*}
D_-\f(0)
= D_-(g^{n-1}\psi R_{-\tfrac{n-1} n})(\tfrac n n)
&=(Dg^{n-1})(\tfrac1n) \, D_-\psi(\tfrac1n)\\
&=\underbrace{(Dg^n)(0)}_1 \, Dg^{-1}(\tfrac1n) \, D_-\psi(\tfrac1n)
=(Dg(0))^{-1} \, D_-\psi(\tfrac1n)
=1.
\end{align*}
This shows that $\varphi$ is a $C^1$ diffeomorphism, and the absolute continuity of $\log D\f$ is immediate 
from that of $\log Dg$.

\medskip
\vspace{0.1cm}

From now on, we assume that \eqref{e:R1n} is satisfied. We claim that, under this hypothesis, 
all the elements of $G_0$ commute with $R_{\frac1n}$.  
Indeed, every $f\in G_0$ fixes $[0,\frac{n-1}n]$, and $g$ equals $R_{\frac1n}$ therein. Therefore, on this interval, 
$f\circ R_{\frac1n}=f\circ g=g\circ f=R_{\frac1n}\circ f$. 
On $[\frac{n-1}n,1]$ we have
$$f\circ R_{\frac1n}
= f\circ g^{-n}\circ (g^n\circ R_{\frac1n})
= f\circ g^{-n}\circ g = f\circ (g^{n-1})^{-1}
= (g^{n-1})^{-1}\circ f 
= (R_{\frac1n}^{n-1})^{-1}\circ f 
= R_{\frac1n}\circ f,$$
where the second equality follows from the second part of \eqref{e:R1n} and the second-but-last one 
from the first part of \eqref{e:R1n}.

As was explained at the beginning of this section, knowing that all elements of 
$G_0 = \langle f_1, \ldots, f_k \rangle$ commute with $R_{\frac1n}$ easily allows to connect $(f_1,\dots,f_k)$ to 
$(\id,\dots,\id)$ through a continuous path $(f_{1,t},\dots,f_{k,t})$ of commuting $C^{1+ac}$ diffeomorphisms, 
all of which commute with $R_{\frac1n}$. 
We let $g_t$ be the rotation by $\frac1n$ everywhere except on $[\frac{n-1}n,1]$, where it is defined 
as $f_{1,t}\circ R_{\frac1n} $. 
We claim that $(g_t,f_{1,t},\dots,f_{k,t})$ defines a deformation of the initial action to $(R_{\frac1n},\id,\dots,\id)$.
Indeed, 
\begin{itemize}
\item the regularity of $g_t$ for each $t$ follows from the parabolicity of $f_{1,t}$ at the orbit of $0$;  
\item the continuity of the paths of diffeomorphisms is then immediate;
\item the path starts and ends where stated (note that $g_0=g$, according to \eqref{e:R1n});
\item for a given $t$, the $f_{i,t}$'s commute by definition, and $g_t$ commutes with them thanks to the analogue of \eqref{e:R1n} 
with ``$t$'' subscripts. More precisely, on $[0,\frac{n-1}n]$ and $[\frac{n-1}n,1]$, which are both fixed by $f_{i,t}$ 
for every $i$, the map $g_t$ is equal to (the restriction of) a diffeomorphism which commutes with $f_{i,t}$ 
(namely, $R_{\frac1n}$ or $f_{1,t}\circ  R_{\frac1n}$, respectively).
\end{itemize}

\medskip

This concludes the argument in the case where $G_0$ has a global parabolic fixed point.

\medskip

The next two paragraphs deal with the case where all globally fixed points of $G_0$ are hyperbolic. 
A straightforward extension of Proposition \ref{p:noncyclic} to this setting shows that either $G_0$ 
is cyclic or it embeds into a $C^1$ flow, so we analyze these cases separately. Recall that we are 
still assuming that $n > 1$. 


\subsubsection{The case where $G_0$ has no global parabolic fixed point and is infinite cyclic}
\label{section-pourri}

In this case, if we denote $f$ a generator of $G_0$, then the group $G$ is generated by $g$ and $f$ (the generator $f$ 
can be avoided if $g^n$ already generates $G_0$). The deformation we aim requires a previous structure result. In the 
statement and the proof below, $\mathbb{Z} / 1 \, \mathbb{Z}$ has to be understood as the trivial group, so that 
$\mathbb{Z} \times \mathbb{Z} / 1 \, \mathbb{Z} \sim \mathbb{Z}$.

\begin{lem} 
\label{lem-structure}
If $n > 1$ and $G_0$ is infinite cyclic, then $G$ is isomorphic to $\mathbb{Z} \times \mathbb{Z} / k \mathbb{Z}$ 
for some $k \ge 1$. 
\end{lem}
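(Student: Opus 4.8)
The plan is to exploit the fact that $G$ is a finitely generated abelian group containing a distinguished infinite cyclic subgroup $G_0 = \langle f \rangle$, and that the quotient $G/G_0$ embeds into $\T^1$ via $\rot$ as the cyclic group $\Z/n\Z$. So $G$ sits in a short exact sequence $1 \to \Z \to G \to \Z/n\Z \to 1$ of abelian groups. By the structure theorem for finitely generated abelian groups, $G \cong \Z \times \Z/k\Z$ or $G \cong \Z \times \Z^0 = \Z$ (the latter covered by $k=1$), once we show that $G$ has free rank exactly $1$ and that its torsion subgroup is cyclic. The free rank is $1$ because tensoring the exact sequence with $\Q$ kills the finite quotient, giving $G \otimes \Q \cong \Z \otimes \Q = \Q$. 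So it remains to identify the torsion subgroup $T(G)$ and show it is cyclic.

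First I would observe that $T(G) \cap G_0 = \{1\}$ since $G_0$ is torsion-free, so $T(G)$ injects into $G/G_0 \cong \Z/n\Z$; hence $T(G)$ is cyclic, say $T(G) \cong \Z/k\Z$ with $k \mid n$. Next, to get the direct product decomposition rather than merely a group of the correct isomorphism type, I would argue that the extension splits: pick $g$ with $\rot(g) = 1/n$, a generator of $G/G_0$ modulo... more precisely the image of $g$ generates $\mathrm{im}(\rot)=\Z/n\Z$. Write $n = k m$. Then $g^k$ maps to an element of order $m$ in $\Z/n\Z$, and one checks $\langle g^k \rangle$ is a free complement: it meets $T(G)$ trivially (its image in $\Z/n\Z$ has order $m = n/k$, while $T(G)$ has order $k$, and... one needs $g^k$ to be non-torsion, which holds because if $g^{kj}=\id$ then $\rot(g)^{kj}=0$ forces $n \mid kj$ i.e. $m \mid j$, and then $g^{km} = g^n \in G_0$ is a power of $f$ which is torsion-free, so $g^{km}=\id$ would force it to be $\id$; but actually $g^{km}=g^n$ need not be $\id$ — here is where care is needed). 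Let me instead take the cleaner route: since $G$ is abelian of free rank $1$ with torsion subgroup $\Z/k\Z$, the structure theorem directly gives $G \cong \Z \times \Z/k\Z$, with no need to exhibit the splitting concretely — the isomorphism type is all the lemma asserts.

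So the core of the argument is just: (i) $G$ is finitely generated abelian; (ii) $G \otimes \Q \cong \Q$, so the free rank is $1$; (iii) $T(G)$ embeds in $\Z/n\Z$ via $\rot$ (using $T(G) \cap \ker(\rot) = T(G) \cap G_0 = \{1\}$, as $G_0$ is torsion-free), hence is cyclic $\cong \Z/k\Z$; (iv) conclude by the classification of finitely generated abelian groups. I expect the only genuine subtlety — and the thing to state carefully — is step (iii), namely that the restriction of $\rot$ to $T(G)$ is injective, which rests on the hypothesis that $G_0$, the kernel of $\rot$, is \emph{infinite cyclic} and therefore torsion-free. Everything else is a routine invocation of abelian group theory, and the value $k$ in the statement is exactly the order of the torsion subgroup of $G$.
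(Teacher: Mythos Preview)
Your proof is correct and follows essentially the same route as the paper: both invoke the structure theorem for finitely generated abelian groups after establishing that the free rank is~$1$. The paper argues this by noting that $G$ is two-generated (by $g$ and $f$), hence has at most two invariant factors, and then rules out free rank~$2$ by observing that any two elements have nontrivial powers lying in the cyclic group $G_0$; you instead tensor the short exact sequence $0 \to \Z \to G \to \Z/n\Z \to 0$ with $\Q$. Your explicit identification of the torsion subgroup as a subgroup of $\Z/n\Z$ via $\rot$ is a clean touch (the paper relegates the analogous computation to a subsequent remark), and your argument has the mild advantage of not using the two-generator hypothesis, but the two proofs are otherwise interchangeable.
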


Before proving this elementary lemma, let us explain how we deform the action based on it. 
We first conjugate the action so that the generator of the $\Z/k\Z$ factor becomes a rotation $R$ of order $k$, 
and we denote by $h$ the generator of the $\Z$ factor after conjugacy (which is not necessarily in $G_0$, 
cf. Remark \ref{rem-st}).

One can deform $h$ to a rotation along diffeomorphisms commuting with $R$, 
for example by linearly interpolating the logarithmic derivative (compare (\ref{e:otra-def})). 
In concrete terms, if one lets, for each $t \in [0,1]$, 
\begin{equation}
\label{eq-deform}
h_t (x) = h(0) + \frac{\int_0^x (Dh (y))^{1-t} \, dy}{\int_{\mathbb{T}^1} (Dh (y))^{1-t} \, dy},
\end{equation}
one can readily check that, since $h$ commutes with the rotation of order $k$, the same holds for  
$h_t$ for each $t \in [0,1]$. 
(Note that this argument proves that the space of circle diffeomorphisms commuting with a given 
rational rotation is path-connected.) 
Now keeping the order-$k$ rotation untouched and deforming $h$ via $h_t$, we obtain 
a path of actions ending (at $t=1$) at an action by rotations, as desired.

\begin{proof}[Proof of Lemma \ref{lem-structure}] 
Since $G$ is Abelian, generated by two elements and infinite, it has a torsion decomposition with 
at most two factors (cf. Remark \ref{rem-st} below), at least one of them being $\Z$. 
There cannot be two factors equal to $\mathbb{Z}$ since any two elements have positive powers in 
$G_0 = \langle f \rangle$, and hence have nontrivial powers that are equal. The only possibilities are thus 
$\Z $ and $\Z\times \Z/k\Z$ for some $k>1$.
\end{proof}

\medskip

\begin{rem}
\label{rem-st} 
Let us relate the algebraic structure of $G$ to the relation between the initial generators $f$ and $g$. 
First note that the map $\pi:(i,j)\in\Z^2\mapsto g^if^j\in G$ is a surjection, which is not injective since $g^n$ 
belongs to $G_0$ and hence equals a (possibly trivial) power $f^m$ of the fixed generator $f$ of $G_0$. 
The structure of $G$ then depends on the arithmetic nature of the pair $(m,n)$.  
In concrete terms, we claim that $G\simeq \Z\times\Z/k\Z$, with $k=gcd(m,n)$ if $m\neq0$, and $k=n$ if $m=0$.  

Indeed, the kernel $N$ of $\pi$ contains $(n,-m)$. Actually, it is generated by it, since it cannot be of rank $2$ 
(otherwise $G$ would be finite), and no relation between $g$ and $f$ can involve a power of $g$ smaller than $n$ 
(because $\rot(g)=\frac1n$). 
Hence, the structure of $G$ depends on whether $(n,-m)$ can be completed to a basis of $\Z^2$ 
(equivalently, whether $m$ and $n$ are relatively prime) 
or not. In the first case, a change of basis shows that $G\simeq\Z^2/N\simeq(\Z\times\Z)/(\Z\times\{0\})\simeq \Z$. 
In the second case, if $k=gcd(m,n)$ (or $k=n$ if $m=0$), up to a change of basis of $\Z^2$, we have 
$N= \{0\} \times k\Z $. Thus, 
$G\simeq\Z^2/N\simeq \Z \times \Z/k\Z$, as announced.

The description above can be made more concrete. 
For example, when $m$ is a multiple $q\, n$ of $n$ (in particular, when $m=0$), 
in the isomorphism $G\simeq \Z\times\Z/n\Z$, the $\Z/n\Z$ factor is generated by  $\tilde g = gf^{-q}$ 
and the $\Z$ factor by $f$.  
In the totally opposite case, namely when $gcd (m,n) = 1$, letting $i,j$ be integers such that $i m + j n=1$, 
we have that  $\tilde{f} := g^i f^j$ generates $G \sim \mathbb{Z}$. 
Indeed, 
$$\tilde{f}^n = g^{in} f^{jn} = f^{im} f^{jn} = f^{im+jn} = f,$$
hence $f \in \langle \tilde{f} \rangle$, and 
$$\tilde{f}^{m} 
= g^{im} f^{jm} 
= g^{1-jn} f^{jm}
= g\, g^{-jn}   f^{ j m}
= g \, f^{-jm} f^{jm}
=g,$$
hence $g \in \langle \tilde{f} \rangle $. 
\end{rem}


\subsubsection{The case where $G_0$ has no  global parabolic fixed point and is flowable}

Here we assume that $G_0=\langle f_1,\dots,f_k\rangle$ embeds into a $C^1$ flow $(f^t)$ fixing $a_0$. 
By the Regularization Lemma \ref{l:lissage}, there exists a $C^1$ diffeomorphism $\varphi$ conjugating $(f^t)$ 
to the flow $(\tilde f^t)$ of a $C^{1+ac}$ vector field on $[0,1]$. Moreover, by looking into the proof, one 
readily checks that $\varphi$ perfectly matches at the endpoints, so this is actually a flow on the circle. 
Furthermore, the proof of Lemma \ref{lem-complement} applies to $g$, thus showing that $\varphi$ 
conjugates $g$ to a $C^{1+ac}$ circle diffeomorphism $\tilde g$ commuting with the new flow. By the 
Interpolation Lemma \ref{l:chem}, the initial action can be connected to the new one (conjugated by $\varphi$) 
by a path of $C^{1+ac}$ actions. Now, since $\tilde g^n\in\langle \tilde f_1,\dots,\tilde f_k\rangle$, 
there exists $\tau\in\R$ such that $\tilde g^n = \tilde f^\tau$. But then $\tilde f^{\tau/n}$ is an $n$-th 
root of $\tilde g^n$ fixing $a_0 \sim 0$ and commuting with $G_0$, so the discussion 
of the beginning of Section \ref{ss:finite} allows to conclude.


\subsection{Infinite rotation number set}
\label{ss:infinite}

Let us now assume that $\rot(G)$ is an infinite subgroup of $\T^1$, which means that at least one element of 
the image group has an irrational rotation number. 
Then it follows from \cite[Main Theorem]{Na23} that there exists a path of $C^{1+ac}$-conjugated actions 
ending at an action by rotations, and this last action can then be easily deformed to the trivial one. 
In concrete terms, this works as follows.

First, \cite[Theorem 2]{Na23} establishes that $C^{1+ac}$ circle diffeomorphisms of irrational rotation number have 
vanishing \emph{asymptotic variation}
\footnote{Recall that \cite{Na23} also shows that this is false for $C^{1+bv}$ circle diffeomorphisms 
of irrational rotation number. 
This is the second place in the article where we need regularity $C^{1+ac}$ rather than $C^{1+bv}$ for making our proof work.}  
(cf. Section \ref{ss:rat} for the definition).
For obvious reasons, this is also true for finite-order diffeomorphisms. 
Now, it readily follows from Denjoy's theorem that for a $C^{1+bv}$ action of $\Z^d$ on the circle 
whose image has an element with irrational rotation number, elements of rational rotation number are forced to have 
finite order. 
Therefore, in the present situation, all the elements of the image group have vanishing asymptotic variation. 
Fix a system of generators $\{f_1,\dots,f_d\}$. 
As in Section \ref{ss:rat}, we can define a sequence of conjugacies~$\f_n$ by (\ref{eq:def-g_n}), 
and the very same estimate \eqref{e:var-conj} shows that $\var (\log D (\f_n f_i \f_n^{-1}))$ 
converges to $0$ as $n$ goes to infinity for every $i \in [\![1,d]\!]$. 
Since $\rot (\f_n f_i \f_n^{-1})$ remains constant on $n$, this implies that $\f_n f_i \f_n^{-1}$ 
converges in the $C^{1+ac}$ sense to the rotation of angle $\rot(f_i)$. 
Using the Interpolation Lemma \ref{l:chem}, we can upgrade this discrete deformation to a continuous path 
that connects the initial action to an action by rotations, which completes the proof.

\medskip

Let us conclude this section with a somewhat unifying remark. 
The parallel between the strategies for the case of irrational rotation numbers on the circle and for the case of 
cyclic components on the interval is clear, since we used the same conjugacies in order to reduce to a more 
manageable action. 
But a parallel can also be made with the strategy for the flowable components, or rather the noncyclic components, 
as explained below.

On the one hand, let us consider the circle case with elements with irrational rotation number. 
Let $f_1,\dots,f_d$ be the generators of the image group. 
Let us choose a lift $\F_i$ of each $f_i$ to the real line. 
An easy application of Denjoy's theorem shows that the group generated by $\F_1, \ldots, \F_d$ 
embeds into a $C^0$ flow $(\F^t)$, where $\F^1$ is the unit translation and $\F_i$ equals $\F^{\alpha_i}$ for some 
$\alpha_i \in \mathbb{R}$ 
(the value of $\alpha_i$, reduced modulo $\mathbb{Z}$, equals $\rot (f_i)$). 
With this notation, let us again consider the conjugacy $\f_n$ described by (\ref{eq:def-g_n}) in Section~\ref{ss:rat}, 
and let $\Phi_n$ be its lift that satisfies $\Phi_n (0) = 0$. Then we have an equality of the form
\begin{equation}
\label{e:hn}
\log D \Phi_n = \frac1{n^d}\sum_{k\in[\![0,n-1]\!]^d}\log D \F^{\rep{k\cdot\alpha}}-c_n,
\end{equation}
where $c_n\in\R$ is such that $\int_0^1D \Phi_n=1$ and $\{ k\cdot\alpha \}$ denotes the fractional part of 
the scalar product $k \cdot \alpha$ between the $d$-tuples $(k_1,\dots,k_d)$ and $(\alpha_1,\dots,\alpha_d)$. 
Moreover, each $\Phi_n \F_i \Phi_n^{-1}$ converges in the $C^{1+ac}$ topology towards the translation 
by $\alpha_i$, so the limit action on the real line embeds into the flow of the unit vector field.

On the other hand, let us consider a family $f_1,\dots,f_d$ of (nontrivial) commuting diffeomorphisms 
of the interval embedding into a $C^1$ flow $(f^t)$. Such a flow can be 
 taken so that $f_1=f^1$. For each $i \in [\![1,d]\!]$, let $\alpha_i$ be such that $f_i = f^{\alpha_i}$. 
Let $(\f_n)$ be a sequence of conjugacies defined similarly to (\ref{e:hn}), that is, by letting $\f_n (0) := 0$ and  
$$\log D \f_n := \frac1{n^d}\sum_{k\in[\![0,n-1]\!]^d}\log D f^{\rep{k\cdot\alpha}}-c_n,$$
where $c_n\in\R$ is such that $\int_0^1D\f_n=1$.  
It turns out that, if some $\alpha_i\in\R$ is irrational, then these conjugacies $\f_n$
converge in the $C^1$ topology to the conjugacy $\f$ of the proof of the Regularization Lemma \ref{l:lissage}; 
see (\ref{e:defh}). 
Moreover, the conjugates $\f_n f_i \f_n^{-1}$ converge in the $C^{1+ac}$ topology towards the time-$\alpha_i$ 
maps of the flow of the $C^{1+ac}$ vector field $\tilde{X}$ of the same proof. 
Furthermore, these conjugates can be connected by a path using the Interpolation Lemma \ref{l:chem}. 
Note that this argument yields an alternative proof of Proposition \ref{p:flot} in the noncyclic case. Details are 
left to the reader. 


\section{Studying the connectedness in regularity $C^2$ and higher}
\label{s:C2}

In this section, we prove the Main Theorem \ref{t:C2}. We also give a partial analog for actions on the circle, 
and we discuss the situation in higher regularity.


\subsection{Connectedness of $C^{2}$ actions of $\Z^d$ on the interval}
\label{ss:C2-int}

Theorem \ref{t:C2} follows from the next result, which positively answers a question from \cite{EN2}.

\begin{thm}
\label{t:dense}
The path-connected component of the trivial action is dense in the space of all $C^2$ actions of $\Z^d$ on $[0,1]$. 
As a consequence, this space is connected. 
\end{thm}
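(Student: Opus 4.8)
The plan is to reduce Theorem~\ref{t:dense} to the tools already assembled for the $C^{1+ac}$ case, so I would first recall what a $C^2$ action of $\Z^d$ on $[0,1]$ looks like away from parabolic global fixed points. As in the interval case of the Main Theorem~\ref{t:C1ac}, let $P$ be the set of global fixed points at which every diffeomorphism is $2$-parabolic (i.e. $C^2$-tangent to the identity), and decompose $[0,1]\setminus P$ into its connected components. On the closure of each component, Proposition~\ref{p:noncyclicC2} says the restricted action either has cyclic image or embeds into a $C^1$ flow which is $C^{r-1}=C^1$ away from the boundary; combined with the $C^2$ analog of Proposition~\ref{p:flot} (Proposition~\ref{p:flotC2}, announced in Section~\ref{ss:C2-int}), the flowable pieces can be \emph{path}-connected to the trivial action with good $C^2$-control. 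So the only genuinely problematic pieces are the cyclic ones, for which Proposition~\ref{p:cyclic} and its asymptotic-variation conjugacy trick do \emph{not} carry over with $C^2$-control (cf. Remark~\ref{rem-specific}).

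To handle the cyclic pieces within a density statement rather than a path-connectedness statement, I would argue that a $C^2$ action with cyclic image on an interval can be $C^2$-approximated by actions that are already in the path-component of the trivial action. Concretely, write the cyclic image as generated by $f_0\in\Diff^2_+([0,1])$ with $f_i=f_0^{m_i}$. I would perturb $f_0$ in the $C^2$ topology to a diffeomorphism $\tilde f_0$ that either has an isolated hyperbolic fixed point structure making $\langle\tilde f_0\rangle$ reducible to a single-diffeomorphism deformation, or — better — perturb so that $\tilde f_0$ embeds into a $C^2$ flow (this can be done near any $C^2$ diffeomorphism of the interval by a small perturbation, since flowable maps are $C^2$-dense: e.g. perturb $\log Df_0$ to make all fixed points parabolic of suitable form, or insert the map into a flow on each wandering interval à la Szekeres and smooth). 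Since $\tilde f_0$ embeds into a $C^2$ flow, so do all its powers $\tilde f_i=\tilde f_0^{m_i}$, and Proposition~\ref{p:flotC2} connects this approximating action to the trivial one. Doing this on every cyclic component, with the perturbation size controlled so the global concatenation remains a $C^2$ action $C^2$-close to the original (exactly as in the continuity argument in the proof of the Main Theorem~\ref{t:C1ac} for the interval, using a finite subfamily of components carrying most of the $C^2$-size and leaving the rest untouched), produces an action in the path-component of $\id$ arbitrarily $C^2$-close to $\rho_0$.

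Assembling these pieces: given an arbitrary $C^2$ action $\rho_0$ and $\eps>0$, deform-or-approximate each component of $[0,1]\setminus P$ — flowable components exactly via Proposition~\ref{p:flotC2} (no approximation needed, a genuine path), cyclic components via the $C^2$-small flowability perturbation above followed by Proposition~\ref{p:flotC2} — choosing the perturbations small enough and concentrated on finitely many components so that the glued-together map $\rho_1$ is a bona fide $C^2$ action with $d_2(\rho_0,\rho_1)<\eps$ and $\rho_1$ lies in the path-component of the trivial action. One must check, as in the $C^{1+ac}$ proof, that the glued maps are genuinely $C^1$ (hence $C^2$) across points of $P$: this uses that along each deformation the diffeomorphisms stay parabolic at the component endpoints and that the $C^2$-size on a component tends to $0$ as the component shrinks, so $D(\log Df)$ has the right limits at points of $P$. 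This gives density of the path-component of $\id$, and connectedness of the whole space of $C^2$ actions follows immediately.

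The main obstacle I expect is the cyclic components: one needs a perturbation of an arbitrary $C^2$ interval diffeomorphism $f_0$, small in $C^2$, after which $f_0$ (and all its powers) embed into a $C^2$ flow, \emph{and} the induced change on the glued global action is $C^2$-small and still yields a legitimate $C^2$ action tangent to the identity to the correct order at $P$. Getting the flowability perturbation is the delicate analytic point — it amounts to replacing $f_0$ near each of its (finitely many relevant, after a preliminary perturbation making them isolated and hyperbolic) fixed points by the time-one map of a suitable $C^2$ vector field while staying $C^2$-close and matching boundary data — and this is precisely where the $C^2$ rigidity (no $C^{1+ac}$ asymptotic-variation trick available) forces us to settle for density rather than path-connectedness.
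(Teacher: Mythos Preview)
Your approach has a genuine overcomplication that turns into a gap. On cyclic components you propose to perturb the generator $f_0$ in $C^2$ so that it becomes $C^2$-flowable, then invoke Proposition~\ref{p:flotC2}. You correctly flag this flowability perturbation as ``the delicate analytic point'', but you never actually carry it out, and the sketch you give (make fixed points hyperbolic, replace $f_0$ near them by time-one maps of vector fields) is far from a proof. Worse, it is entirely unnecessary.

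The point you are missing is that a cyclic action can be connected to the trivial one by a \emph{path} of $C^2$ actions with no flowability hypothesis whatsoever: if $f_i=f_0^{m_i}$, simply deform $f_0$ to the identity along any $C^2$-continuous path $t\mapsto f_{0,t}$ (say the affine interpolation of $\log Df_0$), and set $f_{i,t}:=(f_{0,t})^{m_i}$. These are powers of a single diffeomorphism, hence commute for every $t$, and the path is automatically $C^2$-continuous. The only reason this does not immediately give path-connectedness of the whole space is the lack of \emph{control}: the $d_2^*$-size of $(f_{0,t})^{m_i}$ along the path can be enormous compared to $d_2^*(f_i,\id)$, so you cannot glue infinitely many such deformations across the components of $[0,1]\setminus P$.

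The paper's proof solves this by a much simpler device than your flowability perturbation: given $\eps>0$, it ``crashes'' every component $I$ on which $d_2(\rho|_I,\id)\le\eps$, i.e.\ replaces $\rho|_I$ by the identity there. The resulting action $\tilde\rho$ is $\eps$-close to $\rho$ in $C^2$ and is nontrivial on only \emph{finitely many} components. On each of these finitely many components one then deforms to the trivial action --- by moving the generator if the image is cyclic, by Propositions~\ref{p:noncyclicC2} and~\ref{p:flotC2} otherwise --- and since there are finitely many pieces, no size control along the deformations is required. This places $\tilde\rho$ in the path-component of $\id$, and density follows. Your argument would be repaired by adopting this crashing step and then observing that on the surviving cyclic components the generator can be deformed directly, dropping the flowability perturbation altogether.
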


\begin{proof}
Consider a $C^2$ action $\rho$ of $\Z^d$ on $[0,1]$, and let $P$ denote this time the set of its 2-parabolic 
global fixed points, that is, the set of global fixed points of $\rho$ at which every diffeomorphism 
of the action is $C^2$-tangent to the identity. 
For any subinterval $I$ of $[0,1]$, we abusively denote by $\id$ the trivial action of $\Z^d$ on $I$. 
Given $\eps>0$, we can define a new $C^2$ action $\tilde \rho$ that is $\eps$-close to $\rho$ in the $C^2$ metric 
as follows: 
for every connected component $I$ of $[0,1]\setminus P$, we let 
$\tilde \rho\res{I} := \id$  if $d_2(\rho\res{I},\id)\le \eps$, 
and $\tilde \rho\res{I} := \rho\res{I}$ otherwise. 
Note that there are only finitely many connected components of $[0,1]\setminus P$ where the new action 
$\tilde{\rho}$ is nontrivial. 
We claim that, on these connected components, $\tilde{\rho}$ can be continuously deformed to the trivial action 
preserving the $C^2$-tangency to the identity at the $C^2$-parabolic global fixed points. 
Indeed, if the restricted action has a cyclic image, it suffices to deform its generator 
(no control is needed here since we only deal with a finite number of components).  
If not, according to Proposition \ref{p:noncyclicC2}, this restriction is $C^1$-flowable, and one can apply 
Proposition \ref{p:flotC2} below, which is the analog of Proposition \ref{p:flot} for the $C^2$ setting. 
So $\tilde\rho$ belongs to the path-connected component of the trivial action. 
Since $\varepsilon > 0$ was arbitrary, this shows that $\rho$ belongs to the closure of this path-connected component, as required. 
\end{proof}

\begin{rem} 
\label{r:pcC2}
We would not need to ``crash'' the dynamics on the components where $d_2(\rho,\id)\le \eps$ if we knew how to 
deform a $C^2$ action with cyclic image \emph{while keeping a control on the $C^2$-distance from the canonical 
generators to the identity}  (as we did in the $C^{1+ac}$ case). 
This would lead to the path-connectedness of the space of $C^2$ actions of $\Z^d$ on the interval 
(rather than the density of the path-connected component of the trivial action), 
because Proposition \ref{p:flotC2} below provides a good control for components where the restriction of the action 
is noncyclic.
\end{rem}

\medskip

We are left with proving the following $C^2$ version of Proposition \ref{p:flot}. 
The proof would be much simpler if we were not interested in the control of the size of the deformation, 
which we did not use in the proof of Theorem \ref{t:dense}. 
However, because of Remark \ref{r:pcC2} above, we prefer to include these estimates here. 
It is worth pointing out that in the statements and the proofs below, the notation $O$ is used in regard to 
small values of the entries, not large ones.

\begin{prop}
\label{p:flotC2}
Every $C^{2}$ action $\rho_0$ of $\Z^d$ on $[0,1]$ that embeds into a $C^1$ flow can be 
deformed to the trivial action~$\rho_1=\id$ through a path $(\rho_t)_{t \in [0,1]}$ satisfying 
$\max_{t\in[0,1]}d^*_2(\rho_t,\id)=O(d^*_2(\rho_0,\id))$. 
Moreover, along this path, all diffeomorphisms remain $C^2$-tangent to the identity at an endpoint 
if they originally were.
\end{prop}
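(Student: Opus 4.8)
The plan is to follow the same architecture as the proof of Proposition~\ref{p:flot}, replacing the $C^{1+ac}$-specific ingredients by their $C^2$ counterparts. First I would apply the Regularization Lemma~\ref{l:lissage}: since the action embeds into a $C^1$ flow $(f^t)$ whose time-$1$ map (after permuting generators and rescaling time so that $\tau_1=1$ and $\tau_i\in[-1,1]$) is $C^2$, the lemma gives a $C^1$ diffeomorphism $\varphi$ conjugating $(f^t)$ to the flow of a $C^2$ vector field $\tilde X$ with $D\tilde X=\log Df\circ\varphi^{-1}$, hence $\|D\tilde X\|_\infty=\|D\log Df\|_\infty=d_2^*(f,\id)$. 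Moreover, by the last sentence of Remark~\ref{r:stillholds} together with the parabolicity-preserving part of the Regularization Lemma, the new flow maps $g^s:=\tilde f^s$ are $C^2$-tangent to the identity at an endpoint whenever the original ones are.

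The next step is the $C^2$ analogue of Lemma~\ref{l:unsurprising}: the flow $(g^s)$ of the $C^2$ vector field $\tilde X$ is a $C^2$ flow, with the estimate $\|D\log Dg^s\|_\infty\le |s|\cdot\|D\tilde X\|_\infty$, i.e. $d_2^*(g^s,\id)\le |s|\cdot d_2^*(f,\id)$. This is proved exactly as Lemma~\ref{l:unsurprising}, using the identity \eqref{e:nec}, namely $D\log Dg^s=\dfrac{D\tilde X\circ g^s-D\tilde X}{\tilde X}$ on the complement of the zero set of $\tilde X$; here one bounds the numerator by $2\|D\tilde X\|_\infty$ and, more carefully, estimates $|D\tilde X\circ g^s(x)-D\tilde X(x)|$ along the flow line through $x$ by the total variation of $D\tilde X$ restricted to the arc from $x$ to $g^s(x)$, which has ``flow-length'' $|s|$; summing/taking suprema over fundamental domains gives the $L^\infty$ bound rather than the $L^1$ bound of the earlier lemma. (At zeros of $\tilde X$, $D\log Dg^s$ extends continuously by $0$ on the isolated ones and by a limiting argument on the non-isolated ones, just as in the Regularization Lemma.) Since $\|D\tilde X\|_\infty=d_2^*(f,\id)\le d_2^*(\rho_0,\id)$, we get $d_2^*(g^{\tau_i},\id)\le d_2^*(\rho_0,\id)$ for all $i$.

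Now I would invoke the $C^2$ Interpolation Lemma~\ref{l:chemC2} (referenced in the excerpt right after Lemma~\ref{l:chem}) to connect $\rho_0$ to the conjugate action $(g^{\tau_1},\dots,g^{\tau_d})$ by a path of $C^1$-conjugated $C^2$ actions with $d_2^*(\rho_t,\id)\le\max(d_2^*(\rho_0,\id),d_2^*((g^{\tau_i}),\id))=d_2^*(\rho_0,\id)$; as in Remark~\ref{r:higher}, the conjugacies need only be $C^1$ for the intermediate actions to stay $C^2$, and formula \eqref{e:chem} shows tangency to the identity at an endpoint is preserved. Finally, I would deform the regularized action to the trivial one along the explicit path $t\in[0,1]\mapsto(g^{t\tau_1},\dots,g^{t\tau_d})$, which is $C^2$-continuous by the analogue of Lemma~\ref{l:unsurprising} and satisfies $d_2^*(g^{t\tau_i},\id)\le |t\tau_i|\cdot d_2^*(f,\id)\le d_2^*(\rho_0,\id)$, with every diffeomorphism $C^2$-tangent to the identity at an endpoint if $g$ (hence $f$) was. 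Concatenating the two paths yields the claimed deformation with $\max_{t\in[0,1]}d_2^*(\rho_t,\id)=O(d_2^*(\rho_0,\id))$ (in fact with constant $1$, though only the $O$ is needed).

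The main obstacle is the $C^2$ version of Lemma~\ref{l:unsurprising}: one must upgrade the $L^1$-type estimate $\var(\log Df^t)\le|t|\var(DX)$ to the $L^\infty$-type estimate $\|D\log Df^t\|_\infty\le|t|\|DX\|_\infty$, which requires controlling $|DX\circ f^t(x)-DX(x)|$ pointwise rather than in an averaged sense — this is where one needs $DX$ to be merely bounded (automatic from $X\in C^2$) but genuinely needs a careful argument along flow lines near the zeros of $X$, including the non-isolated ones, to see that $D\log Df^t$ extends continuously across $X^{-1}(\{0\})$ with the right bound. Everything else is a routine transcription of the $C^{1+ac}$ arguments with $\var(\cdot)$ and $\|\cdot\|_{L^1}$ replaced by $\|D(\cdot)\|_\infty$.
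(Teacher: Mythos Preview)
Your overall architecture is the same as the paper's, but several quantitative claims are wrong, and one missing ingredient is genuinely needed for the $O$-estimate.

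First, your identification $\|D\tilde X\|_\infty=\|D\log Df\|_\infty$ is incorrect. From the Regularization Lemma one has $D\tilde X=\log Df\circ\varphi^{-1}$, hence $\|D\tilde X\|_\infty=\|\log Df\|_\infty=d_1^*(f,\id)$, not $d_2^*(f,\id)$. The quantity that carries the $C^2$ information is $D^2\tilde X=(D\log Df)\circ\varphi^{-1}\cdot D\varphi^{-1}$, so $\|D^2\tilde X\|_\infty\le d_2^*(f,\id)\cdot\|D\varphi^{-1}\|_\infty$ (this is exactly Lemma~\ref{l:lissageC2}). Consequently your claimed bound $\|D\log Dg^s\|_\infty\le|s|\cdot\|D\tilde X\|_\infty$ is false as written: using \eqref{e:nec} and the mean value theorem, $|D\tilde X\circ g^s(x)-D\tilde X(x)|\le|g^s(x)-x|\cdot\|D^2\tilde X\|_\infty$, and then $(g^s(x)-x)/\tilde X(x)=\int_0^s Dg^u(x)\,du$, giving $\|D\log Dg^s\|_\infty\le|s|\cdot\|D^2\tilde X\|_\infty\cdot(1+d_1^*((g^t),\id))$, not your expression.

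Second, you misquote Lemma~\ref{l:chemC2}: the bound is $\max(d_2^*(\rho_0,\id),d_2^*(\rho_1,\id))\cdot(1+O(d_1^*(\varphi,\id)))$, not just the max. That extra factor is harmless only once you know $d_1^*(\varphi,\id)=O(d_2^*(\rho_0,\id))$. For this you need an a priori $C^1$ control on the \emph{entire} Szekeres flow $(f^t)$, namely $d_1^*((f^t),\id)=O(d_2^*(f,\id))$, which is Proposition~\ref{p:Szek}; only then does Lemma~\ref{l:lissageC2} give $d_1^*(\varphi,\id)=O(d_1^*((f^t),\id))=O(d_2^*(\rho_0,\id))$. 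You never invoke this step, and without it the multiplicative factors $\|D\varphi^{-1}\|_\infty$, $(1+d_1^*((g^t),\id))$ and $(1+O(d_1^*(\varphi,\id)))$ are uncontrolled in terms of $d_2^*(\rho_0,\id)$. In particular, your final claim that the constant is $1$ is false: these factors genuinely appear and only yield an $O$-estimate. (The reference to Remark~\ref{r:stillholds} for the tangency claim is also misplaced; the relevant statement is in Lemma~\ref{l:lissageC2}.)
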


This follows from the next two Lemmas \ref{l:chemC2} and \ref{l:lissageC2} much like 
Proposition \ref{p:flot} followed from the Interpolation Lemma \ref{l:chem} and the Regularization Lemma \ref{l:lissage} 
(of which the statements \ref{l:chemC2} and \ref{l:lissageC2} are $C^2$ analogs). 
We will nevertheless give a complete proof at the end of the section for completeness.

\begin{lem}
\label{l:chemC2}
Let $M$ denote either the interval $[0,1]$ or the circle. 
If two $C^2$ actions $\rho_0$ and $\rho_1$ of a finitely generated group $G$ on $M$ are conjugated 
by a $C^1$ diffeomorphism $\f$, 
then they are connected by a continuous path $(\rho_t)_{t\in[0,1]}$ of $C^1$-conjugated $C^2$ actions such that, 
all along the path, 
$$d_2^* (\rho_t,\id) \le \max \{ d_2^* (\rho_0,\id), d_2^* (\rho_1,\id) \} \cdot (1+O(d_1^* (\f,\id))).$$ 
Moreover, in the case of the interval, if the images of all group elements under $\rho_0$ and $\rho_1$ are 
$C^2$-tangent to the identity at an endpoint, so are their images under $\rho_t$ for every $t$. 
\end{lem}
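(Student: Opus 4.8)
The plan is to run the proof of the Interpolation Lemma~\ref{l:chem} almost verbatim, the only genuinely new point being the bookkeeping of the factor that appears when a $\sup$-norm of a derivative is pushed through a $C^1$ change of variable. First I would fix, for $\rho_0,\rho_1$ and the $C^1$ conjugacy $\varphi$ (with $\varphi f_i\varphi^{-1}=g_i$, where $f_i$ and $g_i$ are the images of the generators under $\rho_0$ and $\rho_1$), the same path of conjugacies $(\varphi_t)$ as in Lemma~\ref{l:chem}: $\varphi_t(0):=0$ and $\log D\varphi_t:=t\log D\varphi-c_t$, with $c_t:=\log\int_0^1(D\varphi)^t$, so that $\varphi_0=\id$, $\varphi_1=\varphi$ and $(\varphi_t)$ is a $C^1$-continuous path of $C^1$ diffeomorphisms (in the circle case the cocycle identity $\log D\varphi_t(0)=\log D\varphi_t(1)$ holds as before). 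Setting $\rho_t$ to be the conjugate of $\rho_0$ by $\varphi_t$ and $f_{i,t}:=\varphi_t f_i\varphi_t^{-1}$, identity~\eqref{e:chem} holds word for word; since the $f_i,g_i$ are $C^2$ and $\varphi$ is $C^1$, its right-hand side is $C^1$, so, composing with $\varphi_t^{-1}$, each $f_{i,t}$ is $C^2$ and $\rho_t$ is a $C^2$ action $C^1$-conjugate to $\rho_0$.

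Next I would prove the estimate. Writing $\delta:=d_1^*(\varphi,\id)=\|\log D\varphi\|_\infty$, one has $e^{-\delta}\le D\varphi\le e^\delta$ and $e^{-c_t}=1/\int_0^1(D\varphi)^t\in[e^{-t\delta},e^{t\delta}]$, whence $D\varphi_t=e^{-c_t}(D\varphi)^t\ge e^{-2\delta}$. Differentiating~\eqref{e:chem} in $x$ and dividing by $D\varphi_t$ gives
$$(D\log Df_{i,t})(\varphi_t(x))=\frac{t\,(D\log Dg_i)(\varphi(x))\,D\varphi(x)+(1-t)\,(D\log Df_i)(x)}{D\varphi_t(x)}.$$
Bounding $|D\log Dg_i|$ and $|D\log Df_i|$ by $D:=\max\{d_2^*(\rho_0,\id),d_2^*(\rho_1,\id)\}$, $D\varphi$ by $e^\delta$, $D\varphi_t$ from below by $e^{-2\delta}$, and using that $\varphi_t$ is onto, I get $d_2^*(f_{i,t},\id)=\|D\log Df_{i,t}\|_\infty\le e^{2\delta}\bigl(t e^\delta+(1-t)\bigr)D\le e^{3\delta}D$; taking the maximum over $i$ and noting $e^{3\delta}=1+O(\delta)$ with $\delta=d_1^*(\varphi,\id)$ gives the announced bound.

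For continuity of $t\mapsto f_{i,t}$ in the $C^2$ topology I would follow Lemma~\ref{l:chem}: for $t_n\to t$, note $\varphi_{t_n}\to\varphi_t$ in $C^1$ (as $c_{t_n}\to c_t$), pull the difference $D\log Df_{i,t_n}-D\log Df_{i,t}$ back by $\varphi_{t_n}$, bound $D\varphi_{t_n}$ from below, and split into the two terms appearing in~\eqref{expr}: the first equals $|t-t_n|$ times the fixed constant $\|D(\log Df_i-(\log Dg_i)\circ\varphi)\|_\infty$, and the second is $\|D(\log Df_{i,t}\circ(\varphi_t\circ\varphi_{t_n}^{-1})-\log Df_{i,t})\|_\infty$. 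The latter goes to $0$ by the $C^2$-analogue of Lemma~\ref{l:astuce}: if $u\in C^1(M)$ and $\psi_n\to\id$ in $C^1$ then $\|D(u\circ\psi_n)-Du\|_\infty\to 0$; this is immediate from $D(u\circ\psi_n)-Du=(Du\circ\psi_n-Du)D\psi_n+Du\,(D\psi_n-1)$ together with the uniform continuity of $Du$ on the compact $M$ — notably, no $L^1$ approximation is needed, so $C^{1+ac}$ plays no role here. In the circle case I would additionally invoke the $C^0$-continuity of $t\mapsto\varphi_t$.

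Finally, for the interval endpoint claim, if every $f_i,g_i$ is $C^2$-tangent to $\id$ at $e\in\{0,1\}$ then $\log Df_i(e)=D\log Df_i(e)=0$ and likewise for $g_i$; since $\varphi(e)=\varphi_t(e)=e$, evaluating~\eqref{e:chem} and its $x$-derivative at $e$ forces $\log Df_{i,t}(e)=0$ and, as $D\varphi_t(e)>0$, $D\log Df_{i,t}(e)=0$, so $f_{i,t}$ stays $C^2$-tangent to $\id$ at $e$. I expect no real obstacle beyond the careful bookkeeping in the second step; everything else is a direct transcription of the proof of the Interpolation Lemma~\ref{l:chem}.
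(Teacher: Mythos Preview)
Your proposal is correct and follows essentially the same approach as the paper: the same path $(\varphi_t)$, the same differentiation of~\eqref{e:chem} yielding the explicit formula for $D\log Df_{i,t}$ (the paper records it as~\eqref{e:C2tangency}), the same type of estimate (the paper writes it multiplicatively as $(1+\|D\varphi-1\|_\infty)(1+\|D\varphi_t^{-1}-1\|_\infty)$, you as $e^{3\delta}$, both $1+O(d_1^*(\varphi,\id))$), and the same endpoint argument.

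The only noticeable difference is your continuity argument: you mimic the two-term splitting from the proof of Lemma~\ref{l:chem} and invoke a $C^2$ analogue of Lemma~\ref{l:astuce}, whereas the paper simply observes that~\eqref{e:C2tangency} expresses $D\log Df_{i,t}$ as an explicit composition and product of maps depending $C^0$-continuously on $t$ (via the $C^1$-continuity of $t\mapsto\varphi_t$), so continuity in $C^2$ is immediate. Your route works too (after a small bookkeeping fix: the ``first term'' you identify is really $\|D[(\log Df_{i,t_n})\circ\varphi_{t_n}]-D[(\log Df_{i,t})\circ\varphi_t]\|_\infty$, so one must also absorb the $1/D\varphi_s$ factors, which is where your lower bound on $D\varphi_{t_n}$ actually enters), but the paper's direct reading from the formula is shorter and avoids the need for any analogue of Lemma~\ref{l:astuce}.
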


\begin{proof} 
As it was pointed out in Remark \ref{r:higher}, if we proceed as in the proof of the Interpolation Lemma \ref{l:chem},  
we obtain a family $(\rho_t)$ of $C^2$ actions of $G$. Equality~(\ref{e:chem}) then yields
\begin{equation}
\label{e:C2tangency}
D\log Df_{i,t} = \big(  t \, D (\log Dg_i) \circ \f \cdot D\f + (1-t) \, D (\log Df_i) \big) \circ \varphi_t^{-1} \cdot D\f_t^{-1}.
\end{equation}
Since $t \mapsto \f_t$ is continuous in the $C^1$ topology, this easily implies that $t\mapsto f_{i,t}$ is continuous 
in the $C^2$ topology. Moreover, 
$$\|D\log Df_{i,t} \|_\infty \le \max(\|D\log Df_i\|_\infty,\|D\log Dg_i\|_\infty) \cdot 
( 1 + \|D\f - 1\|_{\infty}) \cdot (1+\|D\f_t^{-1}-1\|_\infty).$$
Clearly, $\|D\f - 1\|_{\infty} = O (\| \log D \f \|_{\infty}) = O (d^*_1 (\f, id))$ and 
$$\|D\f_t^{-1}-1\|_\infty 
= O (\| D \f_t - 1 \|_{\infty}) 
= O (\| \log D\f_t \|_{\infty}).$$
Furthermore, the definition 
$$\log D\varphi_t(x) := t\log D\varphi(x) - c_t, \qquad c_t := \log\int_0^1 (D\varphi(y))^tdy,$$ 
gives
$$\|\log D\f_t \|_{\infty} \leq t \|\log D\f \| + |c_t| \leq  2 t \,\|\log D\f \|_{\infty} = O (d_1^*(\f,id)),$$
where the last inequality follows from 
$$ -t\, \| \log D\f \|_\infty
= \log \int_0^1 e^{- t \| \log D \f \|_{\infty}} \, dy
\leq 
c_t 
\leq \log \int_0^1 e^{ t \| \log D \f \|_{\infty}} \,dy
= t \,\|\log D\f\|_\infty.$$
This proves the desired estimate for $d_2^* (\rho_t,id)$. 

Finally, note that (\ref{e:C2tangency}) directly implies that if $f_i$ and $g_i$ are $C^2$-tangent to the identity at an 
endpoint for every $i \in [\![1,d]\!]$, so are the $f_{i,t}$ for every $t \in \R$, as announced.  
\end{proof}

\medskip

\begin{lem}
\label{l:lissageC2}
If $(f^t)$ is a $C^1$ flow of diffeomorphisms of $[0,1]$ whose time-$1$ map $f$ is of class $C^2$, then it is 
conjugated by a $C^1$ diffeomorphism $\f$ to the flow of a $C^{2}$ vector field $\tilde X$ in such a way that
$$d_1^* (\f,\id) = O\bigl( d_1^* ((f^t),\id) \bigr) 
\qquad \text{and}\qquad 
\|D^2\tilde X\|_{\infty}\le  d_2^* (f,\id)  \cdot \big( 1+ O \big( d_1^* ((f^t),\id)\big) \big) .$$ 
Moreover, if $f$ is $C^2$-tangent to the identity at an endpoint, so are the maps of the new flow.  Furthermore, 
if $g$ is a $C^2$ diffeomorphism commuting with every $f^t$, then the conjugate of $g$ by $\f$ is still of class $C^2$.
\end{lem}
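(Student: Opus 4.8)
The plan is to reuse the very conjugacy $\f$ from the Regularization Lemma~\ref{l:lissage}: with $X$ the generating vector field of $(f^t)$, let $\f$ be the $C^1$ diffeomorphism of $[0,1]$ defined by $\f(0):=0$ and \eqref{e:defh}, and set $\tilde X:=\f_*X$. From the proof of Lemma~\ref{l:lissage} we already know that $D\tilde X=\log Df\circ\f^{-1}$; since $f$ is $C^2$, both $\log Df$ and $\f^{-1}$ are $C^1$, so $D\tilde X$ is $C^1$, i.e. $\tilde X$ is $C^2$, with $D^2\tilde X=(D\log Df\circ\f^{-1})\cdot D\f^{-1}$. The last assertion of the statement is then exactly Lemma~\ref{lem-complement}: the identity $\log(D\tilde g)\circ\f=\int_0^1(\log Dg)\circ f^s\,ds$ holds for $\tilde g:=\f g\f^{-1}$, and since $g$ is $C^2$ and $(s,x)\mapsto f^s(x)$ is $C^1$, differentiation under the integral sign shows the right-hand side is $C^1$; composing with the $C^1$ map $\f^{-1}$ gives that $\log D\tilde g$ is $C^1$, that is, $\tilde g$ is $C^2$.

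The heart of the matter is the two size estimates, and the only real input is a bound on $d_1^*(\f,\id)=\|\log D\f\|_\infty$. By \eqref{e:defh}, $\log D\f(x)=\int_0^1\log Df^s(x)\,ds-c$; the integral term has sup norm at most $\max_{s\in[0,1]}\|\log Df^s\|_\infty\le d_1^*((f^t),\id)$, and the same elementary squeezing used for the constant $c_t$ in the proof of Lemma~\ref{l:chemC2} bounds $|c|$ by $\max_{s\in[0,1]}\|\log Df^s\|_\infty\le d_1^*((f^t),\id)$ as well. Hence $d_1^*(\f,\id)\le 2\,d_1^*((f^t),\id)=O(d_1^*((f^t),\id))$, which is the first claim. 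Consequently $\|D\f^{-1}\|_\infty\le e^{\|\log D\f\|_\infty}=1+O(d_1^*((f^t),\id))$, and therefore
$$\|D^2\tilde X\|_\infty\le\|D\log Df\|_\infty\cdot\|D\f^{-1}\|_\infty=d_2^*(f,\id)\cdot\bigl(1+O(d_1^*((f^t),\id))\bigr),$$
which is the second claim.

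For the tangency assertion, first note that $\f$ fixes both endpoints (as $\f(0)=0$ and $\int_0^1D\f=1$). Suppose $f$ is $C^2$-tangent to the identity at an endpoint $a$, so $\log Df(a)=0$ and $D\log Df(a)=D^2f(a)/Df(a)=0$; then, writing $p:=\f(a)$, we get $D\tilde X(p)=\log Df(a)=0$ and $D^2\tilde X(p)=D\log Df(a)\cdot D\f^{-1}(p)=0$, while $\tilde X(p)=X(a)\cdot D\f(a)=0$. So $\tilde X$ vanishes together with its first two derivatives at $p$. Differentiating the ODE $\tfrac{d}{dt}\tilde f^t(x)=\tilde X(\tilde f^t(x))$ in $x$ at the fixed point $p$ gives $\tfrac{d}{dt}D\tilde f^t(p)=D\tilde X(p)\,D\tilde f^t(p)=0$, hence $D\tilde f^t(p)=1$; differentiating once more gives $\tfrac{d}{dt}D^2\tilde f^t(p)=D^2\tilde X(p)\,(D\tilde f^t(p))^2+D\tilde X(p)\,D^2\tilde f^t(p)=0$, hence $D^2\tilde f^t(p)=0$, for every $t$. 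Thus every map of the new flow is $C^2$-tangent to the identity at $p$. The main point requiring care is precisely this last step: upgrading the first-order vanishing that comes directly from $\log Df(a)=0$ to genuine $C^2$-tangency of \emph{all} flow maps; the differentiate-the-ODE computation above is where the $C^2$ (not merely parabolic) hypothesis is actually used, so it deserves to be written out, whereas everything else is bookkeeping around the explicit conjugacy of Lemma~\ref{l:lissage}.
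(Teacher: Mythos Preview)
Your proof is correct and follows essentially the same approach as the paper: you use the very same conjugacy $\f$ from the Regularization Lemma~\ref{l:lissage}, exploit the identity $D\tilde X=\log Df\circ\f^{-1}$ to get $C^2$ regularity and the second-derivative bound, and handle the centralizer claim via Lemma~\ref{lem-complement}. The only differences are expository: you spell out the bound $d_1^*(\f,\id)\le 2\,d_1^*((f^t),\id)$ explicitly (the paper just says it follows ``directly from its definition''), and you make the $C^2$-tangency of the flow maps explicit by differentiating the ODE, whereas the paper simply asserts this after showing $D\tilde X(a)=D^2\tilde X(a)=0$.
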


\begin{proof}
According to the proof of the Regularization Lemma \ref{l:lissage}, the $C^1$ diffeomorphism $\varphi$ defined by 
$\f(0)=0$ and
\begin{equation*}
\log D\varphi (x) := \int_0^{1} \log Df^s(x) ds - c, 
\quad\text{with} \quad 
c:= \log\left(\int_0^1\exp\left(\int_0^{1} \log Df^s(x) ds\right)dx\right)
\end{equation*}
sends the generating vector field $X$ of $(f^t)$ to the $C^1$ vector field $\tilde X:=\f_*X$ satisfying 
$D\tilde X = \log Df\circ \f^{-1}$. 
The control for $\f$ directly follows from its definition. Moreover, since $\log Df$ and $\f^{-1}$ are $C^1$, 
the latter equality implies 
$$ D^2 \tilde{X} = (D\log Df)\circ \f^{-1} \cdot D\f^{-1}.$$
In particular, 
$$\|D^2\tilde X\|_\infty \le d_2^* (f,\id) \cdot \|D \f^{-1}\|_\infty.$$
which yields the desired control on $\tilde X$ given the control on $\f$. 

Moreover, if $f$ is $C^2$-tangent to the identity at an endpoint $a$, then   
$$D\tilde X (a) = (\log D f \circ \f^{-1}) (a) = \log Df (a) = 0$$
and
$$ D^2 \tilde{X} (a) = ((D\log Df)\circ \f^{-1} \cdot D \f^{-1}) (a) = D \log Df (a) \cdot D \f^{-1} (a) = 0.$$
This shows that the elements of the new flow are all $C^2$-tangent to the identity at $a$. 
Finally, the proof that the conjugate of $g$ as in the statement by $\f$ is of class $C^2$ works verbatim as in 
Lemma~\ref{lem-complement}.
\end{proof}

\begin{rem}
\label{rem-circle-flow-C2} 
As it was the case of the Regularization Lemma \ref{l:lissage} and its complementary Lem\-ma~\ref{lem-complement},  
the preceding statement still holds in the case of actions on the circle, with the very same proof. The hypothesis 
of $C^1$ flowability in this setting is, however, unnatural; compare Remark~\ref{l:lissage}. 
\end{rem}

\medskip

 \begin{proof}[Proof of Proposition \ref{p:flotC2}.] 
We proceed as it was done for Proposition~\ref{p:flot}. We generically denote by $C$ 
some universal positive constant (not depending on $\rho_0$ for small values of 
distances from $\rho_0$ to $id$) that may vary along the proof.

Up to time-rescaling of the flow and permutation of the generators of $\Z^d$, their images under 
$\rho_0$ are maps $f^{\tau_1}$, \dots, $f^{\tau_d}$ of a $C^1$ flow satisfying 
$\tau_1=1$ and $\tau_i\in[-1,1]$ for every~$i$. 
According to Proposition \ref{p:Szek} applied to the restriction of $f^1$ to the closure of each connected component 
of $[0,1]\setminus\Fix(\rho_0) =[0,1]\setminus\Fix(f^1)$, we have 
\begin{equation}
\label{e:logDf}
d_1^* ((f^t),\id) 
= \max_{t\in[-1,1]}\|\log Df^t\|_\infty \le C\,  d_2 (f^1,\id) \le C\,  d_2^* (f^1,\id) \le C \, d_2^* (\rho_0,\id).
\end{equation}
\noindent By Lemma~\ref{l:lissageC2}, this flow is conjugated by a $C^1$ diffeomorphism $\varphi$ satisfying 
\begin{equation}
\label{e:d1h}
d_1^* (\varphi,\id)\,\le C \, d_1^*((f^t),\id) \le C \, d_2^* (\rho_0,\id)
\end{equation}
 to the flow of a $C^{2}$ vector field $X$ satisfying 
\begin{equation}
\label{e:D-2}
\|D^2X\|_{\infty} \, \le d_2^* (f,\id)  \cdot \big( 1+ O \big( d_1^* ((f^t),\id)\big) \big)
\le  \, d_2^* (\rho_0,\id) \cdot \big( 1 + C \, d_2^* (\rho_0,id) \big).
\end{equation} 
Moreover, the time-$s$ maps $g^s$ of this new flow are $C^2$-tangent to the identity at an endpoint 
if the original map $f^1$ is. 
Using the equality $D\log D g^s = \tfrac{DX\circ g^s - DX}{X}$ (see (\ref{e:nec})), we get
\begin{align}
\label{e:DlogDgs}
\|D\log D g^s\|_{\infty} 
=  \left\|\tfrac{DX\circ g^s - DX}{X}\right\|_\infty\le \|D^2X\|_\infty \cdot \left\|\tfrac{g^s - \id}{X}\right\|_\infty.
\end{align}
Now
$$ \frac{g^s(x)-x}{X(x)} 
 = \int_0^s\frac{\frac{d}{dt} g^t (x)}{X(x)} \, dt 
 = \int_0^s\frac{X(g^t(x))}{X(x)} \, dt 
 = \int_0^sD g^t(x) \, dt 
,$$
so
$$ \left\|\tfrac{g^s - \id}{X}\right\|_\infty  
 \le C \, |s| \, \big(1+d_1^* ((g^t),\id))\big) 
 \le C \, |s| \, \big( 1 + 2\, d_1^* (\f,\id)+d_1^* ((f^t),\id) \big) 
\le C \, |s| \, \big( 1 + d_2^* (\rho_0,\id)\big), 
$$
where the second equality follows from the chain rule and the third one from \eqref{e:logDf} and \eqref{e:d1h}.
  
Together with (\ref{e:D-2}) and \eqref{e:DlogDgs}, this shows that the $C^{2}$-distances to $\id$ 
of the new generators $g^{\tau_i}$ are bounded from above by 
$$C  \, d_2^* (\rho_0,\id)\cdot (1+  \, d_2^* (\rho_0,id))^2.$$
Thus, using Lemma \ref{l:chemC2}, one can connect the initial action to the new more regular one 
with the desired control in the $C^2$ topology. Let us finally consider the continuous path of $C^{2}$-actions 
defined by $t\in[0,1]\mapsto(g^{t\tau_1},\dots,g^{t\tau_d})$. 
By the above computations, this path connects the regular action to the trivial one keeping the desired control 
on the $C^{2}$ distances to the identity of the generators. 
Finally, the desired $C^2$-tangency property follows from the construction.
\end{proof}


\subsection{Connectedness of a subspace of $C^{2}$ actions of $\Z^d$ on the circle}
\label{ss:C2-circle}

As a corollary of the previous results and their proofs, we obtain the following result.

\begin{prop}
\label{t:circleC2} 
The path-connected component of the trivial action is dense in the space of $C^2$ actions of $\Z^d$ 
on the circle which either have a rotation number set of rank less than or equal to~$1$ or embed into a $C^1$ flow.
\end{prop}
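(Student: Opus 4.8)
The plan is to mimic the proof of Theorem~\ref{t:dense} (the interval case), using the results of Sections~\ref{ss:finite} and~\ref{ss:C2-int} as black boxes. Let $\rho$ be a $C^2$ action of $\Z^d$ on the circle of the stated form, and fix $\eps>0$. I distinguish the two cases appearing in the hypothesis.

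\emph{The rank $\le 1$ case.} If the rotation number set $\rot(G)$ has rank $0$, it is finite, so we are in the situation of Section~\ref{ss:finite}. If it has rank $1$, then either it is infinite cyclic or it contains an irrational; in the latter case, Section~\ref{ss:infinite} already gives a genuine \emph{path} to an action by rotations (and then to the trivial action), so such $\rho$ is itself in the path-connected component of $\id$ and there is nothing to approximate. If $\rot(G)\cong\Z$ is infinite cyclic generated by a rational $p/q$, pass to the finite-index subgroup on which all rotation numbers are integers; equivalently, after replacing generators, we may reduce to the case where $\rot(G)$ is finite and invoke Section~\ref{ss:finite}. In all these sub-cases, the argument of that section produces a \emph{continuous path} of $C^2$ (in fact $C^{1+ac}$, hence $C^2$ when the input is $C^2$) actions connecting $\rho$ to an action by rotations, which then deforms to $\id$; so $\rho$ lies in the path-connected component of $\id$ already, with no density argument needed. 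One subtlety: Section~\ref{ss:finite} was carried out in the $C^{1+ac}$ category; I would note that the conjugacies and interpolations used there (the map $\varphi$ of~\eqref{e:R1n}, the linear-in-$\log D$ homotopy~\eqref{eq-deform}, the Regularization Lemma via Remark~\ref{rem-unpleasant} and Lemma~\ref{l:lissageC2}) all preserve the $C^2$ class, so the same path works verbatim for $C^2$ inputs.

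\emph{The $C^1$-flowable case.} Suppose $\rho$ embeds into a $C^1$ flow $(f^t)$ on the circle. By Remark~\ref{rem-circle-flow-C2}, Lemma~\ref{l:lissageC2} applies on the circle: there is a $C^1$ diffeomorphism $\varphi$ conjugating $(f^t)$ to the flow of a $C^2$ vector field $\tilde X$, with the stated controls, and by Lemma~\ref{l:chemC2} the conjugation can be realized through a continuous path of $C^2$ actions. Then, exactly as in the final paragraph of the proof of Proposition~\ref{p:flotC2}, the time-rescaling path $t\mapsto(g^{t\tau_1},\dots,g^{t\tau_d})$ connects the regularized action to the trivial one, all in $C^2$ and with controlled size. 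So again $\rho$ is \emph{in} the path-connected component of $\id$, not merely in its closure.

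Putting these together, the path-connected component of the trivial action in fact \emph{contains} every $C^2$ action of $\Z^d$ on the circle whose rotation number set has rank $\le 1$ or which embeds into a $C^1$ flow; a fortiori, it is dense in this subspace, which proves the proposition. The main point to check carefully — and the only place where the circle genuinely differs from the interval — is that the constructions of Section~\ref{ss:finite} respect $C^2$ regularity and continuity in the $C^2$ topology, i.e. that the boundary-matching computations (e.g. the parabolicity computation showing $D_-\varphi(0)=1$ in~\eqref{e:R1n}) go through with $\log D\varphi$ merely continuous rather than $C^{ac}$; this is routine but must be stated. A secondary point is the reduction, in the rank-$1$ rational case, to the finite rotation-number-set setting of Section~\ref{ss:finite}; one should spell out that passing to the kernel of $\rot$ together with one extra generator of prescribed rational rotation number is exactly the data handled there. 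I do not expect any serious obstacle: unlike Theorem~\ref{t:dense}, here no density "crash" is needed because Proposition~\ref{p:flotC2}-type controls and the Section~\ref{ss:finite} arguments already yield honest paths on each relevant piece.
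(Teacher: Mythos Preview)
Your treatment of the $C^1$-flowable case is correct and matches the paper. The problems are in the rank-$\le 1$ case.

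First, a confusion about what ``rank'' means. The rotation number set $\rot(G)$ is a subgroup of $\T^1$, and its rank is the rank of its torsion-free part. Elements of finite order in $\T^1$ are exactly the rationals, so rank~$0$ means $\rot(G)$ is finite, and rank~$1$ means $\rot(G)$ contains an irrational. There is no case ``$\rot(G)\cong\Z$ generated by a rational $p/q$'': a rational generates a finite cyclic group. Your case split should simply be: $\rot(G)$ finite, or $\rot(G)$ contains an irrational.

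Second, and more seriously, in the rank-$1$ case you invoke Section~\ref{ss:infinite}. That argument is $C^{1+ac}$-specific: the conjugates $\varphi_n f_i \varphi_n^{-1}$ converge to rotations in the $C^{1+ac}$ topology, \emph{not} in $C^2$, and the Interpolation Lemma~\ref{l:chem} used to glue them is likewise $C^{1+ac}$. Whether this can be done in $C^2$ is precisely Question~B of the Introduction, stated there as open even for a single diffeomorphism. The paper avoids this entirely: when $\rot(G)$ has rank~$1$, Denjoy's theorem forces $\rot$ to be injective on $G$, so $G\cong\Z\times\Z/k\Z$ is generated by one $f$ with irrational rotation number and possibly one finite-order $g$. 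One conjugates $g$ to a rotation and deforms $f$ via the $\log D$-interpolation~\eqref{eq-deform}, which preserves commutation with rotations and is manifestly $C^2$-continuous. This gives an honest path, and it does not touch Section~\ref{ss:infinite} at all.

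Third, in the rank-$0$ (finite $\rot(G)$) case you overclaim. Section~\ref{ss:finite}, in the parabolic sub-case, reduces to deforming the interval-like action of $G_0$. In the $C^{1+ac}$ setting this calls the Main Theorem~\ref{t:C1ac} for the interval, which gives a path; in the $C^2$ setting the available subroutine is only Theorem~\ref{t:dense}, which gives \emph{density}, not a path (cf.~Remark~\ref{r:pcC2}). So for finite $\rot(G)$ the correct conclusion is that $\rho$ lies in the \emph{closure} of the path-connected component of the trivial action, not necessarily in the component itself. This is exactly what the paper proves (``combining the arguments of Section~\ref{ss:finite} and Theorem~\ref{t:dense}''), and it is why the statement of the proposition says ``dense'' rather than ``path-connected''.
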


\begin{proof} 
Let $\rho$ be a $C^2$ action of $\Z^d$ on the circle and let $G:=\rho(\Z^d)$. 
If $\rot(G)$ is finite, combining the arguments of Section \ref{ss:finite} and Theorem \ref{t:dense}, 
one can show that $\rho$ belongs to the closure of the path-connected component of the trivial action. 
If $\rot(G)$ has rank $1$, then $G$ is generated by some $f$ with irrational rotation number and possibly some 
$g$ of finite order. 
In the last case, $g$ is $C^{2}$ conjugated to the corresponding rotation, and as in Section \ref{section-pourri} 
one can deform $f$ to the identity along diffeomorphisms commuting with $g$ (leaving $g$ unchanged) 
and then deform the pair $(\id,g)$ to the trivial pair. 
This shows that $\rho$ lies in the path-connected component of the trivial action. 
Finally, this is also the case if $\rho$ embeds into a $C^1$ flow, as can be easily established by combining 
Lemmas \ref{l:lissageC2} and \ref{l:chemC2}, 
both of which hold in the circle case (see Remark \ref{rem-circle-flow-C2} for the first one). 
Indeed, the former claims that $\rho$ is $C^1$ conjugated to a $C^2$ flowable action $\tilde \rho$ 
(which lies in the path-connected component of the trivial action), and the latter that $\rho$ and $\tilde \rho$ are 
connected by a path of $C^2$ actions. 
(This combination was the core idea of the proof of Proposition \ref{p:flotC2}, leaving aside the matter of 
controlling the size of the deformation.)
\end{proof}

\subsection{A discussion concerning higher regularity on the interval}
\label{ss:Cr}

It is very tempting to try to give a version of Theorem \ref{t:dense} for $C^r$ actions 
when $r > 2$. Certainly, given an action $\rho: \Z^d \to \Diff^r_+([0,1])$ and 
$\eps>0$, we can again crash the dynamics on the components of $[0,1] \setminus \Fix (\rho)$ 
where $d_r(\rho,\id)\le \eps$, and look for a deformation to the trivial action on each remaining component. 
Those where the image group is cyclic can be managed just by deforming the generator. 
However, we face a huge problem when dealing with $C^1$ flowable components where the restriction of the action 
has no cyclic image. 
Indeed, although there might be some $C^r$ version of the Regularization Lemma \ref{l:lissageC2}, 
the work of Sergeraert \cite{Se} implies that the conjugacy $\varphi$ produced to improve the regularity of the flow 
will be no more regular than $C^1$ in general, even if $r=\infty$. 
Then, if we apply the strategy of the Interpolation Lemma \ref{l:chemC2}, we will only get a path of $C^2$ actions 
rather than $C^r$ actions. New ideas are hence needed to deal with these components in higher regularity.

Nevertheless, the tools presented in the previous sections yield the following very partial result. 
For the statement, recall that an {\em $r$-parabolic global fixed point} is a global fixed point at which every element 
is $C^r$-tangent to the identity. 
For the proof, note that, if we remove the explicit estimates on derivatives from their statements, Lemmas 
\ref{l:chemC2} and \ref{l:lissageC2}, and thus Proposition~\ref{p:flotC2}, remain true 
if one replaces $C^1$ and $C^2$ by $C^{r-1}$ and $C^r$, respectively (we leave the verification of this to the reader). 

\begin{prop}
\label{p:partial-Cr}
Given $r\ge2$, the subspace of the space of $C^r$ actions of $\Z^d$ on $[0,1]$ made of the actions which either 
are trivial or have no $r$-parabolic global fixed point is path-connected.
\end{prop}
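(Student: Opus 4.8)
The plan is to re-run the scheme of Section~\ref{ss:idea} with $P$ the set of $r$-parabolic global fixed points, which is now \emph{empty} by hypothesis, so that $[0,1]$ itself is the only connected component of $[0,1]\setminus P$; this makes all the gluing bookkeeping of the general proof disappear (in particular no control on the size of the deformation is needed). So let $\rho$ be a nontrivial $C^r$ action of $\Z^d$ on $[0,1]$ with no $r$-parabolic global fixed point (if $\rho$ is trivial there is nothing to do). I would produce a path from $\rho$ to $\id$ that stays inside the subspace, distinguishing whether the image of $\rho$ is cyclic or not. Combining the two cases below, every element of the subspace is joined to $\id$ within the subspace, whence path-connectedness.

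\textbf{Noncyclic image.} Here Proposition~\ref{p:noncyclicC2} applies (there is in particular no \emph{interior} $r$-parabolic global fixed point), so $\rho$ embeds into a $C^1$ flow, and I would run the $C^r$-analogue of the proof of Proposition~\ref{p:flotC2}: using the $C^{r-1}/C^r$ versions of Lemmas~\ref{l:chemC2} and~\ref{l:lissageC2} (discarding the explicit derivative bounds), one regularizes the flow by a $C^{r-1}$ conjugacy, connects $\rho$ to the regularized action by the Interpolation Lemma, and finally reparametrizes time in the regularized $C^r$ flow down to the identity. The only nontrivial point is that this path stays in the subspace, i.e.\ that no action along it acquires an $r$-parabolic global fixed point. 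At the endpoints of $[0,1]$ this follows from the ``preservation of $C^r$-tangency at an endpoint'' clauses built into Lemmas~\ref{l:chemC2} and~\ref{l:lissageC2}, together with the hypothesis that the generators are not all $C^r$-tangent to $\id$ at an endpoint. At an interior fixed point $q$ of $\rho$, since $q$ is a global fixed point the hypothesis furnishes a nonzero time-map of the flow that is not $C^r$-tangent to $\id$ at $q$; but then, the multipliers at $q$ forming a one-parameter group, \emph{every} nonzero time-map is tangent to $\id$ at $q$ to one and the same finite order $k\le r$, and this order of contact is unchanged both by the $C^{r-1}$ conjugacies used in the regularization and interpolation steps and by the time-reparametrization. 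Hence no interior global fixed point becomes $r$-parabolic along the path either.

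\textbf{Cyclic image.} Let $f_0\in\Diff^r_+([0,1])$ generate the image, so each generator of $\Z^d$ is sent to some power $f_0^{m_i}$. Since being $C^r$-tangent to $\id$ at a point is inherited by all powers and fails at a hyperbolic fixed point as well as at a parabolic fixed point of order $\le r$, one checks directly that $\rho$ has no $r$-parabolic global fixed point if and only if $f_0$ has no $r$-parabolic fixed point (and the same equivalence holds for any action built from the powers of a single $f$). It thus suffices to connect $f_0$ to $\id$ by a path $(f_0^t)_{t\in[0,1]}$ in $\Diff^r_+([0,1])$ such that every $f_0^t$ with $t<1$ has no $r$-parabolic fixed point: taking $m_i$-th powers then yields a path of cyclic actions from $\rho$ to $\id$ inside the subspace. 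To build $(f_0^t)$, I would first observe that $f_0$ has only \emph{finitely many} fixed points — an accumulation of fixed points would force, by iterating Rolle's theorem on $x\mapsto f_0(x)-x$, that $f_0$ be $C^r$-tangent to $\id$ at the accumulation point — each of which is either hyperbolic or parabolic of finite order $\le r$. Then: (i) compose $f_0$ with the time-$(t\varepsilon)$ maps ($t\in[0,1]$) of the flow of a fixed smooth vector field that vanishes on $\Fix(f_0)$ and whose derivative is nonzero at the parabolic fixed points of $f_0$; for $\varepsilon$ small this leaves the fixed-point set unchanged and turns every parabolic fixed point hyperbolic, reaching at $t=1$ a map $f_1$ with only hyperbolic fixed points; (ii) follow with the straight-line homotopy $(1-t)f_1+t\,\id$, which for $t<1$ has exactly the same fixed points as $f_1$ (all still hyperbolic) and equals $\id$ at $t=1$. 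Concatenating (i) and (ii) gives the desired path.

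The step I expect to be the main obstacle is the verification, in the noncyclic case, that the deformation of Proposition~\ref{p:flotC2} never produces an $r$-parabolic global fixed point: this requires tracking the order of tangency of the generators at interior fixed points and checking that a $C^{r-1}$ conjugacy and a reparametrization of the flow both preserve it, whereas only the endpoint version of this invariance is explicitly recorded in Lemmas~\ref{l:chemC2} and~\ref{l:lissageC2}.
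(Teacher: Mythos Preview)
Your noncyclic case is essentially the paper's argument, with more explicit tracking of the non-$r$-parabolicity along the path than the paper itself provides. The cyclic case, however, differs from the paper (which simply deforms the generator via the logarithmic-derivative interpolation of \eqref{e:otra-def}), and your step~(i) has a genuine gap.

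Namely, composing $f_0$ with the flow of a vector field vanishing on $\Fix(f_0)$ does \emph{not} in general leave the fixed-point set unchanged. If $p$ is a parabolic fixed point of even contact order $2m\le r$ (so that $f_0-\id$ has constant sign on a punctured neighborhood of $p$), then for $t>0$ the perturbed map $g_t=f_0\circ\phi^{t\varepsilon}$ is hyperbolic at $p$, hence $g_t-\id$ changes sign there; since $g_t-\id$ agrees in sign with $f_0-\id$ away from the (finitely many) fixed points of $f_0$ for small $\varepsilon$, a new zero of $g_t-\id$ must appear in a one-sided neighborhood of $p$. No choice of the sign of $DX(p)$ avoids this. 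You would then need to rule out $r$-parabolicity at these moving new fixed points, which you do not address.

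The good news is that step~(i) is unnecessary: your step~(ii) applied directly to $f_0$ already does the job. For $t\in[0,1)$ and $h_t:=(1-t)f_0+t\,\id$, one has $\Fix(h_t)=\Fix(f_0)$ (since $h_t(x)=x$ iff $(1-t)(f_0(x)-x)=0$), and at each $p\in\Fix(f_0)$ either $Dh_t(p)=(1-t)Df_0(p)+t\neq1$ when $Df_0(p)\neq1$, or $Dh_t(p)=1$ and $D^kh_t(p)=(1-t)D^kf_0(p)\neq0$ for the first $2\le k\le r$ with $D^kf_0(p)\neq0$. Hence $h_t$ has no $r$-parabolic fixed point for $t<1$, and $t\mapsto(h_t^{m_1},\dots,h_t^{m_d})$ is the desired path in the subspace.
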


\begin{proof}
Let $\rho$ be a nontrivial action having no $r$-parabolic global fixed point. 
On the one hand, if it has a cyclic image, it can easily be deformed to the trivial action along actions 
without $r$-parabolic fixed points, just by deforming the generator the same way we did before 
(note that this procedure of linearly interpolating the logarithmic derivative preserves the non-$r$-parabolicity 
condition). 
On the other hand, if it has a noncyclic image, then, by Proposition \ref{p:noncyclicC2}, it embeds into a 
$C^{r-1}$ flow. By the extension of Lemma \ref{l:lissageC2},  $\rho$ is $C^{r-1}$ conjugated to a $C^r$-flowable action. 
By the extension of Lemma \ref{l:chemC2}, $\rho$ can be connected to this new $C^r$-flowable action  
by a path of $C^r$ actions. 
Finally, the $C^r$-flowable action can be deformed to the trivial one just by reparametrization of time. 
One can check that, by construction, the actions along the path thus built have no $r$-parabolic fixed point.
\end{proof}


\section{Path-connectedness for real-analytic actions}
\label{s:analytic}

In this section, we prove Theorem C. Let $\Diff_+^\omega ([0,1])$ denote the group 
of (orientation-preserving) real-analytic diffeomorphisms of the interval. We see this 
as a subspace of the space of analytic functions $C^\omega([0,1])$, which is endowed with a natural topology described as follows: 
a sequence $(u_n)$ converges to $u$ if there exists some neighborhood (strip) of the interval in the complex plane 
such that the $u_n$ extend to analytic maps that uniformly converge to $u$ on this strip. We list below three basic properties of this topology:

\begin{itemize}

\item For every $f\in \Diff_+^\omega[0,1]$, the path $t\in[0,1]\mapsto (1-t) f + t \, \id$ is continuous and connects $f$ to the identity. In particular, 
$\Diff_+^\omega([0,1])$ is path-connected. 

\item If $(f_n)$ is a sequence in $\Diff_+^\omega ([0,1])$ converging to some $f\in\Diff_+^\omega([0,1])$ and $k$ is an integer number, 
then the sequence  $(f_n^k)$ converges to $f^k$.

\item If for a real-analytic vector field $X$ on $[0,1]$ we denote by  $f_X^t$ its (analytic) flow maps, then $t\mapsto  f_X^t$ is continuous.

\end{itemize}

The first two claims are rather immediate, whereas the third one follows from 
standard results on (analytic) ordinary differential equations (namely, continuity of solutions on parameters, noting that $f_X^t = f^1_{tX}$). 
See \cite{Gh93} and references therein for more on this topology. 

Again, we identify real-analytic actions of $\Z^d$ on $[0,1]$ with $d$-tuples of commuting real-analytic diffeomorphisms of $[0,1]$. 
This yields a subspace of $(\Diff_+^\omega ([0,1]))^d$ (endowed with the induced topology). With this terminology, 
Theorem C becomes a corollary of the following fact. 

\vspace{0.1cm}

\begin{prop}
\label{p:dichotomy}
If $(f_1,\dots,f_d)$ is a $d$-tuple of commuting elements of $\Diff_+^\omega[0,1]$, then it embeds in the flow of a 
real-analytic vector field or in an infinite cyclic subgroup of $\Diff_+^\omega ([0,1])$.
\end{prop}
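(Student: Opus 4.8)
The plan is to prove the dichotomy by analyzing the fixed point sets of the $f_i$ and invoking the classical rigidity of real-analytic diffeomorphisms of the interval. First I would dispose of the trivial cases: if all $f_i$ are the identity, the tuple embeds in the trivial (hence infinite cyclic, understood loosely) situation, so assume some $f_j \neq \id$. Since $f_j$ is real-analytic and not the identity, $\Fix(f_j)$ is finite; write it as $0 = a_0 < a_1 < \dots < a_m = 1$. By commutativity, each other $f_i$ permutes $\Fix(f_j)$, and since it is orientation-preserving it must fix each $a_\ell$. Thus the $a_\ell$'s are global fixed points of the whole tuple, and it suffices to work on the closure $\bar J$ of each connected component $J = (a_\ell, a_{\ell+1})$: if on every such $\bar J$ the restricted tuple embeds in a real-analytic flow (resp. in an infinite cyclic group), then the global statement follows by gluing, using that everything is pinned down at the finitely many points $a_\ell$ and that real-analyticity is a local condition together with the finiteness of $\Fix(f_j)$.

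So I reduce to the case where $M = [0,1]$ and the tuple has no interior global fixed point (every $f_i$ either is the identity or has fixed point set exactly $\{0,1\}$). Here I would appeal to the real-analytic analogue of Proposition \ref{p:noncyclic}, which is exactly the mechanism behind Proposition \ref{p:noncyclicC2}: by the work in \cite{Yo} (Appendice IV) — or equivalently by Szekeres' theorem in the $C^\omega$ setting — each nontrivial $f_i$ with $\Fix(f_i) = \{0,1\}$ embeds into the flow $(f^t_i)$ of a real-analytic vector field on $[0,1]$ vanishing exactly at the endpoints, and this vector field is the \emph{unique} one (up to scaling) whose time-one map is $f_i$. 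By Kopell's lemma and the uniqueness of real-analytic centralizers (cf. Remark \ref{r:folk}), any two nontrivial generators $f_i, f_j$ then lie in the flow of the \emph{same} real-analytic vector field $X$, with $f_i = f^{\tau_i}_X$ for some $\tau_i \in \R$; trivial generators get $\tau_i = 0$. If at least one of the ratios $\tau_i/\tau_j$ is irrational, then the group $\langle \tau_1, \dots, \tau_d \rangle$ is dense in $\R$, and the whole tuple genuinely embeds in the flow $(f^t_X)$, giving the first alternative. If instead all the $\tau_i$ are rational multiples of a common $\tau_0$, then all the $f_i$ are integer powers of the single real-analytic diffeomorphism $f^{\tau_0}_X$, so the tuple embeds in an infinite cyclic subgroup, giving the second alternative.

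The main obstacle is making the ``same vector field'' step fully rigorous in the real-analytic category — i.e. justifying that commuting real-analytic diffeomorphisms of $[0,1]$ with fixed set exactly the endpoints have a common analytic flow, and that relative ``translation numbers'' $\tau_i$ are well-defined and additive. This is precisely the content invoked in the proof of Proposition \ref{p:noncyclicC2} (via Szekeres/Kopell/Yoccoz and the uniqueness of $C^1$ centralizers of such maps), and it must be checked that no parabolic-type degeneracy or devil's staircase obstructs the argument; in the analytic setting the classification of the flow near the endpoints (Szekeres' theorem, Appendix \ref{a:szekeres}) handles this cleanly, since the generator near each endpoint is either hyperbolic or parabolic of finite order, and in both cases the flow and the relative translation numbers are unambiguous. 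The gluing at the finitely many interior global fixed points $a_\ell$ is then routine: one checks the pieced-together flow (or the pieced-together cyclic generator) is real-analytic across each $a_\ell$ using that $f_j$ is real-analytic there and has an isolated fixed point, exactly as in the higher-regularity arguments earlier in the paper.
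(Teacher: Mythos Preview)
There is a genuine gap in your argument. You assert that each nontrivial $f_i$ with $\Fix(f_i)=\{0,1\}$ embeds in the flow of a \emph{real-analytic} vector field, citing Szekeres and \cite[Appendice IV]{Yo}. This is false for a single diffeomorphism. Szekeres' theorem gives only a $C^1$ flow; Yoccoz's refinements give $C^{r-1}$ in the parabolic non-$r$-flat case and $C^r$ in the hyperbolic case, but neither gives analyticity. Concretely, a real-analytic germ that is parabolic at a fixed point generically carries nontrivial \'Ecalle--Voronin moduli \cite{Ec81,Vo81}, and these are precisely the obstruction to embedding the germ in a \emph{holomorphic} (hence real-analytic) flow. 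So for a generic single analytic $f$ with a parabolic endpoint, no analytic $X$ with $f=f_X^1$ exists. Your subsequent dichotomy on the rationality of the $\tau_i$ then collapses in the noncyclic branch, since you have no analytic flow to embed into.

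The paper's proof proceeds differently and uses the noncyclic hypothesis in an essential way. One first assumes the tuple is \emph{not} cyclic and applies Proposition~\ref{p:noncyclicC2} (which is legitimate because analytic maps are never infinitely flat at an isolated fixed point) to obtain a $C^\infty$ vector field $X$ on all of $[0,1]$ whose flow contains every $f_i$. The work is then to upgrade $X$ from $C^\infty$ to $C^\omega$. Near each fixed point, the germs of the $f_i$ extend to a \emph{noncyclic} family of commuting holomorphic germs, and the \'Ecalle--Voronin theory forces such a family to embed in the flow of a holomorphic vector field. By uniqueness of the Szekeres flow, this local holomorphic vector field agrees with $X$ on the real trace, so $X$ is analytic near each fixed point; invariance under $f_1$ then propagates analyticity to the whole interval. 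In short, the noncyclic hypothesis is not merely used to decide which branch of the dichotomy one is in, but is the key input that kills the \'Ecalle--Voronin obstruction and makes the flow analytic.
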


\vspace{0.1cm}

Again, in the first case, we say that the $d$-tuple is \emph{flowable}, and in the second case, that it is \emph{cyclic}. 
(Note that a $d$-tuple can be both flowable and cyclic.)

\begin{proof}[Proof of Theorem C from Proposition \ref{p:dichotomy}] 
It suffices to show that every $d$-tuple $(f_1,\ldots,f_d)$ of commuting elements in $\Diff_+^\omega ([0,1])$ 
can be connected to the $d$-tuple $(\id,\dots,\id)$ through a continuous path of such $d$-tuples. 
The argument splits into two cases, according to Proposition \ref{p:dichotomy}.

If $(f_1,\ldots,f_d)$ is flowable, then there exists a real-analytic vector field $X$ and real numbers $\tau_1,\dots,\tau_d$ such that 
$f_i=f^{\tau_i}_X$ for every $i\in[\![1,d]\!]$. Then $s\in[0,1]\mapsto (f^{s\tau_1},\dots,f^{s\tau_d})$ gives the desired path. 

If $(f_1,\ldots,f_d)$ is cyclic, then there exist $f\in \Diff_+^\omega ([0,1])$ and integers $k_1,\dots,k_d$ such that $f_i=f^{k_i}$ for every $i\in[\![1,d]\!]$. 
If we let  \, $f_s := (1-s)f+s\, \id$ \,  for every $s\in[0,1]$, then $s\in[0,1]\mapsto(f_s^{k_1},\dots f_s^{k_d})$ gives the desired path.
\end{proof}
 
\begin{proof}[Proof of Proposition \ref{p:dichotomy}] Since the trivial $d$-tuple $(\id,\dots,\id)$ is both flowable and cyclic, we only need to deal with 
a nontrivial one $(f_1,\ldots,f_d)$. Let us assume that it is not cyclic, and let us argue that it is flowable. Up to permuting the indices, we can suppose 
that $f_1\neq \id$. By analyticity, $f_1$ has only finitely many fixed points, all of them fixed by the other $f_i$'s,  
and it is not infinitely tangent to the identity 
at these points. By Proposition \ref{p:noncyclicC2}, all the $f_i$'s belong to the flow of a  $C^\infty$ vector field $X$, which is unique if one imposes 
that $f_1$ is its time-$1$ map. We need to show that $X$ is real-analytic.  To do this, note that 
about each of the fixed points, the germs of the $f_i$'s 
extend to a non cyclic family of commuting holomorphic germs. 
By classical works of \'Ecalle \cite{Ec81} and Voronin \cite{Vo81}, 
at a neighborhood of each fixed point, they arise from the flow of a holomorphic vector field. By the uniqueness above, up to rescaling, 
this vector field must coincide with $X$ in restriction to the intersection of this neighborhood with the interval. Therefore, $X$ is real-analytic 
in a neighborhood of each fixed point, and since it is invariant under $f_1$, 
it is real-analytic on the whole interval.
\end{proof}


\section{Appendix 1: some variations and extensions of classical results by Szekeres and Sergeraert}
\label{ss:Szek}

Let us start by recalling two classical results used throughout the article. 
The former is due to Szekeres \cite{Sz}, with  slight improvements by Sergeraert \cite{Se} and Yoccoz \cite{Yo}. 
The latter is a famous lemma in Kopell's thesis \cite{Ko}, slightly extended (from $C^2$ to $C^{1+bv}$ regularity) 
in \cite{N1} (see also \cite{Na06} for a folklore straightened version in the real-analytic setting). 
For the statements, we say that a homeomorphism $f$ of $[0,1)$ is a \emph{contraction} if 
$f(x)<x$ for every $x\in(0,1)$, and that a flow of homeomorphisms of $[0,1)$ is \emph{contracting} if $f^t$ 
is a contraction of $[0,1)$ for every~$t\ > 0$. 

\medskip

\begin{propsn} 
Every $C^2$ contraction of $[0,1)$ is the time-$1$ map of the flow generated by a $C^1$ vector field.
\end{propsn}

\begin{kop}
Given a $C^{1+bv}$ contraction $f$ of $[0,1)$, the only $g \in \Diff_+^1([0,1))$ commuting with $f$ and 
having an interior fixed point is the identity. 
\end{kop}

\begin{rem}
\label{r:folk}
A folklore consequence of these two statements, that has been used in numerous works, is that the $C^1$ centralizer 
of a $C^2$ diffeomorphism of $[0,1)$ fixing only $0$ is a $C^1$ flow. 
\emph{A fortiori}, the $C^1$ centralizer of a diffeomorphism of $[0,1]$ without interior fixed points is isomorphic to 
a subgroup of $\R$ (and, in particular, is Abelian). 
However, in general, it is much smaller than~$\R$. (Another result of Kopell \cite{Ko} claims that this centralizer is generically 
cyclic; see also \cite{BCW09} and references therein for general versions and variations of this phenomenon in arbitrary 
dimension.) All of this extends to diffeomorphisms of class $C^{1+bv}$ thanks to the variation of Szekeres' theorem below.
\end{rem}

\medskip

We devote the next subsection \ref{a:szekeres}.to a \emph{variation} of Szekeres' theorem in the $C^{1+bv}$ setting, 
which is crucially used in the core of the article. 
We next show in Subsection \ref{a:examples} that the $C^1$ regularity of the flow provided by Szekeres' theorem 
and its variation is ``optimal'' in the sense that, in general, a $C^{1+bv}$ (or even $C^\infty$) contraction does not 
embed into a $C^{1+bv}$ flow.

\subsection{A variation of Szekeres' theorem} 
\label{a:szekeres}

The starting point of the proof of the Main Theorem \ref{t:C1ac} for the interval was the dichotomy between 
cyclic and flowable components (cf. Proposition \ref{p:noncyclic}), for which the following version of 
Szekeres' theorem in the $C^{1+bv}$ setting is a crucial ingredient. 
Here again, the notation $O$ is used in regard to small values of the entries, not large ones. Moreover, given a $C^1$ 
function $u$ on a closed interval (which will be clear from the context), we let 
$$\|u\|_1:= \|u\|_\infty+\|Du\|_\infty.$$

 \begin{prop}[A variation of Szekeres' theorem]
 \label{p:Szek}
Let $f$ be a $C^{1+bv}$ contraction of~$[0,1)$. 
Then $f$ is the time-$1$ map of a unique $C^1$ contracting flow~$(f^t)$. 
Moreover, if $f$ is of class $C^{1+bv}$ on $[0,1]$, then this flow satisfies
$$\|\tfrac{d}{dt}_{|t=0}f^t\|_\infty+\max_{t\in[-1,1]}\|\log Df^t\|_\infty= O \big( \|f-\id\|_{1}+\var (\log Df) \big).$$
\end{prop}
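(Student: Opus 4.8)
The existence and uniqueness of the $C^1$ flow $(f^t)$ is the $C^{1+bv}$ counterpart of Szekeres' classical theorem (refined by Sergeraert and Yoccoz), and its proof is the classical one: the only place the $C^2$ hypothesis enters is the bounded–distortion estimate on a fundamental domain, which in class $C^{1+bv}$ reads, for $x,y\in[f(x_0),x_0]$,
$$\bigl|\log Df^n(x)-\log Df^n(y)\bigr| \;\le\; \sum_{k=0}^{n-1}\var\bigl(\log Df;\, f^k([f(x_0),x_0])\bigr) \;\le\; \var(\log Df),$$
the intervals $f^k([f(x_0),x_0])$ being pairwise disjoint in $(0,1)$. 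I would recall in particular that the generating vector field $X$ is, up to a positive constant $c$, the locally uniform limit of $Y_n(x):=\bigl(f^{n+1}(x)-f^n(x)\bigr)/Df^n(x)$: since $Y_n\circ f = Df\cdot Y_{n+1}$, the limit $X_0:=\lim_n Y_n$ satisfies the cocycle equation $X_0\circ f = Df\cdot X_0$, and $X:=cX_0$ is the unique $C^1$ solution of it that is negative on $(0,1)$, vanishes at the endpoints, and obeys $\int_{f(x_0)}^{x_0}|X(y)|^{-1}\,dy=1$; this normalization forces $f=f_X^1$ (the cocycle equation makes $f$ commute with the flow of $X$ and agree with $f_X^1$ on the $f$–orbit of $x_0$, whence equality by Kopell's lemma). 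Uniqueness among $C^1$ flows is the standard consequence of Kopell's lemma: the $C^1$ centralizer of the $C^{1+bv}$ contraction $f$ acts freely on $(0,1)$, hence — by Hölder's theorem — embeds as an ordered group into $(\R,+)$, so $f$ has a unique commuting $C^1$ $q$-th root; two $C^1$ flows with the same time-$1$ map therefore agree on $\tfrac1q\Z$ for every $q$, hence everywhere by continuity.

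For the quantitative statement, set $\delta:=\|f-\id\|_1+\var(\log Df)$ and assume $\delta$ small. The crux is a uniform bound on the variation of $\log|X|$ over a fundamental domain. Since $X=cX_0$, the functions $\log|X|$ and $\log|X_0|$ differ by a constant and have the same variation, and since total variation is lower semicontinuous under pointwise convergence it suffices to bound $\var(\log|Y_n|;[f(x_0),x_0])$ uniformly in $n$. Writing $\log|Y_n(x)| = \log\bigl((\id-f)(f^n(x))\bigr)-\log Df^n(x)$, the triangle inequality bounds this variation by
$$\var\bigl(\log(\id-f);[f^{n+1}(x_0),f^n(x_0)]\bigr) \;+\; \var\bigl(\log Df^n;[f(x_0),x_0]\bigr).$$
The second summand is $\le\var(\log Df)$ by the disjointness argument recalled above. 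For the first, $\log(\id-f)$ has derivative $(1-Df)/(\id-f)$, and on $I_n:=[f^{n+1}(x_0),f^n(x_0)]$ one has $\id-f\asymp|I_n|$ with constants close to $1$ — because $\|Df-1\|_\infty\le\delta$ makes consecutive fundamental intervals comparable and makes $\id-f$ vary by at most $\delta|I_n|$ across $I_n$ — so that summand is $\le 2\|Df-1\|_\infty$. Hence $\var(\log|X|)$ over one, and then over two consecutive, fundamental domains is $O(\delta)$.

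The two desired bounds then follow. Integrating the flow ODE gives $\log Df^t(x)=\log|X(f^t(x))|-\log|X(x)|$; since $f^t(x)\in[f(x),f^{-1}(x)]$ for $|t|\le1$, this yields $\|\log Df^t\|_\infty\le\var\bigl(\log|X|;[f(x),f^{-1}(x)]\bigr)=O(\delta)$ on $[-1,1]$. In particular $Df^s(x)\ge\tfrac12$ for all $x$ and all $s\in[0,1]$, so from $f(x)-x=\int_0^1 X(f^s(x))\,ds = X(x)\int_0^1 Df^s(x)\,ds$ (using the cocycle relation $X(f^s(x))=X(x)Df^s(x)$) we obtain $\|X\|_\infty\le 2\|f-\id\|_\infty=O(\delta)$. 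Since $X=\tfrac{d}{dt}_{|t=0}f^t$, this is exactly the claimed estimate.

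The step I expect to cost the most care is the uniform control of $\var(\log(\id-f))$ on the collapsing intervals $I_n$: although $I_n$ shrinks to $\{0\}$, the relative oscillation of $\id-f$ on it must be shown to stay of order $\delta$, which is precisely where $\|Df-1\|_\infty\le\delta$ (rather than only $\|f-\id\|_\infty\le\delta$) is used; everything else reduces to bookkeeping with the cocycle equation $X\circ f=Df\cdot X$ and elementary changes of variables along the flow.
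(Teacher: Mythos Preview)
Your proposal is correct and follows the same architecture as the paper's proof (carried out in detail in Proposition~\ref{p:Szek2}): construct the generating vector field $X$ as a limit of the pull-backs $Y_n=(f^n)^*\Delta$ (with $\Delta=f-\id$) using that the classical bounded-distortion estimate survives in class $C^{1+bv}$, then control the fluctuation of $\log|X|$ on fundamental domains, and deduce the bounds on $\log Df^t$ and on $X$.

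The difference is in the bookkeeping for that fluctuation. The paper writes $X=c_0(f)\,\Delta\,e^{\Sigma}$ with $\Sigma=\sum_k\theta\circ f^k$, and uses the Mean Value Theorem to rewrite each $\theta(f^k(x))$ as $\log Df(y)-\log Df(f^k(x))$ with $y\in[f^{k+1}(x),f^k(x)]$, whence the pointwise bound $|\Sigma(x)|\le\var(\log Df;[0,x])$; the bound on $\|X\|_\infty$ then comes directly from the explicit product formula. You instead split $\log|Y_n|=\log(\id-f)\circ f^n-\log Df^n$, bound the variation of each piece on $[f(x_0),x_0]$ (the first by $\lesssim\|Df-1\|_\infty$ via $\id-f\asymp|I_n|$ on $I_n$, the second by $\var(\log Df)$ via disjointness), and pass to the limit by lower semicontinuity of variation; the bound on $\|X\|_\infty$ then follows from the integral identity $f-\id=X\int_0^1 Df^s\,ds$ once $\|\log Df^s\|_\infty$ is controlled. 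Both routes land on the same key inequality $|\log Df^t(x)|\le\var(\log|X|;[f(x),f^{-1}(x)])=O(\delta)$.

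One remark: the paper's version yields the sharper \emph{local} estimate in terms of $\var(\log Df;[0,a])$ for every $a$ (stated in Proposition~\ref{p:Szek2}), which is used elsewhere to glue flows across accumulated fixed points; your argument, as written, gives only the global bound in terms of $\var(\log Df)$ on $[0,1]$. That is exactly what Proposition~\ref{p:Szek} claims, so there is no gap, but it is worth being aware of the distinction.
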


We refer to the previous statement as a ``variation'' rather than an ``extension'' of Szekeres' theorem 
for a subtle reason. 
Though it applies to a wider class of diffeomorphisms, what it provides is \emph{a $C^1$ flow} 
(i.e. a continuous homomorphism from $(\R,+)$ into $\Diff^1_+ ([0,1))$) 
rather than \emph{the flow of a $C^1$ vector field}. 
The latter is slightly stronger, but not so much: 
a result of \cite{Ha} implies that a $C^1$ flow on a one-dimensional manifold is always $C^1$ conjugated 
to the flow of a $C^1$ vector field (see also the Regularization Lemma \ref{l:lissage} and Remark \ref{r:stillholds} 
for a proof of this).  

\medskip

\begin{rem} 
By a classical result from \cite{St} and \cite[Appendice 4]{Yo} (see also \cite{MW}), 
if $f$ is of class $C^r$ and hyperbolic at $0$, with $r > 1$, then the (unique) $C^1$ flow into which it embeds is 
actually $C^r$. 
This is no longer true for $r=1+bv$ or even for $r=1+ac$, as we discuss in Appendix \ref{a:examples}. 
For integers $r \geq 2$, Yoccoz \cite[Appendice 3]{Yo} also proved that, 
for $C^r$ contractions that are parabolic at the origin but not $r$-tangent to the identity, 
the flow is generated by a $C^{r-1}$ vector field, hence it is a $C^{r-1}$ flow. 
In particular, for non-flat parabolic $C^{\infty}$ contractions, the flow is $C^{\infty}$ 
(this latter result was originally established by Takens \cite{Ta}; see \cite{EyNa23} for 
versions in finite regularity of this result). 
For parabolic $C^{\infty}$ contractions that are infinitely tangent to the identity at $0$, 
Sergeraert has shown in \cite{Se} that the flow may fail to be $C^2$. 
Actually, in Appendix \ref{a:examples} we will adapt his construction to obtain a flow in which 
some elements are not even $C^{1+bv}$. 
\end{rem}

\begin{proof}[Proof of Proposition \ref{p:Szek}]
The fact that $f$ is the time-$1$ map of an explicit $C^1$ flow $(f^t)$ on $[0,1)$ 
was almost proved in \cite[Appendix I]{EN2} as a nontrivial generalization of Szekeres' result. 
Strictly speaking, \cite{EN2} only claims that $f^t$ is $C^1$ for every $t$ 
because this is what we were interested in at that time, but all the arguments needed to show that 
$(t,x)\mapsto f^t(x)$ is $C^1$ are there. 
We rewrite the precise adapted statement and its proof (see Proposition \ref{p:Szek2} below), and we add the 
announced $C^1$ estimate for the flow. Finally, note that the uniqueness of the flow follows from Kopell's Lemma.
\end{proof}

\begin{prop}
\label{p:Szek2}
Given a $C^{1+bv}$ contraction $f$ of $[0,1)$,  
let $\Delta(x) := f(x)-x $, and let
$$c_0 (f) := 
\begin{cases}
\frac{\log Df (0)}{Df (0) - 1} \quad \, \mathrm{ if } \quad Df (0) \neq 0,\\
1 \quad \quad \quad  \quad \mathrm{otherwise}.
 \end{cases}$$
For every $n \geq 0$, 
 let $X_n := c_0(f)(f^n)^* (\Delta) = c_0 (f) \tfrac{\Delta \circ f^{n} }{ Df^{n}}$. 
 Then the sequence of vector fields $X_n$ uniformly converges on every compact subset of~$[0,1)$ 
 and its limit $\X$ is the generating vector field of a $C^1$ flow whose time-1 map is $f$. 
 Moreover, for all $a > 0$, 
$$\max_{[0,a]}|\X|+\max_{(t,x)\in[0,1]\times[0,a]}|\log Df^t(x)| = O \Big( \| (f-\id)|_{[0,a]} \|_{1}+\var(\log Df; [0,a]) \Big),$$ 
where the notation $O$ is used with respect to small values of the entries and is uniform in $a$.
 \end{prop}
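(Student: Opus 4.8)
The plan is to follow the classical Szekeres construction, carefully tracking the uniform bounds. First I would recall the key idea: the pullbacks $X_n = c_0(f)(f^n)^*(\Delta)$, where $\Delta = f - \id$, are natural candidates for the generating vector field because they satisfy the approximate invariance relation $X_n \circ f = (f^n)^*(X_{n}) \cdot Df$-type identities; more precisely, $X_{n+1} = (f)^*(X_n) = \frac{X_n \circ f}{Df}$ up to the normalization, so a fixed point of this operator will be $f$-invariant in the sense $\X \circ f = Df \cdot \X$, which is exactly the condition for $f$ to commute with the flow of $\X$. I would first establish convergence of $(X_n)$ on compact subsets of $[0,1)$. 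The standard argument compares $\log(X_{n+1}/X_n)$: one writes
$$\log \frac{X_{n+1}(x)}{X_n(x)} = \log \frac{\Delta(f^n(x))}{Df^n(x)} - \log \frac{\Delta(f^{n-1}(x))}{Df^{n-1}(x)} + (\text{const terms}),$$
and bounds the telescoping sum using that $f$ has bounded variation of $\log Df$ and that the orbit $f^n(x) \to 0$. The crucial quantitative input is that $\sum_n \var(\log Df; [f^{n+1}(x_0), f^n(x_0)]) \le \var(\log Df; [0,a])$ since the fundamental intervals are disjoint, and similarly $\sum_n |\Delta(f^n(x))/f^n(x) - (\text{derivative at }0)|$ is controlled. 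This gives uniform Cauchyness of $\log X_n$ on $[0,a]$ with the explicit bound $O(\|(f-\id)|_{[0,a]}\|_1 + \var(\log Df; [0,a]))$, which in turn yields the bound on $\max_{[0,a]}|\X|$.

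Second, I would verify that $\X$ generates a $C^1$ flow $(f^t)$ whose time-$1$ map is $f$. Once we know $\X$ is $C^1$ (or at least Lipschitz) on $(0,1)$ and continuous up to $0$ with $\X(0) = 0$, Picard--Lindel\"of gives a local flow, and the invariance relation $\X \circ f = Df \cdot \X$ forces $f \circ f^t = f^t \circ f$ and hence, by a uniqueness/rigidity argument (two commuting flows with the same time-$1$ behavior, or Kopell-type reasoning), $f^1 = f$. Actually the cleaner route is: the sequence $X_n$ converges to a vector field which, by its very construction as iterated pullbacks, is $f$-invariant, and then one checks directly that the time-$1$ map of $\X$ equals $f$ by comparing their actions on a fundamental domain. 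Here I would need to confirm $\X$ is genuinely $C^1$: $\log X_n$ converges in $C^0$, and I would separately show its derivative $D\log X_n$ converges (this again uses the $\var(\log Df)$ control, now at the level of the derivative of $\log \Delta$ versus $\log Df$), giving $\X \in C^1$ on $(0,1)$; continuity of $D\X$ at $0$ follows from the hyperbolic or parabolic normal form near the fixed point.

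Third, for the flow estimate, I would use the identity (cf.\ equation \eqref{e:nec} in the excerpt) $D\log Df^t = \frac{D\X \circ f^t - D\X}{\X}$ on $[0,a]\setminus \X^{-1}(0)$, integrate, and use the bound on $\|D\X\|_\infty$ together with $\|\X^{-1} \cdot (f^t - \id)\|_\infty = O(|t|)$ (which follows as in Lemma~\ref{l:unsurprising} and the proof of Proposition~\ref{p:flotC2}) to get $\max_{t\in[-1,1]}\|\log Df^t\|_\infty = O(\var(D\X)) = O(\var(\log Df))$ — wait, more carefully, $\|\log Df^t\|_\infty$ is controlled by $\|D\X\|_\infty$ and the return-time estimate, and $\|D\X\|_\infty$ is itself $O(\|(f-\id)\|_1 + \var(\log Df))$ from the convergence analysis. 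Putting this together with $\|\X\|_\infty = O(\text{same})$ gives the displayed estimate, uniformly in $a$ because every bound was phrased in terms of $\var$ over $[0,a]$ and sup-norms over $[0,a]$ that are monotone in $a$.

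\textbf{Main obstacle.} The hard part will be the quantitative convergence of $(X_n)$ in $C^1$ — i.e., controlling not just $\log X_n$ but $D\log X_n$ — with a bound that is \emph{linear} in $\|(f-\id)\|_1 + \var(\log Df)$ and uniform in $a$. The $C^0$ convergence and even the $C^1$ convergence of the flow are essentially in \cite{EN2} (as the proof remarks), so the genuinely new content is extracting the explicit $O(\cdot)$ estimate. The delicate point is that naive telescoping of $D\log(X_{n+1}/X_n)$ produces terms involving $D\log Df^n$, which a priori could grow; one must reorganize the sum so that it is dominated by $\sum_n \var(\log Df; I_n)$ over disjoint fundamental intervals $I_n$, which is the place where $C^{1+bv}$ (bounded variation of $\log Df$, hence summability over a disjoint family) is exactly what makes the argument close — and this is why the statement cannot be pushed below $bv$ regularity.
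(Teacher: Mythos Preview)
Your plan has a genuine misconception at its core: you are trying to show that the limit vector field $\X$ is $C^1$, and you intend to use this (via the identity $D\log Df^t = \frac{D\X\circ f^t - D\X}{\X}$) to control $\|\log Df^t\|_\infty$. But in the $C^{1+bv}$ setting, $\X$ is \emph{not} $C^1$ in general, and the paper does not claim it is. The statement asserts that $\X$ generates a \emph{$C^1$ flow}, meaning $t\mapsto f^t$ is a continuous homomorphism into $\Diff^1_+([0,1))$; this is strictly weaker than being the flow of a $C^1$ vector field (the paper emphasizes this distinction right after Proposition~\ref{p:Szek}). Concretely: your plan to show that $D\log X_n$ converges cannot work, because $X_n = c_0(f)\,\Delta\circ f^n / Df^n$ involves $Df^n$, and $f$ being only $C^{1+bv}$ means $Df^n$ has bounded variation but need not be differentiable, so $D\log X_n$ does not exist pointwise. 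Likewise the identity you cite from \eqref{e:nec} is stated for a $C^{1+ac}$ vector field and is unavailable here.

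The paper's route avoids this entirely. After establishing only \emph{uniform} (i.e.\ $C^0$) convergence of $X_n$ to the continuous field $\X = c_0(f)\,\Delta\, e^{\Sigma}$ with $\Sigma = \sum_k \theta\circ f^k$ bounded by $\var(\log Df;[0,a])$, it constructs the flow explicitly as $f^t = \tau^{-1}T_t\tau$, where $\tau(x)=\int_1^x \frac{du}{\X(u)}$ is $C^1$ on $(0,1)$ because $\X$ is merely continuous and nonvanishing there. The derivative estimate then comes directly from
\[
\log Df^t(x) = \log\frac{\X(f^t(x))}{\X(x)} = \log\frac{\Delta(f^t(x))}{\Delta(x)} + \Sigma(f^t(x)) - \Sigma(x),
\]
bounding the $\Sigma$-difference by $2\var(\log Df;[0,a])$ and the $\Delta$-ratio via the mean value theorem by $\|(f-\id)|_{[0,a]}\|_1$. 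No derivative of $\X$ appears anywhere. A separate argument handles continuity of $(t,x)\mapsto Df^t(x)$ at $x=0$. Your Step~1 (convergence via telescoping and disjoint fundamental domains) is essentially correct and matches the paper; the divergence is in Steps~2--3, where you should drop the $C^1$-regularity-of-$\X$ program and work with the explicit decomposition $\X = c_0\Delta e^\Sigma$ instead.
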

 
\begin{proof} 
We divide the proof in five steps. First, we prove the convergence of the sequence $(X_n)$ and, 
almost simultaneously, we obtain the announced control for the limit $X$. 
Next, we check that $X$ generates a $C^0$ flow ($C^1$ away from $0$) of which $f$ is the time-$1$ map. 
Then, we prove the announced control on the derivatives of the flow-maps away from $0$. 
Finally, we extend this to $0$ by showing that these derivatives are continuous at $0$.

 \medskip
 
\noindent \underline{Step 1:} 
For every $x\in(0,1)$, let 
  $$\theta(x)
  := \log \left( \frac{ f^2(x)-f(x) }{ Df(x) \, (f(x) - x) }\right).$$ 
 Then, for every $k\in\N$, one has $ \log \frac{ \X_{k+1} }{ \X_k }  = \theta \circ f^{k}$. 
 Note that $\theta$ may be also expressed as  
$$\theta (x) = \log \left(\int_0^1 Df\bigl( x + s (f(x)-x) \bigr)\,ds \right) 
- \log(Df(x)).$$
In particular, $\theta$ extends to $[0,1)$ as a continuous map, with $\theta(0)=0$. 
By the Mean Value Theorem, for each $x\in[0,1)$ there exists  $ y_x \in [f(x),x]$ such that 
$$ \int_0^1 Df \bigl( x + s (f(x)-x) \bigr) \, ds = Df(y_x) .$$
Therefore, given $a \in [0,1)$, for every $x\in [0,a]$ and every $0\le i <  j$, we have
$$\sum_{k=i}^{j-1} \left| \theta \circ f^{k}(x) \right| 
=  \sum_{k=i}^{j-1} |\log Df(y_{f^{k}(x)})-\log Df(f^{k}(x))| 
\le \var(\log Df ; [0,f^{i}(a)])\xrightarrow[i\to+\infty]{}0.$$
As a consequence, $\sum_k \theta\circ f^{k}$ converges absolutely and uniformly on $[0,a]$. 
Denote by $\Sigma$ the limit sum, which is thus a continuous function on $[0,1)$ that satisfies $\Sigma (0) = 0$ and 
\begin{equation}
\label{eq:sum}
\max_{[0,a]}|\Sigma|\le \var(\log Df ; [0,a]),
\end{equation} 
according to the above computation. Then $(\X_n)$ converges uniformly on every compact subset of $[0,1)$ 
towards $\X:= c_0(f)\Delta \, e^\Sigma$. 
In particular, $\X$ vanishes only at $0$, and is strictly negative at all points of $(0,1)$. \medskip

\noindent\underline{Step 2:} 
Denote by $c(\lambda)$ the number $\frac{\log\lambda}{\lambda-1}$ if $\lambda\in(0,1)$ and $1$ if 
$\lambda=1$, so that $c$ is continuous at $1$. From the equality $\X = c_0(f) \Delta \, e^\Sigma$, one gets 
$$\max_{[0,a]}|\X|\le | c \, (Df(0)) | \,\cdot \| (f-\id)|_{[0,a]}\|_{\infty}\cdot e^{\var(\log Df,[0,a])}.$$
This gives the desired type of control for $\X$.

\medskip

\noindent\underline{Step 3:} 
Since, by definition, $\X_{n+1}=f^*\X_n$, in the limit we obtain $\X=f^*\X$, 
 that is, $\X=\frac{\X\circ f}{Df}$. This implies that the derivative of $x\mapsto \int_x^{f(x)}\frac{du}{\X(u)}$ is identically 
 $0$ on $(0,1)$, so this map is constant. The proof that this constant equals 1 
 in the ``usual'' Szekeres' theorem (i.e. when $f$ is assumed~$C^2$) reproduced in
 \cite[Proposition 4.1.14]{N1} works without any change in the present setting. 
 
Let us now show that $\X$ is the generating vector field of a $C^1$ flow with the desired $C^1$ control. Let
\begin{equation}\label{eq:P}
\tau_{\X} = \tau \!: x \in (0,1) \mapsto \int_1^x\frac{du}{\X(u)}.
\end{equation}
Then $\tau$ is of class $C^1$, with positive derivative, and $\tau (f^n(1))=n$ for every $n\in \mathbb{Z}$. 
Therefore, $\tau$ defines a $C^1$ diffeomorphism between $(0,1)$ and $\R$, which satisfies $f = \tau^{-1} T_1 \tau$, 
where $T_t$ denotes the translation by $t$ on $\R$. 
For every $t\in\R$, define $f^t$ as $\tau^{-1}T_t \tau$ on $(0,1)$ and $f^t(0)=0$ (so that, in particular, $f^1=f$). 
Then $(t,x)\in\R\times [0,1) \mapsto  f^t(x)$ 
defines a $C^0$ flow and is differentiable in the $t$ variable everywhere, with $\frac{d}{dt}f^t(x)=X(f^t(x))$. 
Moreover, this map restricted to $\R\times(0,1)$ is $C^1$. We need to check that it is actually $C^1$ on 
$\R\times[0,1)$ and prove the announced control on the derivatives. \medskip

\noindent\underline{Step 4:} 
Let us prove the desired control on $[0,1]\times (0,1)$. (The case of $(t_0,0)$ with $t_0 \in [0,1]$ will follow from this 
once we prove the continuity of $(t,x) \mapsto Df^t(x)$ at $x=0$ below.) For every $x>0$,
\begin{equation}
\label{e:C1control}
\log Df^t(x) 
= \log \left( \frac{\X(f^t(x))}{\X(x)} \right) 
= \log \left( \frac{\Delta(f^t(x))}{\Delta(x)} \right) + \Sigma(f^t(x)) - \Sigma(x).
\end{equation}
By (\ref{eq:sum}), for every $a\in[0,1)$ and all $(t,x)\in[0,1]\times [0,a]$,
\begin{equation}
\label{eq:control2}
\big| \Sigma(f^t(x)) - \Sigma(x) \big| \le 2 \max_{[0,a]} |\Sigma|\le 2\,\var(\log Df,[0,a]).
\end{equation}
Moreover, given $(t,x)\in[0,1]\times(0,a]$, there exists $z_x \in [f^t(x),x]$ such that
\begin{equation*}
\left|\frac{\Delta(f^t(x))}{\Delta(x)}-1\right| 
= \frac{|D\Delta(z_x)|\times|f^t(x)-x|} {|\Delta(x)|}
\le  \max_{z \in [f^t(x),x]}|D\Delta (z)|
\leq \| (f-\id)|_{[0,a]}\|_{1}.
\end{equation*}
This implies
\begin{equation}
\label{eq:control3}
\max_{(t,x)\in[0,1]\times(0,a]}\left| \log \left( \frac{\Delta(f^t(x))}{\Delta(x)} \right)\right|=O(\| (f-\id)|_{[0,a]}\|_{1}).
\end{equation}
Putting (\ref{e:C1control}), (\ref{eq:control2}) and (\ref{eq:control3}) together, we 
get the announced control on $\max_{(t,x)\in[0,1]\times(0,a]}|\log Df^t(x)|$.

\medskip

\noindent\underline{Step 5:} 
Finally, we need to prove that $\log Df^t(x)$ has a limit, namely $t_0\log Df(0)$, when $(t,x)$ 
goes to $(t_0,0)$, for every $t_0\in[0,1]$. 
Since $\Sigma$ is continuous at $0$ and vanishes at this point, 
in view of~\eqref{e:C1control}, this is equivalent to proving that 
$$\lim_{x\to0\atop{t\to t_0}} \log \left( \frac{\Delta(f^t(x))}{\Delta(x)} \right) = t_0\log Df(0).$$
If $Df(0)=1$, then the computation above gives 
\begin{align*}
\left|\frac{\Delta(f^t(x))}{\Delta(x)}-1\right|
& \le \max_{z \in [f^t(x),x]}|D\Delta (z)|\xrightarrow[x\to0\atop{t\to t_0}]{}|D\Delta(0)|=0.
\end{align*}
If $\lambda_0 := Df(0)<1$, then $D\Delta(0)=\lambda_0 - 1<0$, so 
$\frac{\Delta(y)}{y}\xrightarrow[y\to0]{}\lambda_0 - 1\neq0$. 
This implies $\frac{\Delta(f^t(x))}{\Delta(x)}\sim_{x\to0} \frac{f^t(x)}{x}$ uniformly in $t\in[-1,2]$.  
Thus, we need to prove that $\log ( \frac{f^t(x)}{x} ) \xrightarrow[x\to0\atop{t\to t_0}]{}t_0\log \lambda_0$. 
To do this, first observe that 
$$\X(x)=\frac{\log\lambda_0}{\lambda_0 - 1} \, \Delta(x) \, e^{\Sigma(x)}\underset{x\to0}{\sim}(\log\lambda_0) \, x.$$ 
Thus, given $\eps>0$, we may let $\delta>0$ be such that 
$$\frac{1-\eps}{(\log\lambda_0)\, u}<\frac1{\X(u)}<\frac{1+\eps}{(\log\lambda_0)\, u}$$
for every $u\in(0,\delta]$. Assuming that $f^{-1}(x)$ (and thus $f^{t}(x)$ for every $t\ge-1$) is in this interval, we obtain
$$\int_x^{f^t(x)}\frac{1-\eps}{(\log\lambda_0) \, u} \, du
\le \int_x^{f^t(x)}\frac{du}{\X(u)}\le  \int_x^{f^t(x)}\frac{1+\eps}{(\log\lambda_0) \, u} \, du.$$ 
Hence,
$$\frac{1-\eps}{\log\lambda_0}\log \left( \frac{f^t(x)}{x} \right) 
\le t \le \frac{1+\eps}{\log\lambda_0}\log \left(\frac{f^t(x)}{x} \right),$$
and therefore
$$\frac{t\log\lambda_0}{1+\eps}\le \log \left( \frac{f^t(x)}{x} \right) \le \frac{t\log\lambda_0}{1-\eps}.$$
Letting $\varepsilon \to 0$, this gives the desired limit for $\log (\frac{f^t(x)}{x})$, and thus completes the proof.
\end{proof}

\subsection{Some contractions that do not embed into a $C^{1+bv}$ flow}
\label{a:examples}

Here, we give two examples of $C^{1+ac}$ contractions that do not embed into a $C^{1+bv}$ flow. 
The first one is hyperbolic at $0$ (thus necessarily non $C^2$, because of Sternberg's linearization result) 
and, actually, $C^1$ conjugate to a linear map. The second one is 
$C^\infty$ and infinitely tangent to the identity at $0$. For simplicity, we prefer to directly 
work on the half-line $\R_+$, which is diffeomorphic to $[0,1)$.

\medskip

Let us start with the hyperbolic example. 
It is worth pointing out that the phenomenon of failure of Sternberg's linearization result in regularity $C^{1+ac}$ 
we are about to describe is further and deeply investigated in the separate publication \cite{EN4}. 

Consider a sequence $(\psi_n)$ of $C^2$ diffeomorphisms of $[1,2]$ 
with pairwise disjoint supports lying inside $[\sqrt{2},2]$ 
and satisfying $\var (\log D\psi_n )\sim \tfrac1{n^2}$. 
This condition implies $\|\log D\psi_n\|_\infty = O(\frac1{n^2})$. 
For every $n \geq 1$, let $\phi_n := \prod_{k=n}^{+\infty}\psi_k$ 
(this is well-defined since the $\psi_k$ are disjointly supported). Then
 \begin{equation}
 \label{e:rig}
 \|\log D\phi_n\|_\infty=O \left(\frac{1}{n^2} \right) \to 0
 \qquad\text{and} \qquad
 \var (\log D\phi_n) \sim \sum_{k=n}^{+\infty}\frac1{k^2}\sim \frac1n.
 \end{equation} 
 Let $f^t$ be the homothety of ratio~$2^{-t}$, and $f=f^1$. Define $\f$ (the future $C^1$ conjugacy)  
 by letting 
 $$\f \res{f^n([1,2])} := f\circ (f^n \phi_n f^{-n})$$
 for all $n \geq 1$, and $\f := f$ elsewhere. On the one hand, 
 according to the left-side estimate of \eqref{e:rig}, this defines a $C^1$ diffeomorphism of $\R_+$. 
 On the other hand, by the right-side estimate of \eqref{e:rig}, for all $n\geq 1$ one has 
 $$\var(\log D\f,f^n([1,2]))\sim\frac1n;$$ 
since $\sum\frac1n$ diverges, this implies that $\f$ is not $C^{1+bv}$ on any neighborhood of the origin.   
 Let us now check that $g^t = \f^{-1} f^t \f$ is of class $C^{1+ac}$ for $t=1$ but not for $t=\frac12$. 
 By construction, for all $n\geq 1$, 
 $$g\res{f^n([1,2])} = f\circ f^n\psi_nf^{-n}.$$ 
 Therefore, $g$ is $C^2$ on $(0,+\infty)$ and 
 $$\var(\log Dg, f^n([1,2]))= \var (\log D\psi_n) \sim \tfrac1{n^2},$$ 
 which shows that $\var (\log Dg)$ is finite, and hence that $g$ is $C^{1+ac}$. 
Finally, for all $n \geq 1$, one can readily check that
 $$g^{\frac12}\res{f^n([1,2])} = f^{\frac12}\circ f^n\phi_nf^{-n}.$$
 Thus, 
 $$\var ( \log Dg^{\frac12}, f^n([1,2]))= \var (\log D\phi_n),$$
 which is not summable. Therefore, $\var (\log Dg^{\frac12})$ is infinite, 
thus concluding the construction.

\begin{rem}
Let us stress that \emph{no} conjugacy between $f$ and $g$ (even different from $\f$)  can be $C^{1+bv}$. 
Indeed, from Kopell's Lemma, one easily deduces that any $C^1$ conjugacy from $f$ to $g$ 
conjugates their flows, but $g^{\frac12}$ is not $C^{1+bv}$ though $f^{\frac12}$ is.
\end{rem}

\bigskip

Let us now move on to the smooth ``infinitely parabolic'' example.

\begin{prop}
\label{p:Sergeraert}
There exists a $C^1$ contracting vector field on $[0,+\infty)$ whose time-$1$ map is $C^\infty$ but whose 
time-$\frac12$ map is not $C^{1+bv}$ on $[0,\eps]$ for any $\eps>0$.
\end{prop}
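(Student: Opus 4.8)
The plan is to adapt Sergeraert's construction \cite{Se} --- the ``infinitely parabolic'' counterpart of the hyperbolic example above --- the key new difficulty being that here the perturbed diffeomorphism must be $C^{\infty}$, which the naive ``one bump per fundamental domain'' recipe (that in the hyperbolic case only produced a $C^{1+ac}$ map) cannot achieve.

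First I would fix a $C^{\infty}$ parabolic model: a $C^{\infty}$ contracting vector field $X_0$ on $[0,\infty)$ that is flat at $0$ and whose flow $(f_0^t)$ is $C^{\infty}$ (say, the flow of $X_0(x)=-e^{-1/x}$; see also \cite{Ta}). Let $\tau_0$ be its Szekeres time function, so $f_0^t=\tau_0^{-1}\circ(s\mapsto s-t)\circ\tau_0$ and the fundamental domains $J_n:=f_0^n([f_0(x_0),x_0])$ are the $\tau_0$-preimages of unit intervals. One builds $f$ by prescribing, on each $J_n$, a perturbation $f|_{J_n}:=f_0\circ(f_0^n h_n f_0^{-n})$ with $h_n$ a $C^{\infty}$ diffeomorphism of $[f_0(x_0),x_0]$ compactly supported in its interior; the supports are pairwise disjoint and, exactly as in the hyperbolic example, contained in a fixed subinterval chosen so that $f_0^{1/2}$ maps it into a single fundamental domain, disjoint from that domain's perturbation region. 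Equivalently $f=\varphi^{-1}f_0\varphi$, where the $C^1$ diffeomorphism $\varphi$ obtained by gluing these pieces also conjugates the model \emph{flow}; a priori $f$ is only $C^1$, but it is $C^{\infty}$ as soon as the $C^j$-norms of the $h_n$ decay suitably (relative to $\sup_{J_0}Df_0^n$) at every order $j$. In any case $f$ is $C^{1+bv}$, so Proposition \ref{p:Szek} gives a unique $C^1$ flow $(f^t)=(\varphi^{-1}f_0^t\varphi)$ with $f^1=f$, and $f^{1/2}=\varphi^{-1}f_0^{1/2}\varphi$. The same elementary manipulation as in the hyperbolic example identifies $f^{1/2}|_{J_n}$ with $f_0^{1/2}\circ(f_0^n\Pi_n f_0^{-n})$, where $\Pi_n:=\prod_{k\ge n}h_k$ is the cumulative perturbation; using conjugacy-invariance of total variation, the uniform bound $\var(\log Df_0^n;K)=O(|K|)$ for $K\subset J_0$ (from the smoothness of the model), and $\sum_n\var(\log Df_0^{1/2};J_n)<\infty$, one gets
$$\var(\log Df^{1/2};J_n)\ \ge\ \sum_{k\ge n}\var(\log Dh_k)\ -\ \eta_n,\qquad \sum_n\eta_n<\infty,$$
so that $\var(\log Df^{1/2};(0,\eps])$ is bounded below, for every $\eps>0$, by $\sum_k k\,\var(\log Dh_k)$ minus a convergent series. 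It would thus suffice to produce a sequence $(h_k)$ with $\sum_k k\,\var(\log Dh_k)=\infty$, $\sum_k k\,|\mathrm{supp}\,h_k|<\infty$, $\sum_k\var(\log Dh_k)<\infty$, and all higher $C^j$-norms small enough that $f$ is $C^{\infty}$ and flat at $0$.

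This last simultaneous tuning is the heart of the matter and the step I expect to be the main obstacle. With disjoint --- hence shrinking --- supports $\sigma_k$, for a bump one has $\|D^jh_k\|_\infty\asymp\|D^2h_k\|_\infty\,\sigma_k^{2-j}$, so the $C^{\infty}$ requirement constrains $\|D^2h_k\|_\infty$, hence $\var(\log Dh_k)\asymp\|D^2h_k\|_\infty\,\sigma_k$, to decay so fast that $\sum_k k\,\var(\log Dh_k)$ converges: the single-scale scheme of the hyperbolic example genuinely \emph{cannot} be made smooth here. Sergeraert's device \cite{Se} circumvents this by distributing, inside each $J_n$, bumps spread over many auxiliary scales, tuned so that the estimates governing the high derivatives (and hence the smoothness of $f$) are decoupled from the first-order defect that must persist in $\Pi_n$; adapting his scheme so that this defect is non-summable --- rather than merely unbounded near $0$ --- is what delivers the strengthened conclusion. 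Granting it, $\var(\log Df^{1/2};(0,\eps])=\infty$ for all $\eps>0$, so $f^{1/2}$ is not $C^{1+bv}$; and the Szekeres vector field $X$ of $f$ (Proposition \ref{p:Szek2}) is $C^1$ yet not $C^{1+bv}$, which is consistent since a $C^{1+bv}$ generator would force $f^{1/2}$ to be $C^{1+bv}$ by the $C^{1+bv}$ analogue of Lemma \ref{l:unsurprising}.
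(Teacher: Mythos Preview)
Your framework is sound and you correctly isolate the obstruction: with a homogeneous model such as $X_0(x)=-e^{-1/x}$ and one bump $h_n$ per fundamental domain, the $C^\infty$ constraint on $f$ forces $\var(\log Dh_n)$ to decay so fast that $\sum_n n\,\var(\log Dh_n)<\infty$, and the scheme collapses. But your proposed resolution --- ``bumps spread over many auxiliary scales inside each $J_n$'' --- is not Sergeraert's device, and with a homogeneous $X_0$ no amount of cleverness in the $h_n$ alone will decouple the high-derivative constraints from the first-order defect: the scaling that governs both is the same $Df_0^n$.

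The missing idea is that the decoupling lives in the \emph{model vector field}, not in the perturbations. In the paper's (and Sergeraert's) construction, $X_0$ is built from ``bricks'' $B_k$ accumulating at $0$, each consisting of two outer zones $L_k,L_k'$ where $X_0\equiv-2^{-k^2}$ and a central plateau $C_k$ where $X_0\equiv-2^{-k^3}$. The perturbation is a single bump of amplitude $2^{-k^3}$ on one fundamental interval $I_k\subset L_k$, paired with a compensating inverse bump on $I_k'\subset L_k'$; the compensation forces the Szekeres field of $f$ to agree with $X_0$ on a reference interval $J_k$, which makes $f^{1/2}$ explicitly computable there and hence (by conjugation) on $C_k$. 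Because $X_0$ is constant on $L_k$, the perturbation is placed by translation, not by conjugation with a large-derivative map, so the $C^\infty$ bound on $f$ only sees $2^{-k^3+rk^2}\to0$. The divergence of $\var(\log Df^{1/2})$ is then detected on the plateau: each of the $\sim 2^{k^3-k}$ fundamental intervals in $C_k$ is sent to $I_k$ by a flow iterate whose derivative is the ratio $2^{k^3-k^2}$, and this amplification makes each contribute $\sim 2^{-k^3+k^2}$ to the variation, for a total $\gtrsim 2^{k^2-k}$ on $B_k$. Your outline should be reworked around this plateau-amplification mechanism rather than a multiscale perturbation of a simple model.
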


\begin{proof} 
The following construction is mostly borrowed from \cite{Se}, which provides an example of a smooth contraction 
(necessarily embedded into a $C^1$ flow according to Szekeres' theorem) 
that does not admit a $C^2$ square root. 
One could probably show that the very same example does not admit a $C^{1+bv}$ square root either. 
However, following a clever idea of Bonatti, we prefer to slightly modify the construction in order to simplify the 
computations. 
For an explanation of the idea behind the construction, see \cite{Ey19}.

Let us start with a $C^\infty$ vector field $X_0$ on $[0,+\infty)$ with the following shape:
\begin{figure}[htbp]
\centering
\includegraphics[width=12cm]{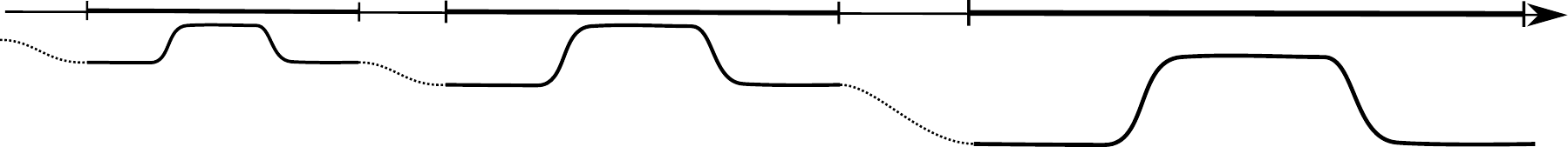}
\put(-60,10){$\scriptstyle{X_0}$}
\put(-75,37){$\scriptstyle{B_{k}}$}
\put(-211,37){$\scriptstyle{B_{k+1}}$}
\put(-305,37){$\scriptstyle{B_{k+2}}$}
\put(-365,28){$\scriptstyle{0\quad\cdots}$}
\caption{The initial vector field $X_0$.}
\label{f:briques}
\end{figure}

More precisely, $X_0$ is made of ``bricks'' of the form described in Figure \ref{f:brique} 
defined on pairwise disjoint segments $B_k$, $k \geq 1$, which get smaller and closer to $0$ as $k$ goes to infinity. 
These bricks are joined together by some simple interpolation. 
For example, $B_k$ and the interpolation zone between $B_k$ and $B_{k+1}$ each have a width equal 
to $2^{-k-1}$, and $X_0$ is constant equal to $-2^{-k^2}$ on the left and right sides of $B_k$ 
(that we denote by $L_k$ and $L_k'$ respectively), 
and to $-2^{-k^3}$ in the central part $C_k$. 
We take the intervals $L_k$, $L'_k$ and $C_k$ and the ``interpolation regions'' between them of 
width $\frac{2^{-k-1}}{5}$. 
One can check that the smoothness of $X_0$ follows from the fact that any power of $2^{-k^2}$ and $2^{-k^3}$ is 
negligible with respect to $2^{-k}$. 

\begin{figure}[h!]
\centering
\includegraphics[width=12cm]{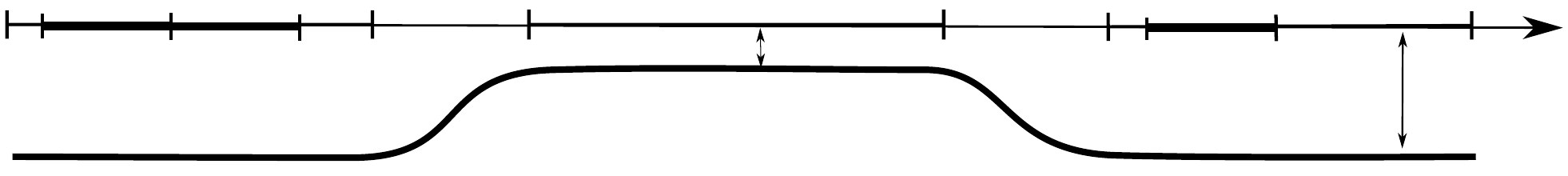}
\put(-170,25){$\scriptscriptstyle{2^{-k^{_3}}}$}
\put(-100,35){$\overbrace{\hspace{2.7cm}}^{L'_k}$}
\put(-339,35){$\overbrace{\hspace{2.7cm}}^{L_k}$}
\put(-184,37){$\scriptstyle{C_k}$}
\put(-30,13){$\scriptstyle{2^{-k^2}}$}
\put(-335,-7){$\scriptstyle{X_0}$}
\put(-323,24){$\scriptstyle{J_k}$}
\put(-293,24){$\scriptstyle{I_k}$}
\put(-80,24){$\scriptstyle{I'_k}$}
\caption{One brick.} 
\label{f:brique}
\end{figure}

Let $f_0^t$ denote the time-$t$ map of $X_0$, and $f_0:=f_0^1$. We will perturb $f_0$ to a new 
$C^\infty$ contraction~$f$, and the desired vector field $X$ will be the generating vector field of $f$. 
More precisely, $f$ will coincide with $f_0$ except on two fundamental intervals 
of $f_0$ inside each $B_k$, located in the ``low regions'' $L_k$ and $L_k'$. 
Let us fix an orbit $(a_j)$ of $f_0$. One can check that, for $k\ge3$, the widths of the intervals $L_k$, $L'_k$ and 
$C_k$ are at least three times the (constant) size of $X_0$ there. 
This easily implies that they each contain three consecutive $a_j$'s. 
For all $k\ge3$, we denote by $J_k$ the leftmost fundamental interval of the form $[a_{j(k)},a_{j(k)-1}]$ lying in $L_k$, 
and we let $I_k:=f_0^{-1}(J_k)$ its neighbor, which also lies in $L_k$. 
Both $I_k$ and $J_k$ have length $2^{-k^2}$, and 
one easily checks that the restriction of $f_0$ to $I_k$ is the translation by $-2^{-k^2}$. 
Furthermore, there exist $m=m_k$ and $l=l_k$ such that $f_0^{-m}(I_k)\subset C_k$ 
and $f_0^{-l}(J_k\cup I_k)\subset L'_k$. 
The perturbation on the $k^{\mathrm{th}}$ brick will take place in $I_k$ and $I'_k:=f_0^{-l}(I_k)$. 
To define it, we need a smooth ``model'' function $\delta$ like this:
\begin{figure}[h!]
\centering
\includegraphics[width=6cm]{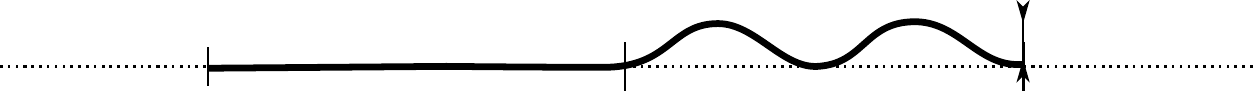}
\put(-89,-8){$\scriptstyle{\frac12}$}
\put(-142,-5){$\scriptstyle{0}$}
\put(-31,-7){$\scriptstyle{1}$}
\caption{The function $\delta$.}
\label{fig:gammak}
\end{figure}

\noindent (The specific form of $\delta$ on $[\frac12,1]$ does not matter here, whereas it did in Sergeraert's 
original argument. 
What does matter is that $\delta$ vanishes on $[0,\frac12]$ and not on $[\frac12,1]$.) We rescale it horizontally and 
vertically to obtain a function $\delta_k$ supported in $I_k = [ p_k,q_k ]$: 
$$\delta_k(x) := 2^{-k^3} \delta(2^{k^2}(x-p_k)).$$
Finally, we define: 
\begin{itemize}
\item a diffeomorphism $\f_k$ of $I_k$ by $\f_k:=\id+\delta_k$, 
\item a diffeomorphism $\psi_k$ of $I'_k$ by $\psi_k := (f_0^{l})^{-1}\circ \f_k^{-1}\circ f_0^{l}$, 
\item a contraction $f$ by
$$ f:=
\begin{cases}
f_0\circ \f_k & \text{on $I_k$ for every $k\ge3$,}\\
f_0\circ \psi_k& \text{on $I'_k$ for every $k\ge3$,}\\
f_0 & \text{elsewhere.}
\end{cases}
$$
\end{itemize}
Note that, on $I'_k$, 
one has $Df_0^l = \frac{X_0\circ f_0^l}{X_0}=\frac{-2^{-k^2}}{-2^{-k^2}}=1$. 
Thus, $f_0^l$ restricted to $I'_k$ is a translation that conjugates $\psi_k$ to $\f_k$. 
As a consequence, for every $r\in\N$, 
$$\|f_k-f_0\|_r=\max(\|\f_k-\id\|_r,\|\f_k^{-1}-\id\|_r)=O(\|\f_k-\id\|_r)=O(2^{-k^3+rk^2}\|\de\|_r),$$
which implies that $f$ is still $C^\infty$. Note also that, by construction, $(a_j)$ is still an orbit of $f$.

We now need to understand the time-$\frac12$ map $f^{\frac12}$ of its generating vector field, 
especially on the regions $C_k$. One way to do this would be to show that $f$ is conjugated to $f_0$ by some explicit 
$C^1$ diffeomorphism (and thus the same holds for their generating flows), but we choose to give a more elementary 
(though less enlightening) argument that we present as a series of claims. 
In the end, just note that the conclusion of Claim 5 below directly implies that $f^{\frac{1}{2}}$ is not $C^{1+bv}$ 
on any interval $[0,\varepsilon]$ with $\varepsilon > 0$, as announced. 

\medskip

\noindent{\underline{Claim 1:}} The generating vector field $X$ of $f$ coincides with $X_0$ on each $J_k$. 

\medskip

This comes from the fact that the two perturbations applied to $f_0$ on each $B_k$ are designed to 
``compensate each other'' in the sense that, 
if $f_0^{n_k}$ is the iterate of $f_0$ sending $J_{k}$ to $J_{k+1}$, then $f^{n_k}=f_0^{n_k}$ on $J_{k}$ 
(while if some $f^n$ sends $J_{k}$ somewhere between $I_{k+1}$ and $I'_{k+1}$, then $f^n\neq f_0^n$ on~$J_k$). 
\begin{figure}[h!]
\centering
\includegraphics[width=16cm]{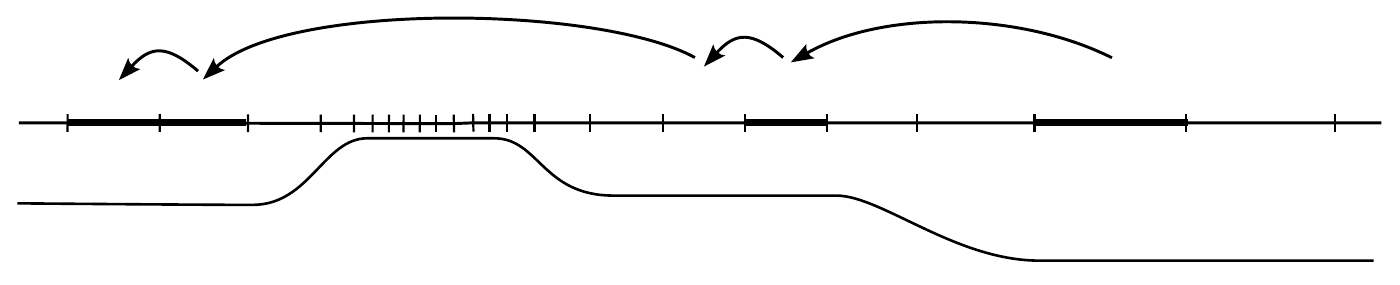}
\put(-330,94){${f^{l-1}=f_0^{l-1}}$}
\put(-155,94){${f^i=f_0^i}$}
\put(-425,58){$\scriptstyle{J_{k+1}}$}
\put(-395,58){$\scriptstyle{I_{k+1}}$}
\put(-208,58){$\scriptstyle{I_{k+1}'}$}
\put(-98,58){$\scriptstyle{J_k}$}
\put(-50,58){$\scriptstyle{I_k}$}
\put(-236,94){$f=f_0\circ \psi_k$}
\put(-424,94){$f=f_0\circ \f_k$}
\put(-23,12){$X_0$}
\caption{Checking that $f^{n_k}= f_0^{n_k}$ from $J_k$ to $J_{k+1}$.}
\label{f:fnk}
\end{figure}

\noindent Indeed, if $l$ and $i$ are positive integers such that $f_0^{i}(J_k)=I'_{k+1}$ and 
$f_0^{l}(I'_{k+1})=I_{k+1}$, then, since $f_0^1(I_{k+1})=J_{k+1}$, one has $n_k=1+l+i$ and 
\begin{align*}
f^{n_k}\res{J_k}
&=\left(f\res{I_{k+1}}\right)\circ \left(f^{l-1}\res{f(I'_{k+1})}\right)\circ \left(f\res{I'_{k+1}}\right)\circ \left(f^i\res{J_k}\right)\\
&=\left(f_0\circ \f_{k+1}\right)\circ f_0^{l-1}\circ \bigl(f_0\circ\underbrace{\psi_{k+1}}_{f_0^{-l}\circ\f_{k+1}^{-1}\circ f_0^{l}}\bigr)\circ f_0^i\\
&=f_0\circ f_0^l\circ f_0^i\\ 
&= f_0^{n_k}
\end{align*} 
(cf. Figure \ref{f:fnk}, where the vertical dashes represent the common orbit $(a_j)$ of $f$ and $f_0$). 
Now let $\Delta_0(x):=(f_0(x)-x)\partial_x$ and $\Delta(x):=(f(x)-x)\partial_x$. Note that these coincide on every $J_k$. 
On such an interval, according to Proposition \ref{p:Szek2}, the generating vector field $X$ of $f$ satisfies
$$ X = \lim_{n\to+\infty}(f^n)^*\Delta = \lim_{K\to+\infty}(f^{n_K+\dots+n_k})^*\Delta
= \lim_{K\to+\infty}(f_0^{n_K+\dots+n_k})^*\Delta_0=X_0,$$
as announced.

For the next claims, given an interval $K$, we denote $K^+$ (resp. $K^-$) its half subinterval to the right (resp. left). 
Also, given $t \in \mathbb{R}$, we let $T_t$ denote the translation of amplitude $t$. 

\medskip

\noindent{\underline{Claim 2:}} The restriction of $f^{\frac{1}{2}}$ to $J_k^+$ coincides with that of the translation $T_{t_k}$ of amplitude 
$t_k := -2^{-k^2-1}$.

\medskip

Indeed, by the previous claim, it we let $g^t(x)=x-t\cdot 2^{-k^2}$ then, for every  $x\in J_k^+$ and $t\in[0,\frac12]$, 
the point $g^t(x)$ belongs to $J_k$, and $\frac{d}{dt}g^t(x)=-2^{-k^2}=X(g^t(x))$. 
By definition of the flow of $X$, this implies that $g^t$ coincides with 
$f^t$ on $J_k^+$ for $t\in[0,\frac12]$. Taking $t=\frac12$, this proves the claim.

\medskip

\noindent{\underline{Claim 3:}} The restriction of $f^{\frac{1}{2}}$ to $I_k^+$ coincides with that of $T_{t_k} \circ \f_k$.

\medskip

Indeed, given that the restriction of $f$ to $I_k$ equals $f_0\circ \f_k$, that $\f_k$ fixes $I_k^+$, that 
$f_0$ sends $I_k^+$ to $J_k^+$, and that, according to the previous claim, 
$f^{\frac12}(J_k^+)=T_{t_k}(J_k^+)=J_k^-$, on $I_k^+$ we have:
$$f^{\frac12}=f^{-1}\circ f^{\frac12}\circ f = (f_0\f_k)^{-1}T_{t_k} (f_0\f_k) = \f_k^{-1} (f_0^{-1} T_{t_k} f_0) \f_k.$$
Now, $f_0$ is the translation by $-2^{-k^2}$ on $\f_k(I_k^+)=I_k^+$, so $(T_{t_k}f_0\f_k)(I_k^+)=J_k^-$. 
On this interval, 
$f_0^{-1}$ is the translation by $2^{-k^2}$, so the equality above reduces to 
$$f^{\frac{1}{2}} = \f_k^{-1} T_{t_{t_k}} \f_k.$$
Finally, since $T_{t_{t_k}} \f_k$ sends $I_k^+$ to $I_k^{-}$ and $\f_k$ is trivial on the latter interval, 
this finally gives the equality 
$f^{\frac{1}{2}} = T_{t_k} \circ \f_k$ on $I_k^+$.

\begin{figure}[h!]
\label{f:f12}
\centering
\includegraphics[width=12cm]{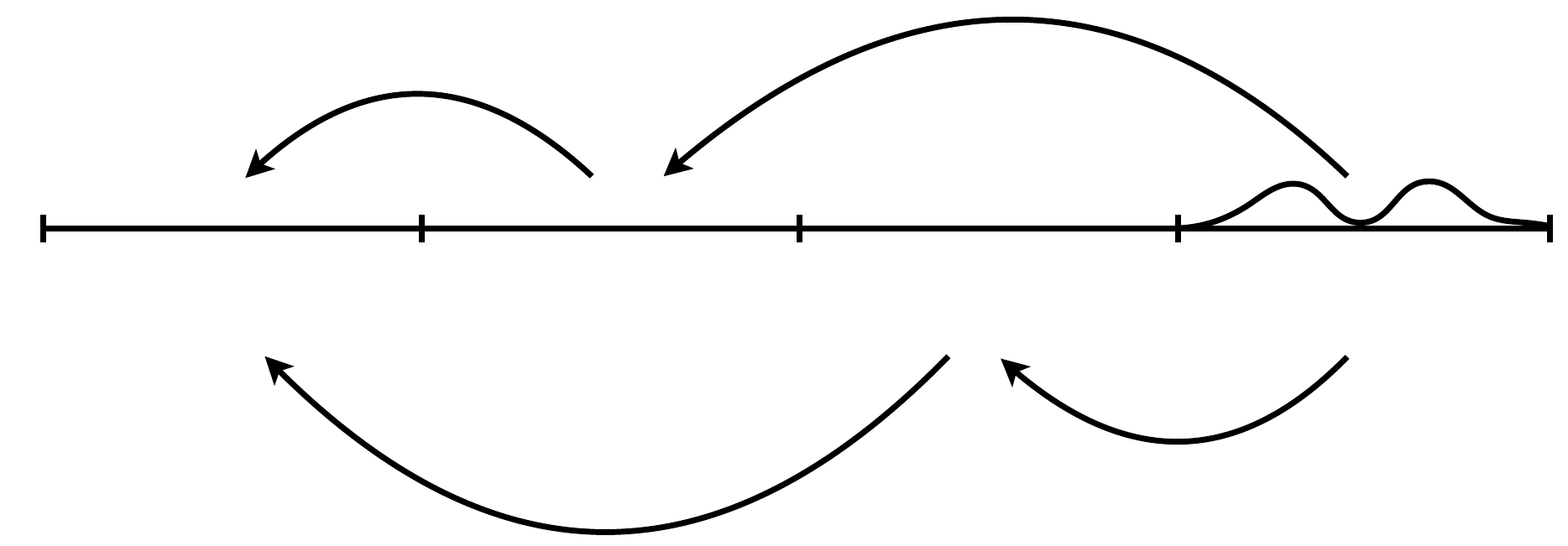}
\put(-215,56){$\scriptstyle{J_k^+}$}
\put(-220,-10){${f\,=\,T_{-2^{-k^2}}}$}
\put(-295,56){$\scriptstyle{J_k^-}$}
\put(-269,105){${f^{\frac12}\,=\,T_{-2^{-k^2-1}}}$}
\put(-130,56){$\scriptstyle{I_k^-}$}
\put(-145,122){${f\,=\,\left(T_{-2^{-k^2}}\right)\circ \f_k}$}
\put(-50,56){$\scriptstyle{I_k^+}$}
\put(-93,8){${f^{\frac12}\,=\,\, ?}$}
\put(-20,80){${\f_k}$}
\caption{Checking that $f^{\frac12}= T_{-2^{-k^2-1}}\circ\f_k$ on $I_k^+$.}
\end{figure}

\medskip

\noindent{\underline{Claim 4:}} There exists a universal constant $c > 0$ such that, for every $k\ge3$ and  
every fundamental interval $I$ lying in the central plateau zone $C_k$ and sent to $I_k$ by some iterate $f^n$ of $f$, 
one has  
$$\mathrm{var} ( \log Df^{\frac12} |_{I^+}) 
\geq c \, 2^{-k^3 + 2k^2}.$$

\medskip

Indeed, on such an $I$ one has the equality $f^n = f_0^n$. Since $D (f_0^n) \cdot X_0 = X_0 \circ f_0^n$, 
the map $f^n$ restricted to $I$ is a homothety $H$ of ratio 
$\alpha_k := \frac{X_0 \circ f_0^n|_I}{X_0|_I} = \frac{2^{-k^2}}{2^{-k^3}} = 2^{k^3 - k^2}$. Thus, on $I^+$, 
$$f^{\frac12} 
= f^{-n} f^{\frac{1}{2}} f^{n} 
= H^{-1} f^{\frac12} H
= H^{-1} T_{t_k} \f_k H,$$
where the last equality follows from Claim 3.  Therefore, still on $I^+$, abbreviating $T_{t_k}$ by $T$,
\begin{align*}
\log Df^{\frac12} 
&= \log D(H^{-1}T\f_k H)\\
&=\underbrace{(\log DH^{-1})\circ (T\f_k H)}_{-\log\alpha_k}
+\underbrace{(\log DT)\circ (\f_k H)}_{0}+(\log D\f_k)\circ H
+\underbrace{\log DH}_{\log\alpha_k}\\
&=(\log D\f_k)\circ H.
\end{align*}
As a consequence,
\begin{eqnarray*}
\mathrm{var} ( \log Df^{\frac12} |_{I^+}) 
&=& \var( \log D\f_k |_{I_k^+}) 
\,\,\, = \,\,\,\| D \log D \f_k  \|_{L^1(I_k^+)} \\
&=& \|\tfrac{D^2\delta_k}{1+D\delta_k}\|_{L^1(I_k^+)} 
\,\,\, \sim \,\,\, \| D^2 \delta_k \|_{L^1 (I_k^+)}
\,\,\, = \,\,\, C \,2^{-k^3 + 2k^2},
\end{eqnarray*}
with $C:=\|D^2\delta\|_{L^1([0,1])}$. 

\medskip

\noindent \underline{Claim 5:} There exists a constant $c' > 0$ such that, 
for every $k \geq 3$,  
$$\var (\log Df^{\frac12} |_{C_k} )
\geq c' \, 2^{2k^2-k}.$$

\medskip

Indeed, since $C_k$ has width $\frac{2^{-k-1}}5$ and $X_0$ is constant equal to $-2^{-k^3}$ there, 
a simple computation shows that $C_k$ contains at least $\frac{2^{-k-1}}{5\cdot 2^{-k^3}}-1$ consecutive points 
of the orbit $(a_j)$ of $f_0$, which is also an orbit of $f$. This implies that the number of intervals $I$ satisfying the 
condition of Claim 4 is at least $\frac{2^{k^3-k-1}}{5}-2$, which is bigger than $c''\,2^{k^3-k}$ for some $c''$ 
independent of $k$. Therefore, by the same claim, 
$$
\mathrm{var} ( \log Df^{\frac12} |_{C_k}) 
\geq  (c''\,2^{k^3-k})(c\,  2^{-k^3 + 2 k^2})
=c'\, 2^{2 k^2 - k}
$$
with $c':=c\,c''$.
\end{proof}

\begin{rem} 
Most probably, in the flow constructed above, only the integer-time maps are of class $C^{1+bv}$. 
Compare \cite{eynard-thesis}, where it was shown that, in Sergeraert's original example, only the integer-time maps 
are of class $C^2$. 
The techniques of \cite{Ey11,Ey19}, which combine Sergeraert's construction to Anosov-Katok's method of 
deformation by conjugation, would probably allow to build a contracting flow in which rationally independent times 
yield smooth maps while other times do not yield $C^{1+bv}$ diffeomorphisms. 
\end{rem}


\section{Appendix 2: asymptotic variation, flowability, and a fundamental inequality revisited} 
\label{a:varflow}

In this appendix, we go a little further in our study of the flow associated to a $C^{1+bv}$ contraction of $[0,1)$ 
given by Proposition \ref{p:Szek2}.
This allows us to extend to the $C^{1+bv}$ setting the relation established in \cite{EN1} between asymptotic variation 
(cf. Section \ref{ss:rat}) and flowability for $C^2$ diffeomorphisms of the closed interval without interior fixed point. 
This relation (cf.~Corollary~\ref{c:varflow} below) was crucially used in Section \ref{ss:rat}.

In order to state it precisely, we need to recall the definition of the \emph{Mather Invariant} of such a 
diffeomorphism~$f$. We will denote by $\Diff_+^{r,\Delta}([0,1])$ 
the space of orientation preserving $C^r$ diffeomorphisms of $[0,1]$ without interior fixed points, 
where $r$ can be some positive integer, but also $1+bv$ or $1+ac$. 
Given $f\in \Diff_+^{2,\Delta}([0,1])$, Szekeres' theorem and Kopell's Lemma imply that 
$f\res{[0,1)}$ and $f\res{(0,1]}$ each embed into a unique $C^1$ flow (cf. Remark \ref{r:folk}), and we denote them 
by $(f^t)$ and $(f_t)$ respectively. Generically, these flows do not coincide, and the Mather invariant of $f$ 
measures this lack of coincidence (see \cite[Chap. V]{Yo} for a detailed discussion of all of this).

\medskip
\begin{defprop}[Mather Invariant]
In the above context, given $a,b$ in $(0,1)$, the maps $\psi_0:t\mapsto f^t(a)$ and $\psi_1:t\mapsto f_t(b)$ define 
$C^1$ diffeomorphisms from $\R$ to $(0,1)$ that conjugate $f=f^1=f_1$ to the translation by $1$. 
As a result, $\psi_1^{-1}\psi_0$ is a $C^1$ diffeomorphism of $\R$ commuting with this translation, 
hence it induces a circle diffeomorphism. 
Changing $a$ or $b$ boils down to pre- or post-composing by a rotation, and the \emph{Mather Invariant} $M_f$ 
of $f$ is precisely the class of circle diffeomorphisms thus obtained. 
\end{defprop}

We will say that the Mather Invariant of $f$ is \emph{trivial} if it coincides with the class of rotations. 
This corresponds to the case where the flows $(f^t)$ and $(f_t)$ coincide, i.e. where $f$ embeds into a 
$C^1$ flow on the whole closed interval. 
We will from now on also denote by $M_f$ the diffeomorphism $\psi_1^{-1}\psi_0$ obtained by taking $a=b=\frac12$ 
in the definition above. We will use the same notation for the circle diffeomorphism it induces, 
and we will abusively refer to it as \emph{the Mather Invariant of~$f$}. 

Note that, despite the Mather Invariant is not a genuine diffeomorphism, the total variation of the logarithm of its 
derivative is well defined, as pre- or post-composition by a rotation doesn't change this value. 
A crucial relation between  $\var(\log DM_f)$ and the asymptotic variation $V_\infty(f)$ was 
established in \cite{EN1} for every $f\in\Diff_+^{2,\Delta}([0,1])$. Below we establish a $C^{1+bv}$ version of this 
thanks to Proposition \ref{p:Szek2}, which allows extending the definition of $M_f$ verbatim to $C^{1+bv}$ 
diffeomorphisms, and to the complementary statement given by Proposition \ref{p:Szek3} below. 

\medskip

\begin{thm} 
\label{thm-general}
For every $f \in \Diff^{1+bv,\Delta}_+ ([0,1])$, the diffeomorphism $M_f$ is of class $C^{1+bv}$, and
\begin{equation}\label{e:thm}
\big| \var (\log DM_f) - \dist (f) \big| \leq | \log Df(0) | + | \log  Df(1 ) | .
\end{equation}
\end{thm}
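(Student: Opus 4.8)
The plan is to control $\var(\log DM_f)$ by relating it to the asymptotic variation $V_\infty(f)$ through the structure of the two Szekeres flows $(f^t)$ and $(f_t)$, using the explicit formula for the generating vector fields from Proposition \ref{p:Szek2}. First I would fix the normalization $a=b=\frac12$ and recall that $M_f = \psi_1^{-1}\psi_0$ where $\psi_0(t) = f^t(\tfrac12)$ and $\psi_1(t) = f_t(\tfrac12)$ are $C^1$ diffeomorphisms of $\R$ onto $(0,1)$, each conjugating $f$ to the unit translation $T_1$. The key observation is that $\log D\psi_0(t)$ can be written, via the cocycle relation $D f^t = X\circ f^t / X$, as $\log X(f^t(\tfrac12)) - \log X(\tfrac12)$, and similarly for $\psi_1$ with the left-flow vector field $Y$; hence $\log DM_f(s)$ is a telescoping combination of $\log X$ and $\log Y$ evaluated along the appropriate orbits. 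Since pre/post-composition by rotations is irrelevant, I would compute $\var(\log DM_f)$ over a fundamental domain and express it as the total variation of $\log X$ over a fundamental interval of $f$ near $1$ plus that of $\log Y$ over a fundamental interval near $0$ (with suitable orientation), so that upon summing over all fundamental intervals one recovers quantities converging to $V_\infty$.

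The heart of the argument is an estimate of $\var(\log Df^n)$ as $n\to\infty$ in terms of the flow data. Using $D f^n = X \circ f^n / X$ again one gets $\log D f^n(x) = \log X(f^n(x)) - \log X(x)$ on $[0,1)$, and symmetrically with $Y$ on $(0,1]$; but $f^n$ on $[0,1)$ equals the true $n$-th power $f^n$ only approximately — in fact $f^n$ (the flow) and $f^n$ (the iterate) agree on $[0,1)$ exactly because $f^1=f$, and likewise on $(0,1]$. The subtlety is that the single map $f^n = f\circ\cdots\circ f$ restricted to the whole of $[0,1]$ uses the $[0,1)$-flow near $0$ and the $(0,1]$-flow near $1$, and these differ precisely by the Mather invariant. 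So I would split $[0,1]$ at a point $c\in(0,1)$, estimate $\var(\log Df^n; [0,c])$ using $Y$ and $\var(\log Df^n;[c,1])$ using $X$, and track the "mismatch" contribution localized near $c$ across the $n$ fundamental intervals that $f^n$ traverses. After dividing by $n$ and passing to the limit, the $X$- and $Y$-contributions each tend to something of the form $\var(\log X;\text{fund.\ int.})$ summed appropriately, while the mismatch term converges to $\var(\log DM_f)$; the endpoint corrections $|\log Df(0)|$ and $|\log Df(1)|$ arise from comparing $\log X$ near $0$ (resp.\ $\log Y$ near $1$) with its value at the endpoint, using the identity $\log Df^t(0) = t\log Df(0)$ from Step 5 of the proof of Proposition \ref{p:Szek2}.

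More concretely, I expect the cleanest route is: (i) show $M_f$ is $C^{1+bv}$ by noting $\log DM_f(s) = \log DX$-type expression is a difference of restrictions of $\log X$ and $\log Y$ composed with $C^1$ diffeomorphisms, both of which are $C^{bv}$ by Proposition \ref{p:Szek} (the finite-variation estimate there); (ii) establish the two-sided bound by a direct computation of $\var(\log Df^n)$ broken into a "left piece", a "right piece", and a "transition piece", proving that (left piece)$/n \to V^-$, (right piece)$/n \to V^+$ and (transition)$/n \to \var(\log DM_f)$ up to errors bounded by $|\log Df(0)| + |\log Df(1)|$; (iii) combine, recalling that $V_\infty(f) = \lim \var(\log Df^n)/n$ exists. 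The main obstacle, I believe, is step (ii): carefully bookkeeping the telescoping sums so that the contributions from the two flows genuinely separate and the only leftover is the Mather mismatch, while showing all the "boundary" discrepancies are uniformly bounded by $|\log Df(0)|+|\log Df(1)|$ and do not accumulate. This requires the uniform control from Proposition \ref{p:Szek2} (the $O(\cdot)$ estimates uniform in the fundamental interval) applied near both endpoints, and a Denjoy–Koksma-type argument that $\frac1n\sum$ of the per-fundamental-interval variations converges. I would also need the homogeneity \eqref{eq-hom} only implicitly, through the definition of $V_\infty$. The $C^{1+bv}$-regularity claim for $M_f$ should be comparatively routine once the formula for $\log DM_f$ is in hand.
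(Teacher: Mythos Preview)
Your opening observations are correct and align with the paper: $D\psi_0 = X\circ\psi_0$ and $D\psi_1 = Y\circ\psi_1$, so $\log DM_f$ is indeed a combination of $\log X$, $\log Y$ and $\log Df^k$ pulled back by $C^1$ maps. The $C^{1+bv}$ regularity of $M_f$ then follows from the bounded-variation estimate for $\log X$ and $\log Y$ on a fundamental interval, but this is Proposition~\ref{p:Szek3}, not Proposition~\ref{p:Szek} (which only gives $C^1$ control). Incidentally, with the convention $f>\id$, the relevant fundamental interval for $X$ lies near $0$ and that for $Y$ near $1$, the opposite of what you wrote.

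There is a genuine gap in your step (ii). The plan to split $\var(\log Df^n;[0,1])$ into ``left'', ``right'' and ``transition'' pieces with $(\text{transition})/n \to \var(\log DM_f)$ cannot work as stated: on all of $(0,1)$ one has \emph{exactly} $\log Df^n = \log X\circ f^n - \log X = \log Y\circ f^n - \log Y$, so the two flow representations agree identically and no mismatch ``accumulates across $n$ fundamental intervals''. The discrepancy $\log(X/Y)$ is a fixed function of $x$, independent of $n$; any contribution it makes near a fixed cut point $c$ is $O(1)$ and disappears after dividing by $n$. Your quantities $V^\pm$ are left undefined, and there is no equidistribution mechanism here to justify a Denjoy--Koksma step.

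The paper's argument bypasses all this by working on a \emph{single} fundamental interval rather than on $[0,1]$. From the renormalization identity $M_f = T_{-m}\,\psi_1^{-1}\,f^{m+n}\,\psi_0\,T_{-n}$ one reads off, for $t$ in one period,
\[
\log DM_f(t) = \log X\big(\psi_0(t{-}n)\big) - \log Y\big(f^{m+n}\psi_0(t{-}n)\big) + \log Df^{m+n}\big(\psi_0(t{-}n)\big),
\]
and hence, with $J_n = [f^{-n}(\tfrac12), f^{-n+1}(\tfrac12)]$,
\[
\big|\var(\log DM_f) - \var\big(\log Df^{m+n};\,J_n\big)\big| \;\le\; \var(\log X; J_n) + \var\big(\log Y;\, f^{m+n}(J_n)\big).
\]
Proposition~\ref{p:Szek3} bounds the right-hand side by $|\log Df(0)|+|\log Df(1)|$ plus terms tending to $0$ as $m,n\to\infty$, and Proposition~5.1 of \cite{EN1} gives $\var(\log Df^{2N}; J_N)\to V_\infty(f)$. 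No summation over fundamental intervals and no division by $n$ is involved.
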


\medskip

What we actually used in the body of this article were the following corollaries:

\medskip

\begin{cor} \label{c:varflow}
For every $f \in \Diff^{1 + bv,\Delta}_+ ([0,1])$, the equality $V_\infty(f)=0$ holds 
if and only if $f$ embeds into a $C^1$ flow and $Df(0)=Df(1)=0$.
\end{cor}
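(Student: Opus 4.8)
\medskip

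The plan is to deduce the Corollary from Theorem \ref{thm-general} together with one elementary observation about the endpoint multipliers. Since the statement is symmetric under replacing $f$ by $f^{-1}$ --- total variation is unchanged by negation and by composition with a homeomorphism, so $\var(\log Df^{-n})=\var(\log Df^{n})$ and hence $\dist(f^{-1})=\dist(f)$, while both flowability on $[0,1]$ and parabolicity at the endpoints are obviously $f\leftrightarrow f^{-1}$-invariant --- I may and will assume that $f$ is a contraction of $[0,1)$, i.e. $f(x)<x$ for every $x\in(0,1)$; in particular $\log Df(0)\le 0\le \log Df(1)$.

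The first ingredient is the lower bound $\dist(f)\ge |\log Df(0)|+|\log Df(1)|$. Indeed, since $0$ and $1$ are fixed by $f$, the chain rule gives $\log Df^{n}(0)=n\log Df(0)$ and $\log Df^{n}(1)=n\log Df(1)$, whence
$$\var(\log Df^{n})\ \ge\ \log Df^{n}(1)-\log Df^{n}(0)\ =\ n\big(|\log Df(0)|+|\log Df(1)|\big);$$
dividing by $n$ and letting $n\to\infty$ yields the claim.

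For the implication ``$\dist(f)=0\Rightarrow$ \dots'', this bound first forces $\log Df(0)=\log Df(1)=0$, i.e. $f$ is parabolic at both endpoints. Feeding this into \eqref{e:thm} then gives $\var(\log DM_f)=\dist(f)=0$, so $\log DM_f$ is constant; as $M_f$ is a diffeomorphism of $\R$ commuting with the unit translation it must be a translation, and since (with the normalization $a=b=\tfrac12$ used to define $M_f$) one has $M_f(0)=\psi_1^{-1}(\tfrac12)=0$, in fact $M_f=\id$. Hence $\psi_0=\psi_1$, that is $f^{t}(\tfrac12)=f_{t}(\tfrac12)$ for all $t$; writing an arbitrary $x\in(0,1)$ as $x=f^{s}(\tfrac12)$ gives $f^{t}(x)=f^{t+s}(\tfrac12)=f_{t+s}(\tfrac12)=f_{t}(x)$, so the one-sided Szekeres flows $(f^{t})$ of $f\res{[0,1)}$ and $(f_{t})$ of $f\res{(0,1]}$ (Proposition \ref{p:Szek}) agree on $(0,1)$. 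Therefore $F^{t}:=f^{t}$ on $[0,1)$ and $F^{t}:=f_{t}$ on $(0,1]$ is a well-defined flow on $[0,1]$, which is $C^{1}$ jointly in $(t,x)$ because each of the two pieces is $C^{1}$ up to the relevant endpoint (Proposition \ref{p:Szek2}) and they agree on the open interval; since $F^{1}=f$, the map $f$ embeds into a $C^{1}$ flow on $[0,1]$ and is parabolic at both endpoints, as required. Conversely, if $f$ embeds into a $C^{1}$ flow on $[0,1]$ and $\log Df(0)=\log Df(1)=0$, then by the uniqueness of the one-sided flows (Proposition \ref{p:Szek} and Kopell's lemma) $(f^{t})$ and $(f_{t})$ coincide on $(0,1)$, so $M_f$ is a rotation and $\var(\log DM_f)=0$; plugging this into \eqref{e:thm} gives $\dist(f)\le \var(\log DM_f)+|\log Df(0)|+|\log Df(1)|=0$, hence $\dist(f)=0$.

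The only step requiring genuine care is the last one of the direct implication: upgrading ``$M_f$ is (the class of) a rotation'' to ``$f$ embeds into a $C^{1}$ flow on the \emph{closed} interval''. The difficulty is concentrated entirely at the two endpoints, where one needs to know that the Szekeres flows of $f\res{[0,1)}$ and $f\res{(0,1]}$ extend in a $C^{1}$ fashion, jointly in $(t,x)$, up to $0$ and $1$ respectively --- which is exactly what the quantitative form of Szekeres' theorem, Proposition \ref{p:Szek2}, supplies. Everything else is bookkeeping with the chain rule together with an invocation of Theorem \ref{thm-general}.
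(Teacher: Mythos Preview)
Your proof is correct and follows essentially the same route as the paper: establish the endpoint lower bound $\dist(f)\ge|\log Df(0)|+|\log Df(1)|$ via the chain rule, then feed parabolicity into the inequality \eqref{e:thm} of Theorem~\ref{thm-general} to force $\var(\log DM_f)=0$, and conversely read off $\dist(f)=0$ from triviality of the Mather invariant plus parabolicity. The only cosmetic differences are that the paper does not bother with the $f\leftrightarrow f^{-1}$ reduction (the sign argument for the endpoint bound works directly since $f\in\Diff^{1+bv,\Delta}_+([0,1])$ forces $\log Df(0)$ and $\log Df(1)$ to have opposite signs), and that where you explicitly argue $M_f=\id$ and glue the two Szekeres flows, the paper simply invokes the equivalence ``trivial Mather invariant $\Leftrightarrow$ $f$ embeds into a $C^1$ flow on $[0,1]$'' stated just after the Definition--Proposition.
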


\begin{cor} \label{c:varflow2}
For every $f \in \Diff^{1 + bv}_+ ([0,1])$, the equality $V_\infty(f)=0$ holds 
if and only if $f$ embeds into a $C^1$ flow and has no hyperbolic fixed point.
\end{cor}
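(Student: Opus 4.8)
The plan is to deduce Corollary~\ref{c:varflow2} from Corollary~\ref{c:varflow} by a localization argument on the connected components of the complement of the set of interior fixed points. The key point is that the asymptotic variation is ``additive'' over disjoint invariant intervals, whereas the hypotheses of Corollary~\ref{c:varflow} refer to the behaviour at the two endpoints of a component, which for interior components are interior fixed points of $f$.

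First I would recall that for any $f\in\Diff_+^{1+bv}([0,1])$, the set $\Fix(f)$ is closed, and on the closure of each connected component $(a,b)$ of $[0,1]\setminus\Fix(f)$ the restriction $f|_{[a,b]}$ lies in $\Diff_+^{1+bv,\Delta}([a,b])$ (after the obvious affine identification with $[0,1]$). The definition of $V_\infty$ via $\var(\log Df^n)/n$ gives, for each $n$, that $\var(\log Df^n)=\sum_{I}\var(\log Df^n|_{\bar I})$ where the sum is over the components $I$ (the contribution of $\Fix(f)$ to the variation being $0$, since $\log Df^n$ vanishes identically there and the variation is computed over the whole partition; one checks that splitting the sup-partition at the points of $\Fix(f)$ only increases the sum, and the reverse inequality is the triangle inequality). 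Dividing by $n$ and passing to the limit — using that the terms are nonnegative, so monotone/Fatou-type arguments apply, or more simply that $\var(\log Df^n|_{\bar I})/n\to V_\infty(f|_{\bar I})$ for each fixed $I$ and the partial sums are dominated — yields
\begin{equation}
\label{e:Vadd}
V_\infty(f)=\sum_{I}V_\infty(f|_{\bar I}).
\end{equation}
In particular $V_\infty(f)=0$ if and only if $V_\infty(f|_{\bar I})=0$ for every component $I$.

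Next I would apply Corollary~\ref{c:varflow} componentwise: $V_\infty(f|_{\bar I})=0$ iff $f|_{\bar I}$ embeds into a $C^1$ flow on $\bar I$ and $Df$ vanishes at both endpoints $a,b$ of $I$. Now I claim the conjunction of these conditions over all $I$ is equivalent to ``$f$ embeds into a $C^1$ flow on $[0,1]$ and $f$ has no hyperbolic fixed point''. For the forward direction: if $Df(p)=0$ at every endpoint of every component (including $0$ and $1$) then in particular $f$ is parabolic at every interior fixed point, so $f$ has no hyperbolic fixed point; and the componentwise $C^1$ flows $(f^t_{\bar I})$ glue (defining $f^t:=\id$ on $\Fix(f)$) into a global flow which is $C^1$ on $(0,1)$ by the parabolicity at the shared endpoints — this is exactly the gluing argument already carried out at the end of the proof of Proposition~\ref{p:noncyclic}, and the $C^1$ control at $0$ and $1$ comes from the estimate in Proposition~\ref{p:Szek}/\ref{p:Szek2}, since $\|\log Df^t|_{[a,b]}\|_\infty=O(\|f-\id\|_1+\var(\log Df))\to 0$ as the component shrinks towards a boundary point. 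Conversely, if $f$ embeds into a global $C^1$ flow $(f^t)$ with no hyperbolic fixed point, then the generating vector field vanishes at every point of $\Fix(f)$, its zeros are all ``non-hyperbolic'', hence $Df(p)=D f^1(p)=1$, i.e. every point of $\Fix(f)$ — in particular every component endpoint and the two boundary points — is parabolic, so $Df$ vanishes at all these points; and the restriction of $(f^t)$ to each $\bar I$ is a $C^1$ flow into which $f|_{\bar I}$ embeds. Wait — I should double-check the wording: ``no hyperbolic fixed point'' must include the possibility that $0$ or $1$ are fixed with $Df\neq1$ there; since $f\in\Diff_+^{1+bv}([0,1])$ always fixes $0$ and $1$, ``no hyperbolic fixed point'' does force $Df(0)=Df(1)=1$. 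But Corollary~\ref{c:varflow} asks for $Df(0)=Df(1)=0$ on each component, i.e. $Df=1$ at the endpoints after removing the affine normalization; this matches (the ``$0$'' in that statement is the logarithmic derivative, cf. the convention $\|u\|_1$, so $\log Df=0$ means $Df=1$). So the two formulations agree.

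The main obstacle I anticipate is making the additivity \eqref{e:Vadd} fully rigorous when $\Fix(f)$ has infinitely many components accumulating somewhere: one must be careful that $\var(\log Df^n)$ really decomposes as the (possibly infinite) sum over components with no extra contribution from the accumulation set, and that the limit and the sum can be interchanged. The cleanest route is to prove $V_\infty(f)\ge\sum_I V_\infty(f|_{\bar I})$ by truncating to finitely many components (monotonicity of variation under refinement plus the definition of $V_\infty$) and $V_\infty(f)\le\sum_I V_\infty(f|_{\bar I})$ directly from $\var(\log Df^n)\le\sum_I\var(\log Df^n|_{\bar I})$ (subadditivity of variation, since $\log Df^n$ is continuous and vanishes on $\Fix(f)$) after dividing by $n$ and letting $n\to\infty$ — here one needs a dominated-convergence argument, which works because $\var(\log Df^n|_{\bar I})/n\le\var(\log Df|_{\bar I})$ for all $n$ (the known bound $V_\infty\le\var$ holds at finite level too, as $\var(\log Df^n)\le n\var(\log Df)$) and $\sum_I\var(\log Df|_{\bar I})=\var(\log Df)<\infty$. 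Once \eqref{e:Vadd} is in hand, the rest is just bookkeeping together with the gluing already done in the proof of Proposition~\ref{p:noncyclic} and the boundary estimate of Proposition~\ref{p:Szek}.
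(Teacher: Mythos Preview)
Your approach is exactly the paper's: reduce to Corollary~\ref{c:varflow} via the additivity $V_\infty(f)=\sum_I V_\infty(f|_{\bar I})$ over components of $[0,1]\setminus\Fix(f)$ (which the paper simply cites from \cite[Lemma~1.1]{EN1}), then glue the componentwise flows using the estimate of Proposition~\ref{p:Szek}. However, your justification of additivity has a slip: you assert that $\log Df^n$ vanishes identically on $\Fix(f)$, but at a hyperbolic fixed point $p$ one has $\log Df^n(p)=n\log Df(p)\neq 0$, so your inequality $\var(\log Df^n)\le\sum_I\var(\log Df^n|_{\bar I})$ is not available in general.

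This does not actually damage the corollary. For the forward implication ($V_\infty(f)=0\Rightarrow\ldots$) you only need $V_\infty(f|_{\bar I})\le V_\infty(f)$, which is immediate from $\var(\cdot|_{\bar I})\le\var(\cdot)$. For the backward implication you are \emph{assuming} that $f$ has no hyperbolic fixed point, so $\log Df^n$ \emph{does} vanish on $\Fix(f)$; your partition-refinement argument then legitimately gives $\var(\log Df^n)=\sum_I\var(\log Df^n|_{\bar I})$, and the dominated-convergence step (with dominating sequence $\var(\log Df|_{\bar I})$) yields $V_\infty(f)=\sum_I V_\infty(f|_{\bar I})=0$. The fix is thus to drop the general additivity claim and invoke only what each direction needs. (A side remark: the ``$Df(0)=Df(1)=0$'' in the statement of Corollary~\ref{c:varflow} is a typo for $=1$; you decode it correctly but then propagate it, writing ``$Df(p)=0$'' where you mean parabolicity.)
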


\medskip

\begin{proof}[Proof of Corollary \ref{c:varflow}] 
Recall that $f \in \Diff^{1 + bv}_+ ([0,1])$ embeds into a $C^1$ flow if and only if it has 
a trivial Mather invariant. If  this is the case and, moreover, $Df(0)=Df(1)=0$, then inequality~(\ref{e:thm}) 
directly implies that $V_\infty(f)=0$.

For the reverse implication, we first claim that, for all $f\in \Diff^{1 + bv,\Delta}_+ ([0,1])$, 
\begin{equation}\label{e:indeed}
\dist (f)  \geq | \log  Df(0) | + | \log  Df(1) |.
\end{equation}
Indeed, for every $n \geq 1$, 
$$\var (\log Df^n) 
\geq | \log Df^n (1) - \log Df^n (0) | 
= n \big[ | \log Df(1) | + | \log Df (0) | \big].$$
Dividing by $n$ both sides of this inequality and letting $n \to \infty$ yields the announced inequality. Now, if $V_\infty(f)=0$, 
then inequality (\ref{e:indeed}) implies that $Df (0) = Df (1) = 1$, and inequality (\ref{e:thm}) then gives $|\var(\log DM_f)|\le 0$. Thus, 
$\log DM_f$ is constant, and since $M_f$ is a circle diffeomorphism, this constant has to be $0$, so the Mather Invariant of $f$ is trivial.
\end{proof}

\begin{proof}[Proof of Corollary \ref{c:varflow2}] Let us denote by $\cI$ the set of connected components of $[0,1]\setminus\Fix(f)$. 
According to \cite[Lemma 1.1]{EN1},
$$\dist (f) = \sum_{I \in \mathcal{I}} \dist (f\res{\bar I}).$$
It follows from this equality and Corollary \ref{c:varflow} that $\dist(f)=0$ holds if and only if $f$ has no hyperbolic fixed point and 
$f\res{\bar I}$ embeds into a $C^1$ flow for every $I\in \cI$. It remains to show that
all these local flows pasted together give a $C^1$ flow on the whole interval~$[0,1]$. Clearly, they glue up nicely near isolated fixed 
points of $f$, for the derivative of $f$ at a fixed point determines the derivatives of the elements in a flow containing it; 
see the proof of Proposition \ref{p:Szek2}. The case of non-isolated fixed points can be treated by using the estimate 
of the same proposition. Details are left to the reader.
\end{proof}

Before passing to the proof of Theorem \ref{thm-general}, we show the following complement of Proposition~\ref{p:Szek2}, 
which will be fundamental for our argument.

\medskip

\begin{prop}
\label{p:Szek3}
With the notations of Proposition \ref{p:Szek2}, for every $c > 0$, the function $\log \X_n$ converges to $\log \X$ in the $C^{bv}$ topology 
on the fundamental interval $[f(c),c]$, and
$$\left|\var(\log \X;[f(c),c])- |\log Df(0)|\right|\le \var(\log Df ; [0,c]).$$
\end{prop}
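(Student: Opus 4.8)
The plan is to exploit the explicit formula $\X_n = c_0(f)\,(f^n)^*\Delta = c_0(f)\,\tfrac{\Delta\circ f^n}{Df^n}$ together with the convergence $\log\tfrac{\X_{n+1}}{\X_n} = \theta\circ f^n$ already established in Step~1 of the proof of Proposition~\ref{p:Szek2}. First I would record that $\log\X_n$ and $\log\X_{n+1}$ differ, on $[f(c),c]$, by $\theta\circ f^n$, and that the restriction of $f^n$ to $[f(c),c]$ maps this interval into $[f^{n+1}(c),f^n(c)]\subset[0,f^n(c)]$, which shrinks to $\{0\}$. Since total variation is invariant under the homeomorphism $f^n$, we get
$$\var(\log\X_{n}-\log\X_{n-1};[f(c),c]) = \var(\theta\circ f^{n-1};[f(c),c]) = \var(\theta;[f^{n}(c),f^{n-1}(c)]) \le \var(\theta;[0,f^{n-1}(c)]).$$
Summing a telescoping tail and using that $\theta$ has bounded variation near $0$ (this follows from the expression $\theta(x)=\log(\int_0^1 Df(x+s\Delta(x))\,ds)-\log Df(x)$, whose variation is controlled by that of $\log Df$ via the Mean Value Theorem exactly as in Step~1), one obtains that $\sum_n(\log\X_{n+1}-\log\X_n)$ converges in the $C^{bv}$ norm on $[f(c),c]$; hence $\log\X_n\to\log\X$ in $C^{bv}$ there, with an explicit bound on the tails in terms of $\var(\log Df;[0,c])$.

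Next I would compute $\var(\log\X;[f(c),c])$ by passing to the limit. Since $\log\X = \log\X_0 + \sum_{k\ge0}\theta\circ f^k$ with $\log\X_0 = \log(c_0(f)) + \log(-\Delta)$ on $[f(c),c]$, the triangle inequality gives
$$\big|\var(\log\X;[f(c),c]) - \var(\log(-\Delta);[f(c),c])\big| \le \sum_{k\ge0}\var(\theta;[0,f^k(c)]) =: R(c),$$
and $R(c)$ is itself bounded by $\var(\log Df;[0,c])$ by the Step~1 estimate (each $\var(\theta;[0,f^k(c)])\le\var(\log Df;[0,f^k(c)])$, and the intervals $[0,f^k(c)]$ are nested with total ``overlap counting'' giving the single bound $\var(\log Df;[0,c])$; more carefully, one sums the variations of $\theta$ over the disjoint fundamental sub-intervals, which recombine into $\var(\log Df;[0,c])$, just as in the computation $\sum_{k=i}^{j-1}|\theta\circ f^k(x)|\le\var(\log Df;[0,f^i(a)])$ already in the text). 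It then remains to show that $\var(\log(-\Delta);[f(c),c])$ is within $\var(\log Df;[0,c])$ of $|\log Df(0)|$. For this I would write $\log(-\Delta)(x) = \log(x-f(x))$ and note $D\log(-\Delta) = \tfrac{1-Df}{x-f(x)}$; comparing with $\log Df - \log\,\mathrm{id}$ is awkward directly, so instead I would use that $\Delta\circ f = Df\cdot\Delta\cdot e^{-\theta}$-type relations, or more simply note $\log(-\Delta)\circ f - \log(-\Delta) = \log Df + \theta$, so that $\var(\log(-\Delta);[f(c),c]) = \var\big(\log(-\Delta)\circ f^{-1};[c,f^{-1}(c)]\big)$ and telescoping along the orbit of $c$ toward $0$ expresses $\log(-\Delta)$ near $0$ as $\log(-\Delta)(\text{anchor}) + \sum(\log Df + \theta)\circ f^{k}$; the variation of $\log(-\Delta)$ on one fundamental domain near $0$ tends to $|\log Df(0)|$ because $\Delta(x)/x\to Df(0)-1$ and on a fundamental interval the ``$\log$ of an affine-type map'' contributes exactly $|\log Df(0)|$ in the limit, with the discrepancy controlled by $\var(\theta)\le\var(\log Df)$ on the relevant initial segment.

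The main obstacle I expect is precisely this last comparison: pinning down that $\var(\log(-\Delta);[f(c),c])$ equals $|\log Df(0)|$ up to an error bounded by $\var(\log Df;[0,c])$, uniformly in $c$. The clean way is to use the conjugacy $\tau_\X$ from \eqref{eq:P}: $\tau_\X$ sends $(0,1)$ to $\R$, conjugates $f$ to $T_1$, and sends $\X$ to the constant vector field $-\partial_t$; hence $\log D\tau_\X = -\log(-\X) + \text{const}$ and $\var(\log(-\X);[f(c),c]) = \var(\log D\tau_\X;[f(c),c])$. On the other hand, in the hyperbolic case $Df(0)=\lambda_0<1$, Step~5 of Proposition~\ref{p:Szek2}'s proof shows $\X(x)\sim(\log\lambda_0)x$ near $0$, so $\log(-\X)(x) = \log|\log\lambda_0| + \log x + o(1)$, whence $\var(\log(-\X);[f(c),c])\to\var(\log x; \text{a fundamental interval near }0) = |\log(c/f(c))|_{x\text{-scale}} \to |\log Df(0)|$, since fundamental intervals near a hyperbolic fixed point have ratio tending to $\lambda_0$. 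In the parabolic case $Df(0)=1$ both sides of the claimed inequality are handled by noting $\log Df(0)=0$ and $\var(\log(-\X))\to 0$ on fundamental intervals shrinking to the parabolic point. Patching the two regimes and controlling the $o(1)$ uniformly in $c$ by $\var(\log Df;[0,c])$ (using that $\Sigma$, defined in Step~1, already absorbs the relevant error with $\max_{[0,c]}|\Sigma|\le\var(\log Df;[0,c])$) yields the stated estimate
$$\left|\var(\log\X;[f(c),c]) - |\log Df(0)|\right| \le \var(\log Df;[0,c]),$$
completing the proof.
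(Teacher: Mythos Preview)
Your convergence argument via telescoping $\log\X_{n+1}-\log\X_n=\theta\circ f^n$ can be made to work, but note that what you need is $\var(\theta;(0,c])<\infty$, and this does \emph{not} follow from ``Step~1 of Proposition~\ref{p:Szek2}'' as you claim: Step~1 gives the pointwise bound $|\theta(x)|\le\var(\log Df;[f(x),x])$ (hence $\max|\Sigma|\le\var(\log Df)$), not a bound on $\var(\theta)$. One can show $\theta\in C^{bv}$ separately, but with no obvious sharp constant.

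The real gap is in the second half. Your decomposition $\log\X=\mathrm{const}+\log(-\Delta)+\Sigma$ forces you to show that, for every fixed $c$,
\[
\big|\var(\log(-\Delta);[f(c),c])-|\log Df(0)|\big|+\var(\Sigma;[f(c),c])\;\le\;\var(\log Df;[0,c]),
\]
and neither term cooperates. For the first, $\var(\log(-\Delta);[f(c),c])$ is computed on the original interval (not near~$0$) and has no reason to be close to $|\log Df(0)|$; your asymptotic remarks (``fundamental intervals shrinking to~$0$'', $\X(x)\sim(\log\lambda_0)x$) concern the limit $c\to0$, whereas the statement is for fixed~$c$. For the second, $\var(\Sigma;[f(c),c])\le\var(\theta;(0,c])$, and you have no reason to expect this to be bounded by $\var(\log Df;[0,c])$ with constant~$1$. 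Even granting generous bounds on each piece you would at best obtain a multiple of $\var(\log Df;[0,c])$, not the sharp inequality.

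The paper avoids this entirely by \emph{not} freezing $n=0$. It keeps the full expression $\log\X_n=\mathrm{const}+\log(\Delta\circ f^n)-\log Df^n$, proves $C^{bv}$-Cauchy by treating the two nonconstant pieces separately, and then observes two exact facts: first, $\var(\log Df^n;[f(c),c])\le\var(\log Df;[0,c])$ by telescoping; second, and this is the key point you are missing,
\[
\var\big(\log(\Delta\circ f^n);[f(c),c]\big)
=\int_{f(c)}^{c}\Big|c_0(f)\,\tfrac{D\Delta\circ f^n}{\X_n}\Big|
\xrightarrow[n\to\infty]{}\;c_0(f)\,|Df(0)-1|\int_{f(c)}^{c}\tfrac{1}{|\X|}
=|\log Df(0)|,
\]
using $D\Delta\circ f^n\to D\Delta(0)=Df(0)-1$ and the flow identity $\int_{f(c)}^{c}\tfrac{du}{|\X|}=1$. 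Two triangle inequalities on $\log\X_n=\mathrm{const}+\log(\Delta\circ f^n)-\log Df^n$ then give the sharp estimate after letting $n\to\infty$. The moral: push $\Delta$ toward~$0$ via $f^n$ so that it becomes ``linear'', rather than trying to control $\log(-\Delta)$ on the original domain.
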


\begin{proof} 
By Proposition \ref{p:Szek2}, $X_n$ converges uniformly towards $X$ on $[f(c),c]$. Moreover, since $\X$ is bounded away from zero on 
this interval, $\log \X_n$ uniformly converges towards $\log \X$ therein. Now, for all integers $n > m$, by the definition of $X_n$ and $X_m$, 
\begin{align}\label{eq:est-n}
\var \big( \log \X_n - \log \X_m ; [f(c),c] \big) \le \var \big(&\log (\Delta\circ f^{n})-\log (\Delta\circ f^{m}) ; [f(c),c] \big)\\&+ 
\var \big( \log Df^{n}-\log Df^{m} ; [f(c),c] \big).\notag
\end{align}
Concerning the last term, 
\begin{eqnarray*}
\var(\log Df^{n}-\log Df^{m} ; [f(c),c]) 
&=& \var \Big( \sum_{i=m}^{n-1}\log (Df\circ f^{i}); [f(c),c] \Big)\\
&\le& \sum_{i=m}^{n-1} \var( \log (Df\circ f^{i}); [f(c),c])\\
&=& \sum_{i=m}^{n-1} \var( \log Df; [f^{i+1}(c),f^{i}(c)])\\
&\le& \var( \log Df; [f^{n}(c),f^{m}(c)])\xrightarrow[m \to +\infty]{}0.
\end{eqnarray*}
Concerning the previous term in (\ref{eq:est-n}), 
\begin{align*}
\var(\log (\Delta\circ f^{n})-\log (\Delta\circ f^{m}) ; [f(c),c])
&=\| D\log (\Delta\circ f^{n})-D\log (\Delta\circ f^{m})\|_{L^1([f(c),c])}\\
&=c_0(f) \left\| \tfrac{D\Delta\circ f^{n}}{\X_n}-\tfrac{D\Delta\circ f^{m}}{\X_m}\right\|_{L^1([f(c),c])}\xrightarrow[m \to+\infty]{}0,
\end{align*}
where the last convergence follows from that 
$\tfrac{D\Delta\circ f^{n}}{\X_n}$ uniformly converges  towards $\tfrac{Df(0)-1}{\X}$ on $[f(c),c]$.
By the completeness of the space of $C^{bv}$ functions on $[f(c),c]$, 
we obtain that \, $\log \X$ \, belongs to this space. Observe moreover that 
\begin{align*}
\var \big( \log (\Delta\circ f^{n}) ; [f(c),c] \big) 
= c_0(f) \int_{f(c)}^c\left|\tfrac{D\Delta\circ f^{n}}{\X_n}\right| \xrightarrow[n \to+\infty]{}c_0(f)\int_{f(c)}^c\left|\tfrac{D\Delta(0)}{\X}\right|=|\log Df(0)|,
\end{align*}
and that (because of the previous estimate with $m=0$)
\begin{align*}
\var(\log Df^{n}; [f(c),c])\le  \var( \log Df; [f^{n}(c),c]).
\end{align*}
Therefore, letting $n$ go to infinity in 
$$\var(\log \X_n ;[f(c),c])\le \var( \log (\Delta\circ f^{n}) ; [f(c),c])+ 
\var(\log Df^{n}; [f(c),c])$$
and 
$$\var (\log (\Delta\circ f^{n}) ; [f(c),c])\le\var(\log \X_n;[f(c),c])+ 
\var(\log Df^{n}; [f(c),c])$$
(which both follow from the definition of $\X_n$),
we get
\begin{equation*}\label{est-one-direction}
\var(\log \X;[f(c),c])\le |\log Df(0)|+\var(\log Df ; [0,c])
\end{equation*}
and
\begin{equation*}\label{est-second-direction}
| \log Df(0)|\le \var(\log \X;[f(c),c])+\var(\log Df ; [0,c]),
\end{equation*}
which yield the desired estimate.
\end{proof}

\medskip

We are finally in position to prove Proposition \ref{thm-general}.

\begin{proof}[Proof of Theorem \ref{thm-general}] Assume that $f (x) > x$ 
for $x \in (0,1)$ to fix ideas (otherwise, just observe that $\dist(f)=\dist(f^{-1})$ and that, by definition, 
the Mather invariant of $f^{-1}$ equals that of $f$ up to a reflexion.)  Denote by $X$ and $Y$ the generating vector fields 
of the flows $(f^t)$ and $(f_t)$ associated to $f\res{[0,1)}$ and $f\res{(0,1]}$ respectively by Proposition \ref{p:Szek2}. 
Since $f$ is the time-1 map of the flow of both 
$\X$ and $\Y$, the maps $\psiX:t\mapsto f^t(\frac12)$ and $\psiY: t\mapsto f_t(\frac12)$ satisfy $\psi_1 \circ T = f \circ \psi_0$ for $T := T_1$, 
the translation by $1$. Therefore, for each positive $m,n$ we have, letting $k:= m+n$: 
\begin{equation}\label{ren}
M_f = T_{-m} \circ ( \psiY )^{-1} \circ f^k \circ \psiX \circ T_{-n}.
\end{equation}
This gives
\begin{eqnarray*}
DM_f (t) 
&=& \frac{D \psiX (t-n)}{D \psiY \big( (\psiY)^{-1} f^k \, \psiX (t-n) \big) } \cdot Df^k (\psiX (t-n)) \\
&=& \frac{\X (\psiX (t-n)) }{\Y (f^k\psi_0(t-n))} \cdot Df^k (\psiX (t-n)).
\end{eqnarray*}
This easily implies that 
$$\left| \var ( \log DM_f  ) - \var (\log Df^k; [f^{-n}(a), f^{-n+1}(a)])\right|$$
is bounded from above by 
$$\var \big( \log \X ; [f^{-n}(a), f^{-n+1}(a)] \big) + \var \big( \log \Y ; [f^m(a),f^{m+1}(a)] \big).$$
By Proposition \ref{p:Szek3}, the latter expression is smaller than or equal to
$$\big| \log Df(0) \big| + \big| \log Df (1) \big| + \mathrm{var} (\log Df; [f^{-n}(a), f^{-n+1}(a)]) + \mathrm{var} (\log Df ; [f^m (a), f^{m+1}(a)]).$$
Letting $m=n = N \to \infty$, the last two terms above converge to $0$, and Proposition 5.1 of \cite{EN1} yields 
\begin{equation}\label{ren-dist}
\var (\log Df^k; [f^{-n}(a), f^{-n+1}(a)]) = \var (\log Df^{2N} ; [f^{-N}(a), f^{-N+1}(a)] )\to \dist (f).
\end{equation}
Putting everything together, we finally obtain 
$$\big| \var ( \log  DM_f ) - \dist (f) \big| \leq \big| \log Df(0) \big| + \big| \log Df (1) \big|,$$
as announced. 
\end{proof}


\section{Appendix 3: A final on drift of cocycles in Banach spaces}
\label{a:Banach}

The procedure employed in Section \ref{ss:rat} is strongly inspired from \cite[Lemma 2.1]{Na23}, which deals with isometric actions on Banach 
spaces and cocycles with zero drift. Indeed, this framework is perfectly suited for $C^{1+bv}$ actions of $\mathbb{Z}^2$ on one-dimensional 
manifolds for which the generators have vanishing asymptotic variation. Here we extend the discussion to cocycles with nonzero drift and 
relate this to Section \ref{ss:rat}, notably to the ``miraculous'' conjugating sequence (\ref{eq:def-g_n}).
 
Let $U$ be a linear isometric action of a group $\Gamma$ on a Banach space $\mathbb{B}$. A {\em cocycle} for $U$ is a map 
$c \!: \Gamma \to \mathbb{B}$ that, for all $g_1,g_2 $ in $\Gamma$, satisfies the relation
$$c (g_1 g_2) = c(g_2) + U(g_2) (c(g_1)).$$ 
For each $f \in \Gamma$, we define {\em the drift of $c$ at $f$ as}
$$\mathrm{drift}_c (f) := \lim_{n \to \infty} \frac{\| c(f^n) \|_{\mathbb{B}}}{n}.$$ 
The limit above exists because the sequence of norms $\| c(f^n) \|_{\mathbb{B}}$ is subadditive:
$$\| c(f^{m+n}) \|_{\mathbb{B}} = \| c(f^n) + U(f^n) (c(f^m)) \|_{\mathbb{B}} \leq \|c(f^n)\|_{\mathbb{B}} 
+ \|U(f^n)(c(f^m))\|_{\mathbb{B}} = \|c(f^n)\|_{\mathbb{B}} + \|c(f^m)\|_{\mathbb{B}}.$$

The next lemma should be compared to \cite{CTV07}, and is suitable for applications in wide contexts. 

\begin{lem} \label{lem-Banach}
Let $U$ be a linear isometric action of a finitely generated Abelian group $\Gamma$ on 
a Banach space $\mathbb{B}$, and let $c \!: \Gamma \to \mathbb{B}$ be a cocycle. Then there exists a 
sequence of vectors $\psi_n \in \mathbb{B}$ such that, for all $f \in \Gamma$, the coboundary defect
$$\big\| c(f) - \big( \psi_n - U(f) (\psi_n) \big) \big\|_{\mathbb{B}}$$ 
converges to $\mathrm{drift}_c (f)$ as $n$ goes to infinity. Actually, 
\begin{equation}\label{eq-drift-cob}
\mathrm{drift}_c (f) = \inf_{\psi \in \mathbb{B} }\big\| c(f) - \big( \psi - U(f) (\psi) \big) \big\|_{\mathbb{B}}.
\end{equation}

\end{lem}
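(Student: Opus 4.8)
The natural candidates for the vectors $\psi_n$ are the Cesàro-type averages used in \eqref{eq:def-g_n}, transported into this abstract setting. Concretely, writing $g_1,\dots,g_d$ for the generators of $\Gamma$ (so every element is $g_1^{m_1}\cdots g_d^{m_d}$), I would set
\begin{equation*}
\psi_n := -\frac{1}{n^{d}} \sum_{0 \le m_i < n} U(g_1^{m_1}\cdots g_d^{m_d})\big( c(g_1^{m_1}\cdots g_d^{m_d}) \big),
\end{equation*}
or, what is cleaner to manipulate, $\psi_n := -\frac{1}{n^d}\sum_{\gamma \in Q_n} U(\gamma)(c(\gamma))$ with $Q_n$ the box $\{0,\dots,n-1\}^d$. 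The first step is to record the cocycle identity in the form $c(\gamma_1\gamma_2) = c(\gamma_2) + U(\gamma_2)(c(\gamma_1))$ and its consequences: $c(\id)=0$, $U(\gamma)(c(\gamma^{-1})) = -c(\gamma)$, and for $f\in\Gamma$ and $\gamma\in\Gamma$ (using commutativity) the ``discrete derivative'' formula
\begin{equation*}
c(f) - \big(\psi_n - U(f)(\psi_n)\big) = \frac{1}{n^d}\sum_{\gamma\in Q_n} U(\gamma)\Big( U(f)(c(\gamma)) + c(f) - c(\gamma) \Big) = \frac{1}{n^d}\sum_{\gamma\in Q_n} U(\gamma)\big( c(f\gamma) - c(\gamma)\big),
\end{equation*}
where the last equality is just the cocycle relation applied to $f\gamma$. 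This is the exact analogue of the telescoping computation that produces \eqref{e:var-conj} in Section~\ref{ss:rat}.

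The second step is the telescoping/counting argument. Fix $f = g_i$ a generator (the general case follows by iterating, or directly with a slightly bigger box). Since $U$ is isometric, $\|U(\gamma)(v)\|_{\mathbb B}=\|v\|_{\mathbb B}$, and in the sum $\sum_{\gamma\in Q_n}\big(c(g_i\gamma) - c(\gamma)\big)$ almost all terms cancel in pairs: summing over the $i$-th coordinate, the block $\{0,\dots,n-1\}$ in the $i$-direction telescopes, leaving only $n^{d-1}$ surviving terms of the form $c(g_i^{n}\gamma') - c(\gamma')$ where $\gamma'$ ranges over the $(d-1)$-dimensional box. Using the cocycle relation once more, $c(g_i^n\gamma') - c(\gamma') = U(\gamma')^{-1}\big(\text{something}\big)$; more to the point, $\|c(g_i^n\gamma')-c(\gamma')\|_{\mathbb B} \le \|c(g_i^n)\|_{\mathbb B} + (\text{bounded correction})$, because $c(g_i^n\gamma') = c(\gamma') + U(\gamma')(c(g_i^n))$ so in fact $c(g_i^n\gamma')-c(\gamma') = U(\gamma')(c(g_i^n))$ exactly, of norm $\|c(g_i^n)\|_{\mathbb B}$. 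Hence
\begin{equation*}
\big\| c(g_i) - \big(\psi_n - U(g_i)(\psi_n)\big) \big\|_{\mathbb B} \le \frac{n^{d-1}}{n^d}\,\|c(g_i^n)\|_{\mathbb B} = \frac{\|c(g_i^n)\|_{\mathbb B}}{n} \xrightarrow[n\to\infty]{} \mathrm{drift}_c(g_i).
\end{equation*}
This gives the upper bound; for a general $f$ one writes $f$ as a word and applies the same averaging over a box adapted to the support of $f$, or alternatively notes that $\mathrm{drift}_c$ is the relevant homogeneous quantity and the same box works with surviving terms $c(f^{?}\cdot\,)$ — the cleanest route is probably to prove it for each generator and then for arbitrary $f=\prod g_i^{k_i}$ by a second, coarser averaging, which I would leave as a routine verification.

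The third step is the matching lower bound, which upgrades ``$\le$ with a sequence converging to $\mathrm{drift}_c(f)$'' to the exact infimum formula \eqref{eq-drift-cob}. For \emph{any} $\psi\in\mathbb B$ and any $n$, the cocycle relation gives $c(f^n) = \sum_{k=0}^{n-1} U(f^k)(c(f))$, and inserting the coboundary $\psi - U(f)(\psi)$ telescopes: $\sum_{k=0}^{n-1} U(f^k)\big(\psi - U(f)(\psi)\big) = \psi - U(f^n)(\psi)$, whose norm is $\le 2\|\psi\|_{\mathbb B}$. Therefore
\begin{equation*}
\|c(f^n)\|_{\mathbb B} \le \sum_{k=0}^{n-1}\big\| U(f^k)\big(c(f) - (\psi - U(f)(\psi))\big)\big\|_{\mathbb B} + 2\|\psi\|_{\mathbb B} = n\,\big\|c(f) - (\psi - U(f)(\psi))\big\|_{\mathbb B} + 2\|\psi\|_{\mathbb B},
\end{equation*}
using that $U(f^k)$ is isometric. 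Dividing by $n$ and letting $n\to\infty$ yields $\mathrm{drift}_c(f) \le \|c(f) - (\psi - U(f)(\psi))\|_{\mathbb B}$ for every $\psi$, hence $\mathrm{drift}_c(f) \le \inf_\psi \|\cdots\|_{\mathbb B}$. Combined with Step~2 (which exhibits $\psi_n$ achieving values converging down to $\mathrm{drift}_c(f)$), this gives equality in \eqref{eq-drift-cob} and simultaneously shows the convergence of the coboundary defect along the explicit sequence $(\psi_n)$.

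\textbf{Main obstacle.} The only genuinely delicate point is the bookkeeping in Step~2: making the telescoping over a $d$-dimensional box precise while tracking exactly which $U(\gamma)$-translates of $c(g_i^n)$ survive, and then handling a general group element $f$ (not just a generator) — one must either enlarge the averaging box depending on $f$, or argue that the value $\inf_\psi\|c(f)-(\psi - U(f)(\psi))\|$ is insensitive to the choice and reduce to powers of a single element. Everything else (the cocycle manipulations, the isometry estimates, the subadditivity already checked in the text) is routine; the abstract setting actually makes it cleaner than the diffeomorphism computation of Section~\ref{ss:rat}, since there is no logarithm-of-derivative to carry around.
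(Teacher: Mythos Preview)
Your overall strategy matches the paper's: define $\psi_n$ as an average of the cocycle over a box, telescope to get the upper bound $\|c(f^n)\|/n$, and prove the matching lower bound by telescoping $c(f^n)$ against an arbitrary coboundary. Step~3 is essentially identical to the paper's argument. There are, however, two issues worth flagging.

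First, a computational slip: with your definition $\psi_n = -\frac{1}{n^d}\sum_{\gamma} U(\gamma)(c(\gamma))$, the displayed identity for $c(f)-(\psi_n-U(f)(\psi_n))$ does not hold, because $c(f)\neq \frac{1}{n^d}\sum_\gamma U(\gamma)(c(f))$ in general. The correct (and simpler) choice is $\psi_n = \frac{1}{n^d}\sum_{\gamma\in Q_n} c(\gamma)$; then $U(f)(\psi_n) = \frac{1}{n^d}\sum_\gamma[c(\gamma f)-c(f)] = -c(f)+\frac{1}{n^d}\sum_\gamma c(f\gamma)$, giving $c(f)-(\psi_n-U(f)(\psi_n)) = \frac{1}{n^d}\sum_\gamma[c(f\gamma)-c(\gamma)]$ with no stray $U(\gamma)$, and your subsequent telescoping (which you wrote without the $U(\gamma)$) then applies verbatim.

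Second, and more substantively, the passage from generators to a general $f$ is not ``routine'' with a fixed $d$-dimensional box. For $f=g_1^{k_1}\cdots g_d^{k_d}$, the boundary argument only gives $\limsup_n \|c(f)-(\psi_n-U(f)(\psi_n))\|\le \sum_i |k_i|\,\mathrm{drift}_c(g_i)$, which can be strictly larger than $\mathrm{drift}_c(f)$; combined with the lower bound from Step~3, this does not force convergence. And if you adapt the box to $f$, you no longer have a \emph{single} sequence $(\psi_n)$ working for all $f$, which is what the lemma asserts. The paper resolves this neatly: instead of fixing $d$ generators, it enumerates all of $\Gamma$ as $f_1,f_2,\dots$ and takes $\psi_n=\frac{1}{n^n}\sum_{g\in B(n)}c(g)$ with $B(n)=\{f_1^{m_1}\cdots f_n^{m_n}:0\le m_i<n\}$. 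Then every $f\in\Gamma$ equals some $f_i$, and for $n\ge i$ the telescoping in the $i$-th direction gives exactly $\|c(f^n)\|/n$, so the same sequence works uniformly.
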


\begin{proof} We number the elements of $\Gamma$ as $f_1,f_2, \ldots$ 
Let us denote 
$$B (n) := \{ f_{1}^{m_1} f_2^{m_2} \cdots f_n^{m_{n}}: \,\, 0 \leq m_i < n \}$$
For each $n \geq 1$, define
\begin{equation}
\psi_n := \frac{1}{n^n} \sum_{g \in B(n)} c(g).
\label{int-zero}
\end{equation}
Each $f \in \Gamma$ equals $f_i$ for a certain index $i$. Then, for each $n \geq i$, 
\begin{eqnarray*}
U(f) (\psi_n) \!\!\!\!\!\!\!
&=& \!\!\!\!\!\!\! \frac{1}{n^n} \sum_{g \in B(n)} U(f) ( c(g) )
\quad = \quad
 \frac{1}{n^n} \sum_{g \in B(n)} [c(g f) - c(f)] \qquad \hspace{1cm} \\
 &\qquad \qquad = \,\,\, & - c(f) + \frac{1}{ n^n } \sum_{g \in B(n)} c(g f) 
\quad = \quad  - c(f) + \frac{1}{ n^n } \sum_{g \in B(n)} c(f g).
\end{eqnarray*}
Therefore, 
$$\Big\| c ( f ) - ( \psi_n - U(f) (\psi_n) ) \Big\|_{\mathbb{B}} \leq \frac{1}{n^n} \Bigl\| \sum_{g \in B(n)} [ c (f g) - c(g) ] \Bigr\|_{\mathbb{B}},$$
and the last expression equals
$$\frac{1}{n^n} \left\| \sum_{\substack{0 \leq m_j < n \\ j \neq i}} \big[ c (f^n f_1^{m_1} \cdots f_{i-1}^{m_{i-1}} f_{i+1}^{m_{i+1}} \cdots f_n^{m_n}) 
- c (f_1^{m_1} \cdots f_{i-1}^{m_{i-1}} f_{i+1}^{m_{i+1}} \cdots f_n^{m_n}) \big] \right\|_{\mathbb{B}} \! \! .$$
By the cocycle relation, this reduces to
$$\frac{1}{n^n} \left\| \sum_{\substack{0 \leq m_j < n \\ j \neq i}} 
U (f_1^{m_1} \cdots f_{i-1}^{m_{i-1}} f_{i+1}^{m_{i+1}} \cdots f_n^{m_n}) (c (f^n) ) \right\|_{\mathbb{B}},$$
which, by the triangular inequality, is smaller than or equal to 
$$\frac{1}{n^n} \sum_{\substack{0 \leq m_j < n \\ j \neq i}} 
\left\| U (f_1^{m_1} \cdots f_{i-1}^{m_{i-1}} f_{i+1}^{m_{i+1}} \cdots f_n^{m_n}) (c (f^n) ) \right\|_{\mathbb{B}} 
= \frac{1}{n^n} \sum_{\substack{0 \leq m_j < n \\ j \neq i}} \| c(f^n) \|_{\mathbb{B}}.$$
The last expression is equal to
$$\frac{1}{n^n} \, n^{n-1} \| c (f^n) \|_{\mathbb{B}} = \frac{\| c (f^n) \|_{\mathbb{B}}}{n}.$$
By definition, this converges to $\mathrm{drift}_c (f)$ as $n \to \infty$. Therefore, the lim sup of
$$\| c ( f ) - ( \psi_n - U( f ) (\psi_n) ) \|_{\mathbb{B}} $$
is at most $\mathrm{drift}_{c} (f)$. 

To complete the proof of (\ref{eq-drift-cob}), for $f \in \Gamma$ and $\psi \in \mathbb{B}$, we let 
\, $C := \| c(f) - (\psi - U(f) (\psi)) \|_{\mathbb{B}}$. \, Then, for each $i \geq 1$, we have
$$C = \| U (f^i) (c(f)) - (U(f^i)(\psi) - U(f^{i+1})(\psi)) \|_{\mathbb{B}}.$$
The triangular inequality and the cocycle relation (which in particular implies that $c (id) = 0$) then yield
\begin{eqnarray*}
n \, C 
&=& \sum_{i=0}^{n-1}  \big\| U (f^i) (c(f)) - (U(f^i)(\psi) - U(f^{i+1})(\psi)) \big\|_{\mathbb{B}} \\
&\geq& \left\|  \sum_{i=0}^{n-1} U (f^i) (c(f)) -  \big( U(f^i)(\psi) - U(f^{i+1})(\psi) \big) \right\|_{\mathbb{B}} \\
&=& \left\| \sum_{i=0}^{n-1} [c(f^{i+1})-c(f^i)] - \big( \psi - U(f^n) (\psi) \big) \right\|_{\mathbb{B}} \\
&=& \big\| c(f^n) - ( \psi - U(f^n)( \psi )) \big\|_{\mathbb{B}} \\
&\geq& \|c(f^n)\|_{\mathbb{B}} - \|\psi\|_{\mathbb{B}} - \|U(f^n) (\psi) \big) \|_{\mathbb{B}}.
\end{eqnarray*}
Therefore,
$$C \geq \frac{\|c(f^n)\|_{\mathbb{B}}}{n}  - 2 \frac{\|\psi\|_{\mathbb{B}}}{n}.$$
Passing to the limit in $n$ this yields $C \geq \mathrm{drift}_c (f).$
\end{proof}

\medskip

If $\Gamma$ is an Abelian group of $C^{1+\mathrm{ac}}$ diffeomorphisms of a compact one-manifold 
$M$, then $U \!: (f,\varphi) \mapsto ( \varphi \circ f) \cdot Df$ is an isometric action on $\mathbb{B} = L^1(M)$, 
and the affine derivative $c(f) := \frac{D^2 f}{D f}$ is a cocycle for this action. The drift of this cocycle at $f$ is 
nothing but the asymptotic variation of $f$. There is also a close similarity between (\ref{eq:def-g_n}) and 
(\ref{int-zero}). No miracle is happening here.

\section{Appendix 4: 
On the topology of the space of bounded variation functions, 
in collaboration with Th\'eo Virot}
\label{a:theo}
 
In this last appendix, we give a counterexample to Lemma \ref{l:astuce} for a function $u$ that has bounded variation but is not absolutely 
continuous. As we will see, this is closely related to the fact that $\Diff^{1+bv}_+ ([0,1])$ is not a topological group when endowed with the 
topology induced by the metric $d_{1+bv}$ from Section \ref{notations}. (Specifically, left multiplication is not continuous.) It is worth 
mentioning that, after working out the examples below, we discovered that the results they lead to are not new. Actually, they seem 
to be known to the specialists, though the only reference we found on this is the beautiful recent article \cite{Co20} of Cohen, 
where many related results are given. (The most striking one is that $\Diff^{1+bv}_+([0,1])$ admits no (Hausdorff) Polish 
topological group structure.) However, proofs from \cite{Co20} are not explicit and rest upon very subtle considerations 
from functional analysis. The example we present below is very concrete and has a dynamical flavor. 
 
\medskip
 
Recall that we look for:  
\vspace{0.2cm}
\begin{itemize}

\item a bounded variation function $u: [0,1] \to \mathbb{R}$ (which is not absolutely continuous) and  

\item a sequence of $C^1$ diffeomorphisms $\phi_n \!: [0,1] \to [0,1]$ converging to the identity 
(in $C^1$ topology)  

\end{itemize}
such that 
\begin{equation}\label{e:but}
\mathrm{var} (u \circ \phi_n - u) \not\longrightarrow 0.
\end{equation}
We will manage to do this with $u$ a devil staircase. However, for technical reasons, we will need to replace 
the classical ternary Cantor set by another one which is more adapted to our needs.  


\vspace{0.35cm}

\noindent\underbar{Step 1:} We start with any closed interval  $I := [a,b] \subset (0,1)$ and we choose 
$I_0 := [a_0,b_0] \subset (0,a)$ and $[a_1,b_1] \subset (b,1)$. Then we proceed by induction: we assume 
that for a given $n \geq 1$ we have chosen $I_{\iota}$ for all $\iota \in \{0,1\}^k$, with $k \leq n$. We fix 
$\iota \in \{ 0 , 1\}^n$, we let $\iota,0$ (resp. $\iota,1$) be the concatenation of $\iota$ with $0$ (resp. $1$), 
and we choose closed intervals $I_{\iota,0}, I_{\iota,1}$ that are disjoint from those previously chosen so that:

\begin{itemize}
\item $I_{\iota,0}$ (resp. $I_{\iota,1}$) is on the left (resp. right) of $I_{\iota}$, 
\item there is no interval of type $I_{\kappa}$, with $\kappa \in \{0,1\}^k$ and $k \leq n$, 
between $I_{\iota}$ and $I_{\iota,0}$, and the same holds true for $I_{\iota}$ and $I_{\iota,1}$. 
\end{itemize}

We  denote $I_{\iota} = [a_{\iota},b_{\iota}]$ and we let $m_{\iota}$ be the midpoint of $I_{\iota}$. 
We suppose that the length of each $I_{\iota}$ is at least one third of the length of the connected component of \, 
$[0,1] \setminus \bigcup_{k \leq n} \bigcup_{\kappa \in \{0,1\}^k} I_{\kappa}$ \, in which it is contained.  If 
we let $I_{\iota}^{\mathrm{o}}$ be the interior of $I_{\iota}$, then 
$$ K := [0,1] \setminus \bigcup_{n \in \mathbb{N}} \bigcup_{\iota \in \{0,1\}^n} I_{\iota}^{\mathrm{o}}$$ 
is a Cantor set. 

\vspace{0.35cm}

\noindent\underbar{Step 2:} We let $u$ be the devil staircase function associated to the Cantor set $K$ previously constructed.
This function is constant (say, equal to $u_{\iota}$) on each interval $I_{\iota}$. It is recursively defined by:

\begin{itemize}

\item the restriction of $u$ to $I$ is identically equal to $1/2$, 

\item the restriction of $u$ to $I_0$ (resp. $I_1$) is identically equal to $1/4$ \, (resp. $3/4$),

\item more generally, for all $\iota \in \{0,1\}^n$, the restriction of $u$ to $I_{\iota,0}$ (resp. 
$I_{\iota,1}$) is identically equal to $u_{\iota} - 1 / 2^{n+2}$ \, (resp. $u_{\iota} + 1/2^{n+2}$).
\end{itemize}

The function $u$ has bounded variation, since it is monotone (non decreasing) and bounded.\\ 

Now, for every integer $n \geq 1$, we let $\phi_n: [0,1] \to [0,1]$ be a homeomorphism such that:

\begin{itemize} 

\item if $\iota \in \{0,1\}^n$, then  $\phi_n (b_{\iota,0}) = a_{\iota}$, 

\item $\phi_n$ fixes $m_{\iota}$ for all $\iota \in \{ 0, 1 \}^{n} \cup \{ 0, 1 \}^{n+1}$.

\end{itemize}
\begin{figure}[h!]
\centering
\includegraphics[width=15cm]{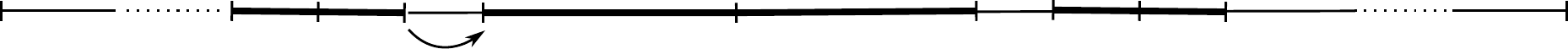}
\put(-429,17){$\scriptstyle{0}$}
\put(-2,17){$\scriptstyle{1}$}
\put(-370,17){$\scriptstyle{a_{\iota,0}}$}
\put(-365,37){$\overbrace{\hspace{1.75cm}}$}
\put(-364,-14){$\scriptstyle{\phi_n=\id}$}
\put(-347,17){$\scriptstyle{m_{\iota,0}}$}
\put(-347,50){$\scriptstyle{I_{\iota,0}}$}
\put(-363,0){$\underbrace{\hspace{0.8cm}}$}
\put(-322,17){$\scriptstyle{b_{\iota,0}}$}
\put(-311,-10){$\scriptstyle{\phi_n}$}
\put(-300,17){$\scriptstyle{a_{\iota}}$}
\put(-295,37){$\overbrace{\hspace{4.7cm}}$}
\put(-229,17){$\scriptstyle{m_{\iota}}$}
\put(-227,0){$\underbrace{\hspace{4.7cm}}$}
\put(-229,50){$\scriptstyle{I_{\iota}}$}
\put(-163,17){$\scriptstyle{b_{\iota}}$}
\put(-170,-14){$\scriptstyle{\phi_n=\id}$}
\put(-144,17){$\scriptstyle{a_{\iota,1}}$}
\put(-140,37){$\overbrace{\hspace{1.70cm}}$}
\put(-124,17){$\scriptstyle{m_{\iota,1}}$}
\put(-124,50){$\scriptstyle{I_{\iota,1}}$}
\put(-100,17){$\scriptstyle{b_{\iota,1}}$}
\caption{Action of $\phi_n$ near $I_\iota$, with $\iota\in\{0,1\}^n$.}
\end{figure}


We claim that these conditions yield (\ref{e:but}). Indeed, for all $n \geq 1$,
\begin{eqnarray*}
\mathrm{var} (u \circ \phi_n - u) 
&\geq& \sum_{\iota \in \{0,1\}^n} \big| (u \circ \phi_n - u) (b_{\iota,0}) - (u \circ \phi_n - u) (m_{\iota,0}) \big| \\
&=&  \sum_{\iota \in \{0,1\}^n} \big| (u(a_{\iota})  - u (b_{\iota,0})) - (u (m_{\iota,0}) - u (m_{\iota,0}) ) \big| \\
&=&  \sum_{\iota \in \{0,1\}^n} \big| u(a_{\iota})  - u (b_{\iota,0}) \big| 
\,\, = \sum_{\iota \in \{0,1\}^n} \frac{1}{2^{n+2}} 
\,\, = \,\, \frac{2^n}{2^{n+2}} 
\,\, = \,\, \frac{1}{4}.
\end{eqnarray*}

 
\vspace{0.35cm}

\noindent\underbar{Step 3:} We now deal with the differentiability of the sequence of maps. To achieve this, we explicitly define $\phi_n$ by letting:
 \begin{itemize}
 
 \item for $x \in [m_{\iota,0},m_{\iota}]$, with $\iota \in \{0,1\}^n$,
 \begin{equation}\label{e:interp}
 \phi_n (x) := x + \frac{a_{\iota} - b_{\iota,0}}{(b_{\iota,0}-m_{\iota,0})^2 (m_{\iota} - b_{\iota,0})^2} (x - m_{\iota,0})^2 (x - m_{\iota})^2,
 \end{equation}
 
 \item $\phi_n (x) := x$ otherwise.
 
 \end{itemize}
 
Note that $\phi_n (b_{\iota,0}) = a_{\iota}$, as requested above. Moreover, the definition immediately yields 
$$D \phi_n (x) = 1 +  \frac{4 (a_{\iota} - b_{\iota,0})  (x - m_{\iota,0}) (x - m_{\iota}) (x - \frac{m_{\iota} + m_{\iota,0}}{2})} {(b_{\iota,0}-m_{\iota,0})^2 (m_{\iota} - b_{\iota,0})^2}$$ 
for $x \in [m_{\iota,0},m_{\iota}]$.  In particular, 
$D \phi_n (m_{\iota,0}) = D \phi_n (m_{\iota}) = 1$, which  shows that $\phi_n$ is of class $C^1$. The fact that it is a  
$C^1$ diffeomorphism at least for well-chosen parameters follows from Step 4 below, where it will be shown that 
$\| D \phi_n - 1\|_{\infty}$ can be taken as close to 0 as desired and, in particular, $D \phi_n$ can be taken to be 
everywhere positive.

\vspace{0.35cm}

\noindent\underbar{Step 4:} We next deal with the $C^1$ convergence of $\phi_n$ to the identity. To achieve this, 
we fix a sequence of positive numbers $\varepsilon_n < 1$ converging to $0$. We suppose that the intervals $I_{\iota}$ 
are chosen so that 
 \begin{equation}\label{e:para-c-1}
 \frac{|a_{\iota} - b_{\iota,0}|}{|a_{\iota} - m_{\iota,0}|} \leq \varepsilon_n \quad \mbox{ for all } \quad \iota \in \{0,1\}^n.
 \end{equation}
 and
 \begin{equation}\label{e:bound}
     \frac{1}{M}\leq\frac{m_\iota-a_\iota}{b_{\iota,0}-m_{\iota,0}}\leq M
 \end{equation}
for a certain constant $M \geq 2$ (independent of $n$). Note that these conditions are not satisfied by the standard ternary Cantor set. 
However, we will see in Step 5 that they do not contradict each other. 

The announced convergence is then a direct consequence of the next lemma.

\begin{lem}\label{l:family}
Given real numbers $a<c<d<b$ such that $\frac{d-c}{d-a}\leq \epsilon $ and $\frac{1}{M}\leq \frac{d-b}{a-c}\leq M$, 
where $\epsilon < 1$ is positive and $M\geq 2$, the map $\Phi$ defined by 
$$\Phi(x) := x+\frac{d-c}{(c-a)^2(b-c)^2}(x-a)^2 (x-b)^2$$
is a $C^1$ diffeomorphism of $[a,b]$ that sends $c$ to $d$ and satisfies   
$$||D\Phi-1||_{\infty} 
\leq \max \Bigl\{16M^2\frac{\epsilon}{1-\epsilon},8M\frac{\epsilon}{1-\epsilon}\Bigl(1+\frac{\epsilon}{1-\epsilon}\Bigr), \frac{16M^3\epsilon}{1-\epsilon}\Bigr\}.$$
\end{lem}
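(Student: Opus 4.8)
The plan is to prove Lemma~\ref{l:family} by a direct but careful estimate of the polynomial $P(x):=\Phi(x)-x = \kappa\,(x-a)^2(x-b)^2$, where $\kappa := \frac{d-c}{(c-a)^2(b-c)^2}$. First I would record the two elementary consequences of the hypotheses that will be used repeatedly: from $\frac{d-c}{d-a}\le\epsilon<1$ one gets $d-c\le\frac{\epsilon}{1-\epsilon}(c-a)$ (writing $d-a=(d-c)+(c-a)$ and solving), and from $\frac1M\le\frac{d-b}{a-c}\le M$ — which I read as $\frac1M\le\frac{b-d}{c-a}\le M$ since $a<c<d<b$ — one controls the ``far'' factor $b-c$ in terms of the ``near'' factor $c-a$: indeed $b-c=(b-d)+(d-c)$, so $b-c$ is comparable to $c-a$ up to the constant $M$ plus a lower-order $\frac{\epsilon}{1-\epsilon}$ term. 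Combining these, $\kappa\,(b-c)^2$ is of size $O\!\big(\frac{\epsilon}{1-\epsilon}\big)\cdot\frac{1}{c-a}$ and, symmetrically, $\kappa\,(c-a)^2=\frac{d-c}{(b-c)^2}$ is $O\!\big(M\frac{\epsilon}{1-\epsilon}\big)\cdot\frac{1}{b-c}$.

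Next I would compute $D\Phi(x)-1 = P'(x) = 2\kappa(x-a)(x-b)\big(2x-a-b\big)$ and bound each of the three linear factors on $[a,b]$: $|x-a|\le b-a$, $|x-b|\le b-a$, $|2x-a-b|\le b-a$. The naive bound $|P'(x)|\le 2\kappa(b-a)^3$ is too crude because $b-a$ can be large compared to $c-a$; the point is that exactly one of $|x-a|$, $|x-b|$ is ``small'' near the relevant endpoint and one should pair the small factor with $\kappa$. Concretely, I would split $[a,b]$ at the midpoint of $[c, (c+b)/2]$ or simply bound separately: on the part of the interval where $|x-a|\le|x-b|$, write $|P'(x)|\le 2\kappa\,|x-a|\,(b-a)^2 \le 2\kappa (c-a)(b-a)^2$ — no, that is still wrong since $x-a$ ranges up to $\sim b-a$. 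The correct device is to factor one copy of $(b-c)$ or $(c-a)$ out of $\kappa$ as above and bound the remaining monomial against the corresponding length: e.g. $\kappa(x-b)^2\le \kappa(b-a)^2$ is replaced, on $[a, m]$ with $m=(c+b)/2$, by noting $|x-b|\le b-a$ is fine once it is multiplied by $\kappa(c-a)^2\le \frac{d-c}{(b-c)^2}$ and $\frac{b-a}{b-c}$ is itself $O(M)$. Carrying out this bookkeeping factor-by-factor produces the three terms in the stated maximum: roughly $16M^2\frac{\epsilon}{1-\epsilon}$ from the ``both factors of $\Phi-x$ paired with a length of order $c-a$'' regime, $8M\frac{\epsilon}{1-\epsilon}(1+\frac{\epsilon}{1-\epsilon})$ from a mixed term where one $b-c$ is only comparable to $c-a$ up to the additive $\frac{\epsilon}{1-\epsilon}$ correction, and $\frac{16M^3\epsilon}{1-\epsilon}$ from the term where the large length $b-a$ is controlled by $M$ relative to $b-c$. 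Once $\|D\Phi-1\|_\infty<1$ (which holds for $\epsilon$ small, since the right-hand side is $O_M(\epsilon/(1-\epsilon))$), $\Phi$ is strictly increasing, hence a $C^1$ diffeomorphism of $[a,b]$; and $\Phi(a)=a$, $\Phi(b)=b$, $\Phi(c)=c+\kappa(c-a)^2(b-c)^2=c+(d-c)=d$ are immediate from the definition.

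Then I would close the loop back to Step~4: applying Lemma~\ref{l:family} on each interval $[m_{\iota,0},m_\iota]$ with $a=m_{\iota,0}$, $c=b_{\iota,0}$, $d=a_\iota$, $b=m_\iota$, the two hypotheses of the lemma are exactly \eqref{e:para-c-1} and \eqref{e:bound} (with the same $\varepsilon_n$ and $M$), so $\|D\phi_n-1\|_{L^\infty([m_{\iota,0},m_\iota])}\le C_M\,\frac{\varepsilon_n}{1-\varepsilon_n}$ uniformly in $\iota\in\{0,1\}^n$; since $\phi_n=\mathrm{id}$ off these intervals, $\|D\phi_n-1\|_\infty\le C_M\frac{\varepsilon_n}{1-\varepsilon_n}\to 0$, and $\|\phi_n-\mathrm{id}\|_\infty\le \sup_\iota|a_\iota-b_{\iota,0}|\to 0$ because the intervals shrink; hence $\phi_n\to\mathrm{id}$ in $C^1$ while Step~2 gives $\mathrm{var}(u\circ\phi_n-u)\ge\frac14$, establishing \eqref{e:but}. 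It remains to check in Step~5 that the constraints \eqref{e:para-c-1}--\eqref{e:bound} on the nested intervals are simultaneously realizable (they are: at each stage one has freedom to place $I_{\iota,0}$ close to $I_\iota$ to make $|a_\iota-b_{\iota,0}|$ as small as one wishes relative to $|a_\iota-m_{\iota,0}|$, while keeping the length ratio $\frac{m_\iota-a_\iota}{b_{\iota,0}-m_{\iota,0}}$ pinned in $[1/M,M]$ by choosing the lengths of the new intervals proportionally).

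The main obstacle, and the only genuinely delicate point, is the estimate of $\|D\Phi-1\|_\infty$ in Lemma~\ref{l:family}: the quantity $b-a$ appearing when one differentiates the quartic is \emph{not} small, so a term-by-term triangle-inequality bound diverges unless each of the three linear factors $(x-a)$, $(x-b)$, $(2x-a-b)$ is matched against the right combination of the normalization $\kappa=\frac{d-c}{(c-a)^2(b-c)^2}$ and the comparability constant $M$. Getting exactly the three-term maximum in the statement — rather than some cruder $O_M(\epsilon/(1-\epsilon))$ bound, which would in fact suffice for the application — requires tracking these pairings carefully, in particular using $b-c=(b-d)+(d-c)\le (M + \tfrac{\epsilon}{1-\epsilon})(c-a)$ and the symmetric relation, which is where the mixed term $8M\frac{\epsilon}{1-\epsilon}(1+\frac{\epsilon}{1-\epsilon})$ comes from. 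Everything else is routine.
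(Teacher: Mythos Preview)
Your preliminary reductions are correct and match the paper's: from $\frac{d-c}{d-a}\le\epsilon$ one gets $\frac{d-c}{c-a}\le\frac{\epsilon}{1-\epsilon}$, and the hypothesis $\frac1M\le\frac{b-d}{c-a}\le M$ gives comparability of the various lengths. The verification that $\Phi(c)=d$, $\Phi(a)=a$, $\Phi(b)=b$, and that $\Phi$ is a diffeomorphism once $\|D\Phi-1\|_\infty<1$, is also fine.

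The gap is in the core estimate of $\|D\Phi-1\|_\infty$. You correctly compute $E(x):=D\Phi(x)-1=\frac{4(d-c)(x-\frac{a+b}{2})(x-a)(b-x)}{(c-a)^2(b-c)^2}$, but then you flounder on how to decompose $[a,b]$: you try the midpoint, then the region $|x-a|\le|x-b|$, then $(c+b)/2$, each time retracting. The sentence ``carrying out this bookkeeping factor-by-factor produces the three terms'' is not a proof, and your heuristic attribution of each term in the max to a ``regime'' does not match what actually happens. You yourself flag this as the ``main obstacle'' without resolving it.

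The paper's decomposition is the one you are missing: split $[a,b]$ into $[a,c]$, $[c,d]$, $[d,b]$. This is natural because the denominators $(c-a)^2(b-c)^2$ are tailored to the points $c$ (and, through $d-c$ small, essentially $d$). On each piece one writes $|E(x)|$ as $4\cdot\frac{d-c}{c-a}$ times three ratios and bounds them individually. For instance, on $[a,c]$ one has $\frac{x-a}{c-a}\le 1$, while $\frac{b-x}{b-c}$ and $\frac{|x-\frac{a+b}{2}|}{b-c}$ are each at most $\frac{b-a}{b-c}\le 2M$ (the paper derives $b-c\ge\frac{b-a}{2M}$ from $M\ge 2$), giving the $16M^2\frac{\epsilon}{1-\epsilon}$ term. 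On $[c,d]$ the factor $\frac{x-a}{c-a}$ overshoots $1$ but only up to $\frac{d-a}{c-a}=1+\frac{d-c}{c-a}\le 1+\frac{\epsilon}{1-\epsilon}$, which is where the middle term comes from. On $[d,b]$ one swaps the roles of $x-a$ and $b-x$ and picks up an extra factor of $M$ from $\frac{b-d}{c-a}\le M$. The three cases correspond exactly to the three terms in the stated maximum. Once you see this split, each case is a two-line computation.
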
    

    \begin{proof}
    We first note that \, 
    $d-c\leq\epsilon(d-a) = \epsilon(d-c)+\epsilon(c-a).$ \, 
    This gives \esp $(d-c)(1-\epsilon)\leq\epsilon(c-a)$, \esp
    hence
    $$\frac{d-c}{c-a}\leq\frac{\epsilon}{1-\epsilon}.$$
    Moreover, 
    $$b-c=\frac{b-d}{2}+\frac{b-d}{2}+d-c 
    \geq  \frac{c-a}{2M}+\frac{b-d}{2}+d-c \geq \frac{b-a}{2M}.$$
    These two inequalities will be used without reference several times below. 
    
We compute
    $$D\Phi(x)
    =1+\frac{4(d-c)(x-\frac{a+b}{2})(x-a)(b-x)}{(c-a)^2(b-c)^2},$$
    and we denote $E(x) := D\Phi(x)-1$. To estimate the norm of $E$, we distinguish three cases. 
    \begin{itemize}
   
   \item if $x\in[a,c]$, then 
    $$|E(x)|=4\cdot\frac{d-c}{c-a}\cdot\frac{x-a}{c-a}\cdot\frac{b-x}{b-c}\cdot\frac{|x-\frac{a+b}{2}|}{b-c} 
    \leq \frac{d-c}{c-a}\cdot 1\cdot\frac{b-x}{b-a}\cdot\frac{|x-\frac{a+b}{2}|}{b-a}16M^2.$$
    Since the last two terms are $\leq 1$, we obtain in this case 
    $$|E(x)|\leq 16M^2\frac{\epsilon}{1-\epsilon}.$$

    \item If $x\in [c,d]$, then 
    $$|E(x)|=4\cdot\frac{d-c}{c-a}\cdot\frac{x-a}{c-a}\cdot\frac{b-x}{b-c}\cdot\frac{|x-\frac{a+b}{2}|}{b-c}.$$
    The last two terms are bounded from above by 1 and $2M$, respectively. This gives
    $$|E(x)|\leq 8M\frac{\epsilon}{1-\epsilon}\cdot\frac{x-a}{c-a} 
    \leq 8M\frac{\epsilon}{1-\epsilon}\cdot\frac{d-a}{c-a}=8M\frac{\epsilon}{1-\epsilon}\cdot\Bigl(1+\frac{d-c}{c-a}\Bigr) 
    \leq 8M\frac{\epsilon}{1-\epsilon}\Bigl(1+\frac{\epsilon}{1-\epsilon}\Bigr).$$
    
    \item Finally, if $x\in[d,b]$, then 
    $$|E(x)|=4\cdot\frac{d-c}{c-a}\cdot\frac{b-x}{c-a}\cdot\frac{x-a}{b-c}\cdot\frac{|x-\frac{a+b}{2}|}{b-c} 
    \leq 4\cdot\frac{\epsilon}{1-\epsilon}\cdot\frac{b-d}{c-a}\cdot\frac{x-a}{b-a}\cdot\frac{|x-\frac{a+b}{2}|}{b-a}4M^2.$$
    Since the last two terms are $\leq 1$, this gives
    $$|E(x)|\leq 4\cdot\frac{\epsilon}{1-\epsilon}\cdot M\cdot 4M^2=\frac{16M^3\epsilon}{1-\epsilon}.$$
    \end{itemize}
    This closes the proof.
    \end{proof}
 
\begin{rem} The hypothesis of the preceding lemma are somewhat necessary to obtain maps that are close to the identity 
in $C^1$ topology. This follows from he following general fact (compare \cite[Lemma 2.7]{Na08}): If $\phi: [a,b] \to [a,b]$ is a 
$C^1$ diffeomorphism then, for all $c \in (a,b)$, 
$$\frac{|\phi(c)-c|}{b-a} \leq \| D \phi - 1\|_{\infty}.$$
Indeed, choosing $c' \in (a,b)$ such that $D\phi (c') = | \phi (c) - \phi (a) | / |c - a|$ and using that $\phi (a) = a$, 
we obtain 
$$ \frac{| \phi (c) - c|}{ b - a } 
< \frac{| \phi (c) - c|}{ c - a }
= \left| \frac{\phi (c) - \phi (a)}{c - a} - 1 \right| 
= | D \phi (c') - 1| \leq \| D \phi - 1 \|_{\infty}.$$
\end{rem}

\vspace{0.35cm}

\noindent\underbar{Step 5:} Finally, to show that the conditions (\ref{e:para-c-1}) and (\ref{e:bound}) are compatible, 
we build our Cantor set recursively as follows. Suppose that, for a given $n \geq 1$, the intervals $I_{\iota}$ have been 
constructed for all $\iota \in \{0,1\}^k$ and all $k \leq n$. Fix $\iota \in \{ 0 , 1 \}^n$, and denote $L_{\iota}$ (resp. $R_{\iota}$) 
the connected component to the left (resp. right) of $I_{\iota}$ of the set 
$$[0,1] \setminus \Big( I \,\,\,\, \cup \bigcup_{\iota' \in \bigcup_{k < n} \{0,1\}^k} I_{\iota'} \Big).$$
Next we define:

\begin{itemize} 

\item $I_{\iota,1}$ as being the central third subinterval of $R_{\iota}$, and

\item $I_{\iota,0}$ as being the interval whose left endpoint $a_{\iota,0}$ coincides with the midpoint of $L_{\iota}$ and 
for which (\ref{e:para-c-1}) is an equality. 

\end{itemize}  
 
One easily convinces that, with this tricky choice, (\ref{e:bound}) is also satisfied for an appropriate constant $M$. 
This concludes the construction of our counterexample to Lemma \ref{l:astuce} for a bounded variation, non 
absolutely continuous function $u$. 

\begin{figure}[h!]
\centering
\includegraphics[width=15cm]{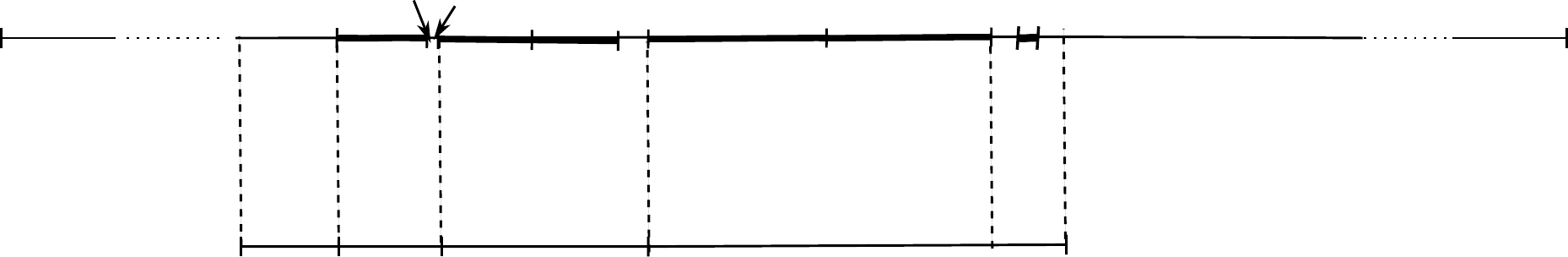}
\put(-429,49){$\scriptstyle{0}$}
\put(-2,49){$\scriptstyle{1}$}
\put(-335,90){$\overbrace{\hspace{0.8cm}}$}
\put(-289,49){$\scriptstyle{m_{\iota,0}}$}
\put(-289,100){$\scriptstyle{I_{\iota,0}}$}
\put(-360,0){$\underbrace{\hspace{1.9cm}}$}
\put(-360,-20){$\underbrace{\hspace{3.8cm}}$}
\put(-310,-35){$\scriptstyle{L_{\iota}}$}
\put(-156,-20){$\underbrace{\hspace{0.2cm}}$}
\put(-147,-35){$\scriptstyle{R_{\iota}}$}
\put(-339,-15){$\scriptstyle{L_{\iota,0}}$}
\put(-326,72){$\scriptstyle{b_{\iota,0,0}}$}
\put(-331,100){$\scriptstyle{I_{\iota,0,0}}$}
\put(-301,72){$\scriptstyle{a_{\iota,0}}$}
\put(-268,65){$\scriptstyle{b_{\iota,0}}$}
\put(-251,65){$\scriptstyle{a_{\iota}}$}
\put(-207,65){$\scriptstyle{m_{\iota}}$}
\put(-250,90){$\overbrace{\hspace{3.2cm}}$}
\put(-207,100){$\scriptstyle{I_{\iota}}$}
\put(-160,65){$\scriptstyle{b_{\iota}}$}
\put(-151,65){$\scriptstyle{I_{\iota,1}}$}
\put(-305,90){$\overbrace{\hspace{1.6cm}}$}
\caption{Proportions of the intervals $I_\iota$.}
\end{figure}

\bigskip

 To close this appendix, let us show why this is closely related to our claim concerning the topology of $\Diff_+^{1+bv} ([0,1])$. Adding a constant to $u$, we 
 may assume that the integral of $e^u$ equals 1. Since this is a positive and continuous function, it coincides with the derivative of a $C^1$ diffeomorphism 
 $f$ of $[0,1]$, which is actually of class $C^{1+bv}$ since $u$ has bounded variation. Now, for the sequence of $C^1$ diffeomorphisms $\phi_n$ constructed 
 above, we have
 \begin{eqnarray*}
 d_{1+bv} (f \phi_n, f) 
&=& \| f \phi_n - f \|_{\infty} + \var ( \log D (f\phi_n) -  \log D f ) \\
& =& \| f \phi_n - f \|_{\infty} + \var ( (\log D f) \circ \phi_n + \log D \phi_n - \log D f ).
 \end{eqnarray*}
 Assume for a while that $\phi_n$ converges to the identity not only in $C^{1}$ topology but also in the $d_{1+bv}$ metric. This means that 
 $\var (\log D \phi_n)$ goes to 0 as $n$ goes to infinity. Since $\| f \phi_n - f\|_{\infty}$ also goes to 0, the last  sum above asymptotically behaves as 
 $$\var ( (\log Df) \circ \phi_n - \log Df) = \var (u \circ \phi_n - u),$$
 which does not converge to 0. As a consequence, $f \phi_n$ does not converge to $f$ in the $d_{1+bv}$ metric, which shows that left multiplication 
 is not continuous in $\Diff_+^{1+bv} ([0,1])$.
 
 To conclude the discussion, let us explain how to ensure the convergence  $\var (\log D \phi_n) \to 0$. This is achieved by slightly 
 improving the previous construction.  Namely, an easy computation shows that the map $\Phi$ from Lemma \ref{l:family} satisfies 
 $$\mathrm{var} (D\Phi; [a,b]) \leq \frac{16 (d-c)(b-a)^3}{(c-a)^2(b-c)^2}.$$
As a consequence, using (\ref{e:para-c-1}) and  (\ref{e:bound}), this yields
$$\mathrm{var} (D\phi_n; [m_{\iota,0},m_{\iota}]) \leq M' \varepsilon_n$$ 
for a certain universal constant $M'$ and all $\iota \in \{0,1\}^n$, with $n\geq 1$. 
In particular, for $\varepsilon_n := 1 / 3^n$, 
$$\var (D \phi_n) = \sum_{\iota \in \{0,1\}^n} \mathrm{var} (D\phi_n; [m_{\iota,0},m_{\iota}]) \leq M'  \left( \frac{2}{3} \right)^n,$$  
which easily implies that $\var (\log D \phi_n) \to 0$. 

\medskip 

\begin{rem} One can even modify the preceding construction of the maps $\phi_n$ so that they become $C^{\infty}$ 
diffeomorphisms converging to the identity in $C^{\infty}$ topology though they still satisfy (\ref{e:but}).
\end{rem}

\medskip

 \begin{rem} 
 In \cite{Co20} it is proved that, for {\em every}
  $f \in \Diff^{1+bv}_+([0,1]) \setminus \Diff^{1+ac}_+([0,1])$, there exists a sequence of $C^{1+bv}$ diffeomorphisms 
 $\phi_n$ converging to the identity in the $d_{1+bv}$ metric and such that  $f \phi_n$ does not converge to $f$ in the $d_{1+bv}$ metric.  
 \end{rem}
 
 \vspace{0.5cm}
 
 \noindent{\bf Acknowledgments.}  Both authors were funded by the ANR project GROMEOV and Fondecyt grant 1220032. 
 H.~Eynard-Bontemps was also supported by Grenoble INP-UGA through the IRGA project ADMIN. During the elaboration 
 of this work, she was detached at CNRS and was thus partially funded by the IRL Centro de Modelamiento Matemático (CMM), 
 FB210005, BASAL funds for centers of excellence from ANID-Chile.  At the final stage, A. Navas was also supported by the 
 MathsAmsud Project 210020 Dynamical Group Theory. The authors have also benefitted from the welcoming environment 
 of the Math. and Computer Science Department of USACH and the Math. Department of the University of Chile in Santiago, 
 as well as the Institut Fourier in Grenoble, the Mittag Leffler Institut in Stockholm and the ICMAT Institute in Madrid. 
 
\bigskip

\begin{footnotesize}

\vspace{0.42cm}

\noindent {\bf H\'el\`ene Eynard-Bontemps} \hfill{\bf Andr\'es Navas}

\noindent Institut Fourier \hfill{Dpto de Matem\'atica y C.C.}

\noindent  Universit\'e Grenoble Alpes \hfill{ Universidad de Santiago de Chile}

\noindent 100 rue des Math\'ematiques \hfill{Alameda Bernardo O'Higgins 3363}

\noindent 38610 Gi\`eres, France \hfill{Estaci\'on Central, Santiago, Chile} 

\noindent helene.eynard-bontemps@univ-grenoble-alpes.fr \hfill{andres.navas@usach.cl}

\end{footnotesize}
\bigskip


\begin{thebibliography}{Dillo 83}

\bibitem[Ar61]{Ar} \textsc{Arnold, V.I.} 
Small Denominators I: mapping a circle into itself. 
{\em Investijia Akad. Nauk, serie Math.} \textbf{25}, No. {\bf 1} (1961), 21--96. {\em Transl A.M.S,} 2nd series, \textbf{46},  213-284.
 
 \bibitem[Be12]{Be} 
\textsc{Benhenda, M.} 
Circle diffomorphisms: quasi-reducibility and commuting diffeomorphisms. 
{\em Nonlinearity} \textbf{25}, no. {\bf 7} (2012), 1981-1995.

\bibitem[Bo--Mo45]{BM}
\textsc{Bochner, S.} \& \textsc{Montgomery, D.} 
Groups of differentiable and real or complex analytic transformations.
{\em Annals of Math.}  {\bf 46}, No. {\bf 4} (1945), 685-694.

\bibitem[BCW09]{BCW09}  
{\sc Bonatti, C., Crovisier, S. \& Wilkinson, A.} 
The $C^1$ generic diffeomorphism has trivial centralizer. 
{\em Publ. Math. de l'IHES} {\bf 109} (2009), 185-244.

\bibitem[Bo--Ey16]{BE} {\sc Bonatti, C. \& Eynard-Bontemps, H.} 
Connectedness of the space of smooth actions of $\mathbb{Z}^n$ on the interval. 
{\em Erg. Theory and Dyn. Systems} {\bf 36} (2016), 2076-3106.

\bibitem[BMRT21]{BMRT21} 
{\sc Brum, J. Matte Bon, N, Rivas, C \& Triestino, M. } 
Locally moving groups acting on the line and $\mathbb{R}$-focal actions. 
Preprint (2021); arXiv:2104.14678. 

\bibitem[CFP96]{CFP96} {\sc Cannon, J., Floyd, W. \& Parry, W.} 
Introductory notes on Richard Thompson's group. 
{\em Enseign. Math.} {\bf 42} (1996), 215-256.

\bibitem[Ch-Ma23]{chen-mann} {\sc Chen, L. \& Mann, K.} 
There are no exotic actions of diffeomorphism groups on 1-manifolds
{\em Groups Geom. Dyn.} {\bf 17}, no. {\bf 1} (2023), 91-99.

\bibitem[Co20]{Co20} {\sc Cohen, M.} 
Polishability of some groups of interval and circle diffeomorphisms. 
{\em Fundamenta Mathematicae} {\bf 248} (2020), 91-109.

\bibitem[CTV07]{CTV07} {\sc Cornulier, Y., Tessera, R. \& Valette, A.}  
Isometric group actions on Hilbert spaces: growth of cocycles. 
{\em Geometric and Functional Analysis (GAFA)} {\bf 17} (2007), 770-792.

\bibitem[Ec81]{Ec81} {\sc \'Ecalle, J.} 
Les fonctions r\'esurgentes, vols. 1 \& 2.
{\em Publ. Math. d'Orsay} {\bf 81-05} \& {\bf 81-06} (1981). 

\bibitem[Ey09] {eynard-thesis} {\sc Eynard-Bontemps, H.} 
{\em Sur deux questions connexes de connexit\'e concernant les feuilletages et leurs holonomies.} 
PhD thesis, \'Ecole Normale Sup. de Lyon (2009); available at HAL.

\bibitem[Ey11]{Ey11}
{\sc Eynard-Bontemps, H.} 
On the centralizer of diffeomorphisms of the half-line. {\em Comment. Math. Helv.} \textbf{86}, no. {\bf 2} (2011), 415--435.

\bibitem[Ey16]{Ey16} \textsc{Eynard-Bontemps, H.} 
On the connectedness of the space of codimension one foliations on a closed 3-manifold,  
{\em Invent. Math.} {\bf 204}, no. {\bf 2}, (2016), 605--670. 

\bibitem[Ey19]{Ey19} {\sc Eynard-Bontemps, H.} 
Smooth times of a flow in dimension 1. 
To appear in {\em Ann. Sc. \'Ecole. Normale Sup.}; arXiv:1905.07582.

\bibitem[Ey-Na19]{EN1}  {\sc Eynard-Bontemps, H. \& Navas, A.} 
Mather invariant, conjugates, and distortion for diffeomorphisms of the interval. 
{\em  J. Funct. Anal.} {\bf 281} (2021), 109-149.

 \bibitem[Ey-Na21]{EN2}  {\sc Eynard-Bontemps, H. \& Navas, A.} 
 (Arc-)connectedness for the space of $\mathbb{Z}^d$-actions by $C^2$ diffeomorphisms on 1-dimensional manifolds. 
 Preprint (2021); arXiv:2103.06940.
 
  \bibitem[Ey-Na22]{EN4}  {\sc Eynard-Bontemps, H. \& Navas, A.} 
On the failure of linearization for germs of $C^1$ hyperbolic vector fields in dimension 1. 
To appear in {em J. of Dynamics and Diff. Equations}; arXiv:2212.13646. 

\bibitem[Ey-Na23]{EyNa23}  {\sc Eynard-Bontemps, H. \& Navas, A.} 
On residues and conjugacies for germs of 1-D parabolic diffeomorphisms in finite regularity
To appear in {\em J. Inst. Math. Jussieu}; arXiv:2302.13182

 \bibitem[Fa17]{Fa}  {\sc Farinelli, \'E. } 
 Conjugacy classes of diffeomorphisms of the interval in $C^1$-regularity. 
{\em Fundamenta Math.} \textbf{237} (2017), 201-248.

\bibitem[Fa-Kh09]{FK}
\textsc{Fayad, B.} \& \textsc{Khanin, K.} 
Smooth linearization of commuting circle diffeomorphisms. 
{\em  Ann. of Math.} \textbf{170}, no. {\bf 2} (2009), 961--980.
 
\bibitem[Gh93]{Gh93} {\sc Ghys, \'E.}
Sur les groupes engendr\'es par des diff\'eomorphismes proches de l'identit\'e. 
{\em Boletim da Sociedade Brasileira de Matem\'atica} {\bf 24} (1993), 137-178.
 
\bibitem[Gh01]{ghys} {\sc Ghys, \'E.}  
Groups acting on the circle. 
{\em Enseign. Math.}  {\bf 47} (2001), 329-407.

 \bibitem[GS87]{GS87} 
 {\sc Ghys, \'E. \& Sergiescu, V.}
 Sur un groupe remarquable de diff\'eomorphismes du cercle. 
 {\em Comment. Math. Helv.} {\bf 62} (1987), 185-239.
 
 \bibitem[Ha82]{Ha} {\sc Hart, D.} 
The generator of a $C^r$ flow. 
{\em Dynamical systems, II} (Gainesville, Fla., 1981), Academic Press, New York-London (1982), 547-553. 

 \bibitem[He79] {herman} {\sc Herman, M.} 
 Sur la conjugaison diff\'erentiable des diff\'eomorphismes du cercle \`a des rotations. 
 {\em Publ. Math. de l'IH\'ES} {\bf 49} (1979), 5-234.
 
 \bibitem[Ko70]{Ko} {\sc Kopell, N.} 
 Commuting diffeomorphisms. {\em Global Analysis}  
 (Proc. Sympos. Pure Math., Vol. XIV, Berkeley, Calif., 1968), Amer. Math. Soc., Providence, R.I. (1970), 165-184.

\bibitem[Ma15]{Ma15} {\sc Mann, K.}  
Spaces of surface group representations. 
{\em Invent. Math.} {\bf 201}, no.  {\bf 8} (2015),  669-710.

\bibitem[Ma-Wo19]{MW} {\sc Mann, K. \& Wolff, M.} 
Reconstructing maps out of groups. 
To appear in {\em Ann. Sc. \'Ecole. Normale Sup.}; arXiv:1907.03024.

 \bibitem[Na23]{Na23} {\sc Navas, A.} 
 On conjugates and the asymptotic distortion of $1$-dimensional $C^{1 + bv}$ diffeomorphisms. 
{\em Int. Math. Res. Not.} {\bf 2023}, No. {\bf 1} (2023), 372-405.
  
 \bibitem[Na18]{Na18} {\sc Navas, A.} 
Group actions on 1-manifolds: a list of very concrete open questions. In 
{\em Proceedings of the International Congress of Mathematicians} (ICM 2018), World Scientific (2018).  
  
 \bibitem[Na14]{Na14} {\sc Navas, A.} 
 Sur les rapprochements par conjugaison en dimension 1 et classe $C^1$. 
 {\em Compositio Math.} {\bf 150} (2014), 1183-1195. 

\bibitem[Na11]{N1} {\sc Navas, A.} {\em Groups of circle diffeomorphisms.} 
Chicago Lectures in Mathematics (2011).

\bibitem[Na08]{Na08}  {\sc Navas, A.} 
Growth of groups and diffeomorphisms of the interval. 
{\em GAFA} {\bf 18} (2008), 988-1028.

\bibitem[Na06]{Na06}  {\sc Navas, A.} 
On uniformly quasisymmetric groups of circle diffeomorphisms. 
{\em Annales Academiae Scientiarum Fennicae Mathematica} {\bf 31} (2006), 437-462.

\bibitem[Pa16]{parkhe} {\sc Parkhe, K.} Nilpotent dynamics in dimension one: structure and smoothness. 
{\em Erg. Theory and Dyn. Systems} {\bf 36} (2016),  2258-2272.

 \bibitem[Se77]{Se} {\sc Sergeraert, F.} 
 Feuilletages et diff\'eomorphismes infiniment tangents \`a l'identit\'e. 
 {\em Invent. Math.} {\bf 39}, no. {\bf 3} (1977), 253-275. 
 
 \bibitem[St57]{St} {\sc Sternberg, S.} 
 Local $C^n$ transformations of the real line. 
 {\em Duke Math. J.} {\bf 24} (1957), 97-102.
 
 \bibitem[Sz58]{Sz} {\sc Szekeres, G.} 
 Regular iteration of real and complex functions. 
 {\em Acta Math.} {\bf 100} (1958), 203-258. 
 
\bibitem[Ta73]{Ta}
\textsc{Takens, F}. 
Normal forms for certain singularities of vector fields.  
{\em Ann. Inst. Fourier} \textbf{23} (1973), 163-195.

\bibitem[Vo81]{Vo81} {\sc Voronin, S.} 
Analytic classification of germs of conformal mappings with identity linear part. 
{\em Funct. Anal. Appl.} {\bf 15} (1981), 1-13, 

 \bibitem[Yo95]{Yo} {\sc Yoccoz, J.-C.} 
 Centralisateurs et conjugaison diff\'erentiable des diff\'eomorphismes du cercle. Petits diviseurs en dimension 1. 
 {\em Ast\'erisque} {\bf 231} (1995),  89-242.

\end{thebibliography}
\end{document}